\newtheorem{remark}{Remark}
\definecolor{db}{rgb}{0.0470,0,0.5294}
\definecolor{dg}{rgb}{0.0,0.392,0.0}
\definecolor{firebrick}{rgb}{0.698,0.133,0.133}
\definecolor{bl}{rgb}{0.0,0.0,0.0}
\definecolor{linen}{rgb}{0.980,0.941,0.902}
\definecolor{ivory}{rgb}{1.0,1.0,0.941}
\definecolor{aliceblue}{rgb}{0.941,0.973,1.0}
\definecolor{beige}{rgb}{0.961,0.961,0.863}
\definecolor{tan}{rgb}{0.824,0.706,0.549}
\definecolor{lightsteelblue}{rgb}{0.690,0.769,0.871}
\definecolor{paleturquoise}{rgb}{0.686,0.933,0.933}
\definecolor{lightblue}{rgb}{0.678,0.847,0.902}
\definecolor{skyblue}{rgb}{0.529,0.808,0.922}
\definecolor{palegoldenrod}{rgb}{0.933,0.910,0.667}
\definecolor{lightgoldenrod}{rgb}{0.933,0.867,0.510}
\definecolor{lightyellow}{rgb}{1.0,1.0,0.878}
\definecolor{yellow}{rgb}{1.0,1.0,0.0}
\definecolor{lightyellow1}{rgb}{1.0,1.0,0.878}
\definecolor{lemonchiffon}{rgb}{1.0,0.980,0.804}
\definecolor{myyellow}{rgb}{1,1,.9}
\definecolor{darkgreen}{rgb}{0.0,0.392,0.0}
\definecolor{darkviolet}{rgb}{0.580,0.0,0.827}
\definecolor{lightsalmon}{rgb}{1.0,0.627,0.478}
\definecolor{orange}{rgb}{1.0,0.647,0.0}
\definecolor{darkblue}{rgb}{0.00,0.00,0.55}
\numberwithin{equation}{section}
\Crefname{table}{Table}{Tables}
\Crefname{figure}{Figure}{Figures}
\newcommand\titlelowercase[1]{\texorpdfstring{\lowercase{#1}}{#1}}
\begin{document}
	
	\title{\large{T\titlelowercase{he} V\titlelowercase{ariable} T\titlelowercase{ime-stepping} DLN-E\titlelowercase{nsemble} A\titlelowercase{lgorithms for} i\titlelowercase{ncompressible} N\titlelowercase{avier}-S\titlelowercase{tokes} E\titlelowercase{quations}}} 
	\author{
			Wenlong Pei\thanks{
			Department of Mathematics, The Ohio State University, Columbus, OH 43210,
			USA. Email: \href{mailto:pei.176@osu.edu}{pei.176@osu.edu}. } 
		    }
	\date{\emty}
	\maketitle
	
	\begin{abstract}
		In the report, we propose a family of variable time-stepping ensemble algorithms for solving multiple incompressible Navier-Stokes equations (NSE) at one pass.
        The one-leg, two-step methods designed by Dahlquist, Liniger, and Nevanlinna (henceforth the DLN method) are non-linearly stable and second-order accurate under arbitrary time grids.
        We design the family of variable time-stepping DLN-Ensemble algorithms for multiple systems of NSE and prove that its numerical solutions are stable and second-order accurate in velocity under moderate time-step restrictions.
        Meanwhile, the family of algorithms can be equivalently implemented by a simple refactorization process: adding time filters on the backward Euler ensemble algorithm.
        In practice, we raise one time adaptive mechanism (based on the local truncation error criterion) for the family of DLN-Ensemble algorithms to balance accuracy and computational costs.
        Several numerical tests are to support the main conclusions of the report. The constant step test confirms the second-order convergence and time efficiency. 
        The variable step test verifies the stability of the numerical solutions and the time efficiency of the adaptive mechanism.

	\end{abstract}
	
	\begin{keywords}
		Ensemble, variable time-stepping, $G$-stability, second-order, time adaptivity, NSE, Refactorization
	\end{keywords}
	
	\begin{AMS}
		65M12, 35Q30, 76D05
	\end{AMS}

    \section{Introduction}
    Ensemble simulation of flow equations with different input data is highly involved in uncertainty quantification, weather prediction, sensitivity analysis, and many other applications in computational fluid dynamics \cite{TK93_AMS, LP08_JCP, Lew05_MWR, MX06}.
    However, this procedure for complex flow problems, especially nonlinear, time-dependent partial differential equations, would result in huge computational costs and yield challenges in accurate calculation if reliable approximations or useful statistical data are required.
    Recently Jiang and Layton proposed an efficient time-stepping ensemble algorithm (BEFE-Ensemble) for fast computation of $J$ ($J>1$) time-dependent Navier-Stokes equations (NSE) \cite{JL14_IJUQ}.
    In the algorithm, they use the backward Euler (BE) time-stepping scheme for time discretization, combined with any standard finite element (FE) space (meeting the discrete inf-sup condition) for spatial discretization. 
    They decompose the non-linear term of each NSE into two parts: the ensemble mean (independent of the ensemble index) and the fully explicit fluctuation (lagged to the previous time level).
    As a result, the simulations of $J$ separate realizations are reduced to solving $J$ linear systems with the same coefficient matrix and $J$ distinct right-hand vectors at each time step. 
    Meanwhile, numerical solutions are stable and first-order accurate in time under Courent-Friedrichs-Lewy (CFL) like conditions.
    The efficiency of the BEFE-ensemble algorithm can be further improved by theories of iterative methods, such as block CG, block QMR, and block GMRES solvers \cite{FOP95_CMAME, FM97_LAA, GS96_LAA}.
    Later, Jiang replaced the first-order BE scheme with blended multi-step backward differentiation formulas with uniform time grids to obtain second-order ensemble solutions of NSE in time \cite{Jia17_NMPDE, Jia15_JSC}.

    Time adaptivity (adjusting the time step based on certain criteria) is an essential approach to balance the conflict between computational cost and time efficiency in the numerical approximation of differential equations, especially in stiff and unstable problems \cite{SW06_JCAM, HW10_Springer}. 
    As linear multistep methods with unfavorable time steps will lead to instability in the numerical simulation \cite{NL79_BIT}, the variable time-stepping ensemble algorithms with time adaptive mechanisms are little explored.
    Here, we refer to a one-parameter family of one-leg, two-step schemes proposed by Dahlquist, Liniger, and Nevanlinna (henceforth the DLN method) \cite{DLN83_SIAM_JNA} and propose the corresponding family of variable time-stepping DLN-Ensemble algorithms for incompressible NSE. 
    Since the DLN method is $G$-stable (nonlinear stable) and second-order accurate under \textit{arbitrary sequence of time steps} \cite{Dah78_BIT,Dah76_Tech_RIT,Dah78_AP_NYL}, its application to many fluid models have confirmed its potential \cite{QCWLL23_ANM,QHPL21_JCAM,LPQT21_NMPDE,Pei24_NM,CQGWLL24_CMA,SP23_arXiv}.

    We denote $\{ t_{n} \}_{n=0}^{N}$ time grids on time interval $[0,T]$, and 
	$k_{n} = t_{n+1} - t_{n}$ local time steps. 
    Given the following initial value problem 
	\begin{gather}
		y'(t) = g(t,y(t)), \quad y(0) = y_{0}, \quad 0 \leq t \leq T, 
		\label{eq:IVP}
	\end{gather}
	where $y: [0,T] \rightarrow \mathbb{R}^{d}$ and $g: [0,T] \times \mathbb{R}^{d} \rightarrow \mathbb{R}^{d}$ ($d \in \mathbb{N}$). 
    The variable time-stepping DLN method (with the parameter $\theta \in [0,1]$) for the above initial value problem in \eqref{eq:IVP} is 
	\begin{gather}
		\sum_{\ell =0}^{2}{\alpha _{\ell }}y_{n-1+\ell }
		= \widehat{k}_{n} g \Big( \sum_{\ell =0}^{2}{\beta _{\ell }^{(n)}}t_{n-1+\ell } ,
		\sum_{\ell =0}^{2}{\beta _{\ell }^{(n)}}y_{n-1+\ell} \Big), \qquad n=1,\ldots,N-1.
		\label{eq:1leg-DLN}
	\end{gather}
    $y_{n}$ represents the DLN solution of $y(t)$ at time $t_{n}$. The coefficients of the DLN method 
	in \eqref{eq:1leg-DLN} are 
	\begin{gather*}
		\begin{bmatrix}
			\alpha _{2} \vspace{0.2cm} \\
			\alpha _{1} \vspace{0.2cm} \\
			\alpha _{0} 
		\end{bmatrix}
		= 
		\begin{bmatrix}
			\frac{1}{2}(\theta +1) \vspace{0.2cm} \\
			-\theta \vspace{0.2cm} \\
			\frac{1}{2}(\theta -1)
		\end{bmatrix}, \ \ \ 
		\begin{bmatrix}
			\beta _{2}^{(n)}  \vspace{0.2cm} \\
			\beta _{1}^{(n)}  \vspace{0.2cm} \\
			\beta _{0}^{(n)}
		\end{bmatrix}
		= 
		\begin{bmatrix}
			\frac{1}{4}\Big(1+\frac{1-{\theta }^{2}}{(1+{%
					\varepsilon _{n}}{\theta })^{2}}+{\varepsilon _{n}}^{2}\frac{\theta (1-{%
					\theta }^{2})}{(1+{\varepsilon _{n}}{\theta })^{2}}+\theta \Big)\vspace{0.2cm%
			} \\
			\frac{1}{2}\Big(1-\frac{1-{\theta }^{2}}{(1+{\varepsilon _{n}}{%
					\theta })^{2}}\Big)\vspace{0.2cm} \\
			\frac{1}{4}\Big(1+\frac{1-{\theta }^{2}}{(1+{%
					\varepsilon _{n}}{\theta })^{2}}-{\varepsilon _{n}}^{2}\frac{\theta (1-{%
					\theta }^{2})}{(1+{\varepsilon _{n}}{\theta })^{2}}-\theta \Big)%
		\end{bmatrix}.
	\end{gather*}
    $\varepsilon _{n} = (k_n - k_{n-1})/(k_n + k_{n-1}) \in (-1,1)$ is the step variability.
    $\widehat{k}_{n} = {\alpha _{2}}k_{n}-{\alpha _{0}}k_{n-1}$ is the average time step to ensure the second order accuracy of numerical solutions. 
	If $\theta = 1$ or $\theta = 0$, the DLN method is reduced to the midpoint rule or two-step midpoint rule. 
	To our knowledge, the DLN method is the \textit{only} multistep time-stepping method possessing $G$-stability and second-order accuracy under \textit{arbitrary time step sequence}.

    Herein we propose the variable time-stepping DLN-ensemble algorithm for incompressible NSE and provide detailed numerical analysis. 
    We consider $J$ ($J > 1$) systems of incompressible NSE on the domain $\Omega \subset \mathbb{R}^{d}$ ($d = 2,3$) over time interval $[0,T]$.
    For the $j$-th ($j = 1, \cdots, J$) system of NSE, the velocity $u^{j}(x,t)$, pressure $p^{j}(x,t)$ and body force $f^{j}(x,t)$ satisfy
    \begin{gather}
		\begin{cases}
			u_{t}^{j} + (u^{j} \cdot \nabla) u^{j} - \nu \Delta u^{j} + \nabla p^{j} = f^{j} \\
			\qquad \nabla \cdot u^{j} = 0, \quad \displaystyle \int_{\Omega} p^{j} dx = 0 
		\end{cases}, \qquad x \in \Omega, \quad 0 < t < T,
		\label{eq:jth-NSE}
	\end{gather}
    with the following homogeneous Dirichlet boundary condition $u^{j}|_{\partial{\Omega}} = 0$ 
	and initial condition $u^{j}(x,0) = u_{0}^{j}(x)$ for $x \in \Omega$. 
    Let $\tilde{u}_{n}^{j}$, $\tilde{p}_{n}^{j}$ be the semi-discrete approximations of velocity and pressure in the $j$-th system of NSE in \eqref{eq:jth-NSE} at time $t_{n}$ ($n = 0,1 \cdots, N$) respectively.
    Given any sequence $\{ z_{n} \}_{n=0}^{N}$, we adopt the following notations for convenience
	\begin{gather*}
		z_{n,\alpha} = \sum_{\ell =0}^{2} \alpha _{\ell } z_{n-1+\ell}, \quad 
		z_{n,\beta} = \sum_{\ell =0}^{2} \beta _{\ell }^{(n)} z_{n-1+\ell}, \\
		z_{n,\ast} = \Big( \beta _{2}^{(n)} \big( 1 + \frac{k_{n}}{k_{n-1}} \big) + \beta _{1}^{(n)} \Big) z_{n}
		+ \Big( \beta _{0}^{(n)} - \beta _{2}^{(n)} \frac{k_{n}}{k_{n-1}} \Big) z_{n-1}.
	\end{gather*}
    The essential part of the DLN-Ensemble algorithm at time level $t_{n+1}$ is to decompose the non-linear term into the sum of mean and fully explicit fluctuation, i.e.
    \begin{gather*}
		\big((u^{j} \cdot \nabla) u^{j} \big) (t_{n,\beta}) \approx
		\big(  \langle \tilde{u} \rangle_{n,\ast} \cdot \nabla \big) \tilde{u}_{n,\beta}^{j}
		+ \big( (\tilde{u}_{n,\ast}^{j} -  \langle \tilde{u} \rangle_{n,\ast}) \cdot \nabla \big) \tilde{u}_{n,\ast}^{j}.
	\end{gather*}
    $\tilde{u}_{n,\beta}^{j}$ is the implicit, second-order approximation of $u^{j}(t_{n,\beta})$ in time.
	$\tilde{u}_{n,\ast}^{j}$ and $\langle  \tilde{u} \rangle_{n,\ast}$ are explicit, second-order approximations of 
	$u^{j}(t_{n,\beta})$ and average of $u^{j}(t_{n,\beta})$ in time respectively.

    The report is organized as follows. 
	Necessary preliminaries and notations are given in Section \ref{sec:Prelim}. In Section \ref{sec:Alg}, we propose the variable time-stepping DLN-Ensemble algorithm for $J$ systems of NSE and discuss how the algorithm is implemented equivalently by adding a few lines of codes to the BEFE-Ensemble algorithm. 
	Detailed numerical analysis for the algorithm is provided in Section \ref{sec:Num-Analysis}.
	In Subsection \ref{subsec:Stab-Error-L2}, we prove the stability and error convergence of velocity in $L^2$-norm.
	Stability and error convergence of velocity in $H^{1}$-norm are derived in 
	Subsection \ref{subsec:Stab-Error-H1}. Numerical analysis for pressure is given in 
	Subsection \ref{subsec:Stab-Error-Pressure}.
	With the assumption of certain CFL-like conditions, 
	all conclusions in Section \ref{sec:Num-Analysis} hold under \textit{non-uniform time grids}.
	In Section \ref{sec:Adapt-Imple}, we design one time adaptive mechanism for the DLN-Ensemble based on the local truncation error (LTE) criterion.
	We offer several numerical tests in Section \ref{sec:Num-Tests} to confirm the stability, accuracy, and efficiency of the DLN-Ensemble algorithm.

    \subsection{Recent Works on Ensemble Algorithms} 
    In the last ten years, various time-stepping ensemble algorithms have been applied to numerous fluid models.
	Takhirov, Neda, and Waters extend the BEFE-Ensemble algorithm to time relaxation NSE models \cite{SAK01_PF} and show the stability of the algorithm provided fluctuations are small enough \cite{TNW16_NMPDE}.
	Jiang and Layton analyze an efficient ensemble regulation algorithm for under-resolved and convection-dominated flow and reconsider an old but not as well-developed definition of the mixing length, which increases stability and improves flow prediction \cite{JL15_NMPDE}.
	Gunzburger, Jiang, and Schneier utilize the proper orthogonal decomposition (POD) method and establish the ensemble-POD approach for multiple non-stationary NSEs \cite{GJS17_SIAM_JNA,GJS18_IJNAM}. As a result, the computational cost is further reduced by solving a reduced linear system at each time step. 
	Gunzburger, Jiang, and Wang allow uncertainties in viscosity coefficients of the parameterized flow problems and decompose the viscosity terms into the mean and fluctuations besides the non-linear terms in ensemble calculations \cite{GJW19_CMAM,GJW19_IMA_JNA}.
	Mohebujjaman and Rebholz apply the BEFE-Ensemble algorithm to incompressible magnetohydrodynamic (MHD) flows in Els\"asser variable formulation and increase the efficiency by solving a stable decoupling of each MHD system at each time step \cite{MR17_CMAM}.
	Mohebujjaman designs a stable, second-order time-stepping algorithm for computing MHD ensemble flow by writing the viscosity and magnetic diffusivity pair as one-leg $\theta$-scheme \cite{Moh22_AAMM}.
	Connors develops a statistical turbulence model with ensemble calculation for two fluids motivated by atmosphere-ocean interaction \cite{Con18_IJNAM}.
	Fiordilino and Khankan calculate an ensemble of solutions to laminar natural convection by solving two coupled linear systems, each involving a shared matrix \cite{Fio18_SIAM_JNA,FK18_IJNAM}. 
    In return, storage requirements and computational costs are largely reduced.
	Jiang, Li, and Yang explore an unconditionally stable ensemble algorithm based on the Crank-Nicolson leap-frog (CNLFAC) method for the Stokes-Darcy model and reduce the size of the linear system by decoupling Stokes-Darcy system into two smaller subphysics problems \cite{JLY21_SIAM_JNA}.
	Recently, the scalar auxiliary variable (SAV) approach for gradient flows \cite{SXY18_JCP,SX18_SIAMJNA}, constructing the scalar auxiliary function to avoid solving non-linear systems, is adopted in ensemble simulation to achieve stability and convergence of numerical solutions without CFL like conditions \cite{CHJ23_JSC,JY21_SIAM_JSC,JY24_JMAA}.

    \section{Notations and Preliminaries} 
	\label{sec:Prelim}
    Let $\Omega \subset \mathbb{R}^{d}$ ($d = 2,3$) be the domain of $J$ systems of NSE. 
	$H^{\ell}(\Omega)$ ($\ell \in \mathbb{N} \cup \{ 0 \}$) is Sobolev space $W^{\ell,2}(\Omega)$ with 
	norm $\| \cdot \|_{\ell}$ and semi-norm $| \cdot |_{\ell}$.
	$H^{0}(\Omega)$ is exact $L^{2}(\Omega)$ inner product space with norm $\| \cdot \|$ and inner product $(\cdot, \cdot)$. The velocity space $X$ and pressure space $Q$ for NSE are
    \begin{gather*}
		X := \Big\{ v \in \big( H^{1}(\Omega) \big)^{d}: v|_{\partial \Omega} = 0 \Big\}, \qquad
		Q := \Big\{ q \in L^{2}(\Omega): (q,1) = 0 \Big\}.
	\end{gather*}
	The diverence-free space for velocity is 
    $V = \big\{ v \in X: ( q,\nabla \cdot v) = 0, \ \forall q \in Q \big\}$.
    $X^{-1}$ is the dual space of $X$ with dual norm 
	\begin{gather}
		\| g \|_{-1} := \sup_{0 \neq v \in X} \frac{(g, v )}{\| \nabla v \|}, \qquad \forall g \in X^{-1}. 
		\label{eq:dual-norm}
	\end{gather}
	We need the following Bochner spaces on the time interval $[0,T]$: for $p \geq 1$,
	\begin{align*}
		L^{p}\big( 0,T; \big(H^{\ell}(\Omega) \big)^{d} \big) 
		\!&=\! \Big\{  v(\cdot,t) \in \big(H^{\ell}(\Omega) \big)^{d}:  0 < t < T, \   \| v \|_{p,\ell} \!=\! \Big( \int_{0}^{T} \| v(\cdot,t) \|_{\ell}^{p} \Big)^{\frac{1}{p}} \!<\! \infty  \Big\}, \\
		L^{\infty}\big( 0,T; \big(H^{\ell}(\Omega) \big)^{d} \big) 
		\!&=\! \Big\{  v(\cdot,t) \in \big(H^{\ell}(\Omega) \big)^{d}: 0 < t < T, \    \| v \|_{\infty,\ell} \!=\!  \sup_{0<t<T} \| v(\cdot,t) \|_{\ell} \!<\! \infty  \Big\}, \\
		L^{p}\big( 0,T; X^{-1} \big) 
		\!&=\! \Big\{  v(\cdot,t) \in X^{-1}: 0 < t < T, \ \| v \|_{p,-1} \!=\! \Big( \int_{0}^{T} \| v(\cdot,t) \|_{-1}^{p} \Big)^{\frac{1}{p}} \!<\! \infty  \Big\}.
	\end{align*}
    We define the skew-symmetric, non-linear operator
	\begin{gather*}
		b(u,v,w) := \frac{1}{2} \big( (u \cdot \nabla) v , w \big) - \frac{1}{2} \big( (u \cdot \nabla) w , v \big), \qquad \forall u,v,w \in \big(H^{1}(\Omega) \big)^{d}.
	\end{gather*}
	We use divergence theorem to obtain 
	\begin{gather*}
		b(u,v,w) = \big( (u \cdot \nabla) v , w \big) + \frac{1}{2} \big( (\nabla \cdot u) v, w \big), 
		\qquad \forall u,v,w \in X. 
	\end{gather*}
	Therefore $b(u,v,w) = \big( (u \cdot \nabla) v , w \big)$ for any $u \in V$ and $v,w \in X$.
    We need the following lemma about the operator $b$ for numerical analysis.
	\begin{lemma}
		\label{lemma:b-bound}
		For any $u,v,w \in H^{1}(\Omega)$,
		\begin{align}
			\big| \big( (u \cdot \nabla )v, w \big) \big|
			\leq& C(\Omega)
			\begin{cases}
				\| u \|_{1} |v|_{1} \| w \|_{1} \\
				\| u \| \| v \|_{2} \| w \|_{1} & v \in H^{2}(\Omega) \\
				\| u \|_{2} |v|_{1} \| w \| & u \in H^{2}(\Omega)
			\end{cases},
			\label{eq:b-bound-1} \\
			\big| b(u,v,w) \big| \leq& C(\Omega) \big( \| u \| \| u \|_{1} \big)^{1/2} \| v \|_{1} \| w \|_{1}. 
			\label{eq:b-bound-2}
		\end{align}
		If $u,v,w \in X$, then 
		\begin{gather}
			\big| b(u,v,w) \big| \leq C(\Omega)
			\begin{cases}
				\| u \|_{1} \| v \|_{1} \big( \| w \| \| w \|_{1} \big)^{1/2} \\
				\| u \|_{1} \| v \|_{2} \| w \| & v \in H^{2}(\Omega)
			\end{cases}.
			\label{eq:b-bound-3}
		\end{gather}
	\end{lemma}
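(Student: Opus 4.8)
The plan is to reduce each of the three inequalities to a bound on the pointwise product $\int_{\Omega}|u|\,|\nabla v|\,|w|$ (and, for the divergence form of $b$, on $\int_{\Omega}|\nabla\cdot u|\,|v|\,|w|$), then apply H\"older's inequality with a suitable triple of Lebesgue exponents and close each factor by a Sobolev-type embedding. The tools I will use are: the continuous embedding $H^{1}(\Omega)\hookrightarrow L^{6}(\Omega)$, valid for $d=2,3$ (and $H^{1}(\Omega)\hookrightarrow L^{p}(\Omega)$ for every finite $p$ when $d=2$); the embedding $H^{2}(\Omega)\hookrightarrow L^{\infty}(\Omega)$, which holds precisely because $d\le 3$; and the Ladyzhenskaya/Gagliardo--Nirenberg interpolation inequality $\|z\|_{L^{3}}\le C(\Omega)\,\|z\|^{1/2}\|z\|_{1}^{1/2}$ for $d=3$ (with the sharper $\|z\|_{L^{4}}\le C(\Omega)\,\|z\|^{1/2}\|z\|_{1}^{1/2}$ available for $d=2$). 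Since $\Omega$ is bounded, every constant depends only on $\Omega$ and the dimension.

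For \eqref{eq:b-bound-1} I would bound $\big|((u\cdot\nabla)v,w)\big|\le\int_{\Omega}|u|\,|\nabla v|\,|w|$ and treat the three regimes separately. The first estimate follows from H\"older with exponents $(3,2,6)$ together with $\|u\|_{L^{3}}\le C(\Omega)\|u\|_{1}$, $\|\nabla v\|_{L^{2}}=|v|_{1}$, and $\|w\|_{L^{6}}\le C(\Omega)\|w\|_{1}$. The second, with $v\in H^{2}(\Omega)$, uses exponents $(2,6,3)$: $\|u\|\,\|\nabla v\|_{L^{6}}\,\|w\|_{L^{3}}\le C(\Omega)\|u\|\,\|v\|_{2}\|w\|_{1}$, since $\nabla v\in(H^{1}(\Omega))^{d}\hookrightarrow(L^{6}(\Omega))^{d}$. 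The third, with $u\in H^{2}(\Omega)$, pulls $u$ out in $L^{\infty}$: $\|u\|_{L^{\infty}}\,|v|_{1}\,\|w\|\le C(\Omega)\|u\|_{2}|v|_{1}\|w\|$.

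For \eqref{eq:b-bound-2} I would write $b(u,v,w)=\tfrac12((u\cdot\nabla)v,w)-\tfrac12((u\cdot\nabla)w,v)$, estimate each term by H\"older with exponents $(3,2,6)$, and then invoke $\|u\|_{L^{3}}\le C(\Omega)(\|u\|\,\|u\|_{1})^{1/2}$ along with $\|\cdot\|_{L^{6}}\le C(\Omega)\|\cdot\|_{1}$ and $\|\nabla v\|_{L^2}\le\|v\|_1$, $\|\nabla w\|_{L^2}\le\|w\|_1$; this produces $C(\Omega)(\|u\|\,\|u\|_{1})^{1/2}\|v\|_{1}\|w\|_{1}$. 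For \eqref{eq:b-bound-3}, since $u\in X$ I would start from the divergence identity $b(u,v,w)=((u\cdot\nabla)v,w)+\tfrac12((\nabla\cdot u)v,w)$ recorded just above the lemma. For the first bound I estimate $((u\cdot\nabla)v,w)$ with exponents $(6,2,3)$ and $((\nabla\cdot u)v,w)$ with exponents $(2,6,3)$, handling the $L^{3}$ slot in each case by $\|w\|_{L^{3}}\le C(\Omega)(\|w\|\,\|w\|_{1})^{1/2}$ and the $L^{6}$ slots by the $H^{1}$ embedding, which yields $C(\Omega)\|u\|_{1}\|v\|_{1}(\|w\|\,\|w\|_{1})^{1/2}$. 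For the second bound, with $v\in H^{2}(\Omega)$, I use exponents $(6,3,2)$ on $((u\cdot\nabla)v,w)$ with $\|\nabla v\|_{L^{3}}\le C(\Omega)\|v\|_{2}$, and $(2,\infty,2)$ on $((\nabla\cdot u)v,w)$ with $\|v\|_{L^{\infty}}\le C(\Omega)\|v\|_{2}$, obtaining $C(\Omega)\|u\|_{1}\|v\|_{2}\|w\|$.

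All of these steps are routine; the only point that needs genuine care is to stay inside the Sobolev exponents available at the borderline dimension $d=3$ — i.e. to use exactly $L^{6}$ for $H^{1}$ factors, exactly $L^{3}$ for the interpolated factor, and $L^{\infty}$ only for $H^{2}$ factors — so that every embedding invoked is actually valid. In two dimensions each exponent choice above is slack, so the $d=2$ case follows a fortiori.
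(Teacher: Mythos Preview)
Your argument is correct and follows the standard route: H\"older with carefully chosen exponent triples, closed by the embeddings $H^{1}\hookrightarrow L^{6}$, $H^{2}\hookrightarrow L^{\infty}$ (both valid for $d\le 3$), and the Gagliardo--Nirenberg interpolation $\|z\|_{L^{3}}\le C(\Omega)(\|z\|\,\|z\|_{1})^{1/2}$. The paper itself does not give a proof but simply cites \cite[p.~273--275]{Ing13_IJNAM}, where exactly this exponent-splitting argument is carried out; so your proof is in fact more explicit than what appears here, and matches the cited source in substance.
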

	\begin{proof}
		See \cite[p.273-275]{Ing13_IJNAM}.
	\end{proof}

    Let $\{\mathcal{T}_{h}\}$ be the family of edge-to-edge triangulations of domain $\Omega$ 
    with diameter $h \in (0,1]$. 
	$X^{h} \subset X$ and $Q^{h} \subset Q$ are certain finite element spaces for velocity and pressure  respectively.
    The divergence-free space for $X^{h}$ and $Q^{h}$ is 
	\begin{gather*}
		V^{h} := \Big\{ v^{h} \in X^{h}: \big( q^{h}, \nabla \cdot v^{h} \big) = 0, \ \ 
		\forall q^{h} \in Q^{h} \Big\}.
	\end{gather*}
    We assume that $X^{h}$ is $C^{m}$-space containing polynomials of highest degree $r$ ($r \in \mathbb{N}$), 
    $Q^{h}$ is $C^{m}$-space containing polynomials of highest degree $s$ ($s \in \mathbb{N}$) and 
    have the following approximations for velocity $u \in (H^{r+1})^{d} \cap X$ 
    and pressure $p \in H^{s+1} \cap Q$ 
    \begin{equation}
        \label{eq:approx-thm}
	    \begin{aligned}
		\inf_{v^{h} \in X^{h}} \| u - v^{h} \|_{\ell_{1}} &\leq C h^{r+1-\ell_{1}} |w|_{r+1}, \ \ 
		0 \leq  \ell_{1} \leq \min\{ m+1, r+1 \},  \\
		\inf_{q^{h} \in Q^{h}} \| p - q^{h} \|_{\ell_{2}} &\leq C h^{s+1-\ell_{2}} |q|_{s+1}, \ \ \ 
		0 \leq  \ell_{2} \leq \min\{ m+1, s+1 \}.
	    \end{aligned}
    \end{equation}
    The inverse inequality for $X^{h}$ is 
	\begin{gather}
		|v^{h}|_{1} \leq C(\Omega) h^{-1} \| v^{h} \|, \qquad \forall v^{h} \in X^{h}.
		\label{eq:inv-inequal}
	\end{gather}
    We refer to \cite{BS07,Cia02_SIAM} for proofs of \eqref{eq:approx-thm} and  \eqref{eq:inv-inequal}.
    To ensure the uniqueness of the numerical solutions, we assume the pair 
    $(X^{h}, Q^{h})$ satisfies the discrete inf-sup condition (Ladyzhenskaya-Babu\v{s}ka-Brezzi condition)
    \begin{gather}
		\inf_{q^{h} \in Q^{h}} \sup_{v^{h} \in X^{h}} \frac{ \big( \nabla \cdot v^{h}, q^{h} \big)}{ \| \nabla v^{h} \| \| q^{h} \| } \geq C_{\tt{is}} > 0,
		\label{eq:inf-sup-cond}
	\end{gather}
    where $C_{\tt{is}}$ is independent of $h$.
    Taylor-Hood ($\mathbb{P}$2-$\mathbb{P}$1) space and Mini element space are typical examples meeting this criterion.
    For any pair $(u,p) \in V \times Q$, the Stokes projection $\big( I_{\rm{St}}^{h} u, I_{\rm{St}}^{h} p \big) \in V^{h} \times Q^{h}$ is defined to be the unique solution to the following Stokes problem 
	\begin{gather}
		\begin{cases}
			\nu \big( \nabla u - \nabla I_{\rm{St}}^{h} u, \nabla v^{h} \big) = \big( p - I_{\rm{St}}^{h} p , \nabla \cdot v^{h} \big) \\
			\qquad \quad \  - \big( q^{h}, \nabla \cdot I_{\rm{St}}^{h} u \big) = 0
		\end{cases}, 
		\quad \forall (v^{h}, q^{h}) \in X^{h} \times Q^{h}.
		\label{eq:Stokes-def}
	\end{gather}
    If the pair $(X^{h}, Q^{h})$ satisfies the discrete inf-sup condition in \eqref{eq:inf-sup-cond}, we have the following approximation property of the Stokes projection (See \cite{GR86_Springer,Joh16_Springer} for proof)
    \begin{equation}
        \label{eq:Stoke-Approx}
        \begin{aligned}
            | u - I_{\rm{St}}^{h} u |_{1} \leq& 
            2 \Big( 1 + \frac{1}{C_{\rm{is}}^{h}} \Big) \inf_{v^{h} \in X^{h}} | u - v^{h} |_{1}
            + \nu^{-1} \inf_{q^{h} \in Q^{h}} \| p - q^{h} \|.
        \end{aligned}
    \end{equation}

	\section{Algorithm}
	\label{sec:Alg} 
	Let $u_{n}^{j,h} \in X^{h}$ and $p_{n}^{j,h} \in Q^{h}$ be fully discrete solutions of $u(x,t_{n})$ and $p(x,t_{n})$ in the $j$-th NSE in \eqref{eq:jth-NSE} respectively.
	$\langle u^{h} \rangle_{n}$ represents average value of $\{ u_{n}^{j,h} \}_{j=1}^{J}$.
	The family of variable time-stepping DLN-ensemble algorithms (with the parameter $\theta \in [0,1]$) for the $j$-th system of NSE is:   
	given $u_{n-1}^{j,h}, u_{n}^{j,h}$ and $p_{n-1}^{j,h}, p_{n}^{j,h}$, we find $u_{n+1}^{j,h} \in X^{h}$ 
	and $p_{n+1}^{j,h} \in Q^{h}$ satisfying: 
	\begin{gather}
		\begin{cases}
			\Big( \frac{u_{n,\alpha}^{j,h}}{\widehat{k}_{n}}, v^{h}  \Big) 
			\!+\! b \big( \langle u^{h} \rangle_{n,\ast} , u_{n,\beta}^{j,h} , v^{h} \big) 
			\!-\! \big( p_{n,\beta}^{j,h}, \nabla \cdot v^{h} \big) \!+\! {\nu} \big( \nabla u_{n,\beta}^{j,h} , \nabla v^{h}  \big)  \\
			\qquad \qquad \qquad 
			= \big( f^{j}(t_{n,\beta}), v^{h} \big) - b \big( u_{n,\ast}^{j,h} - \langle u^{h} \rangle_{n,\ast}, u_{n,\ast}^{j,h}, v^{h} \big),  \\
			\ \\
			\big( \nabla \cdot u_{n,\beta}^{j,h} , q^{h} \big) = 0,
		\end{cases} 
		\label{eq:DLN-Ensemble-Alg}
	\end{gather}
	for all $v^{h} \in X^{h}$, $q^{h} \in Q^{h}$, $j = 1,2, \cdots, J$ and $n = 1,2, \cdots, N-1$.
	Here 
	\begin{gather*}
		\langle u^{h} \rangle_{n,\ast} 
		\!=\! \Big( \beta _{2}^{(n)} \big( 1 + \frac{k_{n}}{k_{n-1}} \big) + \beta _{1}^{(n)} \Big) \langle u^{h} \rangle_{n}
		\!+\! \Big( \beta _{0}^{(n)} - \beta _{2}^{(n)} \frac{k_{n}}{k_{n-1}} \Big) \langle u^{h} \rangle_{n-1}
		\!=\! \frac{1}{J} \Big( \sum_{j=1}^{J} u_{n,\ast}^{j,h} \Big).
	\end{gather*}
	\begin{remark}
		\begin{itemize}
			\item At time step $t_{n+1}$, the algorithm in \eqref{eq:DLN-Ensemble-Alg} is to solve the following $J$ linear systems with the same left matrix, i.e.
			\begin{gather*}
				A [ \mathbf{x}_{1}, \mathbf{x}_{2}, \cdots, \mathbf{x}_{J} ] 
				= [ \mathbf{b}_{1}, \mathbf{b}_{2}, \cdots, \mathbf{b}_{J} ].
			\end{gather*}
			where $A$ is the left coefficient matrix in \eqref{eq:DLN-Ensemble-Alg} for all $j$, 
			$\mathbf{x}_{j}$ and $\mathbf{b}_{j}$ are the numerical solutions and right vector of the algorithm for $j$-th system of NSE respectively. 
			\item For $J=1$,  the algorithm in \eqref{eq:DLN-Ensemble-Alg} is exactly the semi-implicit DLN algorithm for NSE (see \cite{Pei24_NM} for more details).
		\end{itemize}
	\end{remark}
	The DLN-Ensemble algorithm in \eqref{eq:DLN-Ensemble-Alg} can be equivalently implemented by the refactorization process on the (BEFE-Ensemble)-like algorithm: \\
	\textit{Step 1}. Pre-process:
	\begin{gather*}
		\begin{cases}
			u_{n,\tt{old}}^{j,h} = \Big(\beta_{1}^{(n)} \!-\! \frac{\alpha_{1} \beta_{2}^{(n)}}{\alpha_{2}}  \Big) u_{n}^{j,h} 
			+ \Big(\beta_{0}^{(n)} \!-\! \frac{\alpha_{0} \beta_{2}^{(n)}}{\alpha_{2}}  \Big) u_{n\!-\!1}^{j,h}, \\
			p_{n,\tt{old}}^{j,h} = \Big(\beta_{1}^{(n)} \!-\! \frac{\alpha_{1} \beta_{2}^{(n)}}{\alpha_{2}}  \Big) p_{n}^{j,h} 
			+ \Big(\beta_{0}^{(n)} \!-\! \frac{\alpha_{0} \beta_{2}^{(n)}}{\alpha_{2}}  \Big) p_{n\!-\!1}^{j,h}, \\
			k_{n}^{\tt{BE}} = \frac{\beta_{2}^{(n)}}{\alpha_{2}} \widehat{k}_{n}.
		\end{cases}
	\end{gather*}
	\textit{Step 2}. (BEFE-Ensemble)-like solver on time interval 
	$[t_{n,\beta} - k_{n}^{\tt{BE}},t_{n,\beta}]$: solving $u_{n+1}^{j,h,\tt{BE}}$ and $p_{n+1}^{j,h,\tt{BE}}$ such that for all
	$(v^{h}, q^{h})$ in $X^{h} \times Q^{h}$
	\begin{gather*}
	\begin{cases}
		\Big( \frac{u_{n+1}^{j,h,\tt{BE}} - u_{n,\tt{BE}}^{j,h}}{k_{n}^{\tt{BE}}}, v^{h}  \Big) 
		\!+\! b \big( \langle u^{h} \rangle_{n,\ast}, u_{n+1}^{j,h,\tt{BE}} , v^{h} \big) 
		\!-\! \big( p_{n+1}^{j,h,\tt{BE}}, \nabla \cdot v^{h} \big) 
		\!+\! {\nu} \big( \nabla u_{n+1}^{j,h,\tt{BE}}, \nabla v^{h}  \big)  \\
		\qquad \qquad \qquad \qquad \qquad 
		= \big( f^{j}(t_{n,\beta}), v^{h} \big) 
		- b \big( u_{n,\ast}^{j,h} - \langle u^{h} \rangle_{n,\ast} , u_{n,\ast}^{j,h}, v^{h} \big),  \\
		\ \\
		\qquad \qquad \qquad \qquad  \qquad \big( \nabla \cdot u_{n+1}^{j,h,\tt{BE}} , q^{h} \big) = 0.
	\end{cases} 
	\end{gather*}
	\textit{Step 3}. Post-process to obtain DLN-Ensemble solutions:
	\begin{gather*}
		\begin{cases}
			u_{n+1}^{j,h} = \frac{1}{\beta_{2}^{(n)}} u_{n+1}^{j,h,\tt{BE}} - \frac{\beta_{1}^{(n)}}{\beta_{2}^{(n)}} u_{n}^{j,h}
				- \frac{\beta_{0}^{(n)}}{\beta_{2}^{(n)}} u_{n-1}^{j,h}, \\
			p_{n+1}^{j,h} = \frac{1}{\beta_{2}^{(n)}} p_{n+1}^{j,h,\tt{BE}} - \frac{\beta_{1}^{(n)}}{\beta_{2}^{(n)}} p_{n}^{j,h}
				- \frac{\beta_{0}^{(n)}}{\beta_{2}^{(n)}} p_{n-1}^{j,h}.
		\end{cases}
	\end{gather*}
	\begin{remark} 
		\textit{Step 2} just revises the non-linear terms of the BEFE-Ensemble algorithm a little: 
		\begin{align*}
			\text{BEFE-Ensemble :  } 
			&b \big( \langle u_{\tt{old}}^{h} \rangle_{n}, u_{n+1}^{j,h,\tt{BE}} , v^{h} \big)
			+ b \big( u_{n,\tt{old}}^{j,h} - \langle u_{\tt{old}}^{h} \rangle_{n}, u_{n,\tt{old}}^{j,h}, v^{h} \big), \\
			\text{(BEFE-Ensemble)-like :  }
			&b \big( \langle u^{h} \rangle_{n,\ast}, u_{n+1}^{j,h,\tt{BE}} , v^{h} \big)
			+ b \big( u_{n,\ast}^{j,h} - \langle u^{h} \rangle_{n,\ast} , u_{n,\ast}^{j,h}, v^{h} \big),
		\end{align*}
		where $\langle u_{\tt{old}}^{h} \rangle_{n}$ is average value of $\{ u_{n,\tt{old}}^{j,h} \}_{j=1}^{J}$.
	\end{remark}
	We refer to \cite{LPT21_AML} for the equivalence of the DLN-Ensemble algorithms and refactorization process on the (BEFE-Ensemble)-like algorithm. We summarize the refactorization process in \cref{tikz-BEFE-Ensemble-refact}.

	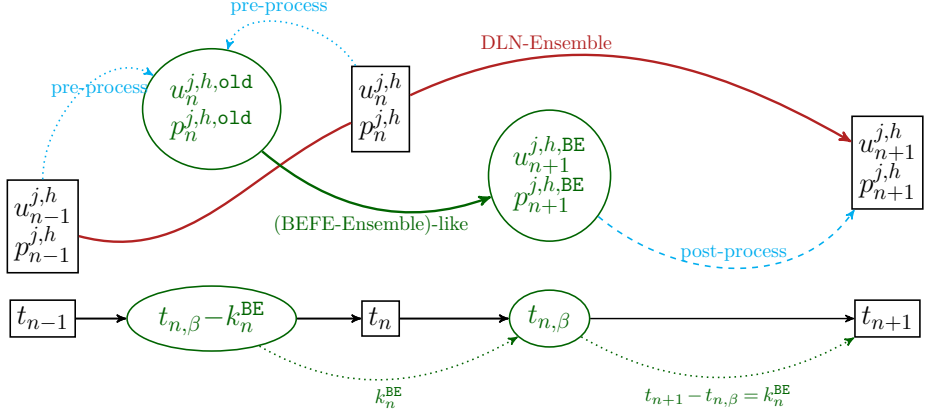
\begin{figure}[]
		\resizebox{.95\textwidth}{!}{
			\begin{tikzpicture}
			[
			->,
			>=stealth',
			auto,node distance=3cm,
			thick,
			main node/.style={ draw, black,font=\sffamily\Large\bfseries}
			]
			
			\node[main node] (1)   {$t_{n-1}$};
			\node[main node] (2) [darkgreen,ellipse] [right of=1] {$t_{n,\beta} \!-\! k_n^{\tt{BE}}$};
			\node[main node] (3) [right of=2] {$t_{n}$} ;
			\node[main node] (4) [darkgreen,ellipse,right of=3] {$t_{n,\beta}$};
			\node[main node] (5) [white,right of=4] {};
			\node[main node] (6) [right of=5] {$t_{n+1}$};
			\node[main node] (11) [above=0.8] {$\begin{matrix} u_{n-1}^{j,h} \\ p_{n-1}^{j,h} \end{matrix}$};
			\node[main node] (12) [right of=11,darkgreen,ellipse,above=1.0] {$\begin{matrix} u_{n}^{j,h,\tt{old}} \\ p_{n}^{j,h,\tt{old}} \end{matrix}$};
			\node[main node] (13) [right of=12] {$\begin{matrix} u_{n}^{j,h} \\ p_{n}^{j,h} \end{matrix}$};
			\node[main node] (14) [darkgreen,ellipse,right of =13, below=0.005] {$\begin{matrix} u_{n+1}^{j,h,\tt{BE}} \\ p_{n+1}^{j,h,\tt{BE}} \end{matrix}$} ;
			\node[main node] (15) [white,right of=14, above=0.10] {};
			\node[main node] (16) [right of=15] {$\begin{matrix} u_{n+1}^{j,h} \\ p_{n+1}^{j,h} \end{matrix}$};

			\path[every node/.style={font=\sffamily\small}]
			(1) edge node [right] {} (2)
			(2) edge node [right] {} (3)
			(3) edge node [right] {} (4)
			(4) edge[line width=.1mm] node [right] {} (6);
			
			\draw[darkgreen, dotted, thick]
			(2) [out=-30, in=-150] to  node[midway,below] {$k_n^{\tt{BE}}$} (4);    			
			\draw[darkgreen, dotted, thick]
			(4) [out=-30, in=-150] to  node[midway,below] {$t_{n+1} - t_{n,\beta} = k_n^{\tt{BE}}$} (6);
			
			\draw[firebrick,very thick] (11) to[out=-15,in=205] 
			(13) to[out=25,in=150] node[pos=0.3,above] {DLN-Ensemble} (16);     			    			
			\draw[cyan, dotted, thick]
			(11) [out=90, in=150] to  node[pos=0.65,below] {pre-process} (12);
			\draw[cyan, dotted, thick]
			(13) [out=120, in=75] to node[pos=0.55,above] {pre-process} (12);
			\draw[darkgreen,  very thick]    			
			(12) [out=-40, in=-160] to node[midway,below] {(BEFE-Ensemble)-like} (14);
			\draw[cyan, dashed, thick]
			(14) [out=-40, in=-125] to  node[midway,above] {post-process} (16);

			\path    			(1) edge node [right] {} (2)
			(2) edge node [right] {} (3)
			(3) edge node [right] {} (4)
			(4) edge[line width=.25mm] node [right] {} (6);
			\end{tikzpicture}
		}  
		\caption{\textbf{Refactorization Process on (BEFE-Ensemble)-like algorithm}}
		\label{tikz-BEFE-Ensemble-refact}
	\end{figure}

	\section{Numerical Analysis}
	\label{sec:Num-Analysis}
	Throughout this section, we assume that the finite element spaces $X^{h}$, $Q^{h}$ satisfy the approximations in \eqref{eq:approx-thm}, inverse inequality \eqref{eq:inv-inequal} and the discrete inf-sup condition in \eqref{eq:inf-sup-cond}.
	$u_{n}^{j}$, $p_{n}^{j}$ represent exact velocity and pressure of $j$-th NSE in \eqref{eq:jth-NSE} at time $t_{n}$.
	We need following discrete Bochner spaces with the time grids $\{t_{n} \}_{n=0}^{N}$ on the 
	time interval $[0,T]$
	\begin{align*}
		\ell^{\infty} \big( \{ t_{n}\}_{n=0}^{N};\big(H^{\ell}(\Omega) \big)^{d}  \big)
		&= \big\{ f(\cdot,t) \in \big(H^{\ell}(\Omega) \big)^{d}:  0 < t < T, \  \| |f| \|_{\infty,\ell} < \infty \big\}, \\
		\ell^{\infty,\beta} \big( \{ t_{n}\}_{n=0}^{N};\big(H^{\ell}(\Omega) \big)^{d} \big)
		&= \big\{ f(\cdot,t) \in \big(H^{\ell}(\Omega) \big)^{d}: 0 < t < T, \  \| |f| \|_{\infty,\ell,\beta} < \infty \big\}, \\
		\ell^{p,\beta} \big( \{ t_{n}\}_{n=0}^{N};\big(H^{\ell}(\Omega) \big)^{d} \big) 
		&= \big\{ f(\cdot,t) \in \big(H^{\ell}(\Omega) \big)^{d} : 0 < t < T, \ \| |f| \|_{p,\ell,\beta} < \infty \big\}, \\
		\ell^{p,\beta} \big( \{ t_{n}\}_{n=0}^{N}; X^{-1} \big) &= \big\{ f(\cdot,t) \in X^{-1}: 0 < t < T, \ \| |f| \|_{p,-1,\beta} < \infty \big\},
		\end{align*}
	where the corresponding discrete norms are 
	\begin{equation}
		\label{eq:def-norm-discrete}
		\begin{aligned}
			&\| |f| \|_{\infty,\ell} := \max_{0 \leq n \leq N} \| f(\cdot, t_{n}) \|_{\ell}, \ \ \ 
			\| |f| \|_{\infty,\ell,\beta} := \max_{1 \leq n \leq N-1} \| f(\cdot, t_{n,\beta}) \|_{\ell},  \\
			&\| |f| \|_{p,\ell,\beta} 
			:= \Big( \sum_{n=1}^{N-1} (k_{n} + k_{n-1})\| f(\cdot,t_{n,\beta}) \|_{\ell}^{p} \Big)^{1/p},  \\
			&\| |f| \|_{p,-1,\beta} 
			:= \Big( \sum_{n=1}^{N-1} (k_{n} + k_{n-1})\| f(\cdot,t_{n,\beta}) \|_{-1}^{p} \Big)^{1/p}.
		\end{aligned}
	\end{equation}
	The discrete norm $\| | \cdot | \|_{p,\ell,\beta}$ in \eqref{eq:def-norm-discrete} is the form of Riemann sum in which the function $f$ is evaluated at $t_{n,\beta} \in [t_{n-1},t_{n+1}]$.
	Since $\varepsilon_{n} \in (-1,1)$, it's easy to check 
	that coefficients $\{ \beta_{\ell}^{(n)} \}_{\ell=0}^{2}$ have the following bounds for $\theta \in [0,1)$
	\begin{gather}
	\frac{ 2 \!+\! \theta \!+\! \theta^{2} }{4(1 \!+\! \theta)} \!< \!\beta_{2}^{(n)}\! <\! \frac{1 \!+\! \theta}{2 (1 \!-\! \theta)}, \ \ 
	\frac{ - \theta }{ 1 \!-\! \theta} \!< \!\beta_{1}^{(n)}\!  <\! \frac{ \theta}{1 \!+\! \theta}, \ \ 
	\frac{ 1  \!-\! 2 \theta \!-\! \theta^2}{ 2 (1 \!-\! \theta) (1 \!+\! \theta) } \!< \! \beta_{0}^{(n)} \!
	<\! \frac{  2 \!-\! \theta \!+\! \theta^2 }{4 (1 \!-\! \theta)}.
	\label{eq:bound-beta}
	\end{gather}
	\begin{definition}
		\label{G-matrix}
		For $0\leq \theta\leq 1$, define the semi-positive symmetric definite matrix 
		$G(\theta) \in \mathbb{R}^{2d \times 2d}$ by
		\begin{equation*}
		G(\theta) = 
		\begin{bmatrix}
		\frac{1}{4}(1+\theta)\mathbb{I}_d & 0 \\
		0 & \frac{1}{4}(1-\theta)\mathbb{I}_d 
		\end{bmatrix},
		\end{equation*}
	\end{definition}
	where $\mathbb{I}_d \in \mathbb{R}^{d \times d}$ is the identity matrix. It's easy to see that $G(\theta)$
	is a positive definite matrix for $\theta \in [0,1)$.
	The following two Lemmas play essential roles in numerical analysis.
	\begin{lemma}  
		\label{lemma:G-stab}
		For any sequence $\{ y_{n} \}_{n=0}^{N}$ in $\big( L^{2}(\Omega) \big)^{d}$ and any $\theta \in [0,1]$, we have the following identity: for any $n = 1,2,\cdots N-1$
		\begin{gather} 
		\Big(\sum_{\ell\!=\!0}^{2} {\alpha_{\ell}} y_{n\!-\!1\!+\!\ell}, \sum_{\ell\!=\!0}^{2} {\beta_{\ell}^{(n)}} y_{n\!-\!1\!+\!\ell} \Big) \!=\! \begin{Vmatrix} {y_{n\!+\!1}} \\ {y_{n}} \end{Vmatrix}^{2}_{G(\theta)}\!-\!\begin{Vmatrix} {y_{n}} \\ {y_{n\!-\!1}} \end{Vmatrix}^{2}_{G(\theta)}
		\!+\! \Big\| \sum_{\ell\!=\!0}^{2} \gamma_{\ell}^{(n)} y_{n\!-\!1\!+\!\ell} \Big\|^{2}, 
		\label{eq:Gstab-Id}
		\end{gather}
		where the $\| \cdot \|_{G(\theta)}$-norm with respect to $L^{2}(\Omega)$ space is 
		\begin{gather}  
		\begin{Vmatrix} u \\ v \end{Vmatrix}^{2}_{G(\theta)} \!:=\! 
		[u^{\rm{tr}},v^{\rm{tr}}] G(\theta) \begin{bmatrix} u \\ v \end{bmatrix}
		\!=\! {\frac{1}{4}}(1 \!+\! {\theta}) \| u \|^{2} 
		+ {\frac{1}{4}}(1 \!-\! {\theta}) \| v \|^{2}, \quad \forall u,v \in \big( L^{2}(\Omega) \big)^{d}, 
		\label{def:G-norm}
		\end{gather}
		where $\rm{tr}$ means transpose of any vector and the $\gamma$-coefficients in numerical dissipation are
		\begin{align*}   
		\gamma_{1}^{(n)}=-\frac{\sqrt{\theta ( 1-{\theta }^{2})}}{\sqrt{2}
			(1+\varepsilon _{n} \theta)}, \quad 
		\gamma_{2}^{(n)}=-\frac{1-\varepsilon_{n}}{2}\gamma_{1}^{(n)}, \quad 
		\gamma_{0}^{(n)}=-\frac{1+\varepsilon_{n}}{2}\gamma_{1}^{(n)},
		\end{align*}
	\end{lemma}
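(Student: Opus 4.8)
The plan is to treat the claimed identity as what it is: a purely algebraic identity in which both sides are quadratic forms in the three vectors $y_{n-1},y_n,y_{n+1}\in\big(L^{2}(\Omega)\big)^{d}$, assembled from the scalar inner products $(y_{n-1+i},y_{n-1+j})$ with coefficients that are rational functions of $\theta$ and $\varepsilon_{n}$ only. Because the $L^{2}$ inner product is symmetric and bilinear and every operation involved acts componentwise on $\mathbb{R}^{d}$, it is enough to establish, for $d=1$, the equality of the two associated $3\times3$ symmetric coefficient matrices. So I would expand each side, read off its matrix, and match the six entries.

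For the left-hand side, $\big(\sum_{i}\alpha_{i}y_{n-1+i},\sum_{j}\beta_{j}^{(n)}y_{n-1+j}\big)=\sum_{i,j}\alpha_{i}\beta_{j}^{(n)}(y_{n-1+i},y_{n-1+j})$, whose symmetrized coefficient matrix is $\tfrac12\big(\alpha\,(\beta^{(n)})^{\mathrm{tr}}+\beta^{(n)}\,\alpha^{\mathrm{tr}}\big)$ with $\alpha=(\alpha_{0},\alpha_{1},\alpha_{2})^{\mathrm{tr}}$ and $\beta^{(n)}=(\beta_{0}^{(n)},\beta_{1}^{(n)},\beta_{2}^{(n)})^{\mathrm{tr}}$. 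On the right-hand side, since $G(\theta)$ does not depend on $n$, the difference of the two $G(\theta)$-norms telescopes to the diagonal contribution
\[
D:=\mathrm{diag}\Big(-\tfrac14(1-\theta),\ -\tfrac12\theta,\ \tfrac14(1+\theta)\Big)
\]
acting on $(y_{n-1},y_{n},y_{n+1})$, while $\big\|\sum_{\ell}\gamma_{\ell}^{(n)}y_{n-1+\ell}\big\|^{2}$ contributes the rank-one matrix $\gamma^{(n)}(\gamma^{(n)})^{\mathrm{tr}}$ with $\gamma^{(n)}=(\gamma_{0}^{(n)},\gamma_{1}^{(n)},\gamma_{2}^{(n)})^{\mathrm{tr}}$. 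Thus the lemma is equivalent to the six scalar identities $\tfrac12\big(\alpha_{i}\beta_{j}^{(n)}+\alpha_{j}\beta_{i}^{(n)}\big)=D_{ij}+\gamma_{i}^{(n)}\gamma_{j}^{(n)}$ for $0\le i\le j\le2$.

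I would verify these by direct computation, keeping the algebra compact with the abbreviation $b_{n}:=(1-\theta^{2})/(1+\varepsilon_{n}\theta)^{2}$, so that $\beta_{2}^{(n)}=\tfrac14(1+\theta+b_{n}+\varepsilon_{n}^{2}\theta b_{n})$, $\beta_{1}^{(n)}=\tfrac12(1-b_{n})$, $\beta_{0}^{(n)}=\tfrac14(1-\theta+b_{n}-\varepsilon_{n}^{2}\theta b_{n})$, and $(\gamma_{1}^{(n)})^{2}=\tfrac12\theta b_{n}$, $\gamma_{2}^{(n)}=-\tfrac12(1-\varepsilon_{n})\gamma_{1}^{(n)}$, $\gamma_{0}^{(n)}=-\tfrac12(1+\varepsilon_{n})\gamma_{1}^{(n)}$; in particular $\gamma^{(n)}(\gamma^{(n)})^{\mathrm{tr}}$ is $\tfrac12\theta b_{n}$ times a matrix depending only on $\varepsilon_{n}$. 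The diagonal entries are the mildest — for $i=j=1$ the identity is immediate — and after clearing a common factor $\tfrac18$ every one of the six equations collapses by means of the single relation $b_{n}(1+\varepsilon_{n}\theta)^{2}=1-\theta^{2}$. I expect the densest bookkeeping in the off-diagonal $(0,2)$ entry $\tfrac12(\alpha_{2}\beta_{0}^{(n)}+\alpha_{0}\beta_{2}^{(n)})$, where the two $\pm\varepsilon_{n}^{2}\theta b_{n}$ terms have to be combined against $\alpha_{2}=\tfrac12(\theta+1)$ and $\alpha_{0}=\tfrac12(\theta-1)$; this is the only place where the check runs longer than a line or two, and it is still routine. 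As a preliminary sanity check I would dispatch the endpoint $\theta=1$, where $b_{n}=0$, all $\gamma_{\ell}^{(n)}$ vanish, and the scheme is the midpoint rule.

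Finally, I would note that the very existence of \emph{some} positive semidefinite $G$ and dissipation coefficients making an identity of this type hold is guaranteed abstractly by Dahlquist's equivalence between $G$-stability and A-stability for one-leg methods — indeed the variable-step coefficients $\beta_{\ell}^{(n)}$ are designed precisely so that the resulting $G$ stays independent of $n$ — but since the statement specifies $G(\theta)$ and the $\gamma_{\ell}^{(n)}$ exactly, the self-contained coefficient matching above is the most direct route, and the passage from $d=1$ to general $d$ adds nothing.
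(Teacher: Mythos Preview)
Your proposal is correct and is precisely a well-organized execution of what the paper itself offers as its proof, namely ``Just algebraic calculation.'' Your reduction to the six entries of a $3\times3$ symmetric matrix and the abbreviation $b_{n}=(1-\theta^{2})/(1+\varepsilon_{n}\theta)^{2}$ make that calculation transparent, but there is no difference in approach to report.
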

	\begin{proof}
		Just algebraic calculation.
	\end{proof}
	\begin{remark}
		If we replace $\big( L^{2}(\Omega) \big)^{d}$ with $\mathbb{R}^{d}$ 
		and $L^{2}$-norm with Euclidean $\ell^{2}$-norm, 
		the variable time-stepping DLN method in \eqref{eq:1leg-DLN} is unconditional $G$-stable from the $G$-stability identity in \eqref{eq:Gstab-Id}. 
		Recall the definition of $G$-stability for the one-leg, $m$-step scheme with constant 
		step $k$ \cite{Dah78_BIT}: 
		\begin{gather*}
			\sum_{\ell=0}^{m} \alpha_{\ell} y_{n+1-\ell} = k f 
			\big( \sum_{\ell=0}^{m} \beta_{\ell} t_{n+1-\ell}, \sum_{\ell=0}^{m} \beta_{\ell} y_{n+1-\ell} \big).
		\end{gather*}
		The above scheme satisfies $G$-stability condition
		if there exists a real symmetric positive definite matrix 
		$G = [g_{ij}]_{i,j=1}^{m} \in \mathbb{R}^{m \times m}$ such that for all $n$
		\begin{gather*}
			Y_{n+1}^{tr} \mathbb{G} Y_{n+1} - Y_{n}^{tr} \mathbb{G} Y_{n} \leq 2 k \Big( f 
			\big( \sum_{\ell=0}^{m} \beta_{\ell} t_{n+1-\ell}, \sum_{\ell=0}^{m} \beta_{\ell} y_{n+1-\ell} \big), \sum_{\ell=0}^{m} \beta_{\ell} y_{n+1-\ell} \Big).
		\end{gather*}
		where $Y_{n} = [y_{n}^{\rm{tr}}, y_{n-1}^{\rm{tr}}, \cdots, y_{n-m+1}^{\rm{tr}}]^{\rm{tr}}$ and 
		$\mathbb{G} = G \otimes \mathbb{I}_{d}$.  
		The above $G$-stability inequality ensures that the deviation from the initial condition in (in $G$-norm) controls the deviations from the sequence of solutions at later times based on that initial condition.
 		From the $G$-stability identity in \eqref{eq:Gstab-Id}, the DLN method with $\theta \in [0,1)$ is $G$-stable. 
		For the case $\theta = 1$, the DLN method with $\theta = 1$ is reduced to one-step midpoint rule and its $G$-stability property is easy to check by definition.
	\end{remark}

	\begin{lemma}
		\label{lemma:DLN-consistency}
		Let $\{ t_{n} \}_{n=0}^{N}$ be the time grids on the time interval $[0,T]$, $u(\cdot,t)$ the mapping from $[0,T]$ to $H^{\ell}(\Omega)$ and $u_{n}$ the function $u(\cdot,t_{n})$ in $H^{\ell}(\Omega)$. Assuming the mapping $u(\cdot,t)$ is smooth about $t$, then for any $\theta \in [0,1)$
		\begin{align}
		\| u_{n,\beta} - u(t_{n,\beta}) \|_{\ell}^{2} 
		\leq& C(\theta) (k_{n} + k_{n-1})^{3} \int_{t_{n-1}}^{t_{n+1}} \| u_{tt} \|_{\ell}^{2} dt, 
		\label{eq:consist-2nd-eq1} \\
		\| u_{n,\ast} - u(t_{n,\beta}) \|_{\ell}^{2} 
		\leq& C(\theta) (k_{n} + k_{n-1})^{3} \int_{t_{n-1}}^{t_{n+1}} \| u_{tt} \|_{\ell}^{2} dt, 
		\label{eq:consist-2nd-eq2} \\
		\Big\| \frac{u_{n,\alpha}}{\widehat{k}_{n}} - u_{t}(t_{n,\beta})\Big\|_{\ell}^{2}  
		\leq& C(\theta) (k_{n} + k_{n-1})^{3} \displaystyle\int_{t_{n-1}}^{t_{n+1}} \| u_{ttt} \|_{\ell}^{2} dt. \label{eq:consist-2nd-eq3} 
		\end{align}
		For $\theta = 1$, the corresponding conclusions for the midpoint rule method are 
		\begin{align*}
		\big\| u_{n,\beta} - u(t_{n,\beta}) \big\|_{\ell}^{2} 
		=& \Big\| \frac{u_{n+1} + u_{n}}{2} - u \big( \frac{t_{n+1} + t_{n}}{2} \big)\Big\|_{\ell}
		\leq C k_{n}^{3} \int_{t_{n}}^{t_{n+1}} \| u_{tt} \|_{\ell}^{2} dt, 
		\\
		\| u_{n,\ast} - u(t_{n,\beta}) \|_{\ell}^{2}
		=& \Big\| \Big( 1 + \frac{k_{n}}{2k_{n-1}} \Big) u_{n} - \frac{k_{n}}{2k_{n-1}} u_{n-1} 
		    - u \big( \frac{t_{n+1} + t_{n}}{2} \big) \Big\|_{\ell} \\
		\leq& C (k_{n} + k_{n-1})^{3} \int_{t_{n-1}}^{t_{n+1}} \| u_{tt} \|_{\ell}^{2} dt, \\
		\Big\| \frac{u_{n,\alpha}}{\widehat{k}_{n}} - u_{t}(t_{n,\beta})\Big\|_{\ell}^{2}  
		=& \Big\| \frac{u_{n+1} - u_{n}}{k_{n}} - u_{t} \big( \frac{t_{n+1} + t_{n}}{2} \big) \Big\|_{\ell}
		\leq C k_{n}^{3} \displaystyle\int_{t_{n}}^{t_{n+1}} \| u_{ttt} \|_{\ell}^{2} dt. 
		\end{align*}
	\end{lemma}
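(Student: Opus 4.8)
The plan is to prove all three bounds as Taylor-with-integral-remainder estimates in which the DLN coefficients enter only through the \emph{moment (consistency) conditions} they satisfy. Writing $\tau_\ell := t_{n-1+\ell} - t_{n,\beta}$, the conditions I would record first are: (a) $\sum_{\ell=0}^{2}\beta_\ell^{(n)}=1$, hence $\sum_{\ell=0}^{2}\beta_\ell^{(n)}\tau_\ell = 0$ (immediate from the definition $t_{n,\beta}=\sum_\ell \beta_\ell^{(n)} t_{n-1+\ell}$); (b) the two coefficients multiplying $u_n$ and $u_{n-1}$ in $u_{n,\ast}$ also sum to $1$ and have first moment $t_{n,\beta}$, so $u_{n,\ast}$ is exactly the linear extrapolant of the data $(t_{n-1},u_{n-1}),(t_n,u_n)$ evaluated at $t_{n,\beta}$, the $u_{n-1}$-weight being $w_{n-1} = -(t_{n,\beta}-t_n)/k_{n-1}$; (c) $\sum_{\ell=0}^{2}\alpha_\ell = 0$, $\sum_{\ell=0}^{2}\alpha_\ell \tau_\ell = \widehat{k}_n$, and $\sum_{\ell=0}^{2}\alpha_\ell \tau_\ell^{\,2}=0$. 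Items (a), (b) and the first two relations of (c) follow from the closed forms of $\alpha_\ell,\beta_\ell^{(n)}$ by elementary manipulation (in the spirit of Lemma~\ref{lemma:G-stab}), together with $\widehat{k}_n = \alpha_2 k_n - \alpha_0 k_{n-1}$; the last identity in (c) is the genuinely nontrivial one and is addressed at the end.

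For \eqref{eq:consist-2nd-eq1}, I would expand each $u(t_{n-1+\ell})$ about $t_{n,\beta}$ to first order with integral remainder, multiply by $\beta_\ell^{(n)}$ and sum; by (a) the $u(t_{n,\beta})$- and $u_t(t_{n,\beta})$-terms cancel, leaving $u_{n,\beta}-u(t_{n,\beta}) = \sum_\ell \beta_\ell^{(n)}\int_{t_{n,\beta}}^{t_{n-1+\ell}} u_{tt}(s)\,(t_{n-1+\ell}-s)\,ds$. Using $|\beta_\ell^{(n)}|\le C(\theta)$ and $|t_{n-1+\ell}-t_{n,\beta}|\le C(\theta)(k_n+k_{n-1})$ (both from \eqref{eq:bound-beta}), $|t_{n-1+\ell}-s|\le k_n+k_{n-1}$, that the ranges of integration lie in $[t_{n-1},t_{n+1}]$, and Cauchy--Schwarz in time, one gets $\|u_{n,\beta}-u(t_{n,\beta})\|_\ell \le C(\theta)(k_n+k_{n-1})^{3/2}\big(\int_{t_{n-1}}^{t_{n+1}}\|u_{tt}\|_\ell^2\,dt\big)^{1/2}$, which squares to \eqref{eq:consist-2nd-eq1}. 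For \eqref{eq:consist-2nd-eq2} I would run the same scheme but expand $u(t_{n-1})$ and $u(t_{n,\beta})$ about $t_n$ rather than about $t_{n,\beta}$: by (b) the zeroth- and first-order terms again collapse, leaving $u_{n,\ast}-u(t_{n,\beta}) = -\int_{t_n}^{t_{n,\beta}} u_{tt}(s)(t_{n,\beta}-s)\,ds + w_{n-1}\int_{t_n}^{t_{n-1}} u_{tt}(s)(t_{n-1}-s)\,ds$. This grouping is chosen so that no $k_n/k_{n-1}$ ratio appears: $|t_{n,\beta}-t_n|\le C(\theta)(k_n+k_{n-1})$ bounds the first term, and $|w_{n-1}|\,k_{n-1} = |t_{n,\beta}-t_n| \le C(\theta)(k_n+k_{n-1})$ bounds the second, after which the same Cauchy--Schwarz estimate closes \eqref{eq:consist-2nd-eq2}.

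For \eqref{eq:consist-2nd-eq3}, I would expand each $u(t_{n-1+\ell})$ about $t_{n,\beta}$ to \emph{second} order with integral remainder, multiply by $\alpha_\ell/\widehat{k}_n$ and sum; by (c) the $u(t_{n,\beta})$- and $u_{tt}(t_{n,\beta})$-terms vanish while the $u_t(t_{n,\beta})$-term equals exactly $u_t(t_{n,\beta})$, leaving $\widehat{k}_n^{-1}\sum_\ell \alpha_\ell \int_{t_{n,\beta}}^{t_{n-1+\ell}} \tfrac12 u_{ttt}(s)(t_{n-1+\ell}-s)^2\,ds$. Bounding $(t_{n-1+\ell}-s)^2\le (k_n+k_{n-1})^2$, using $|\alpha_\ell|\le 1$ and $\widehat{k}_n = \tfrac{1+\theta}{2}k_n+\tfrac{1-\theta}{2}k_{n-1}\ge \tfrac{1-\theta}{2}(k_n+k_{n-1})$ to control $\widehat{k}_n^{-1}$, and Cauchy--Schwarz once more, yields $\|\widehat{k}_n^{-1}u_{n,\alpha}-u_t(t_{n,\beta})\|_\ell \le C(\theta)(k_n+k_{n-1})^{3/2}\big(\int_{t_{n-1}}^{t_{n+1}}\|u_{ttt}\|_\ell^2\,dt\big)^{1/2}$, i.e. \eqref{eq:consist-2nd-eq3} after squaring.

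The main obstacle is the exact identity $\sum_{\ell}\alpha_\ell \tau_\ell^{\,2}=0$ used for \eqref{eq:consist-2nd-eq3}. Since the first two relations of (c) already hold, this is equivalent to $2\widehat{k}_n\big(\beta_2^{(n)}k_n - \beta_0^{(n)}k_{n-1}\big) = \alpha_2 k_n^2 + \alpha_0 k_{n-1}^2$ (using $t_{n,\beta}-t_n = \beta_2^{(n)}k_n-\beta_0^{(n)}k_{n-1}$); it is precisely the condition that makes the DLN abscissa $t_{n,\beta}$ second-order accurate on an arbitrary grid, and I would verify it by substituting $k_n = (1+\varepsilon_n)\tfrac{k_n+k_{n-1}}{2}$, $k_{n-1}=(1-\varepsilon_n)\tfrac{k_n+k_{n-1}}{2}$ together with the closed forms of $\alpha_\ell$ and $\beta_\ell^{(n)}$, reducing the claim to a polynomial identity in $(\varepsilon_n,\theta)$ — purely algebraic. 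Finally, the $\theta=1$ statements follow by specializing all of the above to $\beta_2^{(n)}=\beta_1^{(n)}=\tfrac12$, $\beta_0^{(n)}=0$, $\widehat{k}_n=k_n$, $t_{n,\beta}=\tfrac{t_n+t_{n+1}}{2}$ — for which (a)--(c) are immediate — noting that the estimate for $u_{n,\ast}$ still carries the integral over $[t_{n-1},t_{n+1}]$ since $u_{n,\ast}$ involves $u_{n-1}$, so that the three bounds collapse to the classical midpoint-rule error estimates.
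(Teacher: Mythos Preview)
Your proposal is correct and follows essentially the same approach as the paper: Taylor expansion with integral remainder, the moment conditions $\sum_\ell\beta_\ell^{(n)}=1$, $\sum_\ell\alpha_\ell=0$, $\sum_\ell\alpha_\ell t_{n-1+\ell}=\widehat{k}_n$, Cauchy--Schwarz in time, and for \eqref{eq:consist-2nd-eq3} the same key algebraic identity $\alpha_2 k_n^{2} + \alpha_0 k_{n-1}^{2} = 2\widehat{k}_n\bigl(\beta_2^{(n)}k_n - \beta_0^{(n)}k_{n-1}\bigr)$ (which the paper also records and uses to kill the $u_{tt}$ term). The only difference is cosmetic: the paper expands everything about $t_n$, whereas you expand about $t_{n,\beta}$ for \eqref{eq:consist-2nd-eq1} and \eqref{eq:consist-2nd-eq3}, which packages the cancellations slightly more cleanly via the moment conditions but leads to the same remainder terms and bounds.
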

	\begin{proof}
		See Appendix \ref{appendixA}.
	\end{proof}

	\noindent All conclusions in Section \ref{sec:Num-Analysis} need the following CFL-like conditions:
	for $j = 1, \cdots , J$, $\theta \in (0,1)$ and $n = 1, \cdots, N-1$, 
	\begin{gather} 
		1 - C (\Omega,\theta) \Big( \frac{1 + {\varepsilon_{n}}{\theta}}{1 - {\varepsilon_{n}}} \Big)^{2} \frac{\widehat{k}_{n}}{h{\nu}} \Big\| \nabla \big( u_{n,\ast}^{j,h} - \langle u^{h} \rangle_{n,\ast} \big)  \Big\|^{2} 
		\geq 0, 
		\label{eq:CFL-like-cond}
	\end{gather}
	for certain positive constant $C (\Omega,\theta)$ independent of time steps and diameter $h$.

	\subsection{Stability and Error Analysis in $L^{2}$-norm} \label{subsec:Stab-Error-L2}
	\ 
	\begin{theorem}
		\label{thm:Stab-L2}
		If the source function in the $j$-th NSE $f^{j} \in \ell^{2,\beta}\big( \{ t_{n}\}_{n=0}^{N}; X^{-1} \big)$ for all $j$, then for all $\theta \in (0,1)$, under CFL-like conditions in \eqref{eq:CFL-like-cond},
		the family of variable time-stepping DLN-Ensemble Algorithms in \eqref{eq:DLN-Ensemble-Alg} satisfy: 
		for $M = 2, 3, \cdots, N$, 
		\begin{gather}
			\frac{1 + \theta}{4} \| u_{M}^{j,h} \|^{2} + \frac{1 - \theta}{4} \| u_{M-1}^{j,h} \|^{2} 
			+ \sum_{n = 1}^{M-1} \frac{{\theta} (1 - {\theta}^{2})}{4 ( 1 + {\varepsilon_{n}}{\theta})^{2}} 
			\big\| U_{n}^{j,h} \big\|^{2}
			+ \sum_{n=1}^{M-1} \frac{\nu \widehat{k}_{n}}{4} \| \nabla u_{n,\beta}^{j,h} \|^{2} \notag \\
			\leq \frac{C}{\nu} \| |f^{j}| \|_{2,-1,\beta}^{2} 
			+ \frac{1 + \theta}{4}  \| u_{1}^{j,h} \|^{2} + \frac{1 - \theta}{4} \| u_{0}^{j,h} \|^{2},
			\label{eq:L2-Stab-conclusion}
		\end{gather}
		where 
		\begin{gather*}
			U_{n}^{j,h} = \frac{1 - \varepsilon_{n}}{2} u_{n+1}^{j,h} - u_{n}^{j,h} + \frac{1 + \varepsilon_{n}}{2} u_{n-1}^{j,h}.
		\end{gather*}
	\end{theorem}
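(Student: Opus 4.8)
The proof is an energy argument tailored to the $G$-stability structure of the DLN method. First I would fix $j$ and $n$ and test \eqref{eq:DLN-Ensemble-Alg} with $v^{h}=u_{n,\beta}^{j,h}$ and $q^{h}=p_{n,\beta}^{j,h}$. Because $p_{n,\beta}^{j,h}=\sum_{\ell}\beta_{\ell}^{(n)}p_{n-1+\ell}^{j,h}\in Q^{h}$, the discrete incompressibility constraint annihilates the pressure term, and the skew symmetry of $b$ annihilates the ensemble-mean part $b(\langle u^{h}\rangle_{n,\ast},u_{n,\beta}^{j,h},u_{n,\beta}^{j,h})=0$. Multiplying by $\widehat{k}_{n}$ and applying the $G$-stability identity of Lemma~\ref{lemma:G-stab} to $(u_{n,\alpha}^{j,h},u_{n,\beta}^{j,h})$, together with the two elementary algebraic identities $\sum_{\ell}\gamma_{\ell}^{(n)}u_{n-1+\ell}^{j,h}=-\gamma_{1}^{(n)}U_{n}^{j,h}$ and
\begin{gather*}
u_{n,\ast}^{j,h}-u_{n,\beta}^{j,h}=-\frac{2\beta_{2}^{(n)}}{1-\varepsilon_{n}}\,U_{n}^{j,h}
\end{gather*}
(both of which follow directly from the definitions of $\beta_{\ell}^{(n)},\gamma_{\ell}^{(n)},z_{n,\ast},z_{n,\beta}$ and from $k_{n}/k_{n-1}=(1+\varepsilon_{n})/(1-\varepsilon_{n})$), I obtain a per-step identity whose left side is $\|(u_{n+1}^{j,h},u_{n}^{j,h})\|_{G(\theta)}^{2}-\|(u_{n}^{j,h},u_{n-1}^{j,h})\|_{G(\theta)}^{2}+(\gamma_{1}^{(n)})^{2}\|U_{n}^{j,h}\|^{2}+\nu\widehat{k}_{n}\|\nabla u_{n,\beta}^{j,h}\|^{2}$ and whose right side is $\widehat{k}_{n}(f^{j}(t_{n,\beta}),u_{n,\beta}^{j,h})-\widehat{k}_{n}\,b(u_{n,\ast}^{j,h}-\langle u^{h}\rangle_{n,\ast},u_{n,\ast}^{j,h},u_{n,\beta}^{j,h})$.

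For the forcing term I would use the dual norm \eqref{eq:dual-norm}, Young's inequality, and $\widehat{k}_{n}=\tfrac{1+\theta}{2}k_{n}+\tfrac{1-\theta}{2}k_{n-1}\le k_{n}+k_{n-1}$ to get $\widehat{k}_{n}(f^{j}(t_{n,\beta}),u_{n,\beta}^{j,h})\le\tfrac{\nu\widehat{k}_{n}}{4}\|\nabla u_{n,\beta}^{j,h}\|^{2}+\tfrac{k_{n}+k_{n-1}}{\nu}\|f^{j}(t_{n,\beta})\|_{-1}^{2}$. The delicate term is the explicit fluctuation term. Since $b(w,u_{n,\beta}^{j,h},u_{n,\beta}^{j,h})=0$, I would replace $u_{n,\ast}^{j,h}$ in the second slot by $u_{n,\ast}^{j,h}-u_{n,\beta}^{j,h}$ and then move this difference into the third slot by skew symmetry; this choice is what fixes the order of the CFL condition, because in the third slot \eqref{eq:b-bound-3} only costs the weaker factor $(\|\cdot\|\,\|\cdot\|_{1})^{1/2}$, which after the inverse inequality \eqref{eq:inv-inequal} is $O(h^{-1/2}\|\cdot\|)$ rather than $O(h^{-1}\|\cdot\|)$. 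Combining \eqref{eq:b-bound-3}, the identity above, \eqref{eq:inv-inequal} and the Poincar\'e inequality gives
\begin{gather*}
\widehat{k}_{n}\bigl|b\bigl(u_{n,\ast}^{j,h}-\langle u^{h}\rangle_{n,\ast},u_{n,\ast}^{j,h},u_{n,\beta}^{j,h}\bigr)\bigr|\le C(\Omega)\,\widehat{k}_{n}\,\frac{\beta_{2}^{(n)}}{1-\varepsilon_{n}}\,h^{-1/2}\bigl\|\nabla\bigl(u_{n,\ast}^{j,h}-\langle u^{h}\rangle_{n,\ast}\bigr)\bigr\|\,\|\nabla u_{n,\beta}^{j,h}\|\,\|U_{n}^{j,h}\|.
\end{gather*}

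The last step is a Young's inequality balanced against the two dissipative quantities on the left. I would split off $\|U_{n}^{j,h}\|$ with weight $\tfrac{1}{2}(\gamma_{1}^{(n)})^{2}$, so that half of $(\gamma_{1}^{(n)})^{2}\|U_{n}^{j,h}\|^{2}$ survives; since $(\gamma_{1}^{(n)})^{2}=\theta(1-\theta^{2})/(2(1+\varepsilon_{n}\theta)^{2})$, the survivor is exactly the $\tfrac{\theta(1-\theta^{2})}{4(1+\varepsilon_{n}\theta)^{2}}\|U_{n}^{j,h}\|^{2}$ appearing in \eqref{eq:L2-Stab-conclusion}. The leftover of the split equals $\bigl[C(\Omega,\theta)\bigl(\tfrac{1+\varepsilon_{n}\theta}{1-\varepsilon_{n}}\bigr)^{2}\tfrac{\widehat{k}_{n}}{h\nu}\|\nabla(u_{n,\ast}^{j,h}-\langle u^{h}\rangle_{n,\ast})\|^{2}\bigr]\cdot\tfrac{\nu\widehat{k}_{n}}{2}\|\nabla u_{n,\beta}^{j,h}\|^{2}$, where I used the bounds on $\beta_{2}^{(n)}$ from \eqref{eq:bound-beta} to absorb everything $\varepsilon_{n}$- and $h$-independent into $C(\Omega,\theta)$; the bracket is precisely the quantity that \eqref{eq:CFL-like-cond} forces to be $\le 1$, so the leftover is $\le\tfrac{\nu\widehat{k}_{n}}{2}\|\nabla u_{n,\beta}^{j,h}\|^{2}$. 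Together with the $\tfrac{\nu\widehat{k}_{n}}{4}$ consumed by the forcing term, this leaves $\tfrac{\nu\widehat{k}_{n}}{4}\|\nabla u_{n,\beta}^{j,h}\|^{2}$ on the left. Summing over $n=1,\dots,M-1$, telescoping the $G(\theta)$-norm via \eqref{def:G-norm}, and using $\sum_{n}\tfrac{k_{n}+k_{n-1}}{\nu}\|f^{j}(t_{n,\beta})\|_{-1}^{2}=\tfrac{1}{\nu}\||f^{j}|\|_{2,-1,\beta}^{2}$ yields \eqref{eq:L2-Stab-conclusion}. I expect the only real obstacle to be the constant bookkeeping in this last estimate: one must track the powers of $\beta_{2}^{(n)}$, $\gamma_{1}^{(n)}$, $1-\varepsilon_{n}$ and $1+\varepsilon_{n}\theta$ carefully enough that the surviving viscous term matches the left-hand side of \eqref{eq:CFL-like-cond} verbatim; the rest is routine.
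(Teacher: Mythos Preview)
Your proposal is correct and matches the paper's proof essentially step for step: the same test functions, the same $G$-stability identity, the same algebraic identity $u_{n,\beta}^{j,h}-u_{n,\ast}^{j,h}=\frac{2\beta_{2}^{(n)}}{1-\varepsilon_{n}}U_{n}^{j,h}$, the same skew-symmetry rewrite of the fluctuation term followed by \eqref{eq:b-bound-3} and the inverse inequality to get the $h^{-1/2}$ scaling, and the same Young split that leaves $\tfrac{\theta(1-\theta^{2})}{4(1+\varepsilon_{n}\theta)^{2}}\|U_{n}^{j,h}\|^{2}$ and a viscous remainder absorbed by \eqref{eq:CFL-like-cond}. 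There is no substantive difference.
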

	\begin{proof}
		We set $v_{h} = u_{n,\beta}^{j,h}$, $q^{h} = p_{n,\beta}^{j,h}$ in \eqref{eq:DLN-Ensemble-Alg}, add two equations together and use the skew-symmetric property of $b$
		\begin{gather}  
			\frac{1}{\widehat{k}_{n}} \big( u_{n,\alpha}^{j,h},u_{n,\beta}^{j,h} \big) 
			+ {\nu} \| \nabla u_{n,\beta}^{j,h} \|^{2} \!=\! \big(f^{j}(t_{n,\beta}), u_{n,\beta}^{j,h} \big)
			- b \big( u_{n,\ast}^{j,h} - \langle u^{h} \rangle_{n,\ast}, u_{n,\ast}^{j,h}, u_{n,\beta}^{j,h} \big).
			\label{eq:DLN-Stab-Eq1}
		\end{gather}
		By the skew-symmetric property of $b$, \eqref{eq:b-bound-3} in Lemma \ref{lemma:b-bound}, 
		Poincar\'e inequalty and the inverse inequality in \eqref{eq:inv-inequal} 
		\begin{align}
			|b \big( u_{n,\ast}^{j,h} \!-\! \langle u^{h} \rangle_{n,\ast}, u_{n,\ast}^{j,h}, u_{n,\beta}^{j,h} \big)| 
			\!=& \!|b \big( u_{n,\ast}^{j,h} \!-\! \langle u^{h} \rangle_{n,\ast}, u_{n,\beta}^{j,h}, u_{n,\beta}^{j,h} - u_{n,\ast}^{j,h} \big)| 
			\label{eq:b-Stab-1} \\
			\leq&\! \frac{C(\Omega)}{\sqrt{h}} \big\| \nabla \big( u_{n,\ast}^{j,h} \!-\! \langle u^{h} \rangle_{n,\ast}, u_{n,\ast}^{j,h} \big) \big\| \| \nabla u_{n,\beta}^{j,h} \| \| u_{n,\beta}^{j,h} \!-\! u_{n,\ast}^{j,h} \|. \notag 
		\end{align}
		\begin{confidential}
			\color{darkblue}
			\begin{align*}
				&|b \big( u_{n,\ast}^{j,h} \!-\! \langle u^{h} \rangle_{n,\ast}, u_{n,\ast}^{j,h}, u_{n,\beta}^{j,h} \big)| \\
				\leq& |b \big( u_{n,\ast}^{j,h} \!-\! \langle u^{h} \rangle_{n,\ast}, u_{n,\beta}^{j,h}, u_{n,\beta}^{j,h} - u_{n,\ast}^{j,h} \big)| \\
				\leq& C(\Omega) \| \nabla \big(u_{n,\ast}^{j,h} \!-\! \langle u^{h} \rangle_{n,\ast} \big) \| 
				\| \nabla u_{n,\beta}^{j,h} \| \| \nabla \big( u_{n,\beta}^{j,h} - u_{n,\ast}^{j,h}\big) \|^{1/2} 
				\| u_{n,\beta}^{j,h} - u_{n,\ast}^{j,h} \|^{1/2} \\
				\leq& C(\Omega) h^{-1/2} \| \nabla \big( u_{n,\ast}^{j,h} \!-\! \langle u^{h} \rangle_{n,\ast} \big) \| \| \nabla u_{n,\beta}^{j,h} \| \| u_{n,\beta}^{j,h} - u_{n,\ast}^{j,h} \| .
			\end{align*}
			\normalcolor
		\end{confidential}
		It's easy to check
		\begin{gather}
			u_{n,\beta}^{j,h} - u_{n,\ast}^{j,h}
			= \frac{2 \beta_{2}^{(n)}}{1 - \varepsilon_{n}} U_{n}^{j,h},
			\label{eq:DLN-2nd-approx}
		\end{gather}
		\begin{confidential}
			\color{darkblue}
			\begin{align*}
				&u_{n,\beta}^{j,h} - u_{n,\ast}^{j,h} \\
				=& \big( \beta_{2}^{(n)} u_{n+1}^{j,h} + \beta_{1}^{(n)} u_{n}^{j,h} + \beta_{0}^{(n)} u_{n-1}^{j,h} \big)
				- \Big\{ \Big[ \beta_{2}^{(n)} \big( 1 + \frac{k_{n}}{k_{n-1}} \big) 
				+ \beta_{1}^{(n)} \Big] u_{n}^{j,h} + \big( \beta_{0}^{(n)} - \beta_{2}^{(n)} \frac{k_{n}}{k_{n-1}} \big) u_{n-1}^{j,h}  \Big\}  \\
				=& \beta_{2}^{(n)} u_{n+1}^{j,h} + \Big\{ \beta_{1}^{(n)} - \Big[ \beta_{2}^{(n)} \big( 1 + \frac{k_{n}}{k_{n-1}} \big) + \beta_{1}^{(n)} \Big] \Big\} u_{n}^{j,h}
				+ \Big[ \beta_{0}^{(n)} - \big( \beta_{0}^{(n)} - \beta_{2}^{(n)} \frac{k_{n}}{k_{n-1}} \big) \Big] u_{n-1}^{j,h}  \\
				=& \beta_{2}^{(n)} u_{n+1}^{j,h} - \beta_{2}^{(n)} \big( 1 + \frac{k_{n}}{k_{n-1}} \big) u_{n}^{j,h} + \beta_{2}^{(n)} \frac{k_{n}}{k_{n-1}} u_{n-1}^{j,h}  \\
				=& \beta_{2}^{(n)} \Big[ u_{n+1}^{j,h} - \big( 1 + \frac{k_{n}}{k_{n-1}} \big) u_{n}^{j,h} 
				+ \frac{k_{n}}{k_{n-1}}  u_{n-1}^{j,h} \Big] .
			\end{align*}
			By definition of $\varepsilon_{n}$ 
			\begin{gather*}
				\varepsilon_{n} = \frac{k_{n} - k_{n-1}}{k_{n} + k_{n-1}}, \ \ \ \Leftrightarrow \ \ \  \left( k_{n} + k_{n-1} \right) \varepsilon_{n} = k_{n} - k_{n-1}  \\
				k_{n-1} \varepsilon_{n} + k_{n-1} = k_{n} - k_{n}\varepsilon_{n}, \ \ \ \Leftrightarrow \ \ \ \left( 1 + \varepsilon_{n} \right)k_{n-1} = \left(1- \varepsilon_{n} \right)k_{n} \\
				\frac{k_{n}}{k_{n-1}} = \frac{1 + \varepsilon_{n}}{1 - \varepsilon_{n}}.
			\end{gather*}
			Thus
			\begin{align*}
				u_{n,\beta}^{j,h} - u_{n,\ast}^{j,h} 
				&= \beta_{2}^{(n)} \Big[ u_{n+1}^{j,h} - \big( 1 + \frac{k_{n}}{k_{n-1}} \big) u_{n}^{j,h} 
				+ \frac{k_{n}}{k_{n-1}}  u_{n-1}^{j,h} \Big] \\
				&= \beta_{2}^{(n)} \Big[ u_{n+1}^{j,h} - \big( 1 + \frac{1 + \varepsilon_{n}}{1 - \varepsilon_{n}} \big) u_{n}^{j,h} + \frac{1 + \varepsilon_{n}}{1 - \varepsilon_{n}} u_{n-1}^{j,h} \Big] \\
				&= \beta_{2}^{(n)} \Big[ u_{n+1}^{j,h} - \big(\frac{1-\varepsilon_{n}+ 1 + \varepsilon_{n}}{1 - \varepsilon_{n}} \big) u_{n}^{j,h} + \frac{1 + \varepsilon_{n}}{1 - \varepsilon_{n}} u_{n-1}^{j,h} \Big] \\
				&= \beta_{2}^{(n)} \Big[ u_{n+1}^{j,h} - \big(\frac{2}{1 - \varepsilon_{n}} \big) u_{n}^{j,h} 
				+ \frac{1 + \varepsilon_{n}}{1 - \varepsilon_{n}} u_{n-1}^{j,h} \Big] \\
				&= \beta_{2}^{(n)} \frac{2}{1 - \varepsilon_{n}} \Big[ \frac{1 - \varepsilon_{n}}{2} u_{n+1}^{j,h} - u_{n}^{j,h} + \frac{1 + \varepsilon_{n}}{2} u_{n-1}^{j,h} \Big] \\
				&= \frac{2 \beta_{2}^{(n)}}{1 - \varepsilon_{n}} U_{n}^{j,h}.
			\end{align*}
			\normalcolor
		\end{confidential}
		By \eqref{eq:b-Stab-1}, \eqref{eq:DLN-2nd-approx}, \eqref{eq:bound-beta} and Young's inequality
		\begin{align}
			&\widehat{k}_{n} |b \big( u_{n,\ast}^{j,h} \!-\! \langle u^{h} \rangle_{n,\ast}, u_{n,\ast}^{j,h}, u_{n,\beta}^{j,h} \big)|  \label{eq:b-Stab-2} \\
			\leq&  C (\Omega,\theta) \Big( \frac{1 + \varepsilon_{n} \theta}{1 - \varepsilon_{n}} \Big)^{2} 
			\frac{\widehat{k}_{n}^{2}}{h}
			\big\| \nabla \big( u_{n,\ast}^{j,h} \!-\! \langle u^{h} \rangle_{n,\ast}  \big) \big\|^{2} \| \nabla u_{n,\beta}^{j,h} \|^{2}
			+ \frac{ \theta (1 - {\theta}^{2})}{4( 1 + \varepsilon_{n} \theta )^{2}} \| U_{n}^{j,h} \|^{2}.
			\notag 
		\end{align}
		\begin{confidential}
			\color{darkblue}
			\begin{align*}
				&\widehat{k}_{n} |b \big( u_{n,\ast}^{j,h} \!-\! \langle u^{h} \rangle_{n,\ast}, u_{n,\ast}^{j,h}, u_{n,\beta}^{j,h} \big)|  \\
				\leq& C (\Omega) \frac{2 \beta_{2}^{(n)} \widehat{k}_{n}}{1 - \varepsilon_{n}} h^{-1/2} \| \nabla \big( u_{n,\ast}^{j,h} \!-\! \langle u^{h} \rangle_{n,\ast} \big) \| \| \nabla u_{n,\beta}^{j,h} \| \| U_{n}^{j,h} \| \\
				=& C(\Omega,\theta) \Big[ \Big(\frac{1 + {\varepsilon_{n}}{\theta}}{1 - \varepsilon_{n}} \Big) \widehat{k}_{n} h^{-1/2} \big\| \nabla \big( u_{n,\ast}^{j,h} \!-\! \langle u^{h} \rangle_{n,\ast} \big) \big\| \| \nabla u_{n,\beta}^{j,h} \| \Big] \Big( \frac{1}{1 + {\varepsilon_{n}}{\theta}} \| U_{n}^{j,h} \| \Big) \\
				\leq&  C (\Omega,\theta) \Big( \frac{1 + {\varepsilon_{n}}{\theta}}{1 - \varepsilon_{n}} \Big)^{2} \widehat{k}_{n}^{2} h^{-1} \big\| \nabla \big( u_{n,\ast}^{j,h} \!-\! \langle u^{h} \rangle_{n,\ast}  \big) \big\|^{2} \| \nabla u_{n,\beta}^{j,h} \|^{2} + \frac{ \theta (1 - {\theta}^{2} )}{4 ( 1 + {\varepsilon_{n}} {\theta} )^{2}} \| U_{n}^{j,h} \|^{2}
			\end{align*}
			By the fact 
			\begin{align*}
			\Big\| \sum_{\ell\!=\!0}^{2} \gamma_{\ell}^{(n)} u_{n\!-\!1\!+\!\ell}^{j,h} \Big\|^{2}
			= {\gamma_{1}^{(n)}}^{2} \Big\| \frac{1 - \varepsilon_{n}}{2} u_{n+1}^{j,h} - u_{n}^{j,h} 
			+ \frac{1 + \varepsilon_{n}}{2} u_{n-1}^{j,h} \Big\|^{2} 
			= \frac{\theta (1 - \theta^{2})}{2 (1 + \varepsilon_{n} \theta)^{2}} \| U_{n}^{j,h} \|^{2}
			\end{align*}
			\normalcolor
		\end{confidential}
		By the identity in \eqref{eq:Gstab-Id}, definition of dual norm in \eqref{eq:dual-norm}, Young's inequality, \eqref{eq:b-Stab-2} and the fact 
		\begin{align*}
			\Big\| \sum_{\ell\!=\!0}^{2} \gamma_{\ell}^{(n)} u_{n\!-\!1\!+\!\ell}^{j,h} \Big\|^{2}
			= \frac{\theta (1 - \theta^{2})}{2 (1 + \varepsilon_{n} \theta)^{2}} \| U_{n}^{j,h} \|^{2},
		\end{align*}
		\eqref{eq:DLN-Stab-Eq1} becomes 
		\begin{confidential}
			\color{darkblue}
			\begin{gather*}
				\begin{Vmatrix} u_{n\!+\!1}^{j,h} \\ u_{n}^{j,h} \end{Vmatrix}^{2}_{G(\theta)}\!-\!\begin{Vmatrix} u_{n}^{j,h} \\ u_{n\!-\!1}^{j,h} \end{Vmatrix}^{2}_{G(\theta)}
				\!+\! \frac{\theta (1 - \theta^{2})}{2 (1 + \varepsilon_{n} \theta)^{2}} \| U_{n}^{j,h} \|^{2} 
				+ {\nu} \widehat{k}_{n} \| \nabla u_{n,\beta}^{j,h} \|^{2} \\
				- C (\Omega,\theta) \Big( \frac{1 + \varepsilon_{n} \theta}{1 - \varepsilon_{n}} \Big)^{2} 
				\frac{\widehat{k}_{n}^{2}}{h}
				\big\| \nabla \big( u_{n,\ast}^{j,h} \!-\! \langle u^{h} \rangle_{n,\ast} \big) \big\|^{2} \| \nabla u_{n,\beta}^{j,h} \|^{2}
				- \frac{ \theta (1 - {\theta}^{2})}{4( 1 + \varepsilon_{n} \theta )^{2}} \| U_{n}^{j,h} \|^{2} \\
				\leq \widehat{k}_{n} \big(f^{j}(t_{n,\beta}), u_{n,\beta}^{j,h} \big)
				\leq \widehat{k}_{n} \| f^{j}(t_{n,\beta}) \|_{-1} \| \nabla u_{n,\beta}^{j,h} \|
				\leq \frac{C}{\nu} \widehat{k}_{n} \| f^{j}(t_{n,\beta}) \|_{-1}^{2} 
				+ \frac{\nu \widehat{k}_{n}}{4} \| \nabla u_{n,\beta}^{j,h} \|^{2}.
			\end{gather*}
			\normalcolor
		\end{confidential}
		\begin{align}
			&\begin{Vmatrix} u_{n\!+\!1}^{j,h} \\ u_{n}^{j,h} \end{Vmatrix}^{2}_{G(\theta)}\!-\!\begin{Vmatrix} u_{n}^{j,h} \\ u_{n\!-\!1}^{j,h} \end{Vmatrix}^{2}_{G(\theta)}
			\!+\! \frac{\theta (1 - \theta^{2})}{4 (1 + \varepsilon_{n} \theta)^{2}} \| U_{n}^{j,h} \|^{2}
			+ \frac{\nu \widehat{k}_{n}}{4} \| \nabla u_{n,\beta}^{j,h} \|^{2}  
			\label{eq:DLN-Stab-Eq2} \\
			& + \frac{\nu \widehat{k}_{n}}{2} \Big[ 1 - C (\Omega,\theta) \Big( \frac{1 + \varepsilon_{n} \theta}{1 - \varepsilon_{n}} \Big)^{2} \frac{\widehat{k}_{n}}{ \nu h} \big\| \nabla \big( u_{n,\ast}^{j,h} \!-\! \langle u^{h} \rangle_{n,\ast} \big) \big\|^{2} \Big]
			\| \nabla u_{n,\beta}^{j,h} \|^{2} 
			\leq \frac{C}{\nu} \widehat{k}_{n} \| f^{j}(t_{n,\beta}) \|_{-1}^{2} \notag .
		\end{align}
		We sum \eqref{eq:DLN-Stab-Eq2} over $n$ from $1$ to $M-1$ and use the definition of the discrete norm in \eqref{eq:def-norm-discrete}, CLF-like conditions in \eqref{eq:CFL-like-cond} and 
		the fact $\widehat{k}_{n} \leq k_{n} + k_{n-1}$ to obtain \eqref{eq:L2-Stab-conclusion}.
		\begin{confidential}
			\color{darkblue}
			\begin{align*}
			\begin{Vmatrix} u_{n\!+\!1}^{j,h} \\ u_{n}^{j,h} \end{Vmatrix}^{2}_{G(\theta)}\!-\!\begin{Vmatrix} u_{n}^{j,h} \\ u_{n\!-\!1}^{j,h} \end{Vmatrix}^{2}_{G(\theta)}
			\!+\! \frac{\theta (1 - \theta^{2})}{4 (1 + \varepsilon_{n} \theta)^{2}} \| U_{n}^{j,h} \|^{2}
			+ \frac{\nu \widehat{k}_{n}}{4} \| \nabla u_{n,\beta}^{j,h} \|^{2}  
			\leq  \frac{C}{\nu} (k_{n}+k_{n-1}) \| f^{j}(t_{n,\beta}) \|_{-1}^{2}.
			\end{align*}
			\begin{gather*}
			\frac{1 + \theta}{4} \| u_{M}^{j,h} \|^{2} + \frac{1 - \theta}{4} \| u_{M-1}^{j,h} \|^{2} 
			+ \sum_{n = 1}^{M-1} \frac{{\theta} (1 - {\theta}^{2})}{4 ( 1 + {\varepsilon_{n}}{\theta})^{2}} 
			\big\| U_{n}^{j,h} \big\|^{2}
			+ \sum_{n=1}^{M-1} \frac{\nu \widehat{k}_{n}}{4} \| \nabla u_{n,\beta}^{j,h} \|^{2} \notag \\
			\leq \frac{C}{\nu} \| |f| \|_{2,-1,\beta}^{2}
			+ \frac{1 + \theta}{4}  \| u_{1}^{j,h} \|^{2} + \frac{1 - \theta}{4} \| u_{0}^{j,h} \|^{2}
			\end{gather*}
			\normalcolor
		\end{confidential}

	\end{proof}

	\begin{remark}
		For $\theta = 0,1$, the above proof for the case $\theta \in (0,1)$ doesn't work. If $\theta = 0,1$,
		we have 
		\begin{align*}
			&\Big\| \sum_{\ell\!=\!0}^{2} \gamma_{\ell}^{(n)} u_{n\!-\!1\!+\!\ell}^{j,h} \Big\|^{2} = 0, \\
			&u_{n,\beta}^{j,h} - u_{n,\ast}^{j,h} =
			\frac{1}{2 } \Big[ u_{n+1}^{j,h} - \Big( 1 + \frac{k_{n}}{k_{n-1}} \Big)u_{n}^{j,h}
				                   + \frac{k_{n}}{k_{n-1}} u_{n-1}^{j,h} \Big] .
		\end{align*} 
		It's impossible to hide $\| u_{n,\beta}^{j,h} - u_{n,\ast}^{j,h} \|^{2}$ (generally non-zero) by the numerical dissipation 
		$\displaystyle  \Big\| \sum_{\ell\!=\!0}^{2} \gamma_{\ell}^{(n)} u_{n\!-\!1\!+\!\ell}^{j,h} \Big\|^{2}$. 
		How to prove Theorem \ref{thm:Stab-L2} for the case $\theta = 0,1$ leaves an open question. \\
	\end{remark}

	For error analysis, we denote $k_{\rm{max}} = \displaystyle \max_{0 \leq n \leq N-1} \{ k_{n} \}$ and assume the following time ratio bounds
	\begin{gather}
		C_{L} < \frac{k_{n}}{k_{n-1}} < C_{U}, \qquad n = 1, 2, \cdots, N-1,
		\label{eq:time-ratio-cond}
	\end{gather}
	for some $C_{L},C_{U} >0$.
	\begin{theorem}
		\label{thm:Error-L2}
		If the velocity $u^{j}(x,t)$ and pressure $p^{j}(x,t)$ in the $j$-th NSE satisfy the following regularities:
		\begin{gather}
		u \in \ell^{\infty}(\{t_{n}\}_{n=0}^{N};H^{r}(\Omega))  \cap \ell^{\infty,\beta}(\{t_{n}\}_{n=0}^{N};H^{r+1}(\Omega)), \ p \in \ell^{2,\beta}\big( \{t_{n}\}_{n=0}^{N};H^{s+1}(\Omega) \big),
		\notag \\
		u_{t} \in L^{2}\big( 0,T;H^{r+1}(\Omega) \big),  \ \ 
		u_{tt} \in L^{2}\big(0,T;H^{r+1}(\Omega) \big), \ \ 
		u_{ttt} \in L^{2}\big( 0,T;X^{-1} \big),
		\label{eq:L2-regularity}
		\end{gather}
		for all $j$, then under CFL-like conditions in \eqref{eq:CFL-like-cond} and time ratio bounds in \eqref{eq:time-ratio-cond}, the numerical solutions by the DLN-Ensemble algorithms in \eqref{eq:DLN-Ensemble-Alg} for all $\theta \in (0,1)$ satisfy
		\begin{gather}
		\max_{0 \leq n \leq N} \| u_{n}^{j} - u_{n}^{j,h} \| + \Big(  \nu  \sum_{n=1}^{N-1} \widehat{k}_{n} 
		\| \nabla \big( u_{n,\beta}^{j} - u_{n,\beta}^{j,h} \big) \|^{2} \Big)^{1/2} 
		\label{eq:error-L2-conclusion} \\
		\leq  \mathcal{O} \big( h^{r} ,h^{s+1} , k_{\rm{max}}^{2}, h^{1/2} k_{\rm{max}}^{3/2} \big).
		\notag
		\end{gather}
	\end{theorem}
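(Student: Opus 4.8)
The plan is to run the standard finite‑element energy argument for the Navier--Stokes equations, specialised to the DLN time discretization through \cref{lemma:G-stab} and \cref{lemma:DLN-consistency} and to the ensemble structure through the CFL‑like condition \eqref{eq:CFL-like-cond}; the case $J=1$ (semi‑implicit DLN for NSE) is treated in \cite{Pei24_NM}, so the genuinely new point is the control of the explicit ensemble fluctuation. First I would split the error via the Stokes projection \eqref{eq:Stokes-def}: write $u_{n}^{j}-u_{n}^{j,h}=\eta_{n}^{j}-\phi_{n}^{j,h}$ with $\eta_{n}^{j}=u_{n}^{j}-I_{\rm{St}}^{h}u_{n}^{j}$ and $\phi_{n}^{j,h}=u_{n}^{j,h}-I_{\rm{St}}^{h}u_{n}^{j}$. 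Assuming the startup values are discretely divergence free, induction on the scheme \eqref{eq:DLN-Ensemble-Alg} (using $\beta_{2}^{(n)}\neq 0$) gives $u_{n}^{j,h}\in V^{h}$, hence $\phi_{n}^{j,h}\in V^{h}$ and $\phi_{n,\beta}^{j,h}\in V^{h}$. By \eqref{eq:Stoke-Approx} and \eqref{eq:approx-thm}, $\eta$ is $\mathcal{O}(h^{r},\,h^{s+1}/\nu)$ in $H^{1}$ and one order better in $L^{2}$, so it remains to bound $\phi^{j,h}$.

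Next I would derive the error equation. Evaluating the weak form of \eqref{eq:jth-NSE} at $t_{n,\beta}$, testing with $v^{h}=\phi_{n,\beta}^{j,h}\in V^{h}$, replacing $u_{t}^{j}(t_{n,\beta})$, $u^{j}(t_{n,\beta})$ (in the implicit and the two explicit slots) by $u_{n,\alpha}^{j}/\widehat{k}_{n}$, $u_{n,\beta}^{j}$, $u_{n,\ast}^{j}$ and using the Stokes projection to absorb the viscous and pressure terms (so the discrete pressure and the exact pressure drop out, leaving only $\nu(\nabla\phi_{n,\beta}^{j,h},\nabla v^{h})$ and $\eta$‑ and DLN‑consistency remainders), and then subtracting \eqref{eq:DLN-Ensemble-Alg} and multiplying by $\widehat{k}_{n}$, I obtain an identity whose leading term is $\big(\phi_{n,\alpha}^{j,h},\phi_{n,\beta}^{j,h}\big)$. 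Applying \cref{lemma:G-stab} to the sequence $\{\phi_{n}^{j,h}\}$ turns this into the telescoping difference of $\|\cdot\|_{G(\theta)}$‑norms plus the dissipation $\tfrac{\theta(1-\theta^{2})}{2(1+\varepsilon_{n}\theta)^{2}}\|\Phi_{n}^{j,h}\|^{2}$, where $\Phi_{n}^{j,h}=\tfrac{1-\varepsilon_{n}}{2}\phi_{n+1}^{j,h}-\phi_{n}^{j,h}+\tfrac{1+\varepsilon_{n}}{2}\phi_{n-1}^{j,h}$, while the viscous term contributes $\nu\widehat{k}_{n}\|\nabla\phi_{n,\beta}^{j,h}\|^{2}$.

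The bulk of the work is bounding the right‑hand side. The linear consistency terms are handled by the dual norm \eqref{eq:dual-norm}, Young's inequality and \eqref{eq:consist-2nd-eq1}--\eqref{eq:consist-2nd-eq3}; since $\widehat{k}_{n}\le k_{n}+k_{n-1}\le 2k_{\rm{max}}$, after summation these contribute $\mathcal{O}(k_{\rm{max}}^{4})$ to the squared error, i.e. the $k_{\rm{max}}^{2}$ term, and the $\eta$‑terms, estimated with \cref{lemma:b-bound} and \eqref{eq:approx-thm}, contribute the $h^{r}$ and $h^{s+1}$ terms. For the nonlinear part I would expand the ensemble nonlinearity by writing $u^{j}(t_{n,\beta})=\langle u\rangle(t_{n,\beta})+w^{j}(t_{n,\beta})$, $w^{j}=u^{j}-\langle u\rangle$, and matching it against $b(\langle u^{h}\rangle_{n,\ast},u_{n,\beta}^{j,h},\cdot)+b(u_{n,\ast}^{j,h}-\langle u^{h}\rangle_{n,\ast},u_{n,\ast}^{j,h},\cdot)$; using $u_{n,\ast}^{j,h}-\langle u^{h}\rangle_{n,\ast}=(I_{\rm{St}}^{h}w^{j})_{n,\ast}-(\phi_{n,\ast}^{j,h}-\langle\phi\rangle_{n,\ast}^{h})$ and analogous identities for the other slots, every resulting trilinear term has at least one argument which is a DLN consistency remainder, an $\eta$‑term, or $\phi$. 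Terms in which $\phi$ sits in the advected or test slot are bounded by \cref{lemma:b-bound} and Young so that a small multiple of $\nu\widehat{k}_{n}\|\nabla\phi_{n,\beta}^{j,h}\|^{2}$ is absorbed into the viscous term and the leftover appears as $L^{2}$ norms $\|\phi_{n}^{j,h}\|$, $\|\phi_{n-1}^{j,h}\|$ (exactly what the $G$‑norm controls). The decisive term is the analogue of \eqref{eq:b-Stab-1}--\eqref{eq:b-Stab-2}: moving $\phi_{n,\beta}^{j,h}$ into the advected slot by skew‑symmetry, using the inverse inequality \eqref{eq:inv-inequal}, the identity $\phi_{n,\beta}^{j,h}-\phi_{n,\ast}^{j,h}=\tfrac{2\beta_{2}^{(n)}}{1-\varepsilon_{n}}\Phi_{n}^{j,h}$, \eqref{eq:bound-beta} and Young produces $C(\Omega,\theta)\big(\tfrac{1+\varepsilon_{n}\theta}{1-\varepsilon_{n}}\big)^{2}\tfrac{\widehat{k}_{n}^{2}}{h}\big\|\nabla(u_{n,\ast}^{j,h}-\langle u^{h}\rangle_{n,\ast})\big\|^{2}\|\nabla\phi_{n,\beta}^{j,h}\|^{2}$, absorbed into $\tfrac{\nu\widehat{k}_{n}}{2}\|\nabla\phi_{n,\beta}^{j,h}\|^{2}$ precisely by \eqref{eq:CFL-like-cond}, together with a $\|\Phi_{n}^{j,h}\|^{2}$ term absorbed by the DLN dissipation; the companion piece, in which the explicit fluctuation is replaced by its DLN consistency remainder, generates through the same $h^{-1/2}$ factor the cross term $\mathcal{O}(h^{1/2}k_{\rm{max}}^{3/2})$.

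Finally I would sum over $n=1,\dots,M-1$, keep the bracketed viscous coefficient nonnegative by \eqref{eq:CFL-like-cond}, use the time‑ratio bounds \eqref{eq:time-ratio-cond} (and \eqref{eq:bound-beta}) to compare $\widehat{k}_{n}$, $k_{n}$, $k_{n-1}$ and the $\beta,\gamma$‑coefficients, and apply the discrete Gronwall inequality to the $L^{2}$ quantities $\|\phi_{n}^{j,h}\|$ (legitimate since $\|\phi_{n}^{j,h}\|^{2}\le\tfrac{4}{1-\theta}\|[\phi_{n}^{j,h};\phi_{n-1}^{j,h}]\|_{G(\theta)}^{2}$ for $\theta\in(0,1)$, and $k_{\rm{max}}$ small); a concluding triangle inequality with the $\eta$‑bounds then gives \eqref{eq:error-L2-conclusion}. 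I expect the main obstacle to be the nonlinear analysis: enumerating the many trilinear terms coming from the ensemble mean/fluctuation splitting, arranging each so that the $\nabla\phi_{n,\beta}^{j,h}$ factor is absorbed and the remainder is an $L^{2}$ norm amenable to discrete Gronwall, and extracting simultaneously the CFL‑absorbed term and the $h^{1/2}k_{\rm{max}}^{3/2}$ cross term from the single troublesome advection term exactly as in the stability proof.
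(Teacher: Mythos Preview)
Your overall architecture is correct and matches the paper: Stokes projection splitting, test with $\phi_{n,\beta}^{j,h}\in V^{h}$, the $G$-stability identity from \cref{lemma:G-stab} on the left, consistency bounds from \cref{lemma:DLN-consistency}, absorption of the bad trilinear piece via \eqref{eq:CFL-like-cond} exactly as in \eqref{eq:b-Stab-1}--\eqref{eq:b-Stab-2}, and discrete Gronwall at the end. Two points, however, deserve comment.

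\emph{Nonlinear decomposition.} You propose to split $u^{j}=\langle u\rangle+w^{j}$ at the continuous level and track mean and fluctuation errors separately. The paper avoids this entirely: it simply adds and subtracts $b(u_{n,\ast}^{j},u_{n,\beta}^{j},\phi_{n,\beta}^{j,h})$, which regroups the two discrete nonlinear terms as $b(u_{n,\ast}^{j,h},u_{n,\beta}^{j,h},\cdot)+b(u_{n,\ast}^{j,h}-\langle u^{h}\rangle_{n,\ast},u_{n,\ast}^{j,h}-u_{n,\beta}^{j,h},\cdot)$. The first summand is then the semi-implicit DLN nonlinearity (handled as in \cite{Pei24_NM}), and the second is the \emph{single} ensemble correction, further expanded by writing $u_{n,\ast}^{j,h}-u_{n,\beta}^{j,h}=(\eta_{n,\beta}^{j}-\eta_{n,\ast}^{j})+(\phi_{n,\beta}^{j,h}-\phi_{n,\ast}^{j,h})+(u_{n,\ast}^{j}-u_{n,\beta}^{j})$. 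This gives three terms rather than the many your mean/fluctuation matching would generate, and it never requires introducing $\langle u\rangle$ or $w^{j}$.

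\emph{Origin of the $h^{1/2}k_{\rm{max}}^{3/2}$ term.} You attribute this cross term to ``the same $h^{-1/2}$ factor'' from the inverse inequality. That is not how it arises in the paper, and if you pursue that route you will lose the $h^{1/2}$ gain. In the paper the $\phi$-piece (your ``decisive term'') indeed uses $h^{-1/2}$ and is absorbed by \eqref{eq:CFL-like-cond} plus the $\Phi$-dissipation. But for the companion pieces $b(u_{n,\ast}^{j,h}-\langle u^{h}\rangle_{n,\ast},\,u_{n,\ast}^{j}-u_{n,\beta}^{j},\,\phi_{n,\beta}^{j,h})$ and its $\eta$-analogue, the paper uses \eqref{eq:CFL-like-cond} \emph{as an a priori upper bound}: $\big\|\nabla(u_{n,\ast}^{j,h}-\langle u^{h}\rangle_{n,\ast})\big\|^{2}\le C(\Omega,\theta)\,h\nu/\widehat{k}_{n}$. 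Combined with $\|u_{n,\ast}^{j}-u_{n,\beta}^{j}\|_{1}^{2}\le C(\theta)k_{\rm{max}}^{3}\int\|\nabla u_{tt}^{j}\|^{2}$ from \eqref{eq:consist-2nd-eq1}--\eqref{eq:consist-2nd-eq2} and Young, this yields $C\,h\,k_{\rm{max}}^{3}\widehat{k}_{n}^{-1}\cdot\widehat{k}_{n}$ after multiplication, hence $h^{1/2}k_{\rm{max}}^{3/2}$ after the square root. The $h$ enters through the CFL bound on the discrete fluctuation, not through an inverse inequality; make sure your execution uses the CFL condition in this second way as well.
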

	\begin{proof}
		The proof is relatively long, thus we leave it to Appendix \ref{appendixB-L2}.
	\end{proof}
	\ \\

	\subsection{Stability and Error Analysis in $H^{1}$-norm}
	\label{subsec:Stab-Error-H1}
	\ 
	\begin{theorem}
		If the source function in the $j$-th NSE $f^{j} \in \ell^{2,\beta}\big( \{ t_{n}\}_{n=1}^{N}; L^{2}(\Omega) \big)$,
		the velocity $u^{j}(x,t)$ and pressure $p^{j}(x,t)$ in the $j$-th NSE satisfy the regularities 
		in \eqref{eq:L2-regularity} for all $j$, the time step $k_{\rm{max}}$ and diameter $h$ satisfy 
		\begin{gather}
			k_{\rm{max}} \leq h^{1/4},
			\label{eq:time-diameter-relation}
		\end{gather}
		then for all $\theta \in (0,1)$, under CFL-like conditions in \eqref{eq:CFL-like-cond} and time ratio bounds in \eqref{eq:time-ratio-cond}, 
		the family of variable time-stepping DLN-Ensemble Algorithms in \eqref{eq:DLN-Ensemble-Alg} satisfy: 
		for $M = 2, 3, \cdots, N$, 
		\begin{gather}
			\| \nabla u_{M}^{j,h} \|^{2}  
			+ \sum_{n=1}^{M-1} \frac{ \theta (1 - {\theta}^{2})}{4 ( 1 + \varepsilon_{n} \theta )^{2}} \big\| \nabla U_{n}^{j,h} \big\|^{2}
			+ \frac{4}{\nu(1+\theta)} \sum_{n=1}^{M-1} \widehat{k}_{n} 
			\Big\| \widehat{k}_{n}^{-1} u_{n,\alpha}^{j,h} \Big\|^{2}        
			\label{eq:DLN-Stab-H1-conclusion} \\
			\leq  C(\frac{1}{\nu}, \theta,T)
			\Big[ \frac{C}{\nu} \| |f| \|_{2,0,\beta}^{2} \!+\! C(\theta) \big( \| \nabla u_{1}^{j,h} \|^{2} \!+\! \| \nabla u_{0}^{j,h} \|^{2} \big) \Big].
			\notag 
		\end{gather}
	\end{theorem}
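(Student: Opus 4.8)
The plan is to test the momentum equation in \eqref{eq:DLN-Ensemble-Alg} with $v^{h} = \widehat{k}_{n}^{-1} u_{n,\alpha}^{j,h}$, i.e. against the DLN discrete time derivative. A preliminary observation is that, by induction from discretely divergence-free starting data through the post-processing recursion $u_{n+1}^{j,h} = \beta_{2}^{(n)-1}(u_{n,\beta}^{j,h} - \beta_{1}^{(n)}u_{n}^{j,h} - \beta_{0}^{(n)}u_{n-1}^{j,h})$, every $u_{m}^{j,h}$, and hence $u_{n,\alpha}^{j,h}$, lies in $V^{h}$, so the pressure term drops out and the momentum equation yields $\|\widehat{k}_{n}^{-1}u_{n,\alpha}^{j,h}\|^{2} + \nu(\nabla u_{n,\beta}^{j,h}, \widehat{k}_{n}^{-1}\nabla u_{n,\alpha}^{j,h})$ on the left. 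Multiplying through by $\widehat{k}_{n}$ and applying the $G$-stability identity of \cref{lemma:G-stab} with $y_{m} = \nabla u_{m}^{j,h}$ turns the viscous term into a telescoping $G(\theta)$-difference of $\|\nabla u_{n+1}^{j,h}\|$, $\|\nabla u_{n}^{j,h}\|$ plus the numerical dissipation $\nu\frac{\theta(1-\theta^{2})}{2(1+\varepsilon_{n}\theta)^{2}}\|\nabla U_{n}^{j,h}\|^{2}$; summing in $n$ and dividing by $\nu$ (and the $G$-norm normalization) reproduces the left side of \eqref{eq:DLN-Stab-H1-conclusion}. The forcing term $\widehat{k}_{n}(f^{j}(t_{n,\beta}), \widehat{k}_{n}^{-1}u_{n,\alpha}^{j,h}) = (f^{j}(t_{n,\beta}), u_{n,\alpha}^{j,h})$ is handled by Cauchy--Schwarz and Young, absorbing a fraction of $\widehat{k}_{n}\|\widehat{k}_{n}^{-1}u_{n,\alpha}^{j,h}\|^{2}$ into the left side and leaving $C\widehat{k}_{n}\|f^{j}(t_{n,\beta})\|^{2}$, which sums to $C\||f^{j}|\|_{2,0,\beta}^{2}$.

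The heart of the proof is the two convective terms tested against $\widehat{k}_{n}^{-1}u_{n,\alpha}^{j,h}$. First I would recombine them: using $u_{n,\beta}^{j,h} = u_{n,\ast}^{j,h} + \frac{2\beta_{2}^{(n)}}{1-\varepsilon_{n}}U_{n}^{j,h}$ (identity \eqref{eq:DLN-2nd-approx}) and linearity of $b$ in its first argument, the sum $b(\langle u^{h}\rangle_{n,\ast}, u_{n,\beta}^{j,h}, \cdot) + b(u_{n,\ast}^{j,h} - \langle u^{h}\rangle_{n,\ast}, u_{n,\ast}^{j,h}, \cdot)$ collapses to the plain convective term $b(u_{n,\ast}^{j,h}, u_{n,\ast}^{j,h}, \cdot)$ plus a cross term $\frac{2\beta_{2}^{(n)}}{1-\varepsilon_{n}}\,b(\langle u^{h}\rangle_{n,\ast}, U_{n}^{j,h}, \cdot)$. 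Both are estimated with the interpolation bounds \eqref{eq:b-bound-2}--\eqref{eq:b-bound-3} of \cref{lemma:b-bound} together with the inverse inequality \eqref{eq:inv-inequal} applied to the finite-element factors, invoking the CFL-like condition \eqref{eq:CFL-like-cond} to control $\|\nabla(u_{n,\ast}^{j,h}-\langle u^{h}\rangle_{n,\ast})\|$, the time-ratio bounds \eqref{eq:time-ratio-cond} and the $\beta$-bounds \eqref{eq:bound-beta} to keep all coefficient ratios (including $k_{n}/k_{n-1}$) uniformly bounded, and -- crucially -- the already-proved $L^{2}$ stability \cref{thm:Stab-L2}, which supplies the uniform-in-$n$ bounds $\|u_{m}^{j,h}\|\le C$ and the dissipation budget $\sum_{n}\widehat{k}_{n}\|\nabla u_{n,\beta}^{j,h}\|^{2}\le C\nu^{-1}(\cdots)$. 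The residual gradient factors are absorbed into the left-side quantities $\widehat{k}_{n}\|\widehat{k}_{n}^{-1}u_{n,\alpha}^{j,h}\|^{2}$ and $\|\nabla U_{n}^{j,h}\|^{2}$ (here one checks that the constant $C(\Omega,\theta)$ in \eqref{eq:CFL-like-cond} is calibrated so the $\|\nabla U_{n}^{j,h}\|^{2}$-contributions are dominated by the numerical dissipation), while the mesh--stepsize relation $k_{\max}\le h^{1/4}$ in \eqref{eq:time-diameter-relation} is precisely what renders harmless the negative powers of $h$ produced by the inverse inequality. What remains is a sum of the form $\sum_{n}c_{n}\big(\|\nabla u_{n+1}^{j,h}\|^{2}+\|\nabla u_{n}^{j,h}\|^{2}+\|\nabla u_{n-1}^{j,h}\|^{2}\big)$ with $\sum_{n}c_{n}\le C(\nu,\theta)\,T$ by the time-ratio bounds, so a discrete Grönwall inequality closes the estimate and produces the factor $C(\tfrac1\nu,\theta,T)$ in \eqref{eq:DLN-Stab-H1-conclusion}.

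I expect this nonlinear estimate to be the main obstacle. Unlike in \cref{thm:Stab-L2}, testing against the discrete time derivative yields no viscous dissipation $\nu\sum\widehat{k}_{n}\|\nabla u_{n,\beta}^{j,h}\|^{2}$ on the left into which convective contributions can be dumped; only the weaker quantities $\sum\widehat{k}_{n}\|\widehat{k}_{n}^{-1}u_{n,\alpha}^{j,h}\|^{2}$ and $\sum\|\nabla U_{n}^{j,h}\|^{2}$ are at hand, so one is forced either to borrow the dissipation from the $L^{2}$ estimate or to route the convective terms through the inverse inequality and balance them against $k_{\max}\le h^{1/4}$. Making this balance tight enough that the final constant stays independent of $h$ -- and, indeed, seeing why the exponent $1/4$ is the right one -- is the delicate point of the argument. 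A secondary care-point is the ensemble coupling through $\langle u^{h}\rangle_{n,\ast}$: depending on how one bounds its gradient, the Grönwall step may have to be carried out simultaneously over all $J$ realizations, that is, on $\sum_{j=1}^{J}\|\nabla u_{n}^{j,h}\|^{2}$ rather than on a single index $j$.
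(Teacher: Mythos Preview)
Your test-function choice $v^{h}=\widehat{k}_{n}^{-1}u_{n,\alpha}^{j,h}\in V^{h}$, the disappearance of the pressure, and the use of the $G$-stability identity on $\nabla u^{j,h}$ are exactly right. The genuine gap is in the treatment of the convective terms. After your recombination the leading term is $b(u_{n,\ast}^{j,h},u_{n,\ast}^{j,h},\widehat{k}_{n}^{-1}u_{n,\alpha}^{j,h})$, and the only route to an $L^{2}$ bound on the third slot is \eqref{eq:b-bound-3} plus the inverse inequality, giving at best $C h^{-1/2}\|\nabla u_{n,\ast}^{j,h}\|^{2}\|\widehat{k}_{n}^{-1}u_{n,\alpha}^{j,h}\|$ and hence, after Young and multiplication by $\widehat{k}_{n}/\nu$, a Gr\"onwall weight $c_{n}\sim \nu^{-1}h^{-1}\widehat{k}_{n}\|\nabla u_{n,\ast}^{j,h}\|^{2}$. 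The $L^{2}$ stability of \cref{thm:Stab-L2} controls $\sum_{n}\widehat{k}_{n}\|\nabla u_{n,\beta}^{j,h}\|^{2}$, not $\sum_{n}\widehat{k}_{n}\|\nabla u_{n,\ast}^{j,h}\|^{2}$, and even granting the identification up to $\|\nabla U_{n}^{j,h}\|^{2}$ you obtain $\sum_{n}c_{n}\sim \nu^{-2}h^{-1}$, which blows up as $h\to 0$; no relation of the form $k_{\max}\le h^{1/4}$ rescues this. Note also that your recombination discards the fluctuation $u_{n,\ast}^{j,h}-\langle u^{h}\rangle_{n,\ast}$ altogether, so the CFL-like condition \eqref{eq:CFL-like-cond} is no longer available where you claim to invoke it.

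The paper does \emph{not} rely on \cref{thm:Stab-L2} here but on the $L^{2}$ \emph{error estimate} \cref{thm:Error-L2}; this is precisely why the hypotheses of the theorem import the full regularity list \eqref{eq:L2-regularity}. Concretely, the paper keeps a fluctuation term $b(\langle u^{h}\rangle_{n,\ast}-u_{n,\ast}^{j,h},\,u_{n,\ast}^{j,h}-u_{n,\beta}^{j,h},\,\widehat{k}_{n}^{-1}u_{n,\alpha}^{j,h})$ (handled with the CFL condition and absorbed into $\|\nabla U_{n}^{j,h}\|^{2}$) and then splits the remaining $b(u_{n,\ast}^{j,h},u_{n,\beta}^{j,h},\cdot)$ by inserting the \emph{exact} solution: $b(u_{n,\ast}^{j,h},u_{n,\beta}^{j},\cdot)+b(u_{n,\ast}^{j,h},u_{n,\beta}^{j,h}-u_{n,\beta}^{j},\cdot)$. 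The first uses $\|u_{n,\beta}^{j}\|_{2}$ and yields a Gr\"onwall weight $\widehat{k}_{n}\|u_{n,\beta}^{j}\|_{2}^{2}$ with summable total $\||u^{j}|\|_{2,2,\beta}^{2}$; the second produces a weight $h^{-1}\nu\widehat{k}_{n}\|\nabla e_{n,\beta}^{j}\|^{2}$, whose sum is $h^{-1}\sum_{n}\nu\widehat{k}_{n}\|\nabla e_{n,\beta}^{j}\|^{2}\le C h^{-1}F_{2}^{2}$ with $F_{2}=\mathcal{O}(h^{r},h^{s+1},k_{\max}^{2},h^{1/2}k_{\max}^{3/2})$ coming from \cref{thm:Error-L2}. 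The only term in $h^{-1}F_{2}^{2}$ not automatically bounded is $h^{-1}k_{\max}^{4}$, and $k_{\max}\le h^{1/4}$ is exactly the condition that kills it. This is the missing idea in your proposal.
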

	\begin{proof}
		We set $v^{h} = u_{n,\alpha}^{j,h}/\widehat{k}_{n} \in V^{h}$ in \eqref{eq:DLN-Ensemble-Alg}.
		According to the definition of $V^{h}$
		\begin{gather*}
			\big( p_{n,\beta}^{j,h}, \nabla \cdot \widehat{k}_{n}^{-1} u_{n,\alpha}^{j,h} \big) =0.
		\end{gather*}
		Since 
		\begin{align*}
		&b \big( \langle u^{h} \rangle_{n,\ast} - u_{n,\ast}^{j,h}, u_{n,\ast}^{j,h}, \widehat{k}_{n}^{-1} u_{n,\alpha}^{j,h} \big) 
		- b \big( \langle u^{h} \rangle_{n,\ast}, u_{n,\beta}^{j,h} , \widehat{k}_{n}^{-1} u_{n,\alpha}^{j,h} \big) \\
		=& b \big( \langle u^{h} \rangle_{n,\ast} - u_{n,\ast}^{j,h}, u_{n,\ast}^{j,h}, \widehat{k}_{n}^{-1} u_{n,\alpha}^{j,h} \big) - b \big( \langle u^{h} \rangle_{n,\ast} - u_{n,\ast}^{j,h}, u_{n,\beta}^{j,h} , \widehat{k}_{n}^{-1} u_{n,\alpha}^{j,h} \big)  \\
		&- b \big( u_{n,\ast}^{j,h}, u_{n,\beta}^{j,h} , \widehat{k}_{n}^{-1} u_{n,\alpha}^{j,h} \big) \\
		=& b \big( \langle u^{h} \rangle_{n,\ast} - u_{n,\ast}^{j,h}, u_{n,\ast}^{j,h} - u_{n,\beta}^{j,h}, \widehat{k}_{n}^{-1} u_{n,\alpha}^{j,h} \big) 
		- b \big( u_{n,\ast}^{j,h}, u_{n,\beta}^{j,h} - u_{n,\beta}^{j} , \widehat{k}_{n}^{-1} u_{n,\alpha}^{j,h} \big) \\
		&- b \big( u_{n,\ast}^{j,h}, u_{n,\beta}^{j} , \widehat{k}_{n}^{-1} u_{n,\alpha}^{j,h} \big),
		\end{align*}
		we have
		\begin{align}
			&\Big\| \widehat{k}_{n}^{-1} u_{n,\alpha}^{j,h} \Big\|^{2} + 
			\frac{\nu}{\widehat{k}_{n}} \Big(\begin{Vmatrix}
			\nabla {u_{n+1}^{j,h}} \\
			\nabla {u_{n}^{j,h}}
			\end{Vmatrix}_{G(\theta)}^{2} \!\!\!-\!
			\begin{Vmatrix}
			\nabla {u_{n}^{j,h}} \\
			\nabla {u_{n-1}^{j,h}}
			\end{Vmatrix}%
			_{G(\theta)}^{2} 
			\!\!\!+\! \frac{ \theta (1 - {\theta}^{2})}{2 ( 1 + \varepsilon_{n} \theta )^{2}} \big\| \nabla U_{n}^{j,h} \big\|^{2}  \Big) 
			\label{eq:DLN-Stab-H1-Eq1} \\
			=& b \big( \langle u^{h} \rangle_{n,\ast} - u_{n,\ast}^{j,h}, u_{n,\ast}^{j,h} - u_{n,\beta}^{j,h}, \widehat{k}_{n}^{-1} u_{n,\alpha}^{j,h} \big) 
			- b \big( u_{n,\ast}^{j,h}, u_{n,\beta}^{j,h} - u_{n,\beta}^{j} , \widehat{k}_{n}^{-1} u_{n,\alpha}^{j,h} \big)
			\notag \\
			&- b \big( u_{n,\ast}^{j,h}, u_{n,\beta}^{j} , \widehat{k}_{n}^{-1} u_{n,\alpha}^{j,h} \big)
			+ \big( f^{j}( t_{n,\beta}), \widehat{k}_{n}^{-1} u_{n,\alpha}^{j,h} \big). \notag 
		\end{align}
		By Cauchy-Schwarz inequality and Young's equality,
		\begin{gather}
		\big( f^{j}( t_{n,\beta}), \widehat{k}_{n}^{-1} u_{n,\alpha}^{j,h} \big)
		\!\leq\! \| f^{j}( t_{n,\beta}) \| \big\| \widehat{k}_{n}^{-1} u_{n,\alpha}^{j,h} \big\|
		\!\leq\! C \| f^{j}( t_{n,\beta}) \|^{2} \!+\! \frac{1}{16} \big\| \widehat{k}_{n}^{-1} u_{n,\alpha}^{j,h} \big\|^{2}.
		\label{eq:DLN-Stab-H1-RHS-term1}
		\end{gather}
		By \eqref{eq:b-bound-3}, Poincar\'e inequality, inverse inequality in \eqref{eq:inv-inequal}, 
		\eqref{eq:DLN-2nd-approx}, Young's equality, bounds of $\{\beta_{\ell}^{(n)}\}_{\ell=0}^{2}$ in \eqref{eq:bound-beta} and time ratio restrictions in \eqref{eq:time-ratio-cond}
		\begin{confidential}
			\color{darkblue}
			\begin{align*}
			&b \big( \langle u^{h} \rangle_{n,\ast} - u_{n,\ast}^{j,h}, u_{n,\ast}^{j,h} - u_{n,\beta}^{j,h}, \widehat{k}_{n}^{-1} u_{n,\alpha}^{j,h} \big) \notag \\
			\leq& C(\Omega) \big\| \nabla \big( \langle u^{h} \rangle_{n,\ast} - u_{n,\ast}^{j,h} \big) \big\|
			\big\| \nabla \big( u_{n,\ast}^{j,h} - u_{n,\beta}^{j,h} \big) \big\| 
			\big\| \widehat{k}_{n}^{-1} u_{n,\alpha}^{j,h} \big\|^{1/2} \big\| \nabla \big( \widehat{k}_{n}^{-1} u_{n,\alpha}^{j,h} \big) \big\|^{1/2} \\
			\leq& \frac{ C(\Omega) }{\sqrt{h}} \big\| \nabla \big( \langle u^{h} \rangle_{n,\ast} - u_{n,\ast}^{j,h} \big) \big\| \frac{2 \beta_{2}^{(n)}}{ 1 - \varepsilon_{n}} \big\| \nabla U_{n}^{j,h} \big\|
			\big\| \widehat{k}_{n}^{-1} u_{n,\alpha}^{j,h} \big\| \\
			\leq& C(\Omega,\theta) \sqrt{ \frac{\widehat{k}_{n} }{ h \nu } } \Big( \frac{  1 + \varepsilon_{n} \theta}{ 1 - \varepsilon_{n} } \Big) \big\| \big\| \nabla \big( \langle u^{h} \rangle_{n,\ast} - u_{n,\ast}^{j,h} \big) \big\|
			\big\| \widehat{k}_{n}^{-1} u_{n,\alpha}^{j,h} \big\|
			\frac{ \sqrt{\nu}}{ \sqrt{\widehat{k}_{n} }  \big( 1 + \varepsilon_{n} \theta \big) } \big\| \nabla U_{n}^{j,h} \big\| \\
			\leq& \frac{ C(\Omega,\theta) \widehat{k}_{n} }{h \nu} \Big( \frac{1+ \varepsilon_{n} \theta}{1- \varepsilon_{n} } \Big)^{2}
			\big\| \nabla \big( \langle u^{h} \rangle_{n,\ast} - u_{n,\ast}^{j,h} \big) \big\|^{2} 
			\big\| \widehat{k}_{n}^{-1} u_{n,\alpha}^{j,h} \big\|^{2}
			+ \frac{\nu \theta (1 - \theta^{2}) }{4 \widehat{k}_{n} \big( 1 + \varepsilon_{n} \theta \big)^{2} }
			\big\| \nabla U_{n}^{j,h} \big\|^{2}.
			\end{align*}
			\normalcolor
		\end{confidential}
		\begin{align}
			&b \big( \langle u^{h} \rangle_{n,\ast} - u_{n,\ast}^{j,h}, u_{n,\ast}^{j,h} - u_{n,\beta}^{j,h}, \widehat{k}_{n}^{-1} u_{n,\alpha}^{j,h} \big) 
			\label{eq:DLN-Stab-H1-RHS-term2} \\
			\leq& C(\Omega) \big\| \nabla \big( \langle u^{h} \rangle_{n,\ast} - u_{n,\ast}^{j,h} \big) \big\|
			\big\| \nabla \big( u_{n,\ast}^{j,h} - u_{n,\beta}^{j,h} \big) \big\| 
			\big\| \widehat{k}_{n}^{-1} u_{n,\alpha}^{j,h} \big\|^{1/2} \big\| \nabla \big( \widehat{k}_{n}^{-1} u_{n,\alpha}^{j,h} \big) \big\|^{1/2} \notag \\
			\leq& \frac{ C(\Omega) }{\sqrt{h}} \big\| \nabla \big( \langle u^{h} \rangle_{n,\ast} - u_{n,\ast}^{j,h} \big) \big\| \frac{2 \beta_{2}^{(n)}}{ 1 - \varepsilon_{n}} \big\| \nabla U_{n}^{j,h} \big\|
			\big\| \widehat{k}_{n}^{-1} u_{n,\alpha}^{j,h} \big\| \notag \\
			\leq& \frac{ C(\Omega,\theta) \widehat{k}_{n} }{h \nu} \Big( \frac{1+ \varepsilon_{n} \theta}{1- \varepsilon_{n} } \Big)^{2}
			\big\| \nabla \big( \langle u^{h} \rangle_{n,\ast} - u_{n,\ast}^{j,h} \big) \big\|^{2} 
			\big\| \widehat{k}_{n}^{-1} u_{n,\alpha}^{j,h} \big\|^{2}
			+ \frac{\nu \theta (1 - \theta^{2}) }{4 \widehat{k}_{n} \big( 1 + \varepsilon_{n} \theta \big)^{2} }
			\big\| \nabla U_{n}^{j,h} \big\|^{2}. \notag 
		\end{align}
		\begin{confidential}
			\color{darkblue}
			\begin{align*}
			&b \big( u_{n,\ast}^{j,h}, u_{n,\beta}^{j,h} - u_{n,\beta}^{j} , \widehat{k}_{n}^{-1} u_{n,\alpha}^{j,h} \big) \\
			\leq& C(\Omega) \| \nabla u_{n,\ast}^{j,h} \| \| \nabla e_{n,\beta}^{j} \| 
			\big\| \widehat{k}_{n}^{-1} u_{n,\alpha}^{j,h} \big\|^{1/2} \big\| \nabla \big( \widehat{k}_{n}^{-1} u_{n,\alpha}^{j,h} \big) \big\|^{1/2} \notag \\
			\leq& \frac{C(\Omega)}{\sqrt{h}} \| \nabla u_{n,\ast}^{j,h} \| \| \nabla e_{n,\beta}^{j} \| 
			\big\| \widehat{k}_{n}^{-1} u_{n,\alpha}^{j,h} \big\| \\
			\leq& \frac{C(\Omega) }{h} \| \nabla u_{n,\ast}^{j,h} \|^{2} \| \nabla e_{n,\beta}^{j} \|^{2}
			+ \frac{1}{16} \big\| \widehat{k}_{n}^{-1} u_{n,\alpha}^{j,h} \big\|^{2} \\
			\leq& \frac{C(\Omega,\theta) }{h} \| \nabla e_{n,\beta}^{j} \|^{2} \big( \| \nabla u_{n}^{j,h} \|^{2} 
			+ \| \nabla u_{n-1}^{j,h} \|^{2}  \big) + \frac{1}{16} \big\| \widehat{k}_{n}^{-1} u_{n,\alpha}^{j,h} \big\|^{2}. 
			\end{align*}
			\normalcolor
		\end{confidential}
		\begin{align}
			&b \big( u_{n,\ast}^{j,h}, u_{n,\beta}^{j,h} \!-\! u_{n,\beta}^{j} , \widehat{k}_{n}^{-1} u_{n,\alpha}^{j,h} \big) 
			\label{eq:DLN-Stab-H1-RHS-term3} \\
			\leq& C(\Omega) \| \nabla u_{n,\ast}^{j,h} \| \| \nabla e_{n,\beta}^{j} \| 
			\big\| \widehat{k}_{n}^{-1} u_{n,\alpha}^{j,h} \big\|^{1/2} \big\| \nabla \big( \widehat{k}_{n}^{-1} u_{n,\alpha}^{j,h} \big) \big\|^{1/2} \notag \\
			\leq& \frac{C(\Omega,\theta) }{h} \| \nabla e_{n,\beta}^{j} \|^{2} \big( \| \nabla u_{n}^{j,h} \|^{2} 
			+ \| \nabla u_{n-1}^{j,h} \|^{2}  \big) + \frac{1}{16} \big\| \widehat{k}_{n}^{-1} u_{n,\alpha}^{j,h} \big\|^{2}.  \notag 
		\end{align}
		By \eqref{eq:b-bound-3}, Poincar\'e inequality and Young's inequality
		\begin{align}
			&b \big( u_{n,\ast}^{j,h}, u_{n,\beta}^{j} , \widehat{k}_{n}^{-1} u_{n,\alpha}^{j,h} \big) 
			\label{eq:DLN-Stab-H1-RHS-term4} \\
			\leq& C(\Omega) \| \nabla u_{n,\ast}^{j,h} \| \| u_{n,\beta}^{j} \|_{2} \| \widehat{k}_{n}^{-1} u_{n,\alpha}^{j,h} \|
			\notag \\
			\leq& C(\Omega,\theta) \| \nabla u_{n,\beta}^{j} \|_{2}^{2} \big( \| \nabla u_{n}^{j,h} \|^{2} 
			+ \| \nabla u_{n-1}^{j,h} \|^{2}  \big) + \frac{1}{16} \big\| \widehat{k}_{n}^{-1} u_{n,\alpha}^{j,h} \big\|^{2}. \notag 
		\end{align}
		We combine \eqref{eq:DLN-Stab-H1-RHS-term1}, \eqref{eq:DLN-Stab-H1-RHS-term2}, \eqref{eq:DLN-Stab-H1-RHS-term3} and \eqref{eq:DLN-Stab-H1-RHS-term4} 
		\begin{confidential}
			\color{darkblue}
			\begin{align*}
				& \frac{\nu}{\widehat{k}_{n}} \Big(\begin{Vmatrix}
				\nabla {u_{n+1}^{j,h}} \\
				\nabla {u_{n}^{j,h}}
				\end{Vmatrix}_{G(\theta)}^{2} \!\!\!-\!
				\begin{Vmatrix}
				\nabla {u_{n}^{j,h}} \\
				\nabla {u_{n-1}^{j,h}}
				\end{Vmatrix}%
				_{G(\theta)}^{2} 
				\!\!\!+\! \frac{ \theta (1 - {\theta}^{2})}{4 ( 1 + \varepsilon_{n} \theta )^{2}} \big\| \nabla U_{n}^{j,h} \big\|^{2}  \Big)  \\
				&+ \frac{1}{2} \Big( 1 - \frac{ C(\Omega,\theta) \widehat{k}_{n} }{h \nu} \Big( \frac{1+ \varepsilon_{n} \theta}{1- \varepsilon_{n} } \Big)^{2}
				\big\| \nabla \big( \langle u^{h} \rangle_{n,\ast} - u_{n,\ast}^{j,h} \big) \big\|^{2} \Big)
				\Big\| \widehat{k}_{n}^{-1} u_{n,\alpha}^{j,h} \Big\|^{2} 
				+ \frac{1}{4} \Big\| \widehat{k}_{n}^{-1} u_{n,\alpha}^{j,h} \Big\|^{2}   \\
				\leq&  C \| f^{j}( t_{n,\beta}) \|^{2} + C(\Omega,\theta) \Big( \frac{1}{h} \| \nabla e_{n,\beta}^{j} \|^{2} 
				+ \| \nabla u_{n,\beta}^{j} \|_{2}^{2} \Big) \big( \| \nabla u_{n}^{j,h} \|^{2} 
				+ \| \nabla u_{n-1}^{j,h} \|^{2}  \big)
			\end{align*}
			\normalcolor
		\end{confidential}
		\begin{align}
			& \begin{Vmatrix}
			\nabla {u_{n+1}^{j,h}} \\
			\nabla {u_{n}^{j,h}}
			\end{Vmatrix}_{G(\theta)}^{2} \!\!\!-\!
			\begin{Vmatrix}
			\nabla {u_{n}^{j,h}} \\
			\nabla {u_{n-1}^{j,h}}
			\end{Vmatrix}%
			_{G(\theta)}^{2} 
			\!\!\!+\! \frac{ \theta (1 - {\theta}^{2})}{4 ( 1 + \varepsilon_{n} \theta )^{2}} \big\| \nabla U_{n}^{j,h} \big\|^{2} 
			\label{eq:DLN-Stab-H1-Eq2} \\
			& + \frac{\widehat{k}_{n}}{2 \nu} \Big( 1 - \frac{ C(\Omega,\theta) \widehat{k}_{n} }{h \nu} \Big( \frac{1+ \varepsilon_{n} \theta}{1- \varepsilon_{n} } \Big)^{2}
			\big\| \nabla \big( \langle u^{h} \rangle_{n,\ast} - u_{n,\ast}^{j,h} \big) \big\|^{2} \Big)
			\Big\| \widehat{k}_{n}^{-1} u_{n,\alpha}^{j,h} \Big\|^{2} 
			+ \frac{\widehat{k}_{n}}{4 \nu} \Big\| \widehat{k}_{n}^{-1} u_{n,\alpha}^{j,h} \Big\|^{2}   \notag \\
			\leq& \frac{C \widehat{k}_{n} }{\nu}  \| f^{j}( t_{n,\beta}) \|^{2} 
			+ \frac{C(\Omega,\theta)}{\nu} \Big( \frac{1}{h \nu} \nu \widehat{k}_{n} \| \nabla e_{n,\beta}^{j} \|^{2} 
			+ \widehat{k}_{n} \| \nabla u_{n,\beta}^{j} \|_{2}^{2} \Big) \big( \| \nabla u_{n}^{j,h} \|^{2} 
			+ \| \nabla u_{n-1}^{j,h} \|^{2}  \big). \notag 
		\end{align}
		We sum \eqref{eq:DLN-Stab-H1-Eq2} over $n$ from $1$ to $M-1$, use CFL-like conditions 
		in \eqref{eq:CFL-like-cond}, time-diameter conditions in  \eqref{eq:time-ratio-cond} and $G$-norm 
		in \eqref{def:G-norm} 
		\begin{align}
			&\| \nabla u_{M}^{j,h} \|^{2} 
			+ \sum_{n=1}^{M-1} \frac{ \theta (1 - {\theta}^{2})}{4 ( 1 + \varepsilon_{n} \theta )^{2}} \big\| \nabla U_{n}^{j,h} \big\|^{2}
			+ \frac{4}{\nu(1 + \theta)} \sum_{n=1}^{M-1} \widehat{k}_{n} \Big\| \widehat{k}_{n}^{-1} u_{n,\alpha}^{j,h} \Big\|^{2} 
			\label{eq:DLN-Stab-H1-eq3}   \\
			\leq& \frac{C(\Omega,\theta)}{\nu} \sum_{n=1}^{M-1}\big( \widehat{k}_{n} \| u_{n,\beta}^{j} \|_{2}^{2} 
			+ \frac{1}{h \nu} \nu \widehat{k}_{n} \| e_{n,\beta}^{j} \|_{1}^{2} \big)
			\big( \| \nabla u_{n}^{j,h} \|^{2} + \| \nabla u_{n-1}^{j,h} \|^{2} \big) 
			\notag \\
			& + \sum_{n=1}^{M-1} \frac{C (k_{n} + k_{n-1}) }{\nu} \| f^{j}( t_{n,\beta}) \|^{2}
			+ C(\theta) \big( \| \nabla u_{1}^{j,h} \|^{2} + \| \nabla u_{0}^{j,h} \|^{2} \big). \notag 
		\end{align}
		By \eqref{eq:error-L2-inf-final-L2} and \eqref{eq:error-L2-inf-final-H1} in the proof of 
		Theorem \ref{thm:Error-L2} to obtain
		\begin{align}
		\sum_{n=1}^{M-1} \nu \widehat{k}_{n} \| e_{n,\beta}^{j} \|_{1}^{2}
		=& \nu \sum_{n=1}^{M-1} \widehat{k}_{n} \| e_{n,\beta}^{j} \|^{2}
		+ \nu \sum_{n=1}^{M-1} \widehat{k}_{n} \| \nabla e_{n,\beta}^{j} \|^{2} 
		\label{eq:error-DLN-L2H1} \\
		\leq& C \nu T \max_{0 \leq n \leq M} \| e_{n}^{j} \|^{2} 
		+ \nu \sum_{n=1}^{M-1} \widehat{k}_{n} \| \nabla e_{n,\beta}^{j} \|^{2} \notag \\
		\leq& \big( C \nu T + 1 \big) F_{2}^{2}, \notag 
		\end{align}
		where
		\begin{align*}
		F_{2} 
		=& \exp \Big[ \frac{C(\Omega,\theta)}{\nu} \big( k_{\rm{max}}^{4} \| u_{tt} \|_{2,2}^{2} 
		+ \| |u| \|_{2,2,\beta}^{2} \big) \Big] 
		\sqrt{F_{1}} + C(\Omega)h^{r} \| |u| \|_{\infty,r} 
		\notag \\
		& + C(\Omega,\theta) \sqrt{\nu} h^{r} \Big( k_{\rm{max}}^{2} \| u_{tt} \|_{2,r+1}
		+ \| |u| \|_{2,r+1,\beta} \Big),
		\end{align*}
		and $F_{1}$ is in \eqref{eq:def-F1}.
		We apply \eqref{eq:error-DLN-L2H1} and discrete Gronwall inequality without restrictions (\cite[p.369]{HR90_SIAM_NA}) to \eqref{eq:DLN-Stab-H1-eq3} 
		\begin{confidential}
			\color{darkblue}
			\begin{align*}
			&\| \nabla u_{M}^{j,h} \|^{2}   
			+ \sum_{n=1}^{M-1} \frac{ \theta (1 - {\theta}^{2})}{4 ( 1 + \varepsilon_{n} \theta )^{2}} \big\| \nabla U_{n}^{j,h} \big\|^{2}
			+ \frac{4}{\nu (1 + \theta)} \sum_{n=1}^{N-1} \widehat{k}_{n} 
			\Big\| \widehat{k}_{n}^{-1} u_{n,\alpha}^{j,h} \Big\|^{2} \\
			\leq & \frac{C(\Omega,\theta)}{\nu} 
			\Big[ \big( \widehat{k}_{M-1} \| u_{M-1,\beta}^{j} \|_{2}^{2} 
			+ \frac{1}{h \nu} \nu \widehat{k}_{M-1} \| e_{M-1,\beta}^{j} \|_{1}^{2}  \big) 
			\| \nabla u_{M-1}^{j,h} \|^{2} \\
			+& \sum_{n=2}^{M-2}  \big(\widehat{k}_{n+1} \| u_{n+1,\beta}^{j} \|_{2}^{2} 
			+ \widehat{k}_{n} \| u_{n,\beta}^{j} \|_{2}^{2} 
			+ \frac{1}{h \nu} \nu \widehat{k}_{n+1} \| e_{n+1,\beta}^{j} \|_{1}^{2} 
			+ \frac{1}{h \nu} \nu \widehat{k}_{n} \| e_{n,\beta}^{j} \|_{1}^{2} \big) 
			\| \nabla u_{n}^{j,h} \|^{2} \\
			+& \big( \widehat{k}_{1} \| u_{1,\beta}^{j} \|_{2}^{2} 
			+ \frac{1}{h \nu} \nu \widehat{k}_{1} \| e_{1,\beta}^{j} \|_{1}^{2}  \big) 
			\| \nabla u_{0}^{j,h} \|^{2} \Big] \\
			+& \frac{C}{\nu} \| |f| \|_{2,0,\beta}^{2} + C(\theta) \big( \| \nabla u_{1}^{j,h} \|^{2} + \| \nabla u_{0}^{j,h} \|^{2} \big).
			\end{align*}
			\normalcolor
		\end{confidential}
		\begin{align}
			&\| \nabla u_{M}^{j,h} \|^{2}  
			+ \sum_{n=1}^{M-1} \frac{ \theta (1 - {\theta}^{2})}{4 ( 1 + \varepsilon_{n} \theta )^{2}} \big\| \nabla U_{n}^{j,h} \big\|^{2}
			+ \frac{4}{\nu(1+\theta)} \sum_{n=1}^{M-1} \widehat{k}_{n} 
			\Big\| \widehat{k}_{n}^{-1} u_{n,\alpha}^{j,h} \Big\|^{2}        
			\label{eq:DLN-Stab-H1-eq4} \\
			\leq&\!\!  \exp \!\! \Big[ \frac{C(\Omega,\theta)}{\nu} \!\! 
			\Big( k_{\rm{max}}^{4} \| u_{tt}^{j} \|_{2,2}^{2} 
			\!+\! \| |u^{j}|\|_{2,2,\beta}^{2} \!+\! \frac{ (C \nu T \!+\! 1)F_{2}^{2} }{h \nu} \Big)\!\! \Big] \!\!
			\Big[ \frac{C\| |f| \|_{2,0,\beta}^{2}}{\nu} \notag \\
			&\qquad \qquad \qquad \qquad \qquad \qquad \qquad \qquad \qquad \qquad \qquad \quad \ \ +\! C(\theta) \big( \| \nabla u_{1}^{j,h} \|^{2} \!+\! \| \nabla u_{0}^{j,h} \|^{2} \big) \!\!\Big].
			\notag 
		\end{align}
		By time ratio conditions in \eqref{eq:time-ratio-cond}, $h^{-1} F_{2}^{2}$ is bounded. 
		Thus we have \eqref{eq:DLN-Stab-H1-conclusion} from \eqref{eq:DLN-Stab-H1-eq4}.
	\end{proof}

	\begin{theorem}
		\label{thm:Error-H1}
		We assume that for the $j$-th NSE in \eqref{eq:jth-NSE}, the velocity $u^{j}(x,t)$ satisfies
		\begin{gather*}
		u^{j} \!\in\! \ell^{\infty}(\{ t_{n} \}_{n=0}^{N};H^{r}(\Omega) \cap H^{2}(\Omega) ) 
		\cap \ell^{\infty,\beta}(\{ t_{n} \}_{n=0}^{N};H^{2}(\Omega))
		\cap \ell^{2,\beta}(\{ t_{n} \}_{n=0}^{N};H^{r+1}(\Omega)), \\
		u_{t}^{j} \in L^{2}(0,T;H^{r+1}(\Omega)),  \ \ 
		u_{tt}^{j} \in L^{2}(0,T;H^{r+1}(\Omega)), \ \ 
		u_{ttt}^{j} \in L^{2}(0,T;X^{-1} \cap L^{2}(\Omega)), 
		\end{gather*} 
		and the pressure $p^{j}(x,t)$ satisfies
		\begin{gather*}
			p^{j} \in \ell^{\infty}(\{ t_{n} \}_{n=0}^{N};H^{s+1}(\Omega)) \cap \ell^{2,\beta}(\{ t_{n} \}_{n=0}^{N};H^{s+1}(\Omega)), \\
				p_{t}^{j} \in L^{2}(0,T;H^{s+1}(\Omega)), \ p_{tt}^{j} \in L^{2}(0,T;H^{s+1}(\Omega)),
		\end{gather*}
		for all $j$.  
		Under the CFL-like conditions in \eqref{eq:CFL-like-cond}, time ratio bounds in \eqref{eq:time-ratio-cond} and the time-diameter condition in \eqref{eq:time-diameter-relation},
		the numerical solutions of the DLN-Ensemble algorithms in \eqref{eq:DLN-Ensemble-Alg} for all $\theta \in (0,1)$ satisfy 
		\begin{align}
		&\max_{0 \leq n \leq N}\| u_{n}^{j} - u_{n}^{j,h} \|_{1}  
		+ \sum_{n=1}^{N-1} \frac{\widehat{k}_{n}}{\nu} \Big\| \frac{ u_{n,\alpha}^{j} - u_{n,\alpha}^{j,h} }{ \widehat{k}_{n} } \Big\|^{2}
		\leq \mathcal{O} \big( h^{r},h^{s+1}, k_{\rm{max}}^{2}, h^{1/2}k_{\rm{max}}^{3/2} \big). \label{eq:error-H1-conclusion} 
		\end{align}
	\end{theorem}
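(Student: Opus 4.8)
We sketch the argument; it parallels the $L^{2}$ error analysis of \cref{thm:Error-L2} but differentiates the error equation in the discrete sense, exactly as the $H^{1}$ stability bound \eqref{eq:DLN-Stab-H1-conclusion} was derived from \eqref{eq:DLN-Ensemble-Alg}. First I would split the error through the Stokes projection \eqref{eq:Stokes-def}: writing $e_{n}^{j} := u_{n}^{j} - u_{n}^{j,h} = \eta_{n}^{j} + \phi_{n}^{j,h}$ with $\eta_{n}^{j} := u_{n}^{j} - I_{\rm{St}}^{h} u_{n}^{j}$ and $\phi_{n}^{j,h} := I_{\rm{St}}^{h} u_{n}^{j} - u_{n}^{j,h} \in V^{h}$. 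Subtracting \eqref{eq:DLN-Ensemble-Alg} from the weak form of \eqref{eq:jth-NSE} at $t_{n,\beta}$ and restricting to $v^{h} \in V^{h}$ (so that both the discrete pressure and, via \eqref{eq:Stokes-def}, the viscous $\eta$-contribution drop out), the right-hand side of the resulting equation for $\phi^{j,h}$ collects: the discrete time-derivative residual $\widehat{k}_{n}^{-1}\eta_{n,\alpha}^{j}$; the DLN time-consistency errors $p_{n,\beta}^{j} - p^{j}(t_{n,\beta})$, $u_{n,\beta}^{j} - u^{j}(t_{n,\beta})$, $u_{n,\ast}^{j} - u^{j}(t_{n,\beta})$ and $\widehat{k}_{n}^{-1}u_{n,\alpha}^{j} - u_{t}^{j}(t_{n,\beta})$, all bounded through \cref{lemma:DLN-consistency}; and the difference of the two nonlinear terms.

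Next I would test this error equation with $v^{h} = \widehat{k}_{n}^{-1}\phi_{n,\alpha}^{j,h}$ and apply the $G$-stability identity \eqref{eq:Gstab-Id} of \cref{lemma:G-stab} to the sequence $\{\nabla \phi_{n}^{j,h}\}$, which produces on the left the term $\|\widehat{k}_{n}^{-1}\phi_{n,\alpha}^{j,h}\|^{2}$, a telescoping difference of $G(\theta)$-norms of $\{\nabla\phi_{n+1}^{j,h},\nabla\phi_{n}^{j,h}\}$, and the numerical dissipation $\tfrac{\theta(1-\theta^{2})}{2(1+\varepsilon_{n}\theta)^{2}}\|\nabla\Phi_{n}^{j,h}\|^{2}$, where $\Phi_{n}^{j,h} := \tfrac{1-\varepsilon_{n}}{2}\phi_{n+1}^{j,h} - \phi_{n}^{j,h} + \tfrac{1+\varepsilon_{n}}{2}\phi_{n-1}^{j,h}$. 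For the nonlinear difference I would use the same regrouping as in the $H^{1}$ stability proof, splitting it into an ensemble-fluctuation piece of the form $b(\langle u^{h}\rangle_{n,\ast} - u_{n,\ast}^{j,h}, \phi_{n,\ast}^{j,h} - \phi_{n,\beta}^{j,h}, \widehat{k}_{n}^{-1}\phi_{n,\alpha}^{j,h})$, pieces carrying one factor of the exact velocity $u_{n,\beta}^{j}$ or $u_{n,\ast}^{j}$ that are bounded by the line of \eqref{eq:b-bound-3} requiring $v\in H^{2}$ (whence the standing $H^{2}$-regularity hypothesis on $u^{j}$) with no inverse inequality, and residual pieces containing $\eta_{n,\cdot}^{j}$ and the time-consistency errors. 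On the factor $\widehat{k}_{n}^{-1}\phi_{n,\alpha}^{j,h}$ in the remaining pieces I would bound $\|w\|\|w\|_{1}$ in \eqref{eq:b-bound-3} by $\|w\|^{1/2}\|\nabla w\|^{1/2}$ and invoke the inverse inequality \eqref{eq:inv-inequal}; the fluctuation piece, after using $\phi_{n,\beta}^{j,h} - \phi_{n,\ast}^{j,h} = \tfrac{2\beta_{2}^{(n)}}{1-\varepsilon_{n}}\Phi_{n}^{j,h}$ (the analogue of \eqref{eq:DLN-2nd-approx}) and Young's inequality, splits into a multiple of $\|\nabla\Phi_{n}^{j,h}\|^{2}$ absorbed by the dissipation and the CFL-type term $\tfrac{C(\Omega,\theta)\widehat{k}_{n}}{h\nu}\big(\tfrac{1+\varepsilon_{n}\theta}{1-\varepsilon_{n}}\big)^{2}\|\nabla(u_{n,\ast}^{j,h}-\langle u^{h}\rangle_{n,\ast})\|^{2}\|\widehat{k}_{n}^{-1}\phi_{n,\alpha}^{j,h}\|^{2}$, which is absorbed into $\|\widehat{k}_{n}^{-1}\phi_{n,\alpha}^{j,h}\|^{2}$ precisely under \eqref{eq:CFL-like-cond}.

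Summing over $n$ from $1$ to $M-1$, the coefficients multiplying $\|\nabla\phi_{n}^{j,h}\|^{2}$ involve $\widehat{k}_{n}\|u_{n,\beta}^{j}\|_{2}^{2}$, the discrete norm $\widehat{k}_{n}\|\nabla u_{n,\beta}^{j,h}\|^{2}$ (controlled by the $H^{1}$ stability bound \eqref{eq:DLN-Stab-H1-conclusion}), and, through the inverse inequality, a factor $h^{-1}\widehat{k}_{n}\|\nabla e_{n,\beta}^{j}\|^{2}$; summed, the last is $h^{-1}$ times the square of the rate already established in \eqref{eq:error-L2-conclusion}, which under the time-diameter condition \eqref{eq:time-diameter-relation} together with the time-ratio bounds \eqref{eq:time-ratio-cond} stays bounded, keeping the Gronwall constant finite. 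A discrete Gronwall inequality without step restriction (\cite[p.369]{HR90_SIAM_NA}) then gives $\max_{0\le n\le N}\|\nabla\phi_{n}^{j,h}\|^{2} + \sum_{n=1}^{N-1}\tfrac{\widehat{k}_{n}}{\nu}\|\widehat{k}_{n}^{-1}\phi_{n,\alpha}^{j,h}\|^{2} = \mathcal{O}(h^{2r},h^{2s+2},k_{\rm{max}}^{4},hk_{\rm{max}}^{3})$, and combining with $\max_{n}\|\nabla\eta_{n}^{j}\| = \mathcal{O}(h^{r},h^{s+1})$ from \eqref{eq:Stoke-Approx} and \eqref{eq:approx-thm} via the triangle inequality yields \eqref{eq:error-H1-conclusion}. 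The main obstacle is the nonlinear difference in step two: one must route \emph{every} use of the inverse inequality through a factor that is quadratically small in the error, so that the accompanying $h^{-1}$ is rendered harmless by \eqref{eq:time-diameter-relation} (either absorbed into $\|\widehat{k}_{n}^{-1}\phi_{n,\alpha}^{j,h}\|^{2}$ or kept in a bounded Gronwall constant), while simultaneously keeping the coefficient of $\|\widehat{k}_{n}^{-1}\phi_{n,\alpha}^{j,h}\|^{2}$ nonnegative via \eqref{eq:CFL-like-cond}; the cross term pairing the $h^{-1/2}$ from \eqref{eq:inv-inequal} with the $\mathcal{O}(k_{\rm{max}}^{2})$ consistency error of $u_{n,\ast}^{j} - u^{j}(t_{n,\beta})$ is what produces the mixed rate $h^{1/2}k_{\rm{max}}^{3/2}$ in \eqref{eq:error-H1-conclusion}.
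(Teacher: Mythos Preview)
Your proposal is correct and follows essentially the same route as the paper's proof: test the error equation with $\widehat{k}_{n}^{-1}\phi_{n,\alpha}^{j,h}$, apply the $G$-stability identity to $\{\nabla\phi_{n}^{j,h}\}$, split the nonlinearity so that the ensemble-fluctuation piece is absorbed via \eqref{eq:CFL-like-cond} while the remaining pieces use the $H^{2}$ line of \eqref{eq:b-bound-3} and the inverse inequality, then close with the discrete Gr\"onwall inequality whose $h^{-1}\sum\widehat{k}_{n}\|e_{n,\beta}^{j}\|_{1}^{2}$ coefficient is bounded through \cref{thm:Error-L2} and \eqref{eq:time-diameter-relation}. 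One clarification: in the paper the Stokes projection used here is that of the pair $(u_{n}^{j},p_{n}^{j})$ (not $(u_{n}^{j},0)$ as in the $L^{2}$ proof), so the viscous $\eta$-term does not ``drop out'' but rather combines with the exact pressure to leave precisely the time-consistency error $p_{n,\beta}^{j}-p^{j}(t_{n,\beta})$ you list; and the mixed rate $h^{1/2}k_{\rm max}^{3/2}$ arises not from pairing $h^{-1/2}$ with a consistency error but from bounding $\|\nabla(u_{n,\ast}^{j,h}-\langle u^{h}\rangle_{n,\ast})\|^{2}$ by $C h\nu/\widehat{k}_{n}$ via \eqref{eq:CFL-like-cond} and pairing that $h$ with the $\mathcal{O}(k_{\rm max}^{3})$ square consistency error.
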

	\begin{proof}
		The proof is relatively long, thus we leave it to Appendix \ref{appendixB-H1}.
	\end{proof}
	\ \\

	\subsection{Numerical Analysis for Pressure} \ 
	\label{subsec:Stab-Error-Pressure}
	\begin{theorem}
		If the source function $f^{j}$  in the $j$-th NSE satisfy 
		\begin{gather*}
			f^{j} \in \ell^{2,\beta}\big( \{ t_{n}\}_{n=1}^{N}; L^{2}(\Omega) \cap X^{-1} \big),
		\end{gather*}
		the velocity $u^{j}(x,t)$ and pressure $p^{j}(x,t)$ in the $j$-th NSE satisfy the regularities 
		in \eqref{eq:L2-regularity} for all $j$, the time step $k_{\rm{max}}$ and diameter $h$ satisfy the restriction in \eqref{eq:time-diameter-relation},
		then for all $\theta \in (0,1)$, under CFL-like conditions in \eqref{eq:CFL-like-cond} and time ratio bounds in \eqref{eq:time-ratio-cond}, 
		the family of variable time-stepping DLN-Ensemble Algorithms in \eqref{eq:DLN-Ensemble-Alg} satisfy: 
		for  $M = 2, 3, \cdots, N$, 
		\begin{align}
			&C_{\tt{is}}^{2} \sum_{n=1}^{M-1} \widehat{k}_{n} \| p_{n,\beta}^{j,h} \|^{2} 
			\label{eq:DLN-Pressure-Stab-L2-conclusion} \\
			\leq& C(\Omega, \theta, \nu) \Big( \frac{1}{\nu} \| |f^{j}| \|_{2,-1,\beta}^{2} 
			\!+\! \| u_{1}^{j,h} \|^{2} \!+\! \| u_{0}^{j,h} \|^{2} \Big) \notag \\
			+& C(\Omega, \theta, \nu,\frac{1}{\nu},T)   
			\Big( \frac{1}{\nu} \| |f| \|_{2,0,\beta}^{2} \!+\! \| \nabla u_{1}^{j,h} \|^{2} \!+\! \| \nabla u_{0}^{j,h} \|^{2} \Big) \notag \\
			+& C(\Omega,\theta,\frac{1}{\nu},T) \Big( \frac{1}{\nu} \| |f^{j}| \|_{2,-1,\beta}^{2} 
			\!+\! \| u_{1}^{j,h} \|^{2} \!+\! \| u_{0}^{j,h} \|^{2} \Big)
			\Big( \frac{1}{\nu} \| |f| \|_{2,0,\beta}^{2} \!+\! \| \nabla u_{1}^{j,h} \|^{2} \!+\! \| \nabla u_{0}^{j,h} \|^{2} \Big). \notag
		\end{align}
	\end{theorem}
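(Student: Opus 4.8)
The plan is a duality argument for the discrete pressure that reduces the bound to the already established $L^{2}$- and $H^{1}$-stability estimates. Since $p_{n,\beta}^{j,h}\in Q^{h}$, the discrete inf-sup condition \eqref{eq:inf-sup-cond} gives, for each $n$,
\begin{gather*}
	C_{\tt{is}}\,\| p_{n,\beta}^{j,h} \| \leq \sup_{0 \neq v^{h} \in X^{h}} \frac{ \big( p_{n,\beta}^{j,h}, \nabla \cdot v^{h} \big) }{ \| \nabla v^{h} \| },
\end{gather*}
so it is enough to control the supremum. I would test the momentum equation in \eqref{eq:DLN-Ensemble-Alg} with an arbitrary $v^{h}\in X^{h}$ (now no longer restricted to $V^{h}$) and isolate the pressure term:
\begin{align*}
	\big( p_{n,\beta}^{j,h}, \nabla \cdot v^{h} \big)
	=& \Big( \widehat{k}_{n}^{-1} u_{n,\alpha}^{j,h}, v^{h} \Big)
	+ b \big( \langle u^{h} \rangle_{n,\ast}, u_{n,\beta}^{j,h}, v^{h} \big)
	+ b \big( u_{n,\ast}^{j,h} - \langle u^{h} \rangle_{n,\ast}, u_{n,\ast}^{j,h}, v^{h} \big) \\
	&+ {\nu}\,\big( \nabla u_{n,\beta}^{j,h}, \nabla v^{h} \big)
	- \big( f^{j}(t_{n,\beta}), v^{h} \big).
\end{align*}

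Next I would bound each term on the right after dividing by $\| \nabla v^{h}\|$ and taking the supremum. Cauchy--Schwarz with the Poincar\'e inequality controls the discrete time-derivative term by $C\big\| \widehat{k}_{n}^{-1} u_{n,\alpha}^{j,h} \big\|$ and the viscous term by $\nu\| \nabla u_{n,\beta}^{j,h} \|$, while the definition of the dual norm \eqref{eq:dual-norm} controls the forcing by $\| f^{j}(t_{n,\beta}) \|_{-1}$. For the two nonlinear terms I would apply \eqref{eq:b-bound-2} of Lemma~\ref{lemma:b-bound} with Poincar\'e to write each as $C(\Omega)\big( \| z \| \| z \|_{1} \big)^{1/2}\| \nabla w \|\| \nabla v^{h} \|$, taking $(z,w)=\big( \langle u^{h} \rangle_{n,\ast}, u_{n,\beta}^{j,h} \big)$ in the first and $(z,w)=\big( u_{n,\ast}^{j,h} - \langle u^{h} \rangle_{n,\ast}, u_{n,\ast}^{j,h} \big)$ in the second, then using $\| z \|\| z \|_{1}\leq\| z \|_{1}^{2}\leq C\| \nabla z \|^{2}$ together with the fact that $\langle u^{h} \rangle_{n,\ast}$, $u_{n,\ast}^{j,h}$ and their difference are fixed linear combinations of $u_{n}^{j,h},u_{n-1}^{j,h}$ (or of their ensemble averages) with coefficients bounded by \eqref{eq:bound-beta} and the time-ratio bounds \eqref{eq:time-ratio-cond}, with \eqref{eq:DLN-2nd-approx} available where convenient. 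Squaring the resulting estimate for $\big( p_{n,\beta}^{j,h}, \nabla \cdot v^{h} \big)/\| \nabla v^{h} \|$, multiplying by $\widehat{k}_{n}$, and summing over $n=1,\dots,M-1$ then leaves one sum per term on the right-hand side to control.

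To close, I would invoke the earlier estimates together with $\widehat{k}_{n}\leq k_{n}+k_{n-1}$ and $\sum_{n}(k_{n}+k_{n-1})\leq 2T$. The forcing sum collapses to $\| |f^{j}| \|_{2,-1,\beta}^{2}$ by \eqref{eq:def-norm-discrete}, and the viscous sum is $\nu\sum_{n}\nu\widehat{k}_{n}\| \nabla u_{n,\beta}^{j,h} \|^{2}$, bounded by $\nu$ times the $L^{2}$-stability estimate \eqref{eq:L2-Stab-conclusion}; these two give the first line of \eqref{eq:DLN-Pressure-Stab-L2-conclusion}. The discrete time-derivative sum is exactly $\sum_{n}\widehat{k}_{n}\big\| \widehat{k}_{n}^{-1} u_{n,\alpha}^{j,h} \big\|^{2}$, controlled by the $H^{1}$-stability bound \eqref{eq:DLN-Stab-H1-conclusion}, giving the second line. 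For the nonlinear sums, $\max_{n}\| \nabla \langle u^{h} \rangle_{n,\ast} \|^{2}$ and $\max_{n}\big\| \nabla\big( u_{n,\ast}^{j,h}-\langle u^{h} \rangle_{n,\ast} \big) \big\|^{2}$ are bounded uniformly in $n$ by \eqref{eq:DLN-Stab-H1-conclusion} (the ensemble average only improves constants via Jensen's inequality, which is why the un-indexed data $\| |f| \|_{2,0,\beta}$, $\| \nabla u_{1}^{j,h} \|$, $\| \nabla u_{0}^{j,h} \|$ appear), $\sum_{n}\widehat{k}_{n}\| \nabla u_{n,\beta}^{j,h} \|^{2}$ is bounded by $\nu^{-1}$ times \eqref{eq:L2-Stab-conclusion}, and $\sum_{n}\widehat{k}_{n}\| \nabla u_{n,\ast}^{j,h} \|^{2}\leq C\sum_{n}\widehat{k}_{n}\big( \| \nabla u_{n}^{j,h} \|^{2}+\| \nabla u_{n-1}^{j,h} \|^{2} \big)\leq 2CT\max_{n}\| \nabla u_{n}^{j,h} \|^{2}$, again bounded by \eqref{eq:DLN-Stab-H1-conclusion}. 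Pulling the uniform ($H^{1}$-type) factors out of the sums and multiplying by the remaining ($L^{2}$-type) sum produces the product term in the last two lines of \eqref{eq:DLN-Pressure-Stab-L2-conclusion}, with any surviving extra power of the $H^{1}$-stability bound absorbed by Young's inequality so that only the stated linear combination remains.

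The hard part will be the bookkeeping of the nonlinear terms: for each lagged velocity one must decide which norm to treat as uniformly bounded in $n$ and which to keep summable, and one must track the interplay between the CFL-like condition \eqref{eq:CFL-like-cond} --- which bounds $\widehat{k}_{n}h^{-1}\big\| \nabla\big( u_{n,\ast}^{j,h}-\langle u^{h} \rangle_{n,\ast} \big) \big\|^{2}$ --- and the inverse inequality \eqref{eq:inv-inequal}, so that no negative power of $h$ survives. It is precisely this coupling of the two stability regimes that forces the appearance of the cross term $\big( \tfrac{1}{\nu}\| |f^{j}| \|_{2,-1,\beta}^{2}+\| u_{1}^{j,h} \|^{2}+\| u_{0}^{j,h} \|^{2} \big)\big( \tfrac{1}{\nu}\| |f| \|_{2,0,\beta}^{2}+\| \nabla u_{1}^{j,h} \|^{2}+\| \nabla u_{0}^{j,h} \|^{2} \big)$ rather than a single linear bound.
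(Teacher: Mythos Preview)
Your overall architecture is correct and is exactly the one the paper follows: isolate $(p_{n,\beta}^{j,h},\nabla\cdot v^{h})$ from the first equation of \eqref{eq:DLN-Ensemble-Alg}, bound each piece by a multiple of $\|\nabla v^{h}\|$, apply the discrete inf--sup condition, square, weight by $\widehat{k}_{n}$, sum, and close with the $L^{2}$- and $H^{1}$-stability estimates \eqref{eq:L2-Stab-conclusion} and \eqref{eq:DLN-Stab-H1-conclusion}. The time-derivative, viscous and forcing terms are handled exactly as you say.

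The gap is in the fluctuation nonlinear term. If you bound $b\big(u_{n,\ast}^{j,h}-\langle u^{h}\rangle_{n,\ast},u_{n,\ast}^{j,h},v^{h}\big)$ directly by $C\big\|\nabla(u_{n,\ast}^{j,h}-\langle u^{h}\rangle_{n,\ast})\big\|\,\|\nabla u_{n,\ast}^{j,h}\|\,\|\nabla v^{h}\|$, then after squaring and summing both factors are controlled only by the $H^{1}$-stability bound \eqref{eq:DLN-Stab-H1-conclusion}: neither $\sum_{n}\widehat{k}_{n}\|\nabla u_{n,\ast}^{j,h}\|^{2}$ nor $\sum_{n}\widehat{k}_{n}\|\nabla(u_{n,\ast}^{j,h}-\langle u^{h}\rangle_{n,\ast})\|^{2}$ is an $L^{2}$-dissipation sum (the $L^{2}$-estimate controls $\sum_{n}\nu\widehat{k}_{n}\|\nabla u_{n,\beta}^{j,h}\|^{2}$, with the \emph{implicit} combination $u_{n,\beta}^{j,h}$, not the explicit $u_{n,\ast}^{j,h}$). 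You therefore end up with a term of the form $(H^{1}\text{-bound})^{2}$, which is not present in \eqref{eq:DLN-Pressure-Stab-L2-conclusion}, and no application of Young's inequality can downgrade it to the stated $L^{2}\times H^{1}$ cross term. Using the CFL condition alone on $\widehat{k}_{n}\|\nabla(u_{n,\ast}^{j,h}-\langle u^{h}\rangle_{n,\ast})\|^{2}$ does not help either, because it strips the $\widehat{k}_{n}$ weight and leaves an unweighted sum $\sum_{n}\|\nabla u_{n,\ast}^{j,h}\|^{2}$ that is not bounded.

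The fix in the paper is to \emph{recombine} the two nonlinear terms before estimating:
\[
b\big(\langle u^{h}\rangle_{n,\ast},u_{n,\beta}^{j,h},v^{h}\big)+b\big(u_{n,\ast}^{j,h}-\langle u^{h}\rangle_{n,\ast},u_{n,\ast}^{j,h},v^{h}\big)
=b\big(u_{n,\ast}^{j,h}-\langle u^{h}\rangle_{n,\ast},u_{n,\ast}^{j,h}-u_{n,\beta}^{j,h},v^{h}\big)+b\big(u_{n,\ast}^{j,h},u_{n,\beta}^{j,h},v^{h}\big).
\]
The second piece is handled as you propose and gives the $H^{1}\times L^{2}$ cross term. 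For the first piece one uses skew-symmetry to swap the last two slots, then \eqref{eq:b-bound-3} together with the inverse inequality \eqref{eq:inv-inequal}, the identity \eqref{eq:DLN-2nd-approx} $u_{n,\beta}^{j,h}-u_{n,\ast}^{j,h}=\tfrac{2\beta_{2}^{(n)}}{1-\varepsilon_{n}}U_{n}^{j,h}$, and finally the CFL condition \eqref{eq:CFL-like-cond} on $\|\nabla(u_{n,\ast}^{j,h}-\langle u^{h}\rangle_{n,\ast})\|$ itself, to obtain
\[
\big|b\big(u_{n,\ast}^{j,h}-\langle u^{h}\rangle_{n,\ast},u_{n,\ast}^{j,h}-u_{n,\beta}^{j,h},v^{h}\big)\big|
\le C(\Omega,\theta)\sqrt{\tfrac{\nu}{\widehat{k}_{n}}}\,\tfrac{1}{1+\varepsilon_{n}\theta}\,\|U_{n}^{j,h}\|\,\|\nabla v^{h}\|.
\]
After squaring and weighting by $\widehat{k}_{n}$, this sums to $C\nu\sum_{n}\tfrac{\theta(1-\theta^{2})}{(1+\varepsilon_{n}\theta)^{2}}\|U_{n}^{j,h}\|^{2}$, which is precisely the numerical-dissipation term already controlled by the $L^{2}$-stability estimate \eqref{eq:L2-Stab-conclusion}. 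That is what produces a pure $L^{2}$-type contribution here and keeps the final bound in the form \eqref{eq:DLN-Pressure-Stab-L2-conclusion}.
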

	\begin{proof}
		By the first equation of DLN-Ensemble Algorithms in \eqref{eq:DLN-Ensemble-Alg}, 
		\begin{align}
			\big( p_{n,\beta}^{j,h}, \nabla \cdot v^{h} \big) 
			=&  \Big( \frac{u_{n,\alpha}^{j,h}}{\widehat{k}_{n}}, v^{h}  \Big)  + {\nu} \big( \nabla u_{n,\beta}^{j,h} , \nabla v^{h}  \big)
			- \big( f^{j}(t_{n,\beta}), v^{h} \big)  
			\label{eq:DLN-Pressure-Stab-Eq1}   \\
			&+ b \big( \langle u^{h} \rangle_{n,\ast} , u_{n,\beta}^{j,h} , v^{h} \big) 
			+ b \big( u_{n,\ast}^{j,h} - \langle u^{h} \rangle_{n,\ast}, u_{n,\ast}^{j,h}, v^{h} \big), 
			\quad \forall v^{h} \in X^{h}. \notag 
		\end{align}
		By Cauchy-Schwarz inequality, Poincar\'e inequality, Young's inequality and definition of dual norm in \eqref{eq:dual-norm}
		\begin{gather}
			\Big( \frac{u_{n,\alpha}^{j,h}}{\widehat{k}_{n}}, v^{h}  \Big)
			\leq C(\Omega) \Big\| \widehat{k}_{n}^{-1} u_{n,\alpha}^{j,h} \Big\| \| \nabla v^{h} \|,  \qquad
			\nu \big( \nabla u_{n,\beta}^{j,h} , \nabla v^{h} \big)
			\leq \nu \big\| \nabla u_{n,\beta}^{j,h} \big\| \| \nabla v^{h} \|, \notag \\
			\big( f^{j}(t_{n,\beta}), v^{h} \big) 
			\leq \| f^{j}(t_{n,\beta}) \|_{-1} \| \nabla v^{h} \|. 
			\label{eq:DLN-Pressure-Stab-Eq1-term1-2-3} 
		\end{gather}
		For non-linear terms, we have
		\begin{align*}
		&b \big( u_{n,\ast}^{j,h} - \langle u^{h} \rangle_{n,\ast}, u_{n,\ast}^{j,h}, v^{h} \big) 
		+ b \big( \langle u^{h} \rangle_{n,\ast}, u_{n,\beta}^{j,h} , v^{h} \big) \\
		=& b \big( u_{n,\ast}^{j,h} - \langle u^{h} \rangle_{n,\ast}, u_{n,\ast}^{j,h}, v^{h} \big) 
		+ b \big( \langle u^{h} \rangle_{n,\ast} - u_{n,\ast}^{j,h}, u_{n,\beta}^{j,h} , v^{h} \big)  
		+ b \big( u_{n,\ast}^{j,h}, u_{n,\beta}^{j,h} , v^{h} \big) \\
		=& b \big( u_{n,\ast}^{j,h} - \langle u^{h} \rangle_{n,\ast}, u_{n,\ast}^{j,h} - u_{n,\beta}^{j,h}, v^{h} \big) 
		+ b \big( u_{n,\ast}^{j,h}, u_{n,\beta}^{j,h} , v^{h} \big),
		\end{align*}
		By skew-symmetric property of $b$, \eqref{eq:b-bound-2}, \eqref{eq:b-bound-3} in Lemma \ref{lemma:b-bound}, inverse inequality in \eqref{eq:inv-inequal}, the fact in \eqref{eq:DLN-2nd-approx}, CFL-like conditions in \eqref{eq:CFL-like-cond} and bound of $\beta_{\ell}^{(n)}$ in \eqref{eq:bound-beta}
		\begin{align}
			b \big( u_{n,\ast}^{j,h} - \langle u^{h} \rangle_{n,\ast}, u_{n,\ast}^{j,h} - u_{n,\beta}^{j,h}, v^{h} \big) 
			=& b \big( u_{n,\ast}^{j,h} - \langle u^{h} \rangle_{n,\ast}, v^{h}, u_{n,\beta}^{j,h} - u_{n,\ast}^{j,h} \big) 
			\label{eq:DLN-Pressure-Stab-Eq1-term4} \\
			\leq& C(\Omega) h^{-1/2} \big\| \nabla ( u_{n,\ast}^{j,h} - \langle u^{h} \rangle_{n,\ast} ) \big\|
			\frac{2 \beta_{2}^{(n)}}{1 - \varepsilon_{n}} \| U_{n}^{j,h} \| \| \nabla v^{h} \| \notag \\
			\leq& C(\Omega,\theta) h^{-1/2} \sqrt{\frac{h \nu}{\widehat{k}_{n} } } \frac{1 - \varepsilon_{n}}{1 + \varepsilon_{n} \theta} \frac{1}{ 1 - \varepsilon_{n} } \| U_{n}^{j,h} \| \| \nabla v^{h} \| \notag \\
			=& C(\Omega,\theta) \sqrt{\frac{\nu}{\widehat{k}_{n} } } \frac{1}{ 1 + \varepsilon_{n} \theta } 
			\| U_{n}^{j,h} \| \| \nabla v^{h} \|. \notag \\
			b \big( u_{n,\ast}^{j,h}, u_{n,\beta}^{j,h}, v^{h} \big) 
			\leq& C(\Omega) \| \nabla u_{n,\ast}^{j,h} \| \| \nabla u_{n,\beta}^{j,h} \| \| \nabla v^{h} \| 
			\label{eq:DLN-Pressure-Stab-Eq1-term5} \\
			\leq& C(\Omega,\theta) \big( \| \nabla u_{n}^{j,h} \| + \| \nabla u_{n-1}^{j,h}  \| \big)  \| \nabla u_{n,\beta}^{j,h} \| 
			\| \nabla v^{h} \|. \notag 
		\end{align}
		We combine \eqref{eq:DLN-Pressure-Stab-Eq1}, \eqref{eq:DLN-Pressure-Stab-Eq1-term1-2-3}, \eqref{eq:DLN-Pressure-Stab-Eq1-term4}, \eqref{eq:DLN-Pressure-Stab-Eq1-term5} and obtain: 
		$\forall v^{h} \in X_{r}^{h}$
		\begin{align}
			\frac{ \big( p_{n,\beta}^{j,h}, \nabla \cdot v^{h} \big) }{ \| \nabla v^{h} \| }
			\leq& C(\Omega) \Big\| \widehat{k}_{n}^{-1} u_{n,\alpha}^{j,h} \Big\| + \nu \big\| \nabla u_{n,\beta}^{j,h} \big\|
			+ \| f^{j}(t_{n,\beta}) \|_{-1} 
			\label{eq:DLN-Pressure-Stab-Eq2}  \\
			&+ C(\Omega,\theta) \sqrt{\frac{\nu}{\widehat{k}_{n} } } \frac{1}{ 1 + \varepsilon_{n} \theta } 
			\| U_{n}^{j,h} \| + C(\Omega,\theta) \big( \| \nabla u_{n}^{j,h} \| + \| \nabla u_{n-1}^{j,h}  \| \big)  \| \nabla u_{n,\beta}^{j,h} \|. \notag 
		\end{align}
		By \eqref{eq:inf-sup-cond}, we have: for $M=2,\cdots, N$
		\begin{align}
			C_{\tt{is}}^{2} \sum_{n=1}^{M-1} \widehat{k}_{n} \| p_{n,\beta}^{j,h} \|^{2}
			\leq& \frac{C(\Omega,\theta) \nu}{\nu (1+\theta)}\sum_{n=1}^{M-1} \widehat{k}_{n} \Big\| \widehat{k}_{n}^{-1} u_{n,\alpha}^{j,h} \Big\|^{2}
			+ \nu \sum_{n=1}^{M-1} \nu \widehat{k}_{n} \big\| \nabla u_{n,\beta}^{j,h} \big\|^{2} 
			\label{eq:DLN-Pressure-Stab-Eq3} \\
			+&  \sum_{n=1}^{M-1} (k_{n} + k_{n-1}) \| f^{j}(t_{n,\beta}) \|_{-1}^{2} 
			+ C(\Omega,\theta) \nu  \sum_{n=1}^{M-1} \frac{\theta(1 - \theta^{2})}{ (1 + \varepsilon_{n} \theta )^{2} } \| U_{n}^{j,h} \|^{2} \notag \\
			+& \frac{C(\Omega,\theta)}{\nu} \big( \max_{0 \leq n \leq M} \| \nabla u_{n}^{j,h} \|^{2} \big) 
			\sum_{n=1}^{M-1} \nu \widehat{k}_{n} \| \nabla u_{n,\beta}^{j,h} \|^{2}
		\end{align}
		By \eqref{eq:L2-Stab-conclusion} and \eqref{eq:DLN-Stab-H1-conclusion}, we have \eqref{eq:DLN-Pressure-Stab-L2-conclusion}.

	\end{proof}

	\begin{theorem}
		\label{thm:Error-Pressure}
		We assume that for the $j$-th NSE in \eqref{eq:jth-NSE}, the velocity $u^{j}(x,t)$ satisfies
		\begin{gather*}
		u \in \ell^{\infty}(\{ t_{n} \}_{n=0}^{N};H^{r+1}(\Omega)) \cap \ell^{\infty,\beta}(\{ t_{n} \}_{n=0}^{N};H^{1}(\Omega))
		\cap \ell^{2,\beta}(\{ t_{n} \}_{n=0}^{N};H^{r+1}(\Omega)), \\
		u_{t} \in L^{2}(0,T;H^{r+1}(\Omega)),  \ \ 
		u_{tt} \in L^{2}(0,T;H^{r+1}(\Omega)), \ \ 
		u_{ttt} \in L^{2}(0,T;X^{-1} \cap L^{2}(\Omega)), 
		\end{gather*} 
		and the pressure $p^{j}(x,t)$ satisfies
		\begin{gather*}
		p \in \ell^{\infty}(\{ t_{n} \}_{n=0}^{N};H^{s+1}) \cap \ell^{2,\beta}(\{ t_{n} \}_{n=0}^{N};H^{s+1}), \\
		p_{t} \in L^{2}(0,T;H^{s+1}(\Omega)), \ \ p_{tt} \in L^{2}(0,T;H^{s+1}(\Omega)),
		\end{gather*}
		for all $j$.  
		Under the CFL-like conditions in \eqref{eq:CFL-like-cond}, time ratio conditions in \eqref{eq:time-ratio-cond} and the time-diameter condition in \eqref{eq:time-diameter-relation},
		the numerical solutions of the DLN-Ensemble algorithms in \eqref{eq:DLN-Ensemble-Alg} for all $\theta \in (0,1)$ satisfy
		\begin{align}
		\Big( \sum_{n=1}^{N-1} \widehat{k}_{n} \| p_{n,\beta}^{j} - p_{n,\beta}^{j,h} \|^{2} \Big)^{1/2}
		\leq \mathcal{O} \big( h^{r},h^{s+1}, k_{\rm{max}}^{2}, h^{1/2}k_{\rm{max}}^{3/2}  \big). 
		\label{eq:pressure-L2-conclusion} 
		\end{align}
	\end{theorem}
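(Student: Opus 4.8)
The plan is to recover the pressure error from the momentum error equation through the discrete inf--sup condition \eqref{eq:inf-sup-cond}, mirroring the pressure stability argument \eqref{eq:DLN-Pressure-Stab-Eq1}--\eqref{eq:DLN-Pressure-Stab-Eq3} but now measuring against the exact solution and feeding in the velocity error bounds already established in \cref{thm:Error-L2,thm:Error-H1}. First I would set up the momentum error equation. Write $e_{n}^{j}:=u_{n}^{j}-u_{n}^{j,h}$ and $\eta_{n}^{j}:=p_{n}^{j}-p_{n}^{j,h}$, together with their $\alpha$-, $\beta$- and $\ast$-combinations. Testing the $j$-th NSE in \eqref{eq:jth-NSE} at $t_{n,\beta}$ against $v^{h}\in X^{h}$, rewriting its convective term in the DLN-ensemble split form (which is exact once $u^{j}(t_{n,\beta})$ is replaced by $u_{n,\beta}^{j}$ and $u_{n,\ast}^{j}$, each a second-order-in-time approximation by \cref{lemma:DLN-consistency}), and subtracting \eqref{eq:DLN-Ensemble-Alg}, one obtains, for all $v^{h}\in X^{h}$,
\[
\Big(\tfrac{e_{n,\alpha}^{j}}{\widehat{k}_{n}},v^{h}\Big)+\nu\big(\nabla e_{n,\beta}^{j},\nabla v^{h}\big)-\big(\eta_{n,\beta}^{j},\nabla\cdot v^{h}\big)+\mathcal{N}_{n}^{j}(v^{h})=\big(\tau_{n}^{j},v^{h}\big),
\]
where $\tau_{n}^{j}$ collects the DLN time-consistency residuals (the leading one $u_{t}^{j}(t_{n,\beta})-\widehat{k}_{n}^{-1}u_{n,\alpha}^{j}$ being bounded in $X^{-1}$ via \eqref{eq:consist-2nd-eq3}, the rest via \eqref{eq:consist-2nd-eq1}--\eqref{eq:consist-2nd-eq2}), and $\mathcal{N}_{n}^{j}$ is the convective error, which I would split exactly as in the velocity error analysis of Appendices \ref{appendixB-L2} and \ref{appendixB-H1} into a ``mean'' part involving $\langle u^{h}\rangle_{n,\ast}-\langle u^{j}\rangle_{n,\ast}$ and $e_{n,\beta}^{j}$, and an explicit-``fluctuation'' part involving $u_{n,\ast}^{j,h}-\langle u^{h}\rangle_{n,\ast}$ and $e_{n,\ast}^{j}$, plus lower-order remainders coming from $u_{n,\ast}^{j}-u_{n,\beta}^{j}$.

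Next I would recover the discrete pressure. Decompose $p_{n,\beta}^{j}-p_{n,\beta}^{j,h}=(p_{n,\beta}^{j}-r_{n,\beta}^{j,h})+(r_{n,\beta}^{j,h}-p_{n,\beta}^{j,h})$, with $r^{j,h}\in Q^{h}$ the $L^{2}(\Omega)$-projection of $p^{j}$ (or the Stokes-projected pressure); the first term is $\mathcal{O}(h^{s+1})$ in the relevant discrete norm by \eqref{eq:approx-thm} and the assumed pressure regularity. For the discrete part, \eqref{eq:inf-sup-cond} gives $C_{\tt{is}}\|r_{n,\beta}^{j,h}-p_{n,\beta}^{j,h}\|\le\sup_{0\ne v^{h}\in X^{h}}(\nabla\cdot v^{h},r_{n,\beta}^{j,h}-p_{n,\beta}^{j,h})/\|\nabla v^{h}\|$; using the momentum error equation to rewrite $(\nabla\cdot v^{h},\eta_{n,\beta}^{j})$ and the Poincar\'e inequality, this supremum is controlled by $\|\widehat{k}_{n}^{-1}e_{n,\alpha}^{j}\|+\nu\|\nabla e_{n,\beta}^{j}\|+\|\tau_{n}^{j}\|_{-1}+\|p_{n,\beta}^{j}-r_{n,\beta}^{j,h}\|$ plus the convective contributions. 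The convective contributions are handled by \cref{lemma:b-bound}, the $H^{1}$-stability of the discrete velocity \eqref{eq:DLN-Stab-H1-conclusion}, and --- for the explicit fluctuation terms only --- the inverse inequality \eqref{eq:inv-inequal} together with the CFL-like condition \eqref{eq:CFL-like-cond}, which absorbs the resulting $h^{-1/2}$ factor and leaves a factor of $\sqrt{\nu/\widehat{k}_{n}}$ exactly as in \eqref{eq:DLN-Pressure-Stab-Eq1-term4}.

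Finally I would square the inf--sup bound, multiply by $\widehat{k}_{n}$, sum over $n=1,\dots,M-1$, and identify each resulting sum with an already-established bound: $\sum_{n}\widehat{k}_{n}\|\widehat{k}_{n}^{-1}e_{n,\alpha}^{j}\|^{2}$ and $\nu\sum_{n}\widehat{k}_{n}\|\nabla e_{n,\beta}^{j}\|^{2}$ are $\mathcal{O}(h^{2r},h^{2(s+1)},k_{\rm{max}}^{4},hk_{\rm{max}}^{3})$ by \cref{thm:Error-L2,thm:Error-H1}; $\sum_{n}\widehat{k}_{n}\|\tau_{n}^{j}\|_{-1}^{2}\lesssim k_{\rm{max}}^{4}$ by \cref{lemma:DLN-consistency}; $\sum_{n}\widehat{k}_{n}\|p_{n,\beta}^{j}-r_{n,\beta}^{j,h}\|^{2}=\mathcal{O}(h^{2(s+1)})$; the mean convective sums are controlled through the discrete-velocity $H^{1}$-stability \eqref{eq:DLN-Stab-H1-conclusion} together with the velocity-error bounds of \cref{thm:Error-H1}; and the explicit-fluctuation sums are controlled via \eqref{eq:CFL-like-cond}, \eqref{eq:L2-Stab-conclusion}, and the velocity-error bounds, with the time-ratio bounds \eqref{eq:time-ratio-cond} and the time-diameter relation \eqref{eq:time-diameter-relation} used wherever a negative power of $h$ must be controlled. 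Taking square roots and collecting orders then yields \eqref{eq:pressure-L2-conclusion}.

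I expect the main obstacle to be the explicit fluctuation part of $\mathcal{N}_{n}^{j}$: because $u_{n,\ast}^{j,h}-\langle u^{h}\rangle_{n,\ast}$ is not uniformly small and is tamed only through the CFL-like condition \eqref{eq:CFL-like-cond} after an inverse inequality, one must carefully pair each $h^{-1/2}$ factor with a $\widehat{k}_{n}^{1/2}$ so that the leftover sums match precisely the bounded quantities appearing in the velocity stability and error estimates, and then verify that no uncompensated negative power of $h$ survives beyond those already present in $\mathcal{O}(h^{1/2}k_{\rm{max}}^{3/2})$ --- this is exactly where \eqref{eq:time-diameter-relation} and \eqref{eq:time-ratio-cond} enter, as in the proofs of \cref{thm:Error-L2,thm:Error-H1}.
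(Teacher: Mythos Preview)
Your proposal is correct and follows essentially the same route as the paper: derive the pressure error from the momentum error equation via the discrete inf--sup condition \eqref{eq:inf-sup-cond}, decompose $p_{n,\beta}^{j}-p_{n,\beta}^{j,h}$ into an approximation part and a discrete part, bound each right-hand term with \cref{lemma:b-bound}, \cref{lemma:DLN-consistency}, and the CFL-like condition \eqref{eq:CFL-like-cond} (the latter converting $\|\nabla(u_{n,\ast}^{j,h}-\langle u^{h}\rangle_{n,\ast})\|^{2}$ into $h\nu/\widehat{k}_{n}$ for the fluctuation terms), then square, weight by $\widehat{k}_{n}$, sum, and invoke the velocity error estimates of \cref{thm:Error-L2,thm:Error-H1}. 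The paper's organization of the convective error is the add--subtract $b(u_{n,\ast}^{j},u_{n,\beta}^{j},v^{h})$ device followed by the fluctuation splitting into $\eta_{n,\ast}^{j}-\eta_{n,\beta}^{j}$, $\Phi_{n}^{j,h}$, and $u_{n,\ast}^{j}-u_{n,\beta}^{j}$ pieces, and the $\Phi_{n}^{j,h}$ sum is closed by the $\|\nabla\Phi_{n}^{j,h}\|$ dissipation already controlled in the $H^{1}$ error proof rather than by the $H^{1}$ stability bound; your outline is compatible with this, though you should be aware that the quantity you ultimately need for the fluctuation-$\Phi$ piece is $\sum_{n}\tfrac{\theta(1-\theta^{2})}{(1+\varepsilon_{n}\theta)^{2}}\|\nabla\Phi_{n}^{j,h}\|^{2}$ from \eqref{eq:DLN-Error-H1-eq4}, not the stability estimate \eqref{eq:DLN-Stab-H1-conclusion}.
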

	\begin{proof}
		The proof is relatively long, thus we leave it to Appendix \ref{appendixB-Pressure}.
	\end{proof}
	\ \\

	\section{Adaptive Implementation}
	\label{sec:Adapt-Imple}
	The time adaptive mechanism for the DLN-Ensemble algorithm depends on two essential parts
	\begin{itemize}
		\item the estimator of LTE through a certain explicit scheme,
		\item the time step controller for next step computing. 
	\end{itemize}
	We consider the fully explicit, variable time-stepping AB2-like scheme\footnote{The derivation of scheme is similar to that of two-step Adam-Bashforth (AB2) scheme, hence we call it AB2-like scheme.} to estimate LTE of the DLN-Ensemble algorithm. The AB2-like scheme for the initial value problem in \eqref{eq:IVP} is
	\begin{align}
		y_{n\!+\!1}^{\tt{AB2-like}} \!&=\! y_{n} \!+\! \frac{t_{n\!+\!1} \!-\!t_{n}}{2 (t_{n\!-\!1,\beta} \!-\!t_{n\!-\!2,\beta})} 
		\Big[ (t_{n\!+\!1} \!+\!t_{n} \!-\!2 t_{n\!-\!2,\beta} )  g^{\tt{DLN}} (t_{n\!-\!1,\beta}, y_{n\!-\!1,\beta})   
		\label{eq:AB2-like} \\
		& \qquad \qquad \qquad \qquad \qquad \qquad 
		\!-\! (t_{n\!+\!1} \!+\!t_{n} \!-\! 2 t_{n\!-\!1,\beta} ) g^{\tt{DLN}} (t_{n\!-\!2,\beta}, y_{n\!-\!2,\beta})  \Big], \notag 
	\end{align}
	where $g^{\tt{DLN}}(t_{n\!-\!1,\beta}, y_{n\!-\!1,\beta}) $ and $g^{\tt{DLN}} (t_{n\!-\!2,\beta}, y_{n\!-\!2,\beta})$ are calculated by the DLN scheme in \eqref{eq:1leg-DLN}
	\begin{align*}
		g^{\tt{DLN}}(t_{n-1,\beta}, y_{n-1,\beta}) 
		&= \frac{y_{n-1,\alpha}}{\widehat{k}_{n-1}}, \quad
		g^{\tt{DLN}} (t_{n-2,\beta}, y_{n-2,\beta}) =
		\frac{y_{n-2,\alpha}}{\widehat{k}_{n-2}}.
	\end{align*}
	From \eqref{eq:AB2-like}, we see that $y_{n\!+\!1}^{\tt{AB2-like}}$	is certain interpolant of the previous four DLN solutions $\{ y_{n}, y_{n-1}, y_{n-2}, y_{n-3} \}$. Hence, the AB2-like scheme in \eqref{eq:AB2-like} is fully explicit.
	We refer to \cite{LPT23_ACSE} for the derivation of the explicit AB2-like scheme in \eqref{eq:AB2-like} and the following estimator of LTE $\widehat{T}_{n+1}$ by AB2-like scheme
	\begin{align*}
		\widehat{T}_{n+1}
		=&\frac{-G^{(n)}}{G^{(n)}+\mathcal{R}^{(n)}} \big( y_{n+1}^{\tt{DLN}} - y_{n+1}^{\tt{AB2-like}} \big),
		\notag \\
		G^{(n)} \!=\!& \Big( \frac{1}{2} \!-\! \frac{ \alpha_{0}}{2 \alpha_{2}} \frac{1}{\tau_{n}} \Big)
		\Big( \beta_{2}^{(n)} \!-\! \beta_{0}^{(n)} \frac{1}{\tau_{n}}  \Big)^{2} 
		\!+\! \frac{ \alpha_{0}}{6 \alpha_{2}} \Big( \frac{1}{\tau_{n}} \Big)^{3} \!-\! \frac{1}{6}, 
		\notag \\
		\mathcal{R}^{(n)} \!=\! & \frac{1}{12} \Big[ 2 \!+\! \frac{3}{\tau_n} \Big( 1 \!-\! \beta_2^{(n\!-\!2)} \frac{1}{\tau_{n\!-\!1}} \!+\!\beta_0^{(n\!-\! 2)} \frac{1}{\tau_{n\!-\!2}} \frac{1}{\tau_{n\!-\!1}} \Big)
		\Big( 1\!-\!\beta_2^{(n\!-\!1)} \frac{1}{\tau_n}\!+\!\beta_0^{(n\!-\!1)} \frac{1}{\tau_{n\!-\!1}} \frac{1}{\tau_n} \Big) \notag \\
		&+\! \frac{3}{\tau_n} \Big( 1\!+\!\frac{1}{\tau_n} \!-\!\beta_2^{(n\!-\!2)} \frac{1}{\tau_{n\!-\!1}} \frac{1}{\tau_n}\!+\!\beta_0^{(n\!-\!2)} \frac{1}{\tau_{n\!-\!2}} \frac{1}{\tau_{n\!-\!1}} \frac{1}{\tau_n} \Big)
		\Big( -\beta_2^{(n\!-\!1)} \!+\! \beta_0^{(n\!-\!1)} \frac{1}{\tau_{n\!-\!1}} \Big) \Big], \notag 
	\end{align*}
	and $\tau_n = k_{n}/k_{n-1}$ is ratio of time step.
	For the DLN-Ensemble algorithm, the AB2-like solutions for $j$-th system at time $t_{n+1}$ is
	\begin{align}
		&u_{n\!+\!1,\tt{AB2-like}}^{j,h} 
		\label{eq:AB2-like-Ensemble} \\
		=& u_{n}^{j,h} \!+\! \frac{t_{n\!+\!1} \!-\!t_{n}}{2 (t_{n\!-\!1,\beta} \!-\!t_{n\!-\!2,\beta})} 
		\Big[ (t_{n\!+\!1} \!+\!t_{n} \!-\!2 t_{n\!-\!2,\beta} )  
		\frac{u_{n-1,\alpha}^{j,h}}{\widehat{k}_{n-1}}   
		\!-\! (t_{n\!+\!1} \!+\!t_{n} \!-\! 2 t_{n\!-\!1,\beta} ) 
		\frac{u_{n-2,\alpha}^{j,h}}{\widehat{k}_{n-2}}  \Big], \notag 
	\end{align}
	and we use the following estimator for LTE of the DLN-Ensemble algorithm 
	\begin{gather}
		\widehat{T}_{n+1}
		= \max_{1 \leq j \leq J}\bigg\{\frac{|G^{(n)}|}{|G^{(n)}+\mathcal{R}^{(n)}|} 
		\frac{\big\| u_{n+1,\tt{DLN}}^{j,h} - u_{n+1,\tt{AB2-like}}^{j,h} \big\|}{\big\| u_{n+1,\tt{DLN}}^{j,h} \big\|} \bigg\} .
		\label{eq:Estimator-LTE-Ensemble}
	\end{gather}
	We adopt the time step controller proposed in \cite{HNW93_Springer}
	\begin{gather}
    	k_{n+1} = k_{n} \cdot \min \Big\{ 1.5, \max \Big\{ 0.2, \kappa \big( {\tt{Tol}}/\| \widehat{T}_{n+1} \|  \big)^{\frac{1}{3}} \Big\} \Big\},
    	\label{eq:improve-controller}
    \end{gather}
    where $\kappa \in (0, 1]$ is the safety factor and ${\tt{Tol}}$ is the required tolerance for the LTE.
	From \eqref{eq:improve-controller}, the next time step $k_{n+1}$ is adjusted smaller if the estimator for LTE $\widehat{T}_{n+1}$ is large with respect to ${\tt{Tol}}$. Meanwhile, $k_{n+1}$ is no larger than $1.5 k_{n}$ for robust computing and no smaller than $0.2 k_{n}$ for efficiency. 
	The time adaptivity mechanism for the DLN-Ensemble algorithm is summarized in Algorithm \ref{alg:Adap-DLN}.
	\LinesNumberedHidden
    \begin{algorithm}[ptbh]
    	\caption{Adaptivity of  DLN (estimator of LTE by AB2-like scheme)}
    	\label{alg:Adap-DLN}
    	\KwIn{tolerance ${\tt{Tol}}$, previous DLN solutions $u_{n}^{j,h},u_{n-1}^{j,h},u_{n-2}^{j,h},u_{n-3}^{j,h}$ for all $j$,
    		current time step $k_{n}$, three previous time step $k_{n-1},k_{n-2},k_{n-3}$, safety factor $\kappa$, \;}
    	compute the DLN solution $u_{n+1}^{h,\tt DLN}$ and $p_{n+1}^{h,\tt DLN}$ 
    	by \eqref{eq:DLN-Ensemble-Alg} or the refactorization process \;
    	compute the AB2-like solution $u_{n+1}^{h,\tt{AB2-like}}$ by \eqref{eq:AB2-like-Ensemble} \;
    	use $k_{n},k_{n-1},k_{n-2},k_{n-3}$ to update $\tau_{n},\tau_{n-1},\tau_{n-2}$ \;
    	compute $\widehat{T}_{n+1}$ by \eqref{eq:Estimator-LTE-Ensemble}      \tcp*{relative estimator}
    	\uIf{$ \widehat{T}_{n+1}  < \rm{Tol}$}
    	{
    		$u_{n+1}^{h} \Leftarrow u_{n+1}^{h,\tt DLN}$  \tcp*{accept the result}
    		$k_{n\!+\!1} \!\Leftarrow \! k_{n} \cdot \min \big\{ \!1.5, \max \big\{\!0.2, \kappa \big(\!\frac {\text{Tol}}{ \widehat{T}_{n+1}  }\!\big)^{1/3} \big\} \!\big\}$  
    		\tcp*{adjust step by \eqref{eq:improve-controller}}
    	}\Else
    	{
    		\tt{// adjust current step to recompute}  
    		$k_{n} \!\Leftarrow \! k_{n} \cdot \min \big\{ 1.5, \max \big\{0.2, \kappa \big(\frac {\text{Tol}}{ \widehat{T}_{n+1}  }\big)^{1/3} \big\} \big\}$ \;
    	}
    \end{algorithm}

	\section{Numerical Tests}
	\label{sec:Num-Tests}
	In this section, we test the DLN-Ensemble Algorithm in \eqref{eq:DLN-Ensemble-Alg} with $\theta = 2/3$ and $\theta = 2/\sqrt{5}$. 
	$\theta = 2/3$ is mentioned \cite{DLN83_SIAM_JNA} to balance the magnitude of LTE and fine stability properties.
	$\theta = 2/\sqrt{5}$ is suggested in \cite{KS05} to have the best stability at infinity.
	We use Taylor-Hood ($\mathbb{P}2-\mathbb{P}1$) finite element space for spatial discretization and the software MATLAB for programming.

	\subsection{Convergence Test}
	\label{subsec:Conv-test}

	To verify second-order convergence of DLN-ensemble algorithm in \eqref{eq:DLN-Ensemble-Alg}, we refer to the 2D Taylor-Green vortex problem  \cite{GQ98_IJNMF} with the following revised analytical solutions of the NSE on the domain $\Omega = [0,1]^{2}$ over time interval $[0,1]$. 
	\begin{gather*}
		u_{1}(x,y,t) = - \cos(\pi x) \sin( \pi y) \sin( \omega t), \qquad u_{2}(x,y,t) = \sin(\pi x) \cos( \pi y) \sin(\omega t), \\ 
		p(x,y,t) = - \frac{1}{4} \big[ \cos(2 \pi x) + \cos(2 \pi y) \big] \sin^{2}(\omega t).
	\end{gather*}
	We use the constant time-stepping DLN-ensemble algorithm in \eqref{eq:DLN-Ensemble-Alg} with the refactorization process on BE-solver to solve ten systems of NSE ($J=10$) simultaneously. The exact solutions for velocity are  
	\begin{gather*}
		u_{1}^{j}(x,y,t) = \big( 1 + \delta_{j} \big) u_{1}(x,y,t), \quad u_{2}^{j}(x,y,t) = \big( 1 + \delta_{j} \big) u_{2}(x,y,t),
		\quad j = 1, \cdots, 10,
	\end{gather*}
	where the random array $\{ \delta_{j} \}_{j=1}^{10}$ are uniformly distributed between $-1.\rm{e}-2$ and $1.\rm{e}-2$ in ascending order.
	The pressure function $p^{j}(x,y,t)$ is the same as $p(x,y,t)$ for all $j$. 
	We set $\nu = 5.\rm{e}-3$ ($\rm{Re} = 200$), $\omega = 10$ and constant time step $k = \frac{1}{2}h$.
	We measure the following error for each $j$-th systems
	\begin{gather*}
		\| u^{j} - u^{j,h} \|_{\infty,0} = \max_{0 \leq n \leq N} \| u_{n}^{j} - u_{n}^{j,h} \|, \qquad 
		\| u^{j} - u^{j,h} \|_{\infty,1} = \max_{0 \leq n \leq N} \| u_{n}^{j} - u_{n}^{j,h} \|_{1}, \\
		\| p^{j} - p^{j,h} \|_{2,0} = \Big( \sum_{n=0}^{N} k \| p_{n}^{j} - p_{n}^{j,h} \|^{2} \Big)^{1/2}, \qquad
		j = 1,2,\cdots, J,
	\end{gather*}
	and average of the above errors
	\begin{gather}
		\mathbb{E}\big[\| u - u^{h} \|_{\infty,0}\big]
		\!=\! \frac{1}{J} \Big( \sum_{j=1}^{J} \| u^{j} - u^{j,h} \|_{\infty,0} \Big), \  
		\mathbb{E}\big[\| u - u^{h} \|_{\infty,1}\big]
		\!=\! \frac{1}{J} \Big( \sum_{j=1}^{J} \| u^{j} - u^{j,h} \|_{\infty,1} \Big), \notag \\
		\mathbb{E}\big[\| p - p^{h} \|_{2,0}\big]
		\!=\! \frac{1}{J} \Big( \sum_{j=1}^{J} \| p^{j} - p^{j,h} \|_{2,0} \Big), \qquad
		j = 1,2,\cdots, J.
		\label{eq:Error-Avg}
	\end{gather}
	The convergence rate $R$ is given by 
	\begin{gather*}
		R = \log_{2} \Big( \frac{\rm{error}(k,h)}{\rm{error}(k/2,h/2)} \Big).
	\end{gather*}
	We provide errors of the first system ($j=1$), the last system ($j=10$), average errors and corresponding convergence rate in \Cref{table:Const-DLN23-Ensemble-Re200-001,table:Const-DLN25-Ensemble-Re200-001}. 
	We observe that all the errors are second-order convergent, consistent with theories in Section \ref{sec:Num-Analysis}.
	\begin{table}[ptbh]
		\centering
		\renewcommand\arraystretch{1.25}
		\caption{Error and convergence rate for constant time-stepping DLN-Ensemble algorithm with $\theta = 2/3$, $\rm{Re}=200$} 
		\begin{tabular}{cccccccc}
			\hline
			\hline
			$h\!$  & $\!\| u^{1} \!-\! u^{1,h} \|_{\infty,0}\!$ & $\!R\!$
			& $\!\| u^{1} \!-\! u^{1,h} \|_{\infty,1}\!$ & $\!R\!$ 
			& $\!\| p^{1} \!-\! p^{1,h} \|_{2,0}\!$ & $\!R\!$
			\\
			\hline 
			$\frac{1}{8}$         & 1.4321e-2   & -        & 4.6046e-1   & -        & 7.3065e-2    & -
			\\
			$\frac{1}{16}$        & 3.3161e-3   & 2.1106   & 1.0836e-1   & 2.0878   & 1.4523e-2    & 2.3309
			\\
			$\frac{1}{32}$        & 7.9477e-4   & 2.0609   & 2.6300e-2   & 2.0421   & 3.4100e-3    & 2.0905
			\\
			$\frac{1}{64}$        & 1.9550e-4   & 2.0234   & 6.4898e-3   & 2.0188   & 8.3585e-4    & 2.0285 
			\\
			$\frac{1}{128}$       & 4.8595e-5   & 2.0083   & 1.6276e-3   & 1.9954   & 2.0754e-4    & 2.0098 
			\\
			\hline
			\hline 
			$h\!$   & $\!\| u^{10} \!-\! u^{10,h} \|_{\infty,0}\!$ & $\!R\!$
			& $\!\| u^{10} \!-\! u^{10,h} \|_{\infty,1}\!$ & $\!R\!$ 
			& $\!\| p^{10} \!-\! p^{10,h} \|_{2,0}\!$ & $\!R\!$
			\\
			\hline 
			$\frac{1}{8}$          & 1.4616e-2    & -        & 4.6995e-1   & -        & 7.3733e-2    & -
			\\
			$\frac{1}{16}$         & 3.3856e-3    & 2.1101   & 1.1027e-1   & 2.0915   & 1.4653e-2    & 2.3311
			\\
			$\frac{1}{32}$         & 8.1171e-4    & 2.0604   & 2.6754e-2   & 2.0432   & 3.4409e-3    & 2.0903
			\\
			$\frac{1}{64}$         & 1.9981e-4    & 2.0223   & 6.6069e-3   & 2.0177   & 8.4349e-4    & 2.0283
			\\
			$\frac{1}{128}$        & 4.9665e-5    & 2.0084   & 1.6582e-3   & 1.9943   & 2.0945e-4    & 2.0098
			\\
			\hline
			\hline 
			$h\!$   & $\!\mathbb{E}\!\big[\| u \!-\! u^{h} \|_{\infty,0}\big]\!$ & $\!R\!$
			& $\!\mathbb{E}\!\big[\| u \!-\! u^{h} \|_{\infty,1}\big]\!$ & $\!R\!$ 
			& $\!\mathbb{E}\!\big[\| p \!-\! p^{h} \|_{2,0}\big]\!$ & $\!R\!$
			\\
			\hline 
			$\frac{1}{8}$          & 1.4446e-2    & -        & 4.6447e-1   & -        & 7.3348e-2    & -
			\\
			$\frac{1}{16}$         & 3.3456e-3    & 2.1104   & 1.0914e-1   & 2.0894   & 1.4578e-2    & 2.3310
			\\
			$\frac{1}{32}$         & 8.0196e-4    & 2.0607   & 2.6492e-2   & 2.0426   & 3.4231e-3    & 2.0904
			\\
			$\frac{1}{64}$         & 1.9733e-4    & 2.0229   & 6.5394e-3   & 2.0183   & 8.3908e-4    & 2.0284
			\\
			$\frac{1}{128}$        & 4.9049e-5    & 2.0083   & 1.6406e-3   & 1.9950   & 2.0835e-4    & 2.0098
			\\
			\hline
			\hline 
		\end{tabular}
		\label{table:Const-DLN23-Ensemble-Re200-001}
	\end{table}
	
	\begin{table}[ptbh]
		\centering
		\renewcommand\arraystretch{1.25}
		\caption{Error and convergence rate for constant time-stepping DLN-Ensemble algorithm 
		with $\theta = 2/\sqrt{5}$, $\rm{Re}=200$} 
		\begin{tabular}{cccccccc}
			\hline
			\hline
			$h\!$  & $\!\| u^{1} \!-\! u^{1,h} \|_{\infty,0}\!$ & $\!R\!$
			& $\!\| u^{1} \!-\! u^{1,h} \|_{\infty,1}\!$ & $\!R\!$ 
			& $\!\| p^{1} \!-\! p^{1,h} \|_{2,0}\!$ & $\!R\!$
			\\
			\hline 
			$\frac{1}{8}$         & 9.2835e-3   & -        & 3.8115e-1   & -        & 4.3617e-2    & -
			\\
			$\frac{1}{16}$        & 2.0535e-3   & 2.1766   & 8.3308e-2   & 2.1938   & 9.8705e-3    & 2.1437
			\\
			$\frac{1}{32}$        & 4.8267e-4   & 2.0890   & 2.0362e-2   & 2.0326   & 2.3891e-3    & 2.0467
			\\
			$\frac{1}{64}$        & 1.1783e-4   & 2.0344   & 4.9924e-3   & 2.0281   & 5.9022e-4    & 2.0171 
			\\
			$\frac{1}{128}$       & 2.9213e-5   & 2.0120   & 1.2570e-0   & 1.9897   & 1.4688e-4    & 2.0066 
			\\
			\hline
			\hline 
			$h\!$   & $\!\| u^{10} \!-\! u^{10,h} \|_{\infty,0}\!$ & $\!R\!$
			& $\!\| u^{10} \!-\! u^{10,h} \|_{\infty,1}\!$ & $\!R\!$ 
			& $\!\| p^{10} \!-\! p^{10,h} \|_{2,0}\!$ & $\!R\!$
			\\
			\hline 
			$\frac{1}{8}$          & 9.4639e-3    & -        & 3.8753e-1   & -        & 4.4468e-2    & -
			\\
			$\frac{1}{16}$         & 2.0950e-3    & 2.1755   & 8.4202e-2   & 2.2024   & 1.0055e-2    & 2.1449
			\\
			$\frac{1}{32}$         & 4.9279e-4    & 2.0879   & 2.0565e-2   & 2.0337   & 2.4341e-3    & 2.0464
			\\
			$\frac{1}{64}$         & 1.2041e-4    & 2.0330   & 5.0451e-3   & 2.0273   & 6.0146e-4    & 2.0169
			\\
			$\frac{1}{128}$        & 2.9856e-5    & 2.0119   & 1.2709e-3   & 1.9890   & 1.4970e-4    & 2.0064
			\\
			\hline
			\hline 
			$h\!$   & $\!\mathbb{E}\!\big[\| u \!-\! u^{h} \|_{\infty,0}\big]\!$ & $\!R\!$
			& $\!\mathbb{E}\!\big[\| u \!-\! u^{h} \|_{\infty,1}\big]\!$ & $\!R\!$ 
			& $\!\mathbb{E}\!\big[\| p \!-\! p^{h} \|_{2,0}\big]\!$ & $\!R\!$
			\\
			\hline 
			$\frac{1}{8}$          & 9.3600e-3    & -      & 3.8384e-1   & -      & 4.3977e-2    & -
			\\
			$\frac{1}{16}$         & 2.0711e-3    & 2.1761   & 8.3686e-2   & 2.1975   & 9.9484e-3    & 2.1442
			\\
			$\frac{1}{32}$         & 4.8696e-4    & 2.0885   & 2.0448e-2   & 2.0330   & 2.4081e-3    & 2.0466
			\\
			$\frac{1}{64}$         & 1.1892e-4    & 2.0338   & 5.0147e-3   & 2.0277   & 5.9496e-4    & 2.0170
			\\
			$\frac{1}{128}$        & 2.9486e-5    & 2.0119   & 1.2629e-3   & 1.9894   & 1.4807e-4    & 2.0065
			\\
			\hline
			\hline 
		\end{tabular}
		\label{table:Const-DLN25-Ensemble-Re200-001}
	\end{table}
	Then we increase the difficulty by setting $\nu = 1.\rm{e}-3$ ($\rm{Re} = 1000$) and the random array $\{ \delta_{j} \}_{j=1}^{10}$ uniformly distributed between $-0.1$ and $0.1$ in ascending order. From \eqref{eq:CFL-like-cond}, the CFL-like conditions are more likely to be violated with a larger deviation of velocity (arising from the larger magnitude of $\delta_{j}$) and smaller viscosity $\nu$.  
	From \Cref{table:Const-DLN23-Ensemble-Re1000-01}, we observe that the constant DLN-Ensemble algorithm with $\theta = 2/3$ still has robust simulation under such challenging conditions. 
	We skip the results of the case $\theta = 2/\sqrt{5}$ since its performance is relatively poor in terms of the convergence rate: the errors are large under $h = 1/8, 1/16$ and decrease rapidly as $h \leq 1/32$. 

	\begin{table}[ptbh]
		\centering
		\renewcommand\arraystretch{1.25}
		\caption{Error and convergence rate for constant time-stepping DLN-Ensemble algorithm with $\theta = 2/3$, $\rm{Re}=1000$} 
		\begin{tabular}{cccccccc}
			\hline
			\hline
			$h\!$  & $\!\| u^{1} \!-\! u^{1,h} \|_{\infty,0}\!$ & $\!R\!$
			& $\!\| u^{1} \!-\! u^{1,h} \|_{\infty,1}\!$ & $\!R\!$ 
			& $\!\| p^{1} \!-\! p^{1,h} \|_{2,0}\!$ & $\!R\!$
			\\
			\hline 
			$\frac{1}{8}$          & 1.8574e-2   & -            & 1.0816         & -            & 7.0583e-2    & -
			\\
			$\frac{1}{16}$        & 3.8152e-3   & 2.2835   & 3.1321e-1   & 1.7880   & 1.3998e-2    & 2.3341
			\\
			$\frac{1}{32}$        & 8.5635e-4   & 2.1555   & 8.0781e-2   & 1.9550   & 3.2828e-3    & 2.0922
			\\
			$\frac{1}{64}$        & 2.0441e-4   & 2.0667   & 2.0712e-2   & 1.9635   & 8.0435e-4    & 2.0291 
			\\
			$\frac{1}{128}$      & 5.0309e-5   & 2.0226   & 5.4966e-3   & 1.9139   & 1.9969e-4    & 2.0101 
			\\
			\hline
			\hline 
			$h\!$   & $\!\| u^{10} \!-\! u^{10,h} \|_{\infty,0}\!$ & $\!R\!$
			& $\!\| u^{10} \!-\! u^{10,h} \|_{\infty,1}\!$ & $\!R\!$ 
			& $\!\| p^{10} \!-\! p^{10,h} \|_{2,0}\!$ & $\!R\!$
			\\
			\hline 
			$\frac{1}{8}$           & 2.1742e-2    & -            & 1.3737         & -            & 7.7259e-2    & -
			\\
			$\frac{1}{16}$         & 4.5712e-3    & 2.2499   & 3.9628e-1   & 1.7935   & 1.5289e-2    & 2.3373
			\\
			$\frac{1}{32}$         & 1.0482e-3    & 2.1246   & 9.1134e-2   & 2.1204   & 3.5878e-3    & 2.0913
			\\
			$\frac{1}{64}$         & 2.5525e-4    & 2.0380   & 2.2929e-2   & 1.9908   & 8.7976e-4    & 2.0279
			\\
			$\frac{1}{128}$       & 6.3720e-5    & 2.0021   & 6.0159e-3   & 1.9303   & 2.1850e-4    & 2.0095
			\\
			\hline
			\hline 
			$h\!$   & $\!\mathbb{E}\!\big[\| u \!-\! u^{h} \|_{\infty,0}\big]\!$ & $\!R\!$
			& $\!\mathbb{E}\!\big[\| u \!-\! u^{h} \|_{\infty,1}\big]\!$ & $\!R\!$ 
			& $\!\mathbb{E}\!\big[\| p \!-\! p^{h} \|_{2,0}\big]\!$ & $\!R\!$
			\\
			\hline 
			$\frac{1}{8}$           & 2.0069e-2    & -            & 1.2074         & -            & 7.3653e-2    & -
			\\
			$\frac{1}{16}$         & 4.1617e-3    & 2.2698   & 3.4243e-1   & 1.8180   & 1.4593e-2    & 2.3355
			\\
			$\frac{1}{32}$         & 9.4837e-4    & 2.1336   & 8.5446e-2   & 2.0027   & 3.4239e-3    & 2.0916
			\\
			$\frac{1}{64}$         & 2.2822e-4    & 2.0550   & 2.1726e-2   & 1.9756   & 8.3922e-4    & 2.0285
			\\
			$\frac{1}{128}$       & 5.6693e-5    & 2.0092   & 5.7345e-3   & 1.9217   & 2.0838e-4    & 2.0098
			\\
			\hline
			\hline 
		\end{tabular}
		\label{table:Const-DLN23-Ensemble-Re1000-01}
	\end{table}

	\subsection{Efficiency Test}
	We use the same test problem in Subsection \ref{subsec:Conv-test} to verify the time efficiency of the DLN-Ensemble algorithm in \eqref{eq:DLN-Ensemble-Alg}. We apply the constant time-stepping DLN-Ensemble algorithm with refactorization process on the BE-Ensemble algorithm to $J$ systems of NSE with $J = 1, 10, 100$. We set $\nu = 1.\rm{e}-3$ ($\rm{Re} = 1000$), $\omega = 10$, $h = \frac{1}{64}$ and constant time step $k = \frac{h}{2}$. 
	The random array $\{ \delta_{j} \}_{j=1}^{J}$ are uniformly distributed between $-1.\rm{e}-1$ and $1.\rm{e}-1$ in ascending order. But for different value $J$ and $\theta$, the random array $\{ \delta_{j} \}_{j=1}^{J}$ is re-computed before the simulation. We measure the average errors in \eqref{eq:Error-Avg} and the following maximum errors with respect to $j$
	\begin{gather*}
		\max_{j} \{ \| u^{j} - u^{j,h} \|_{\infty,0} \}, \quad
		\max_{j} \{ \| u^{j} \!-\! u^{j,h} \|_{\infty,1} \}, \quad
		\max_{j} \{ \| p^{j} \!-\! p^{j,h} \|_{2,0} \}.
	\end{gather*}

	From \Cref{table:Const-DLN23-Ensemble-Efficiency,table:Const-DLN23-Ensemble-Efficiency-max}, we observe that 
	the accuracy of Algorithm \eqref{eq:DLN-Ensemble-Alg} is ensured even if the number of NSE systems being solved substantially increases. 
	Since the $J$ linear systems in the DLN-Ensemble algorithm \eqref{eq:DLN-Ensemble-Alg} share the same coefficient matrix, the rising time due to a larger number $J$ comes from linear system solving. Here, we use the direct method \cite{QSS07_Springer} to solve the linear system at each time step. If the coefficient matrix has size $N_d \times N_d$, the complexity for LU factorization is about $\frac{2}{3} N_d^{3}$ FLOPS, and the complexity for forward and backward substitutions are both $J \times N_d^{2}$ FLOPS. 
	\Cref{fig:DLN-Efficiency} show that the CPU time for the simulation has an almost linear relation with the number of system $J$, which confirms our analysis.
	\begin{table}[ptbh]
		\centering
		\renewcommand\arraystretch{1.25}
		\caption{Average error of constant time-stepping DLN-Ensemble algorithm with $\theta = 2/3, 2/\sqrt{5}$, $\rm{Re}=1000$} 
		\begin{tabular}{ccccc}
			\hline
			\hline
			$\theta = \frac{2}{3}$ & $\!\mathbb{E}\!\big[\| u \!-\! u^{h} \|_{\infty,0}\big]\!$ 
			& $\!\mathbb{E}\!\big[\| u \!-\! u^{h} \|_{\infty,1}\big]\!$ 
			& $\!\mathbb{E}\!\big[\| p \!-\! p^{h} \|_{2,0}\big]\!$ & CPU Time(s)
			\\
			\hline 
			$J = 1$      & 2.5216e-4   & 2.2785e-2   & 8.5573e-4   & 3430.34          
			\\
			$J = 10$     & 2.3719e-4   & 2.2116e-2   & 8.4545e-4   & 6596.29   
			\\
			$J = 100$    & 2.3028e-4   & 2.1815e-2   & 8.4072e-4   & 38756.76   
			\\
			\hline
			\hline
			$\theta = \frac{2}{\sqrt{5}}$ & $\!\mathbb{E}\!\big[\| u \!-\! u^{h} \|_{\infty,0}\big]\!$ 
			& $\!\mathbb{E}\!\big[\| u \!-\! u^{h} \|_{\infty,1}\big]\!$ 
			& $\!\mathbb{E}\!\big[\| p \!-\! p^{h} \|_{2,0}\big]\!$ & CPU Time(s)
			\\
			\hline 
			$J = 1$      & 1.3737e-4   & 1.8546e-2   & 5.9141e-4   & 3460.38          
			\\
			$J = 10$     & 1.4144e-4   & 1.8657e-2   & 6.0108e-4   & 6815.91   
			\\
			$J = 100$    & 1.4138e-4   & 1.8653e-2   & 6.0077e-4   & 40340.54   
			\\
			\hline
			\hline
		\end{tabular}
		\label{table:Const-DLN23-Ensemble-Efficiency}
	\end{table}

	\begin{table}[ptbh]
		\centering
		\renewcommand\arraystretch{1.25}
		\caption{Maximum error of constant time-stepping DLN-Ensemble algorithm with $\theta = 2/3, 2/\sqrt{5}$, $\rm{Re}=1000$} 
		\begin{tabular}{cccc}
			\hline
			\hline
			$\theta = \frac{2}{3}$ & $\max \big\{ \| u^{j} - u^{j,h} \|_{\infty,0}\big\}$ 
			& $\max \big\{\| u^{j} - u^{j,h} \|_{\infty,1}\big\}$ 
			& $\max \big\{\| p^{j} - p^{j,h} \|_{2,0}\big\}$ 
			\\
			\hline 
			$J = 1$      & 2.5216e-4   & 2.2785e-2   & 8.5573e-4             
			\\
			$J = 10$     & 2.5760e-4   & 2.3039e-2   & 8.7556e-4      
			\\
			$J = 100$    & 2.5888e-4   & 2.3099e-2   & 8.8368e-4    
			\\
			\hline
			\hline
			$\theta = \frac{2}{\sqrt{5}}$ & $\max \big\{ \| u^{j} - u^{j,h} \|_{\infty,0}\big\}$ 
			& $\max \big\{\| u^{j} - u^{j,h} \|_{\infty,1}\big\}$ 
			& $\max \big\{\| p^{j} - p^{j,h} \|_{2,0}\big\}$ 
			\\
			\hline 
			$J = 1$      & 1.3737e-4   & 1.8546e-2   & 5.9141e-4           
			\\
			$J = 10$     & 1.5484e-4   & 1.8982e-2   & 6.5313e-4    
			\\
			$J = 100$    & 1.5705e-4   & 1.9043e-2   & 6.6205e-4   
			\\
			\hline
			\hline
		\end{tabular}
		\label{table:Const-DLN23-Ensemble-Efficiency-max}
	\end{table}

	\begin{figure}[ptbh]
		\subfigure[CPU time for $\theta = \frac{2}{3}$]{ \label{fig:DLN23_Efficiency}
			\hspace{-0.2cm}
			\begin{minipage}[t]{0.45\linewidth}
				\centering
				\includegraphics[width=2.8in,height=1.8in]{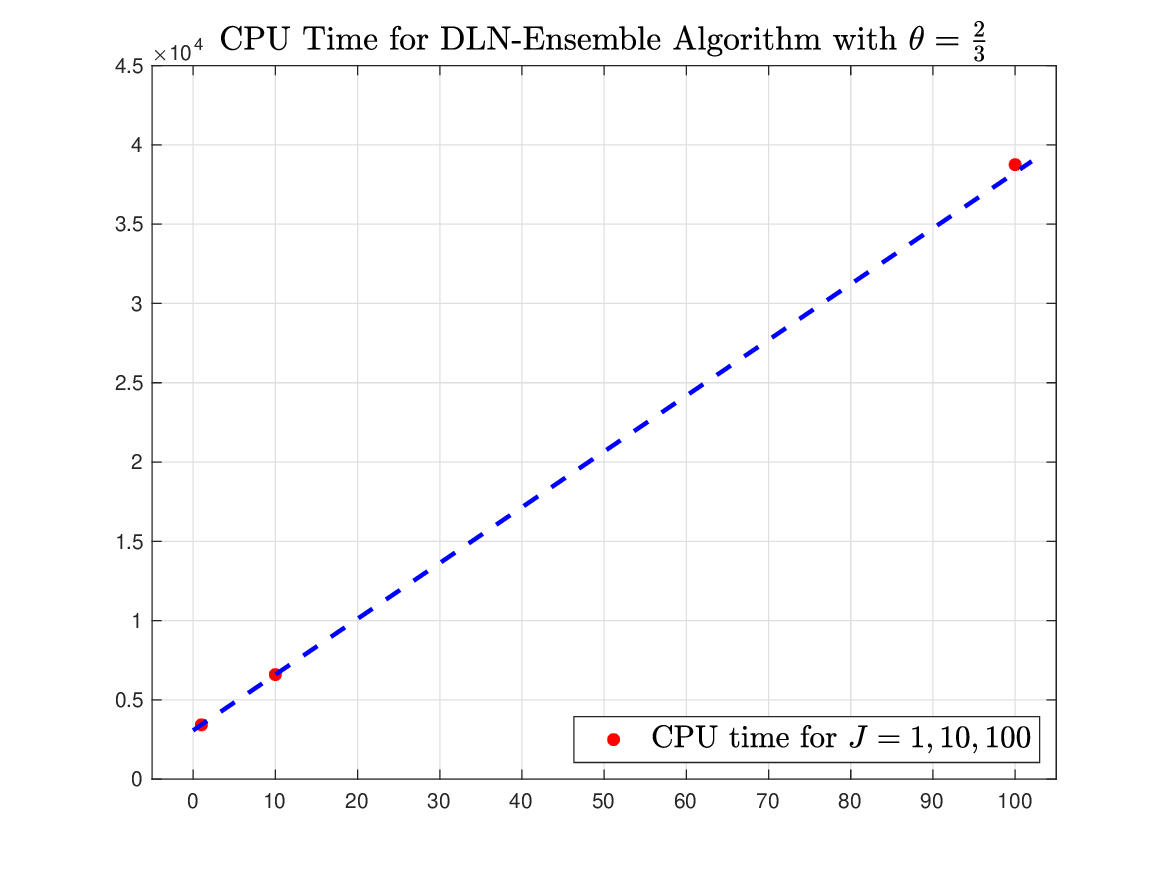}\\
				\vspace{0.02cm}
			\end{minipage}
			\quad}%
		\subfigure[CPU time for $\theta = \frac{2}{\sqrt{5}}$]{ \label{fig:DLN25_Efficiency}
			\hspace{-0.2cm}
			\begin{minipage}[t]{0.45\linewidth}
				\centering
				\includegraphics[width=2.8in,height=1.8in]{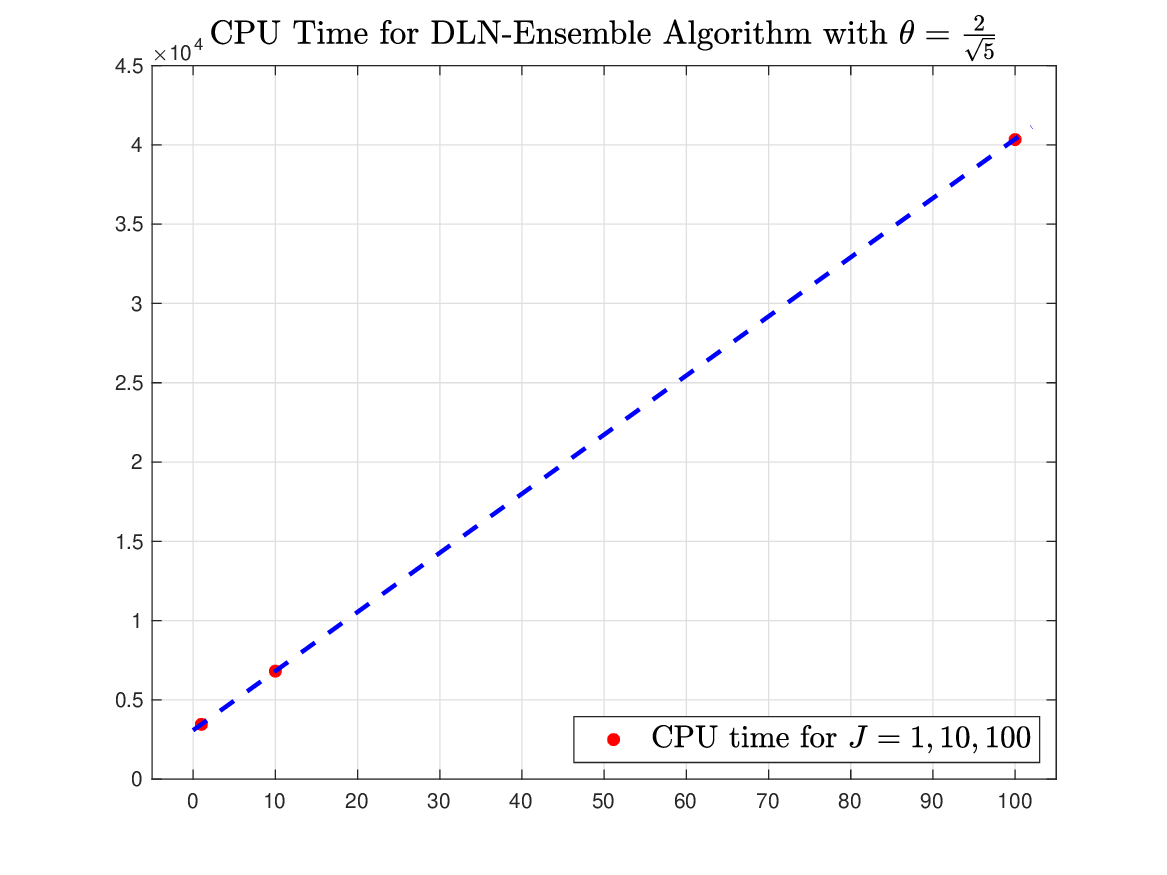}\\
				\vspace{0.02cm}
		\end{minipage}}  	
		\par
		\centering
		\vspace{-0.2cm}
		\caption{CPU time for constant DLN-Ensemble algorithm with $\theta = \frac{2}{3},\frac{2}{\sqrt{5}}$
		}
		\label{fig:DLN-Efficiency}
	\end{figure}

	\subsection{Time Adaptive Test}
	In this subsection, we will show that the variable time-stepping DLN-Ensemble 
	scheme \eqref{eq:DLN-Ensemble-Alg} with time adaptive algorithm \ref{alg:Adap-DLN} outperforms the DLN-Ensemble scheme with uniform time grids in extremely stiff problems.
	To construct a stiff problem concerning time, 
	we change the time component function in the revised Taylor-Green problem on domain $\Omega = [0,1] \times [0,1]$ to be 
	\begin{align*}
		&\begin{cases}
			G_{1}(t) = e^{g_{1}(t)} \big[ \cos(g_{2}(t)) + \sin(g_{2}(t)) \big] \\
			G_{2}(t) = e^{g_{1}(t)} \big[ \cos(g_{2}(t)) - \sin(g_{2}(t)) \big]
		\end{cases}, \\
		&\begin{cases}
			g_{1}(t) = 10^{\omega} ( t + 2 e^{-t} -2) \\
			g_{2}(t) = 10^{\omega} ( 1 - e^{-t} - te^{-1})
		\end{cases},  \qquad \qquad  t_{0} \leq t \leq T.
	\end{align*}
	$G_{1}(t)$, $G_{2}(t)$ are the first and second components of the solution vector to an extremely stiff ordinary differential system proposed by Lindberg \cite{Lin74_BIT}.  
	Hence exact solutions are based on the following functions 
	\begin{gather}
		u_{1}(x,y,t) = - \cos(\pi x) \sin( \pi y) G_{i}(t), \qquad 
		u_{2}(x,y,t) = \sin(\pi x) \cos( \pi y) G_{i}(t), \\ 
		p(x,y,t) = - \frac{1}{4} \big[ \cos(2 \pi x) + \cos(2 \pi y) \big] G_{i}(t), \qquad i = 1,2.
		\label{eq:refer-sol}
	\end{gather}
	As in Subsection \ref{subsec:Conv-test}, we use the DLN-Ensemble algorithms with $\theta = 2/3, 2/\sqrt{5}$ to solve ten systems of NSE ($J=10$). The true velocity functions are 
	\begin{gather*}
	u_{1}^{j}(x,y,t) = \big( 1 + \delta_{j} \big) u_{1}(x,y,t), \quad u_{2}^{j}(x,y,t) = \big( 1 + \delta_{j} \big) u_{2}(x,y,t),
	\quad j = 1, \cdots, 10,
	\end{gather*}
	where the random array $\{ \delta_{j} \}_{j=1}^{10}$ are uniformly distributed between $-0.1$ and $0.1$ in ascending order.
	Pressure function $p^{j}(x,y,t)$ is equal to $p(x,y,t)$ for all $j$. The source function $f^{j}(x,y,t)$, initial and boundary conditions are decided by exact solutions. 
	We define the numerical kinetic energy $\mathcal{E}_{n+1}^{j,h}$, numerical kinetic energy dissipation rate 
	$\mathcal{E}_{n+1,\tt{VD}}^{j,h}$ and numerical dissipation $\mathcal{E}_{n+1,\tt{ND}}^{j,h}$ of $j$-th NSE at time $t_{n+1}$
	\begin{gather*}
		\mathcal{E}_{n+1}^{j,h} = \frac{1}{2} \| u_{n+1}^{j,h} \|^{2}, \ \ 
		\mathcal{E}_{n+1,\tt{VD}}^{j,h} = \nu \| \nabla u_{n+1}^{j,h} \|^{2}, \ \ 
		\mathcal{E}_{n+1,\tt{ND}}^{j,h} = 
		\frac{1}{\widehat{k}_{n}} \Big\| \sum_{\ell=0}^{2} \gamma_{\ell}^{(n)} u_{n-1+\ell}^{j,h} \Big\|^{2},
	\end{gather*}
	and evaluate performance of the DLN-Ensemble algorithms via the following variables
	\begin{align*}
		&\text{Average and maximum of $\{ \mathcal{E}_{n+1}^{j,h} \}_{j=1}^{J}$:} \ \ 
		\begin{cases}
			\mathbb{E}[\mathcal{E}_{n+1}^{h}] = \displaystyle \frac{1}{J} \sum_{j=1}^{J} \mathcal{E}_{n+1}^{j,h} \\
			\mathcal{E}_{n+1}^{h,\infty} = \displaystyle \max_{1 \leq j \leq J} \{ \mathcal{E}_{n+1}^{j,h} \}
		\end{cases},  \\
		&\text{Average and maximum of $\{ \mathcal{E}_{n+1,\tt{VD}}^{j,h} \}_{j=1}^{J}$:} 
		\ \
		\begin{cases}
			\mathbb{E}[\mathcal{E}_{n+1,\tt{VD}}^{h}] = \displaystyle \frac{1}{J} \sum_{j=1}^{J} \mathcal{E}_{n+1,\tt{VD}}^{j,h}\\
			\mathcal{E}_{n+1,\tt{VD}}^{h,\infty} = \displaystyle \max_{1 \leq j \leq J} \{ \mathcal{E}_{n+1,\tt{VD}}^{j,h} \} 
		\end{cases}, \\
		&\text{Average and maximum of $\{ \mathcal{E}_{n+1,\tt{ND}}^{j,h} \}_{j=1}^{J}$:} \ \ 
		\begin{cases}
			\mathbb{E}[\mathcal{E}_{n+1,\tt{ND}}^{h}] = \displaystyle \frac{1}{J} \sum_{j=1}^{J} \mathcal{E}_{n+1,\tt{ND}}^{j,h} \\
			\mathcal{E}_{n+1,\tt{ND}}^{h,\infty} = \displaystyle \max_{1 \leq j \leq J} \{ \mathcal{E}_{n+1,\tt{ND}}^{j,h} \}
		\end{cases}.
	\end{align*}
	We define the exact average exaxt kinetic energy $\mathbb{E}[\mathcal{E}_{n+1}]$ and maximum kinetic energy 
	$\mathcal{E}_{n+1}^{\infty}$ at $t_{n+1}$
	\begin{gather*}
		\mathbb{E}[\mathcal{E}_{n+1}] = \frac{1}{J} \sum_{j=1}^{J} \big( \frac{1}{2} \| u_{n+1}^{j} \|^{2} \big),
		\qquad 
		\mathcal{E}_{n+1}^{\infty} = \max_{1 \leq j \leq J} \big\{ \frac{1}{2} \| u_{n+1}^{j} \|^{2} \big\},
	\end{gather*}
	and also evaluate the error of average kinetic energy: 
	$\log_{10} \big| \mathbb{E}[\mathcal{E}_{n+1}] - \mathbb{E}[\mathcal{E}_{n+1}^{h}] \big|$, 
	and the error of maximum kinetic energy: 
	$\log_{10} \big| \mathcal{E}_{n+1}^{\infty} - \mathcal{E}_{n+1}^{h,\infty} \big|$.

	We set $\rm{Re} = 1000$ for all ten systems of NSE and diameter $h = 0.02$ for mesh generation of $\Omega$. For the adaptive DLN-Ensemble algorithm in \eqref{eq:DLN-Ensemble-Alg}, we set required tolerance 
	${\tt{Tol}} = 1.\rm{e}-4$, the safety factor $\kappa = 0.95$, the maximum time step 
	$k_{\rm{max}} = 1.\rm{e}-4$ for stability and the minimum time step $k_{\rm{min}} = 1.\rm{e}-6$ for efficiency.
	Two initial time steps are the same as $k_{\rm{min}}$ and initial conditions at the first two steps are decided by exact solutions. 

	We first set time component function in \eqref{eq:refer-sol} to be $G_{1}(t)$ with $\omega = 3.1$ and simulate
	10 systems of NSE over time interval $[1.59 \ \  1.6032]$. 
	From \Cref{fig:Lindberg1st}, we see that $G_{1}(t)$ increases rapidly from 0 ($t = 1.596$) to 300 ($t = 1.6015$) and then declines sharply to $-200$ ($t = 1.6032$). 
	We expect that the adaptive algorithms outperform the constant time-stepping algorithms since the adaptive mechanism assigns more steps for drastic changes.
	Number of steps, number of rejections, and total cost in steps (number of steps plus number of rejections) of the adaptive DLN-Ensemble algorithms with $\theta = 2/3, 2/\sqrt{5}$ are offered 
	in \Cref{table:Num-Steps-1st}. 
	We also apply the corresponding constant time-stepping algorithms with the same total cost in steps. 

	From \Cref{fig:KE-avg-1st,fig:KE-max-1st}, we observe that all the algorithms have the true pattern of average kinetic energy and maximum kinetic energy. 
	However \Cref{fig:Error-avg-1st,fig:Error-max-1st} show that adaptive algorithms obtain relatively small errors at the end of simulation since more time steps are assigned in the simulations of extremely stiff part of true solutions ($t \geq 1.602$). 
	From \Cref{fig:ND-avg-1st,fig:ND-max-1st}, the numerical dissipation of adaptive algorithms is relatively large before $t = 1.602$ but grows slower at the end.  
	Meanwhile, all DLN-Ensemble algorithms have similar patterns of kinetic energy dissipation rates in \Cref{fig:VD-avg-1st,fig:VD-max-1st}.
	\Cref{fig:EST-LTE-1st,fig:Step-1st} also verify that the highly stiff part ($t \geq 1.6$) is hard to simulate since the estimator of LTE $\widehat{T}_{n+1}$ exceeds the required tolerance ${\tt{Tol}} = 1.\rm{e}-4$ frequently and time step size oscillates near the minimum step size $k_{\rm{min}} = 1.\rm{e}-6$ after $t = 1.602$.
	\begin{figure}[ptbh]
		\subfigure[$G_{1}(t)$ with $\omega = 3.1$]{ \label{fig:Lindberg1st}
			\hspace{-0.2cm}
			\begin{minipage}[t]{0.45\linewidth}
				\centering
				\includegraphics[width=2.8in,height=1.8in]{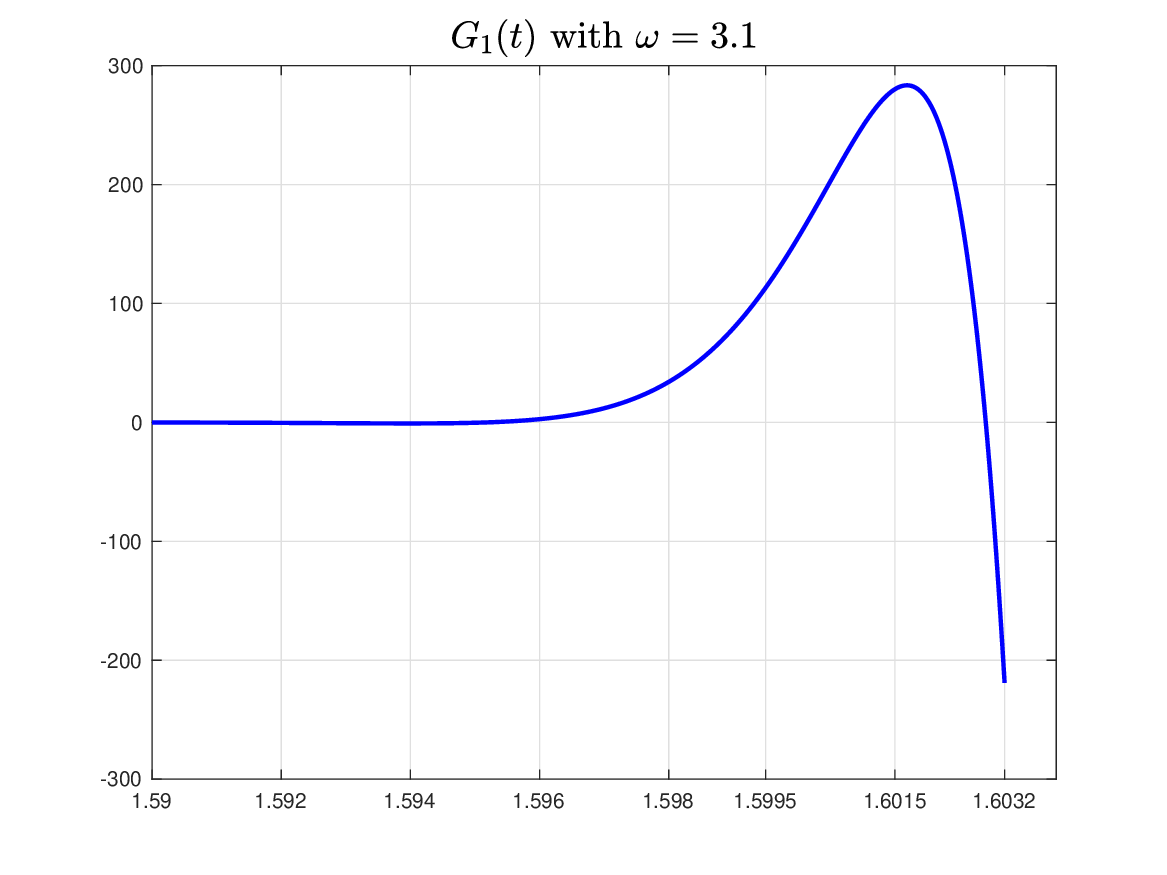}\\
				\vspace{0.02cm}
			\end{minipage}
			\quad}%
		\subfigure[$G_{2}(t)$ with $\omega = 3.1$]{ \label{fig:Lindberg2nd}
			\hspace{-0.2cm}
			\begin{minipage}[t]{0.45\linewidth}
				\centering
				\includegraphics[width=2.8in,height=1.8in]{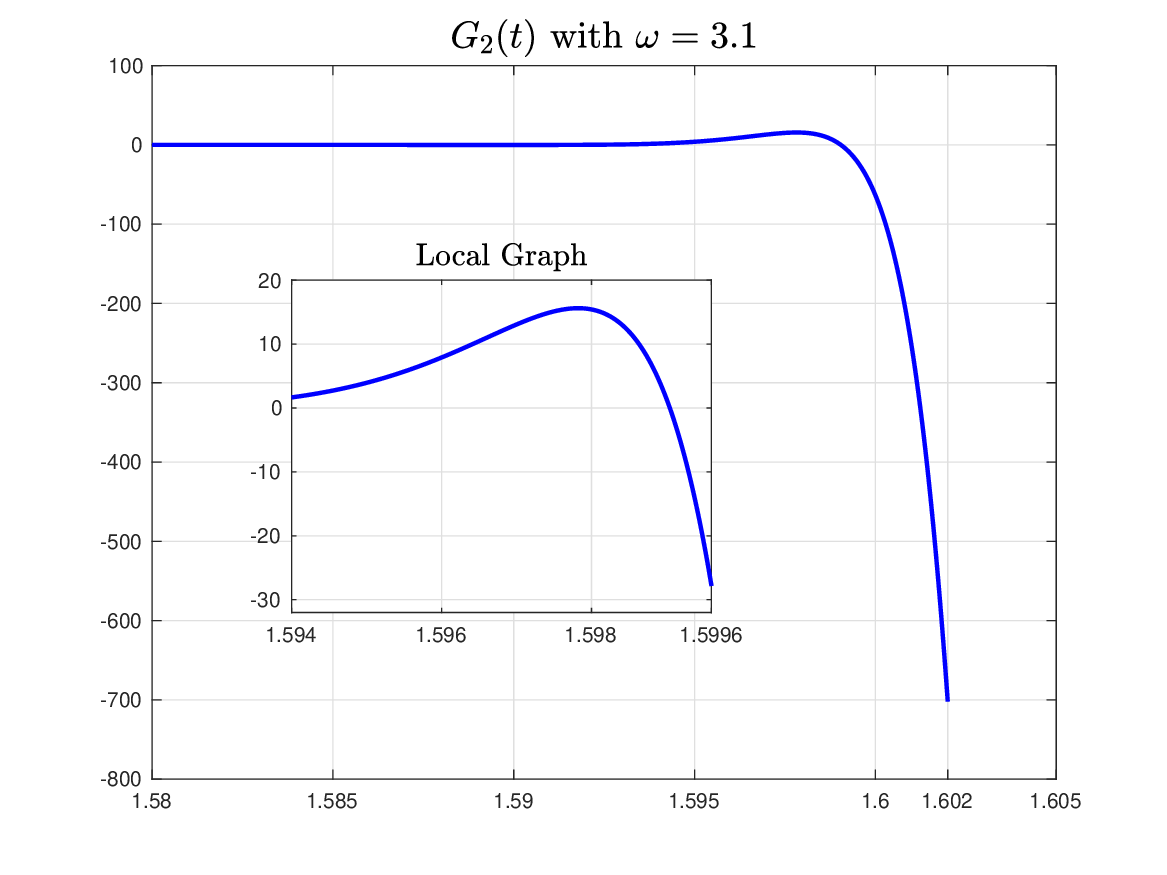}\\
				\vspace{0.02cm}
		\end{minipage}}  	
		\par
		\centering
		\vspace{-0.2cm}
		\caption{Time component functions proposed by Lindberg}
		\label{fig:Time-component}
	\end{figure}

	\begin{table}[ptbh]
		\centering
		\renewcommand\arraystretch{1.25}
		\caption{Number of steps for the DLN-Ensemble algorithms with $\theta = 2/3, 2/\sqrt{5}$ for the test with time component function $G_{1}(t)$.} 
		\begin{tabular}{lccc}
			\hline
			\hline
			DLN-Ensemble algorithms & \# Steps 
			& \# Rejections 
			& Total cost in steps
			\\
			\hline 
			Adaptive with $\theta = 2/3$            & 2495   & 1926   & 4421             
			\\
			Adaptive with $\theta = 2/\sqrt{5}$     & 2710   & 2000   & 4710      
			\\
			Constant with $\theta = 2/3$            & 4421   & -      & 4421    
			\\
			Constant with $\theta = 2/\sqrt{5}$     & 4710   & -      & 4710    
			\\
			\hline 
		\end{tabular}
		\label{table:Num-Steps-1st}
	\end{table}

	\begin{figure}[ptbh]
		\subfigure[$\mathbb{E} \text{[} \mathcal{E}_{n+1}^{h} \text{]}$ 
		and $\mathbb{E} \text{[}\mathcal{E}_{n+1} \text{]}$]{ \label{fig:KE-avg-1st}
			\hspace{-0.2cm}
			\begin{minipage}[t]{0.45\linewidth}
				\centering
				\includegraphics[width=2.8in,height=2.0in]{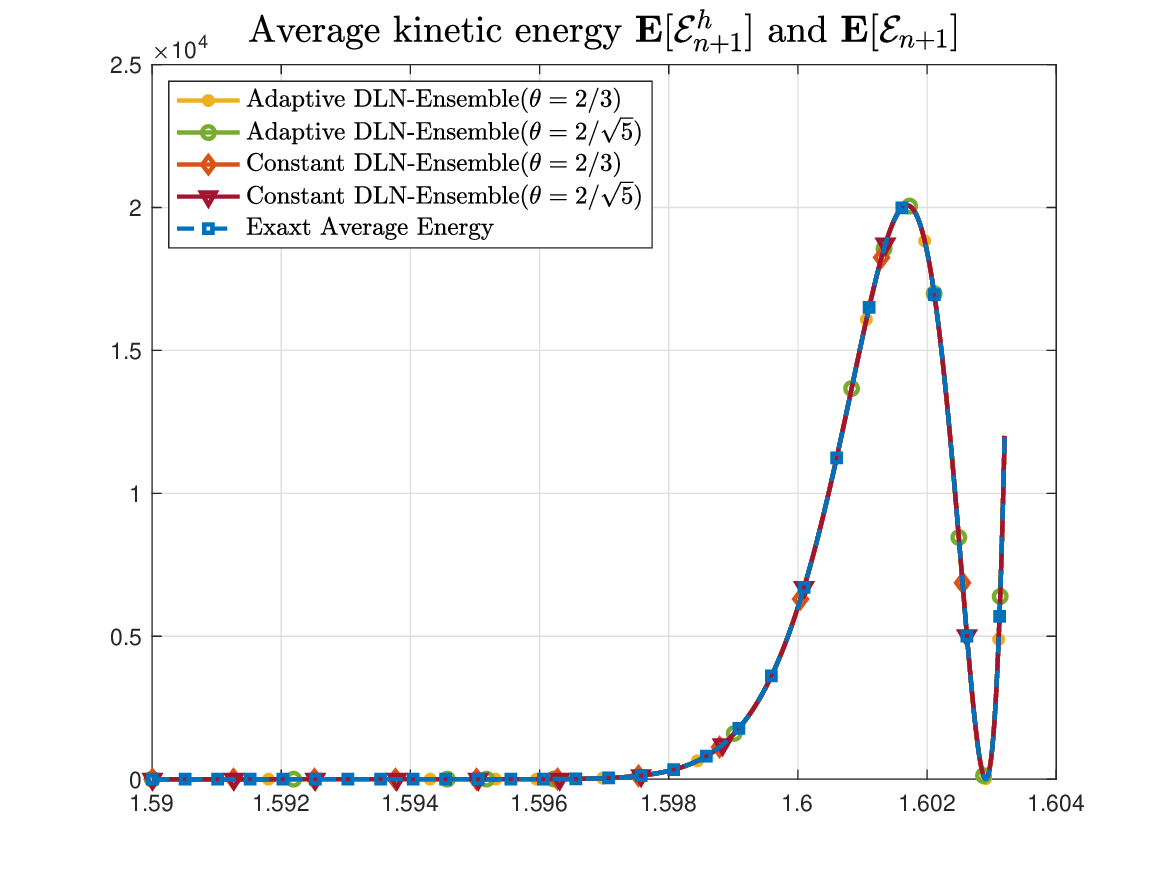}\\
				\vspace{0.02cm}
			\end{minipage}
			\quad}%
		\subfigure[$\log_{10} \big( \big| \mathbb{E} \text{[}\mathcal{E}_{n+1} \text{]} - 
					\mathbb{E} \text{[} \mathcal{E}_{n+1}^{h} \text{]} \big| \big)$]{ \label{fig:Error-avg-1st}
			\hspace{-0.2cm}
			\begin{minipage}[t]{0.45\linewidth}
				\centering
				\includegraphics[width=2.8in,height=2.0in]{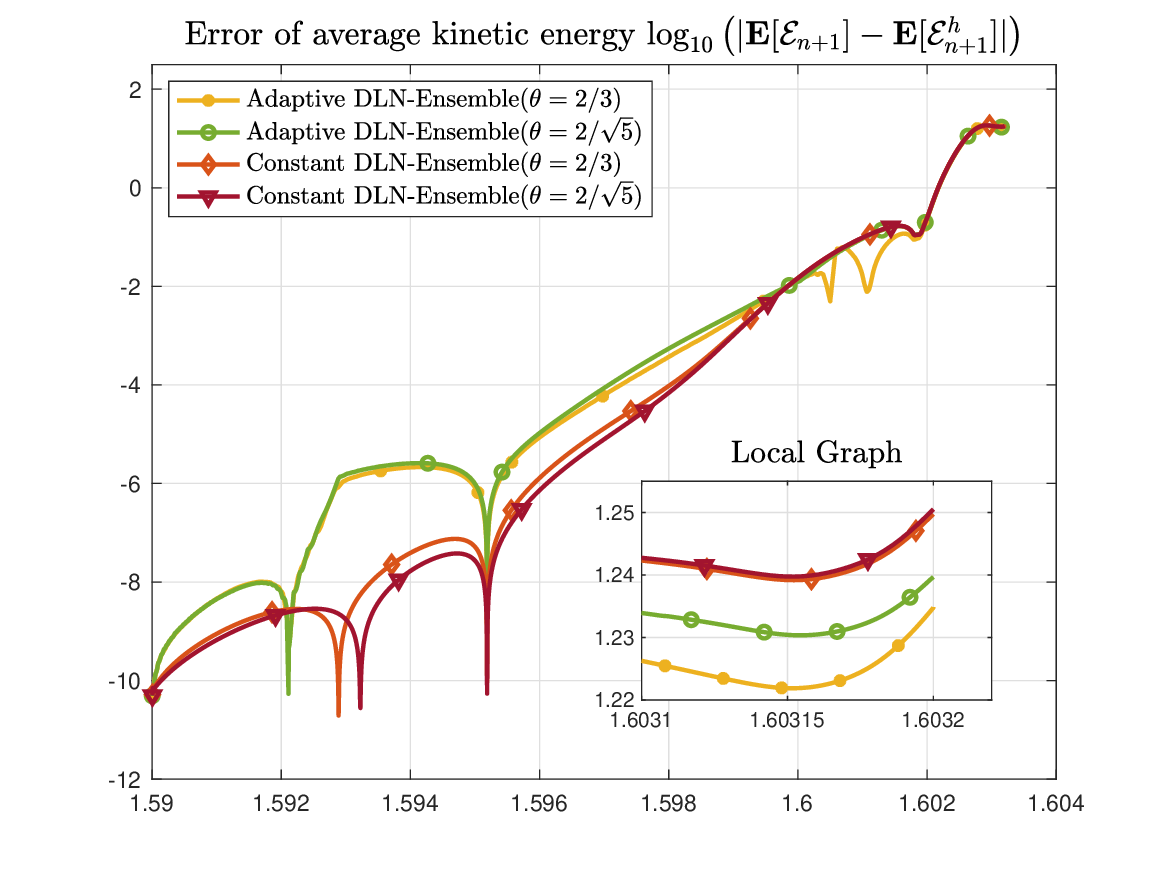}\\
				\vspace{0.02cm}
		\end{minipage}}  	
		\subfigure[$\mathcal{E}_{n+1}^{h,\infty}$ and $\mathcal{E}_{n+1}^{\infty}$]{ \label{fig:KE-max-1st}
			\hspace{-0.2cm}
			\begin{minipage}[t]{0.45\linewidth}
				\centering
				\includegraphics[width=2.8in,height=2.0in]{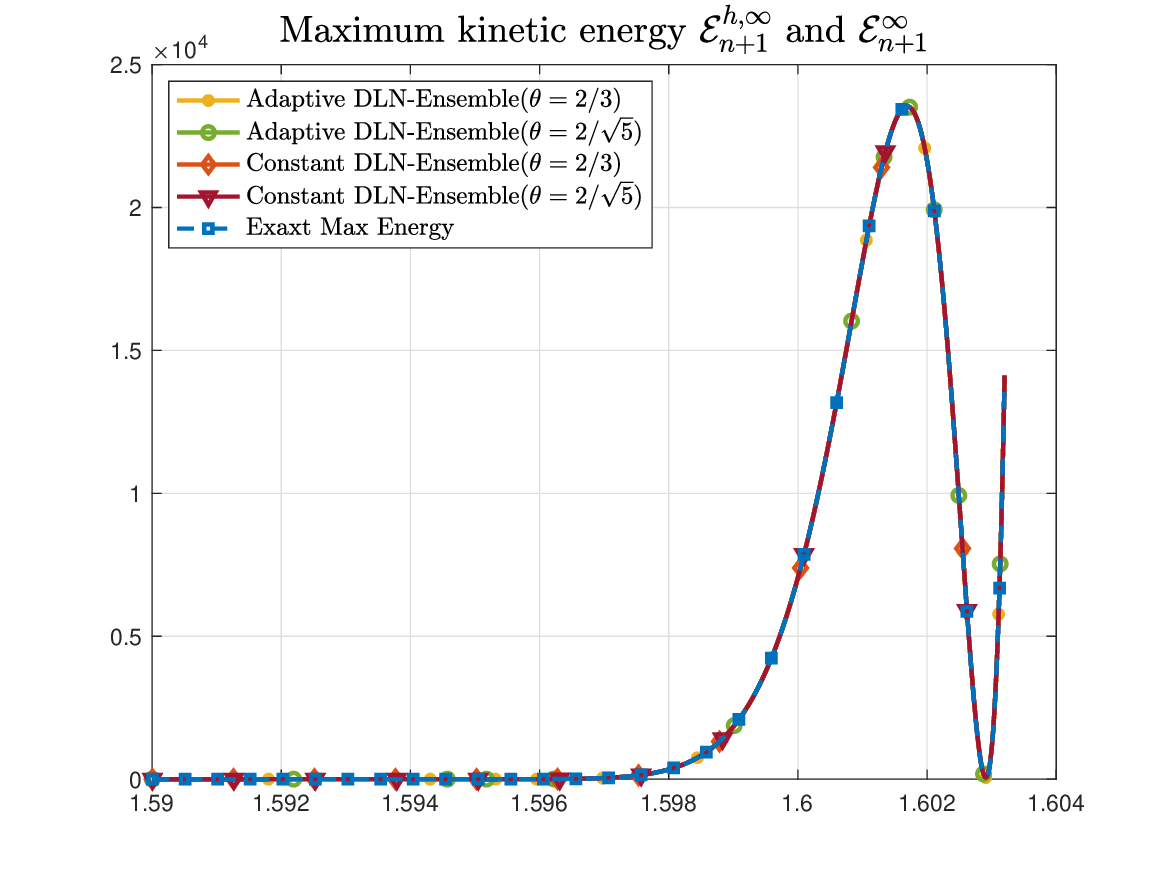}\\
				\vspace{0.02cm}
			\end{minipage}
			\quad}%
		\subfigure[$\log_{10} \big( \big| \mathcal{E}_{n+1}^{\infty} - \mathcal{E}_{n+1}^{h,\infty} \big| \big)$]{ \label{fig:Error-max-1st}
			\hspace{-0.2cm}
			\begin{minipage}[t]{0.45\linewidth}
				\centering
				\includegraphics[width=2.8in,height=2.0in]{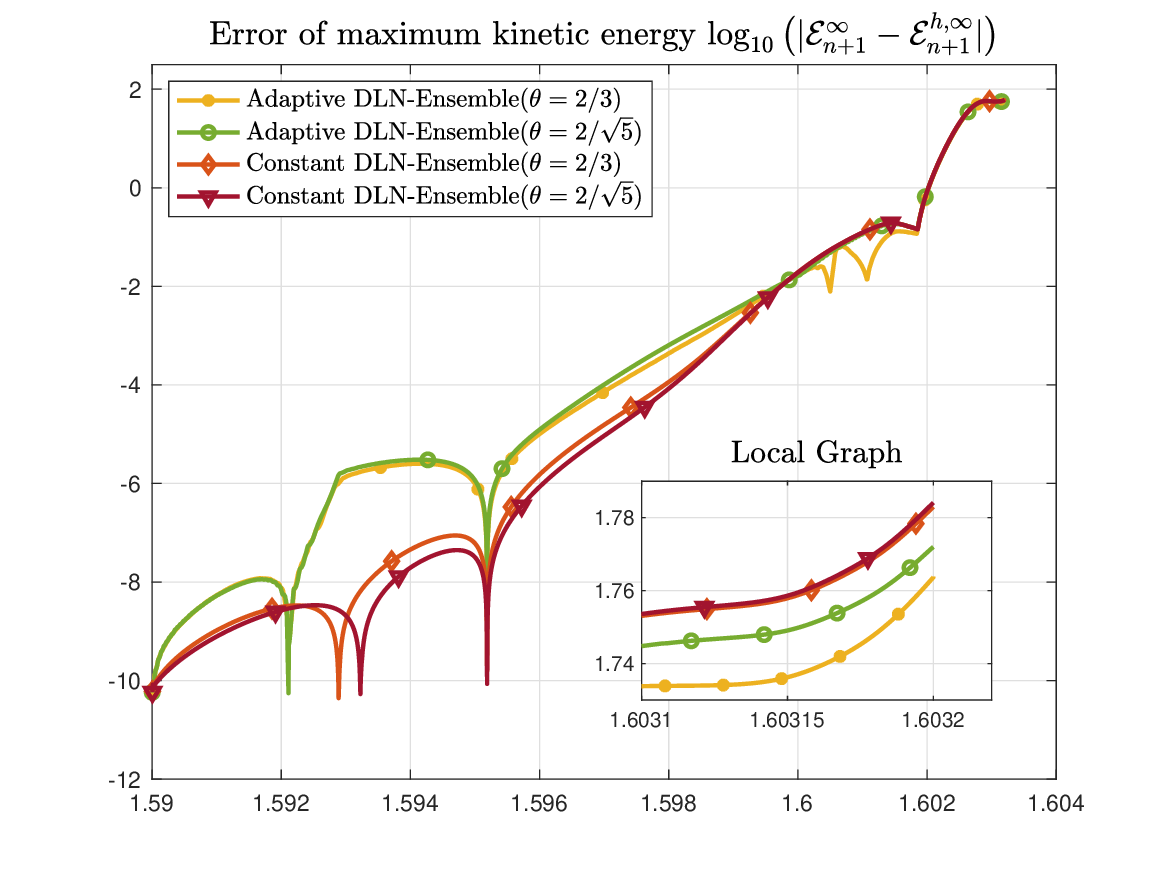}\\
				\vspace{0.02cm}
		\end{minipage}}  	
		\par
		\centering
		\vspace{-0.2cm}
		\caption{All the algorithms have the true pattern of average kinetic energy and maximum kinetic energy.
		However adaptive algorithms obtain relatively small errors at the end of the simulation since more time steps are assigned in the simulations of extremely stiff parts of true solutions ($t \geq 1.602$).}
		\label{fig:Energy-1st}
	\end{figure}

	\begin{figure}[ptbh]
		\subfigure[$\log_{10} ( \mathbb{E} \text{[} \mathcal{E}_{n+1,\tt{ND}}^{h} \text{]} )$]{ \label{fig:ND-avg-1st}
			\hspace{-0.2cm}
			\begin{minipage}[t]{0.45\linewidth}
				\centering
				\includegraphics[width=2.8in,height=2.0in]{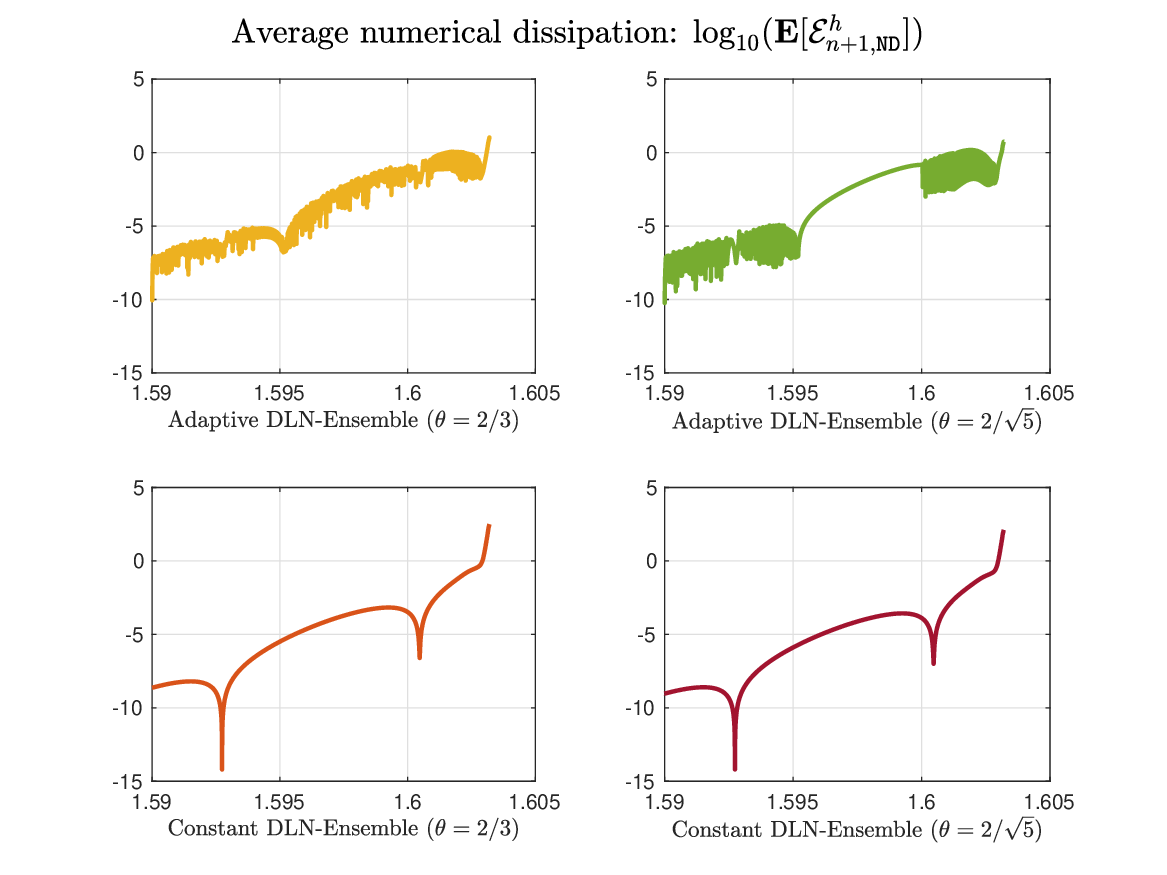}\\
				\vspace{0.02cm}
			\end{minipage}
			\quad}%
		\subfigure[$\log_{10} ( \mathcal{E}_{n+1,\tt{ND}}^{h,\infty})$]{ \label{fig:ND-max-1st}
			\hspace{-0.2cm}
			\begin{minipage}[t]{0.45\linewidth}
				\centering
				\includegraphics[width=2.8in,height=2.0in]{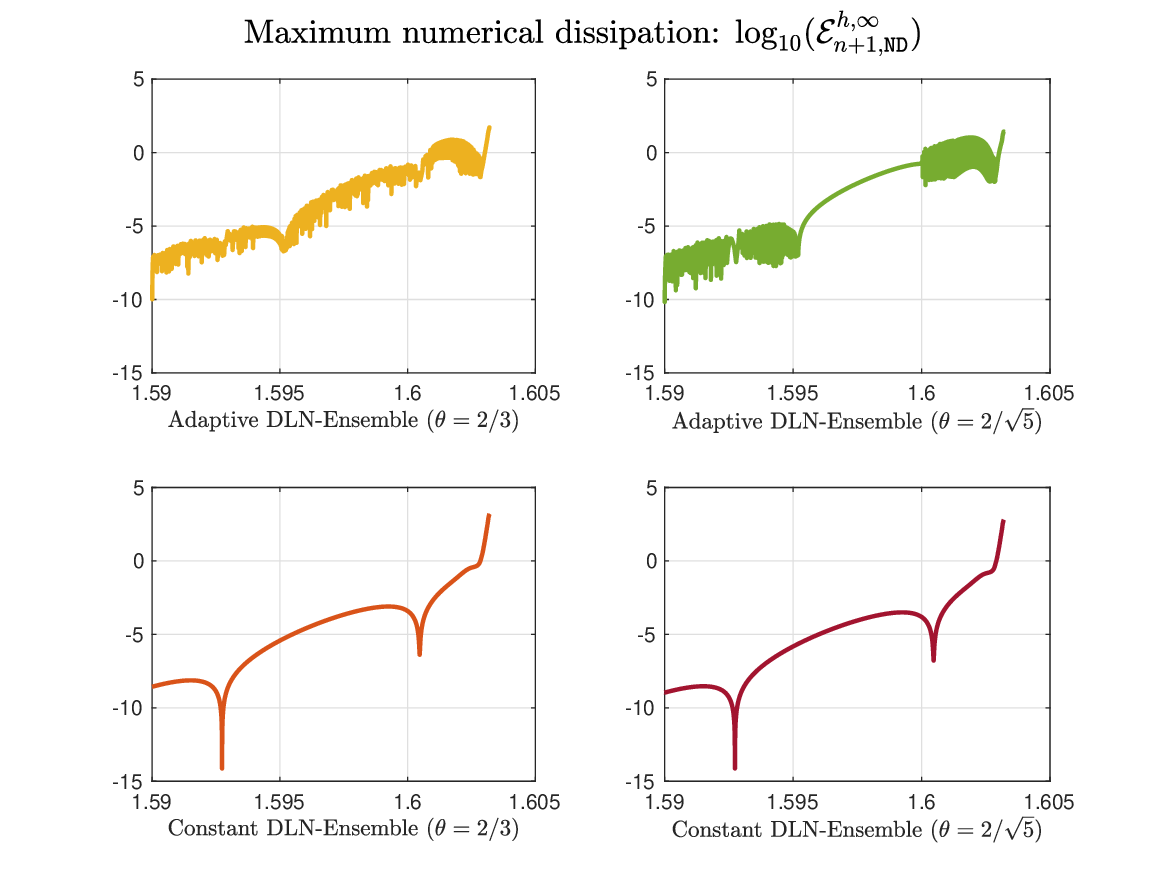}\\
				\vspace{0.02cm}
		\end{minipage}}  	
		\subfigure[$\log_{10} ( \mathbb{E} \text{[} \mathcal{E}_{n+1,\tt{VD}}^{h} \text{]} )$]{ \label{fig:VD-avg-1st}
			\hspace{-0.2cm}
			\begin{minipage}[t]{0.45\linewidth}
				\centering
				\includegraphics[width=2.8in,height=2.0in]{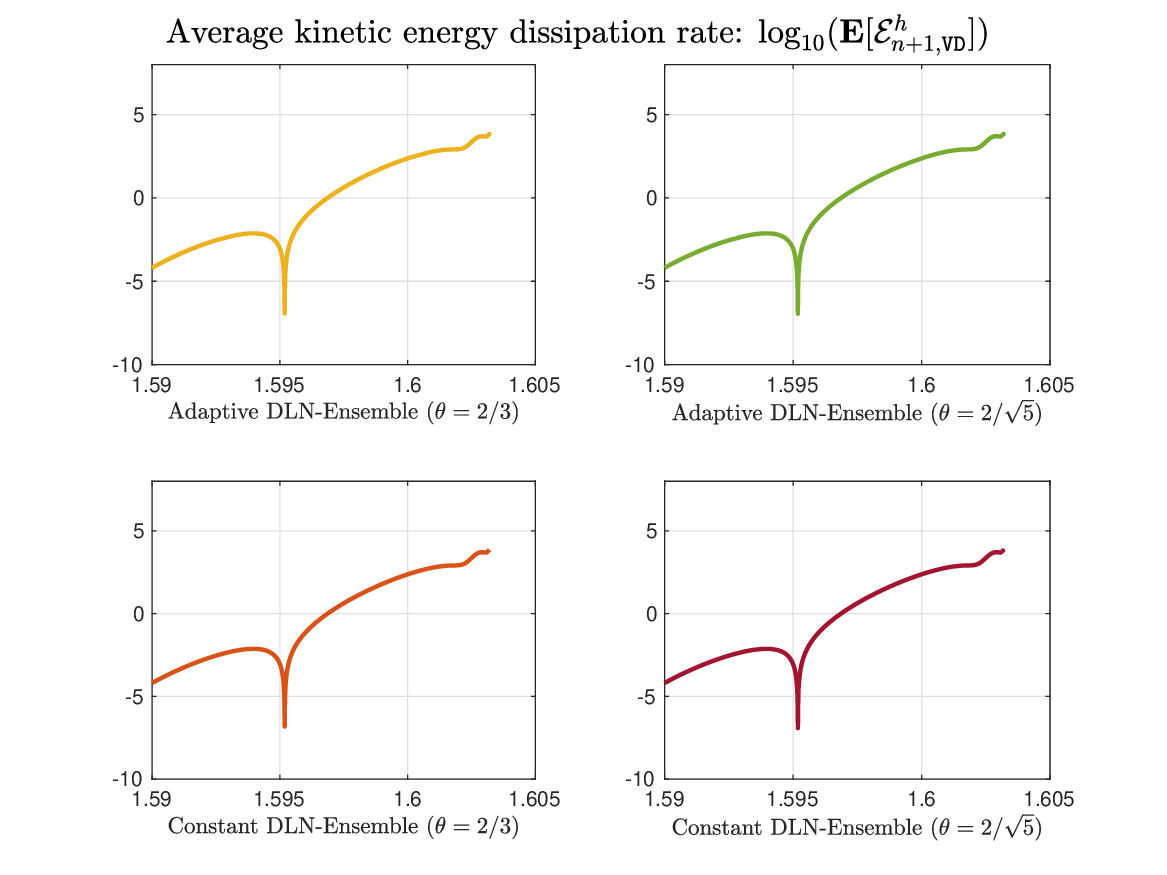}\\
				\vspace{0.02cm}
			\end{minipage}
			\quad}%
		\subfigure[$\log_{10} ( \mathcal{E}_{n+1,\tt{VD}}^{h,\infty})$]{ \label{fig:VD-max-1st}
			\hspace{-0.2cm}
			\begin{minipage}[t]{0.45\linewidth}
				\centering
				\includegraphics[width=2.8in,height=2.0in]{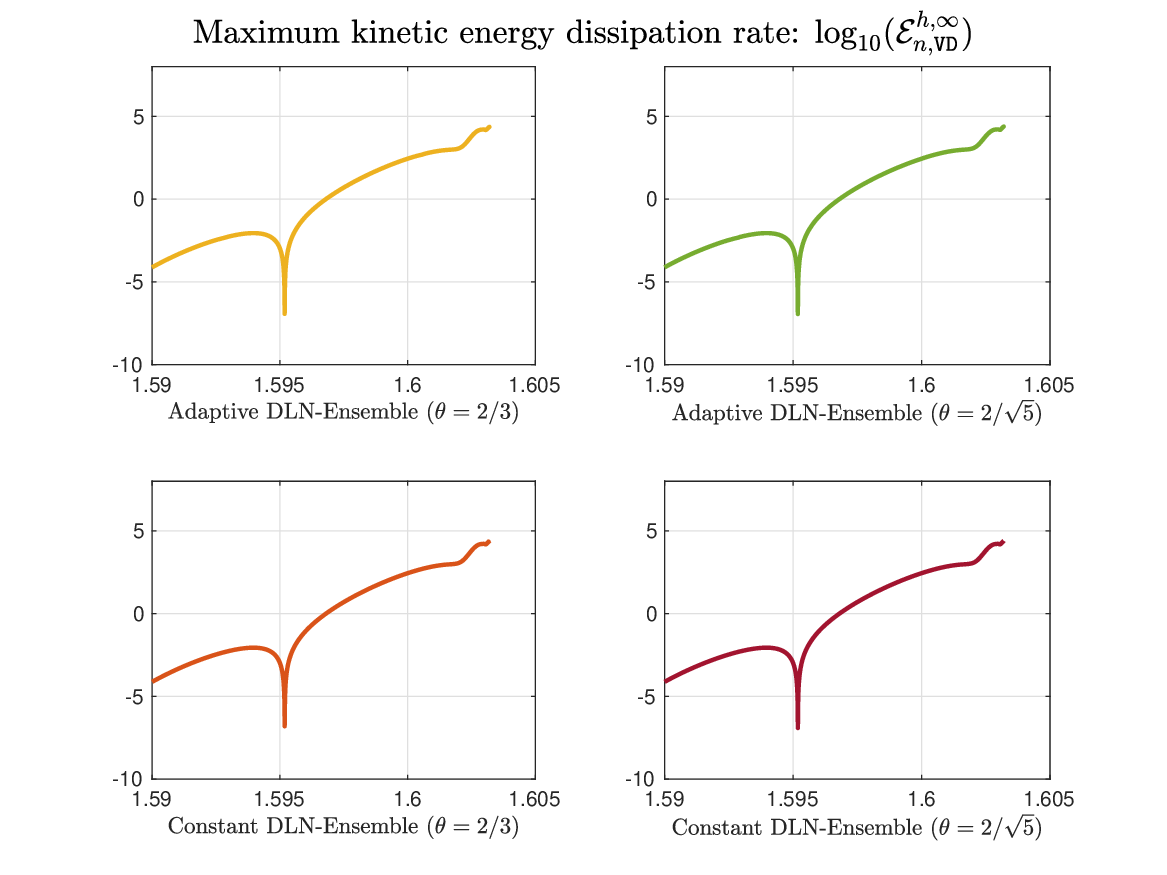}\\
				\vspace{0.02cm}
		\end{minipage}}  	
		\subfigure[$\widehat{T}_{n+1}$]{ \label{fig:EST-LTE-1st}
			\hspace{-0.2cm}
			\begin{minipage}[t]{0.45\linewidth}
				\centering
				\includegraphics[width=2.8in,height=2.0in]{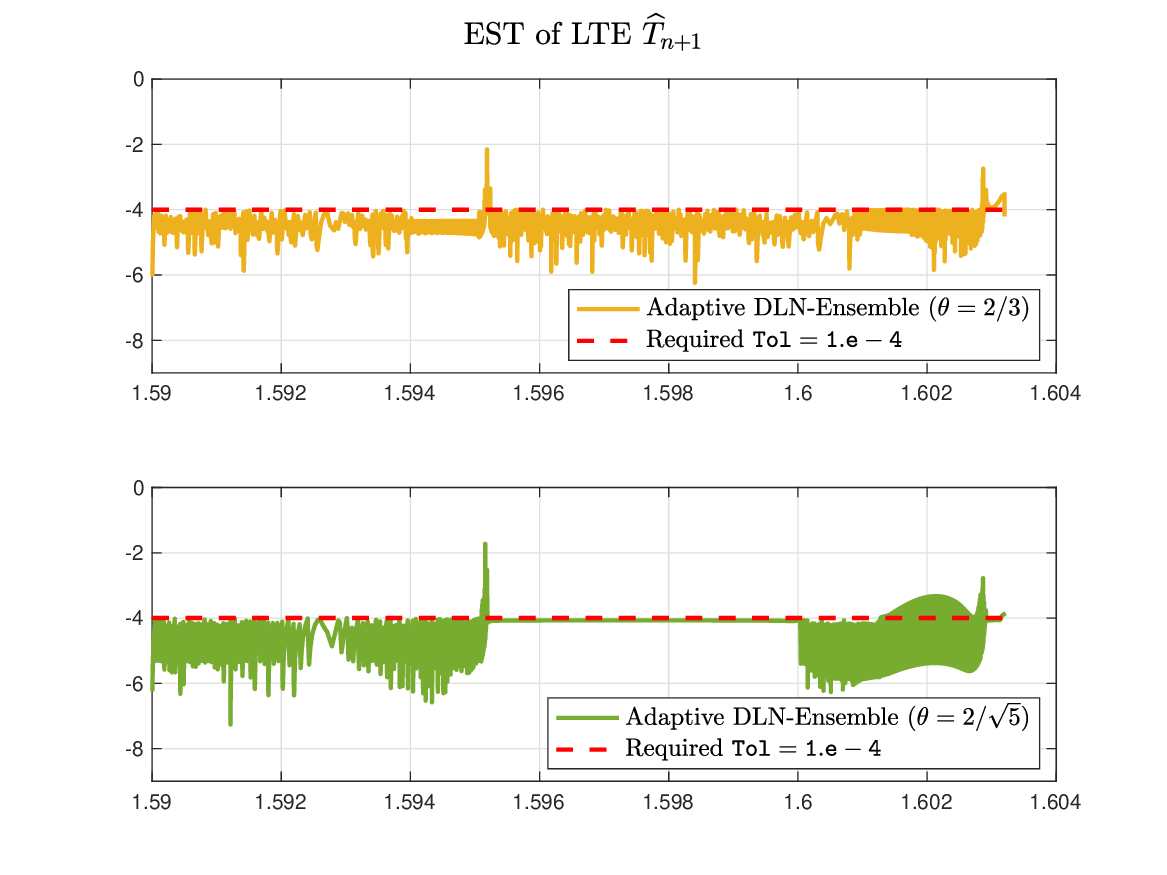}\\
				\vspace{0.02cm}
			\end{minipage}
			\quad}%
		\subfigure[$\log_{10} (k_{n})$]{ \label{fig:Step-1st}
			\hspace{-0.2cm}
			\begin{minipage}[t]{0.45\linewidth}
				\centering
				\includegraphics[width=2.8in,height=2.0in]{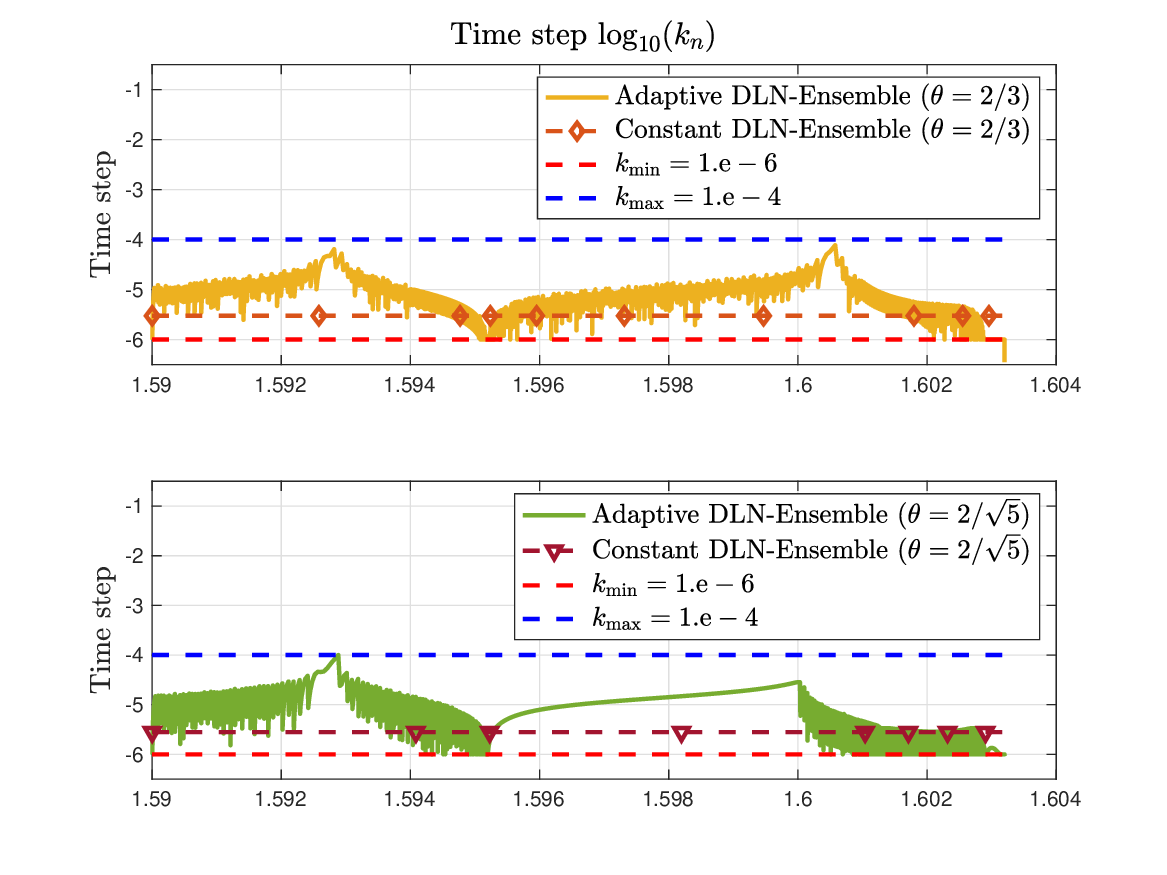}\\
				\vspace{0.02cm}
		\end{minipage}}  	
		\par
		\centering
		\vspace{-0.2cm}
		\caption{The numerical dissipation of adaptive algorithms is relatively large before $t = 1.602$ but grows slower at the end.  
		Meanwhile, all DLN-Ensemble algorithms have similar patterns of kinetic energy dissipation rates.
		The highly stiff part ($t \geq 1.6$) is hard to simulate since the estimator of LTE $\widehat{T}_{n+1}$ exceeds the required tolerance ${\tt{Tol}} = 1.\rm{e}-4$ frequently and time step size oscillates near the minimum step size $k_{\rm{min}} = 1.\rm{e}-6$ after $t = 1.602$.}
		\label{fig:ND-VD-LTE-Step-1st}
	\end{figure}

	Then we set time component function to be $G_{2}(t)$ with $\omega=3.1$ and simulate 10 systems of NSE over time interval $[1.58 \ \ 1.602]$. 
	From \Cref{fig:Lindberg2nd}, $G_{2}(t)$ has a slight growth to $16$ at $t = 1.598$ and declines rapidly to $-700$ in a period of $0.002$. 
	The number of steps, number of rejections, and total cost in steps for the adaptive algorithms are summarized in \Cref{table:Num-Steps-2nd}.
	The adaptive algorithms have small errors of energy in \Cref{fig:Error-avg-2nd,fig:Error-max-2nd} even though the exact average energy and maximum energy are above $10^{5}$ in \Cref{fig:KE-avg-2nd,fig:KE-max-2nd}. 
	The adaptive algorithm with $\theta = 2/\sqrt{5}$ outperforms all other algorithms since it uses the least number of steps to achieve minimum errors at the end. 
	The constant time-stepping algorithm with $\theta = 2/\sqrt{5}$ is unstable and obtains abnormal errors of energy in \Cref{fig:Error-Const25-avg-2nd,fig:Error-Const25-max-2nd}, which implies the advantage of time adaptivity in extremely stiff test problems.
	From \Cref{fig:ND-avg-2nd,fig:ND-max-2nd,fig:VD-avg-2nd,fig:VD-max-2nd}, all the algorithms except the constant time-stepping algorithm with $\theta = 2/\sqrt{5}$ have similar patterns of numerical dissipation and kinetic energy dissipation rate. 
	Adaptive algorithm with $\theta = 2/3$ is less efficient than adaptive algorithm with $\theta = 2/\sqrt{5}$ in that its $\widehat{T}_{n+1}$ reaches the required tolerance ${\tt{Tol}} = 1.\rm{e}-4$ more often and use the minimum time step size frequently. 
 
	\begin{table}[ptbh]
		\centering
		\renewcommand\arraystretch{1.25}
		\caption{Number of steps for the DLN-Ensemble algorithms with $\theta = 2/3, 2/\sqrt{5}$ for the test with time component function $G_{2}(t)$} 
		\begin{tabular}{lccc}
			\hline
			\hline
			DLN-Ensemble algorithms & \# Steps 
			& \# Rejections 
			& Total cost in steps
			\\
			\hline 
			Adaptive with $\theta = 2/3$            & 3851   & 3467   & 7318             
			\\
			Adaptive with $\theta = 2/\sqrt{5}$     & 3121   & 1875   & 4996     
			\\
			Constant with $\theta = 2/3$            & 7318   & -      & 7318    
			\\
			Constant with $\theta = 2/\sqrt{5}$     & 4996   & -      & 4996    
			\\
			\hline 
		\end{tabular}
		\label{table:Num-Steps-2nd}
	\end{table}

	\begin{figure}[ptbh]
		\subfigure[]{ \label{fig:Error-avg-2nd}
			\hspace{-0.2cm}
			\begin{minipage}[t]{0.45\linewidth}
				\centering
				\includegraphics[width=2.8in,height=2.0in]{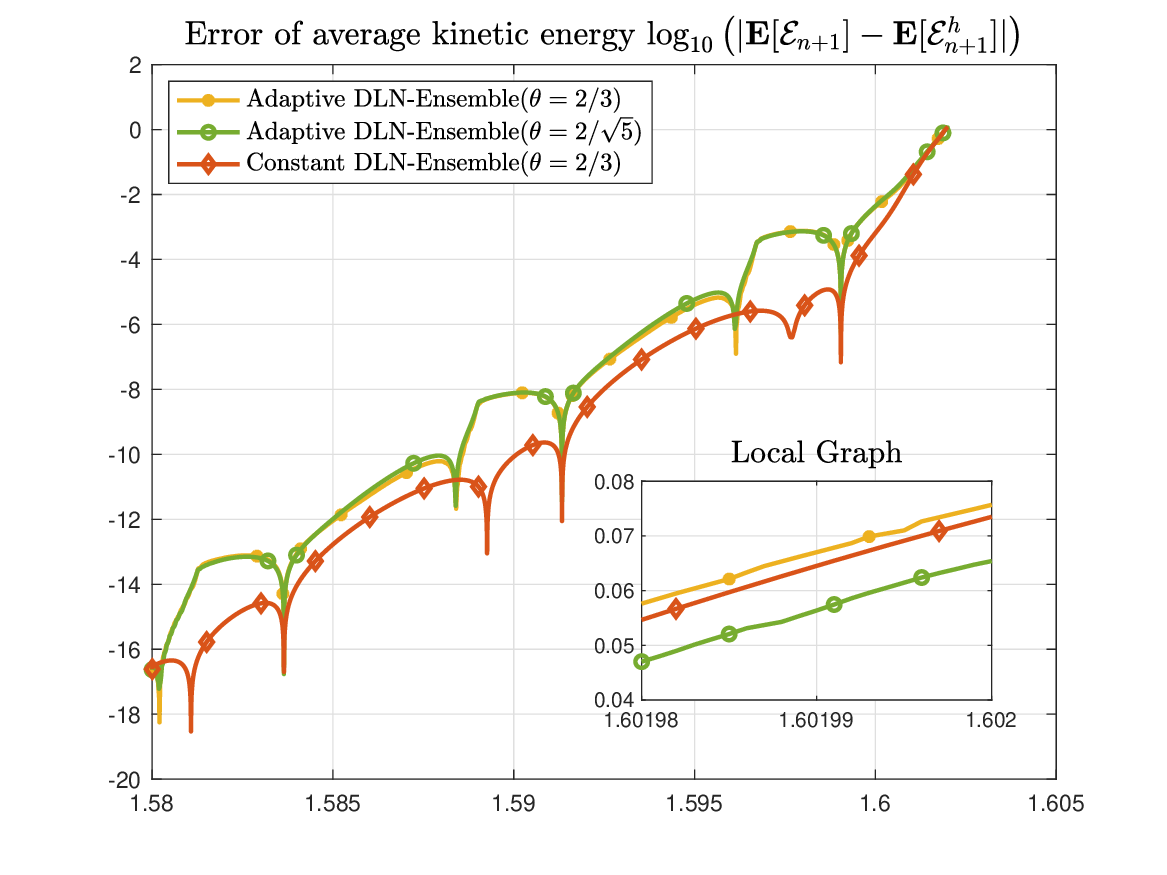}\\
				\vspace{0.02cm}
			\end{minipage}
			\quad}%
		\subfigure[]{ \label{fig:Error-Const25-avg-2nd}
			\hspace{-0.2cm}
			\begin{minipage}[t]{0.45\linewidth}
				\centering
				\includegraphics[width=2.8in,height=2.0in]{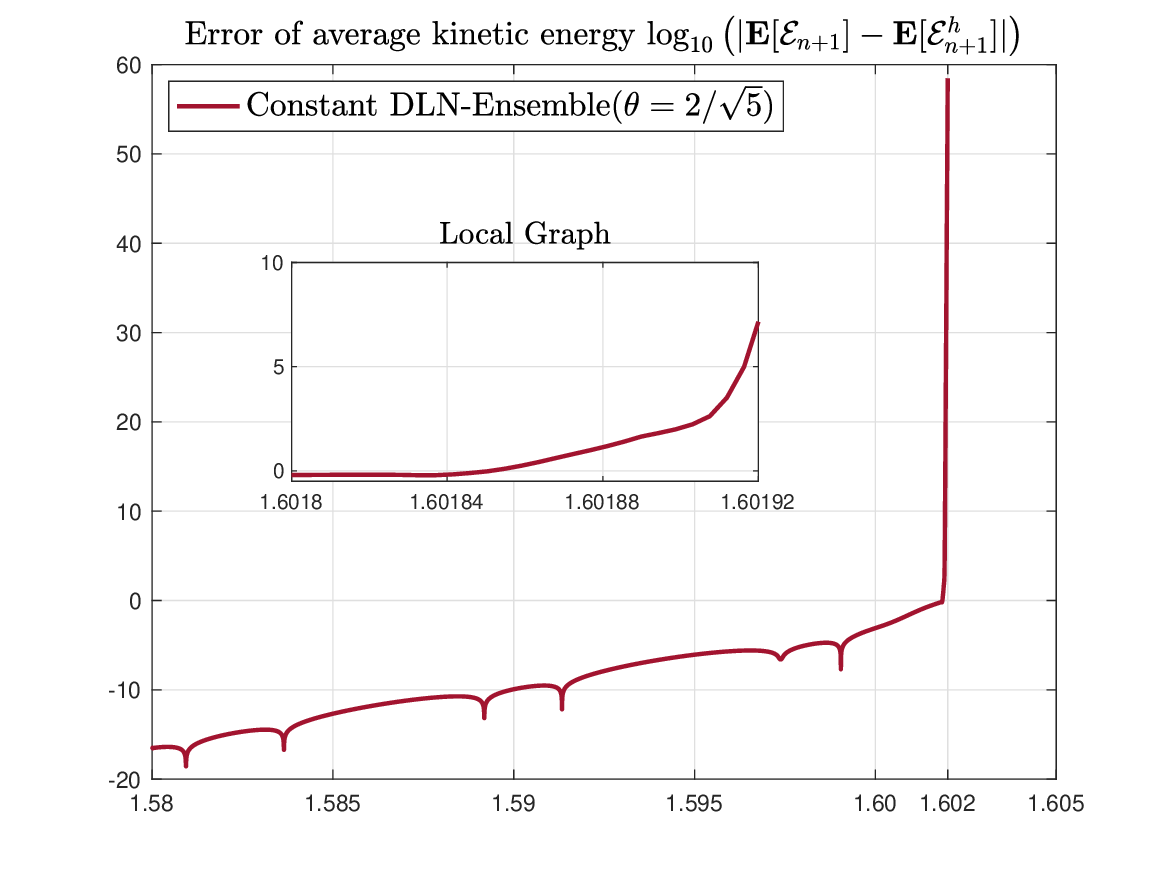}\\
				\vspace{0.02cm}
		\end{minipage}}  	
		\subfigure[]{ \label{fig:Error-max-2nd}
			\hspace{-0.2cm}
			\begin{minipage}[t]{0.45\linewidth}
				\centering
				\includegraphics[width=2.8in,height=2.0in]{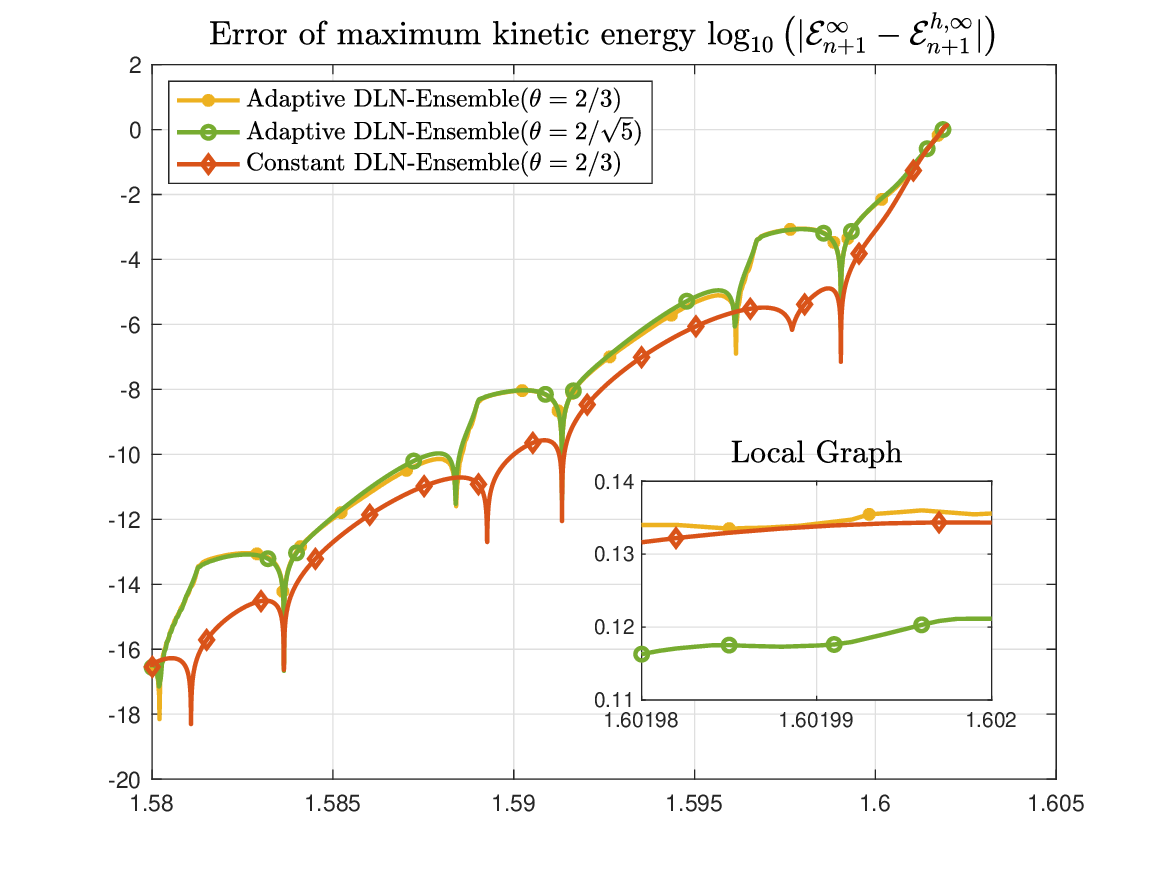}\\
				\vspace{0.02cm}
			\end{minipage}
			\quad}%
		\subfigure[]{ \label{fig:Error-Const25-max-2nd}
			\hspace{-0.2cm}
			\begin{minipage}[t]{0.45\linewidth}
				\centering
				\includegraphics[width=2.8in,height=2.0in]{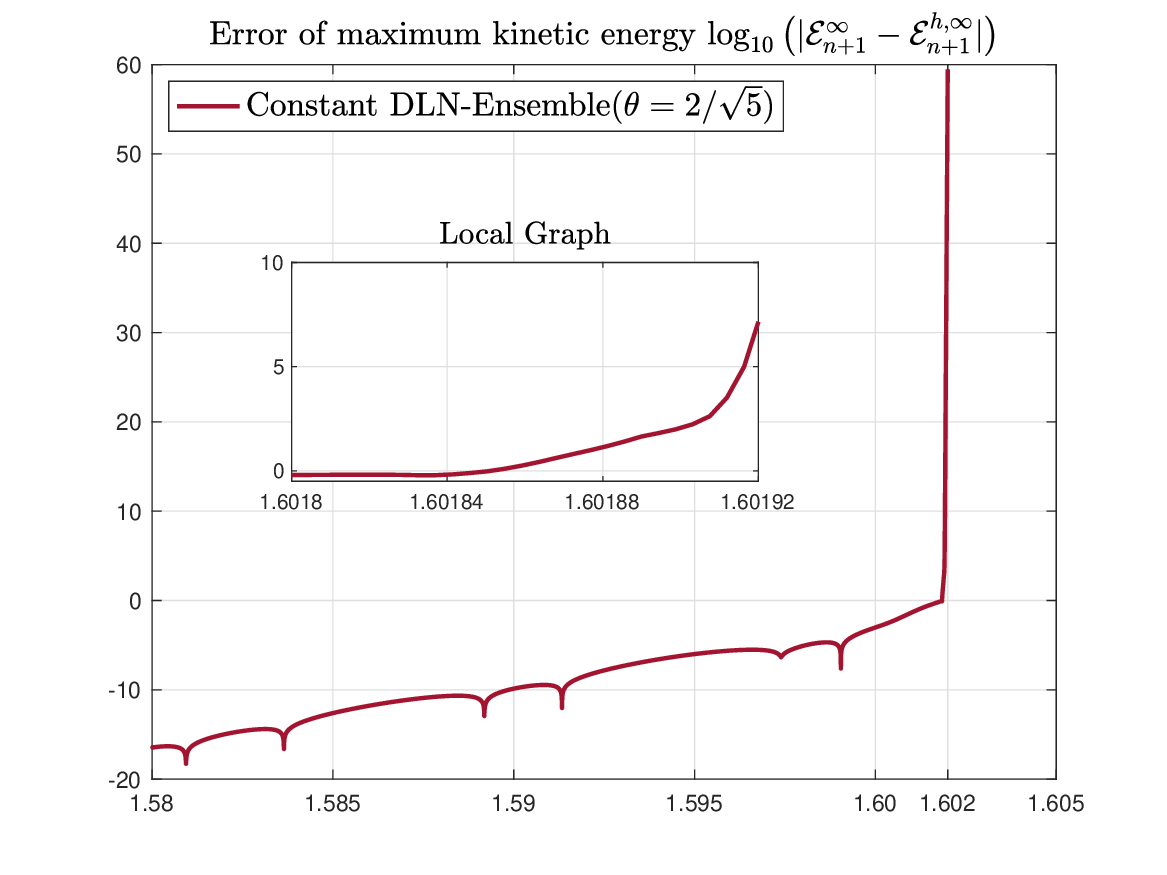}\\
				\vspace{0.02cm}
		\end{minipage}}  	
		\subfigure[$\mathbb{E} \text{[} \mathcal{E}_{n+1}^{h} \text{]}$ 
		and $\mathbb{E} \text{[}\mathcal{E}_{n+1} \text{]}$]{ \label{fig:KE-avg-2nd}
			\hspace{-0.2cm}
			\begin{minipage}[t]{0.45\linewidth}
				\centering
				\includegraphics[width=2.8in,height=2.0in]{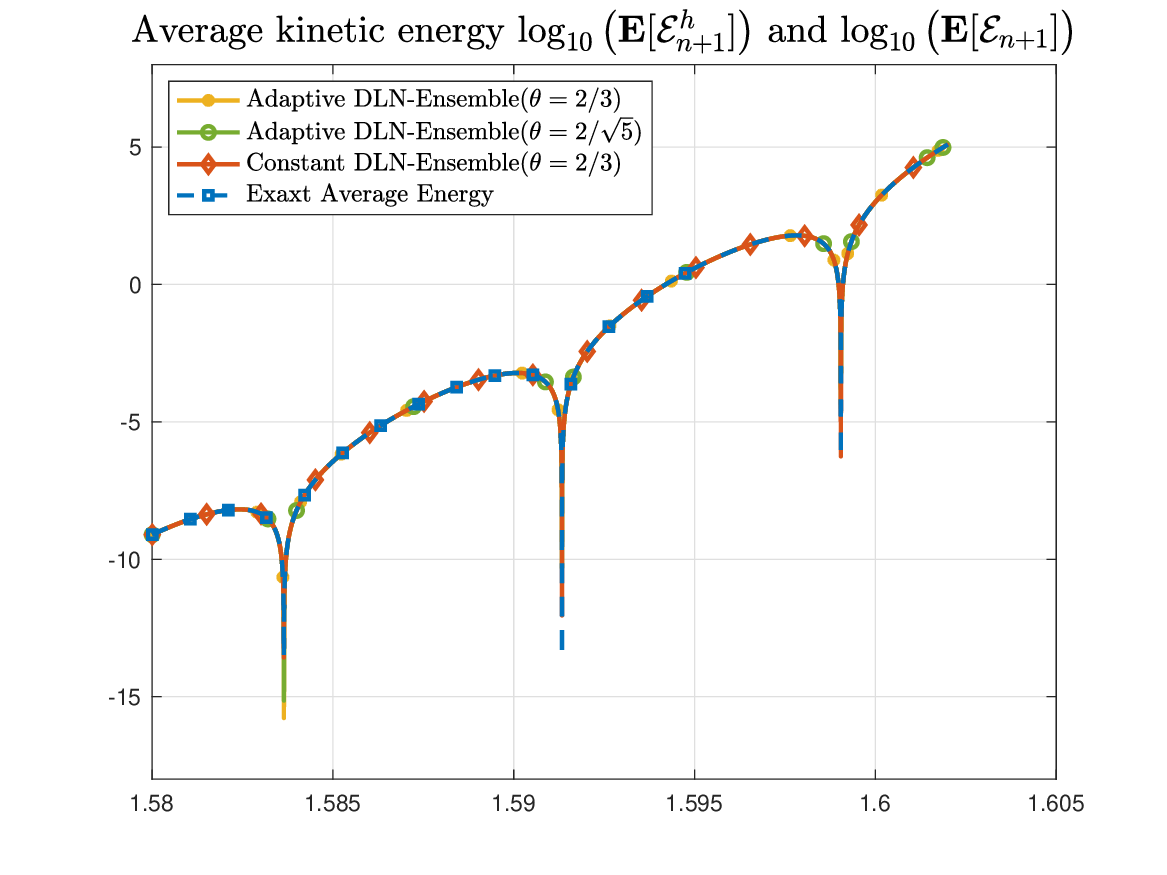}\\
				\vspace{0.02cm}
			\end{minipage}
			\quad}%
		\subfigure[$\mathcal{E}_{n+1}^{h,\infty}$ and $\mathcal{E}_{n+1}^{\infty}$]{ \label{fig:KE-max-2nd}
			\hspace{-0.2cm}
			\begin{minipage}[t]{0.45\linewidth}
				\centering
				\includegraphics[width=2.8in,height=2.0in]{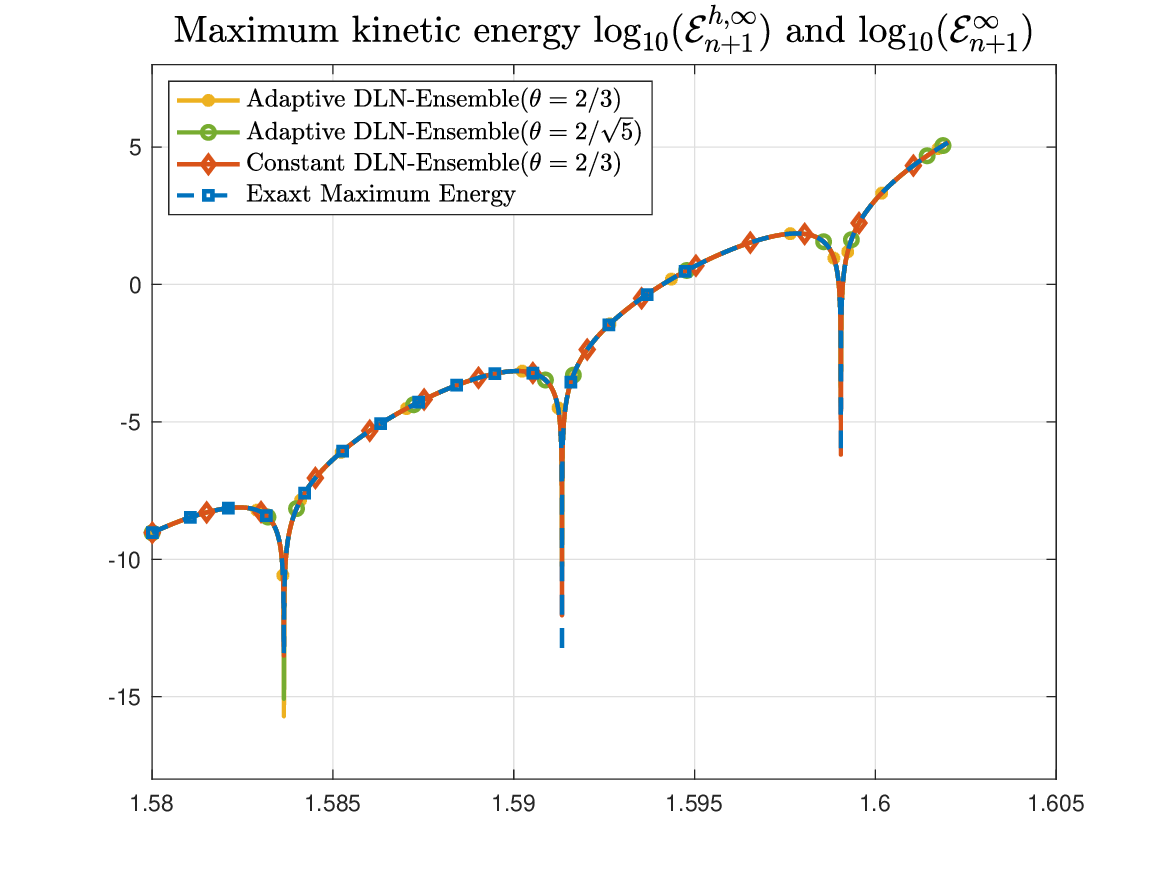}\\
				\vspace{0.02cm}
		\end{minipage}}  
		\par
		\centering
		\vspace{-0.2cm}
		\caption{The adaptive algorithms have small energy errors even if the exact average and maximum energy are above $10^{5}$. The adaptive algorithm with $\theta = 2/\sqrt{5}$ outperforms all other algorithms since it uses the least number of steps to achieve minimum errors at the end. The constant time-stepping algorithm with $\theta = 2/\sqrt{5}$ is unstable and obtains abnormal energy errors, which implies the advantage of time adaptivity in extremely stiff test problems.}
		\label{fig:Energy-error-2nd}
	\end{figure}

	\begin{figure}[ptbh]
		\subfigure[$\log_{10} ( \mathbb{E} \text{[} \mathcal{E}_{n+1,\tt{ND}}^{h} \text{]} )$]{ \label{fig:ND-avg-2nd}
			\hspace{-0.2cm}
			\begin{minipage}[t]{0.45\linewidth}
				\centering
				\includegraphics[width=2.8in,height=2.0in]{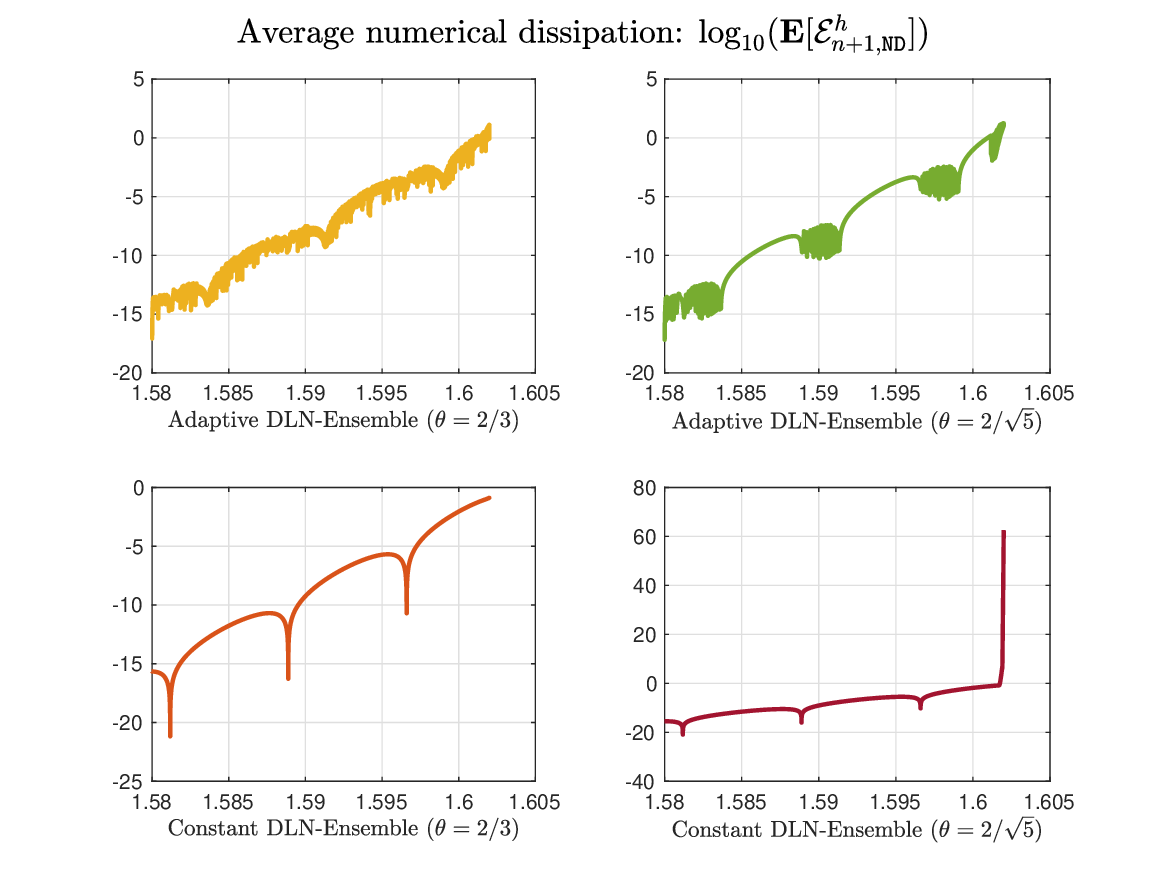}\\
				\vspace{0.02cm}
			\end{minipage}
			\quad}%
		\subfigure[$\log_{10} ( \mathcal{E}_{n+1,\tt{ND}}^{h,\infty})$]{ \label{fig:ND-max-2nd}
			\hspace{-0.2cm}
			\begin{minipage}[t]{0.45\linewidth}
				\centering
				\includegraphics[width=2.8in,height=2.0in]{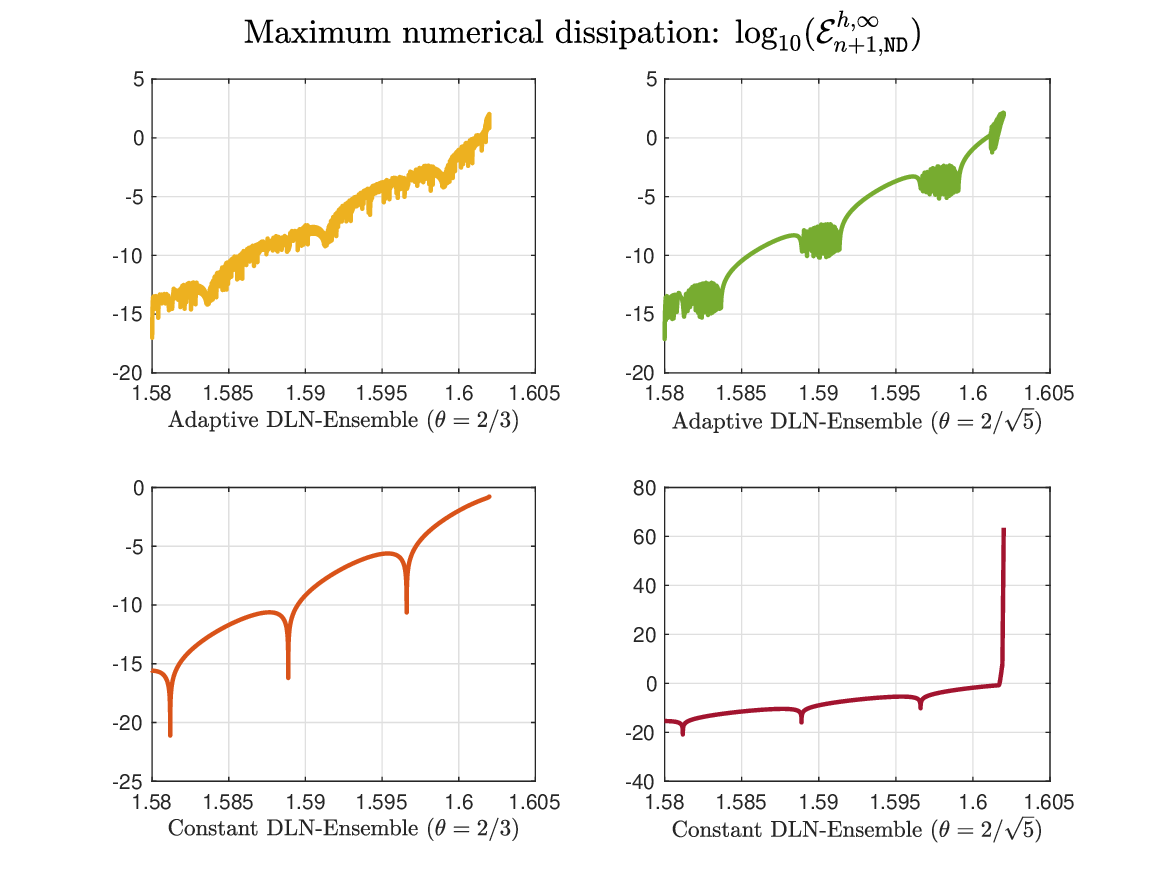}\\
				\vspace{0.02cm}
		\end{minipage}}  	
		\subfigure[$\log_{10} ( \mathbb{E} \text{[} \mathcal{E}_{n+1,\tt{VD}}^{h} \text{]} )$]{ \label{fig:VD-avg-2nd}
			\hspace{-0.2cm}
			\begin{minipage}[t]{0.45\linewidth}
				\centering
				\includegraphics[width=2.8in,height=2.0in]{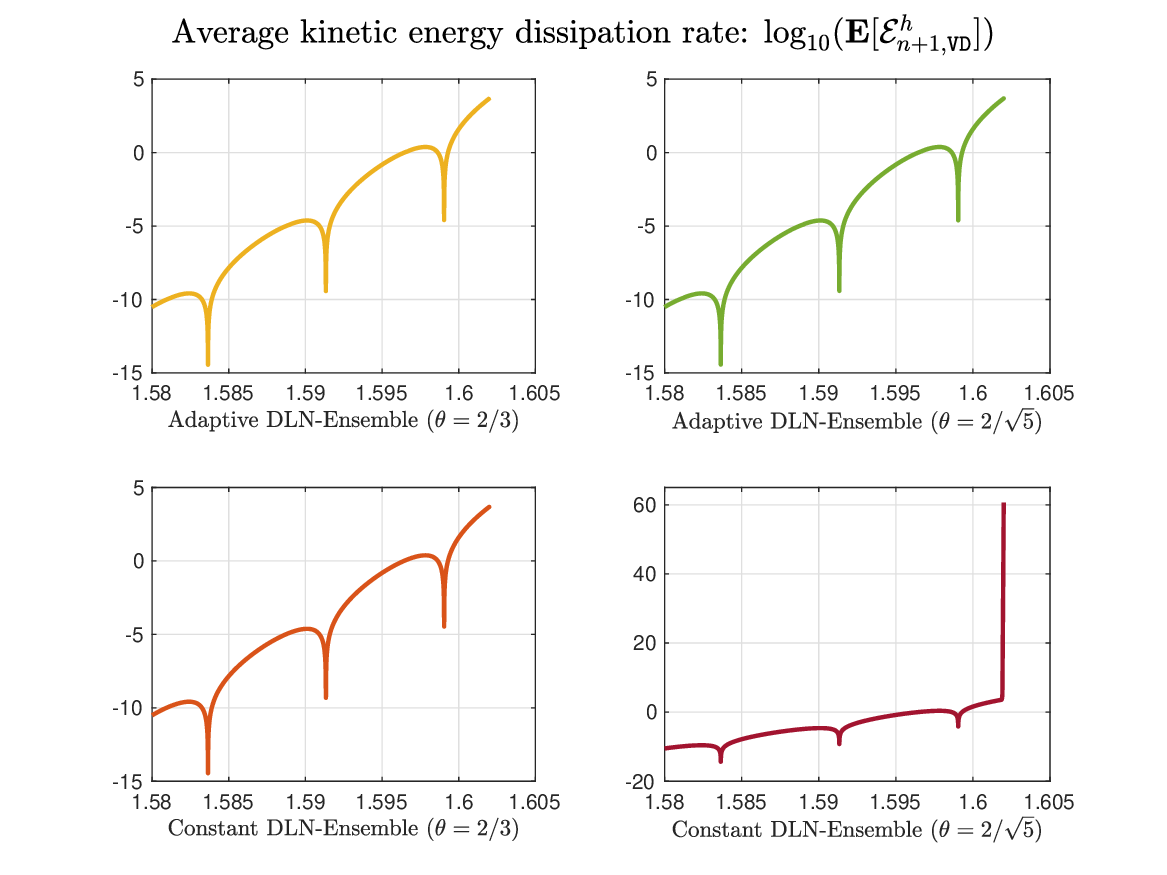}\\
				\vspace{0.02cm}
			\end{minipage}
			\quad}%
		\subfigure[$\log_{10} ( \mathcal{E}_{n+1,\tt{VD}}^{h,\infty})$]{ \label{fig:VD-max-2nd}
			\hspace{-0.2cm}
			\begin{minipage}[t]{0.45\linewidth}
				\centering
				\includegraphics[width=2.8in,height=2.0in]{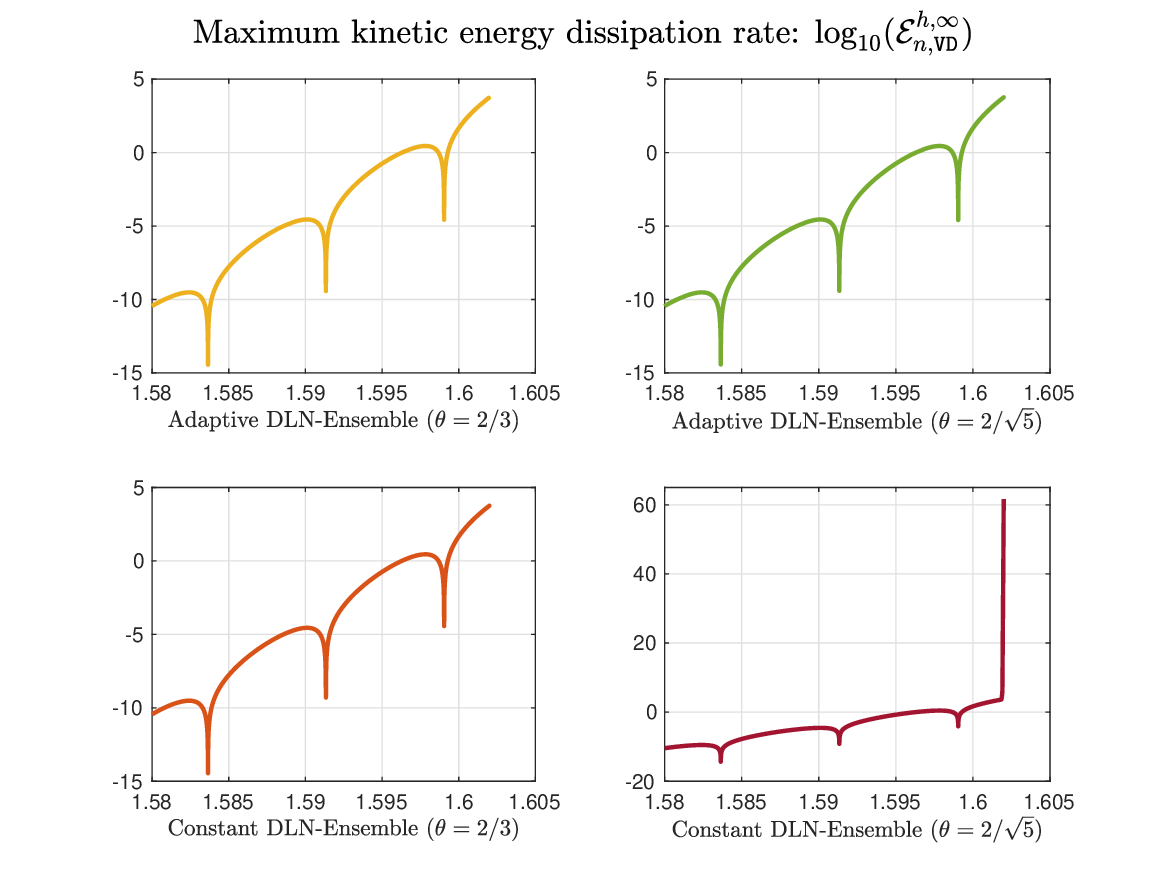}\\
				\vspace{0.02cm}
		\end{minipage}}  	
		\subfigure[$\widehat{T}_{n+1}$]{ \label{fig:EST-LTE-2nd}
			\hspace{-0.2cm}
			\begin{minipage}[t]{0.45\linewidth}
				\centering
				\includegraphics[width=2.8in,height=2.0in]{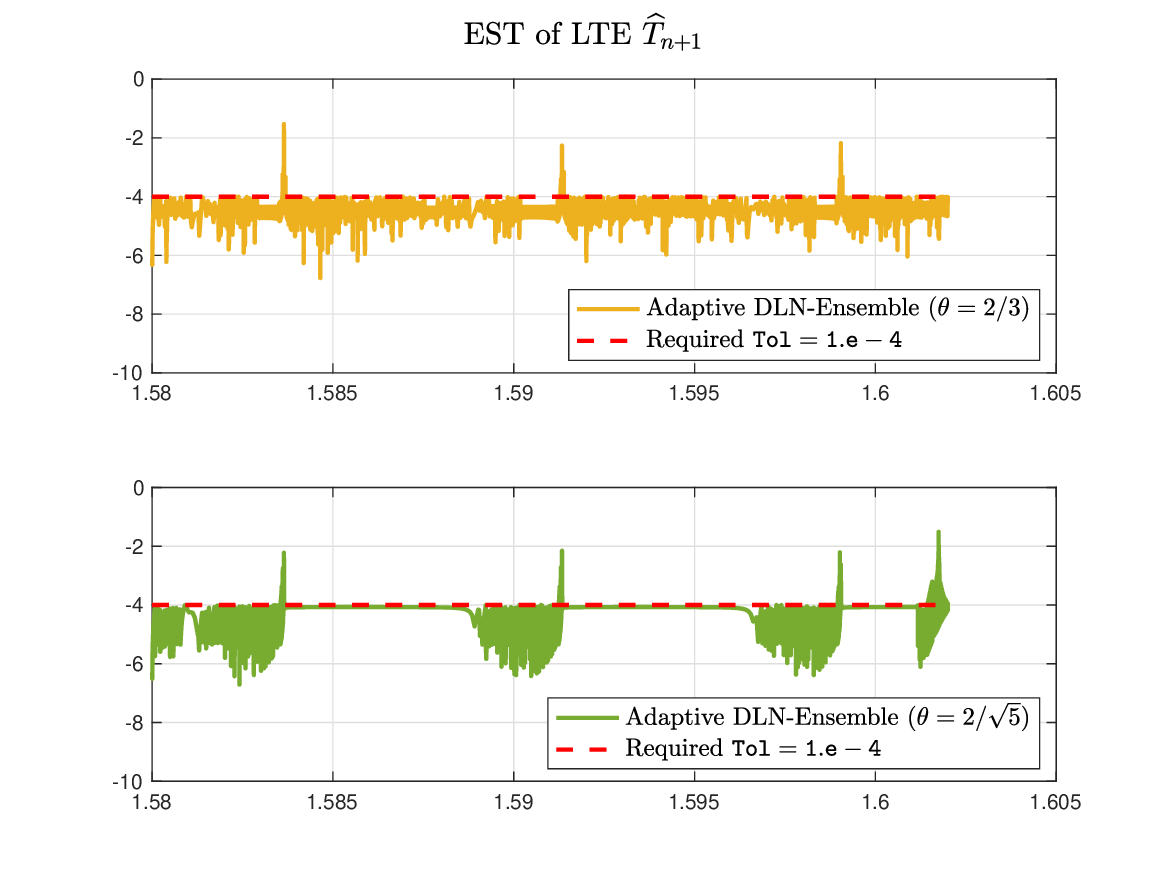}\\
				\vspace{0.02cm}
			\end{minipage}
			\quad}%
		\subfigure[$\log_{10} (k_{n})$]{ \label{fig:Step-2nd}
			\hspace{-0.2cm}
			\begin{minipage}[t]{0.45\linewidth}
				\centering
				\includegraphics[width=2.8in,height=2.0in]{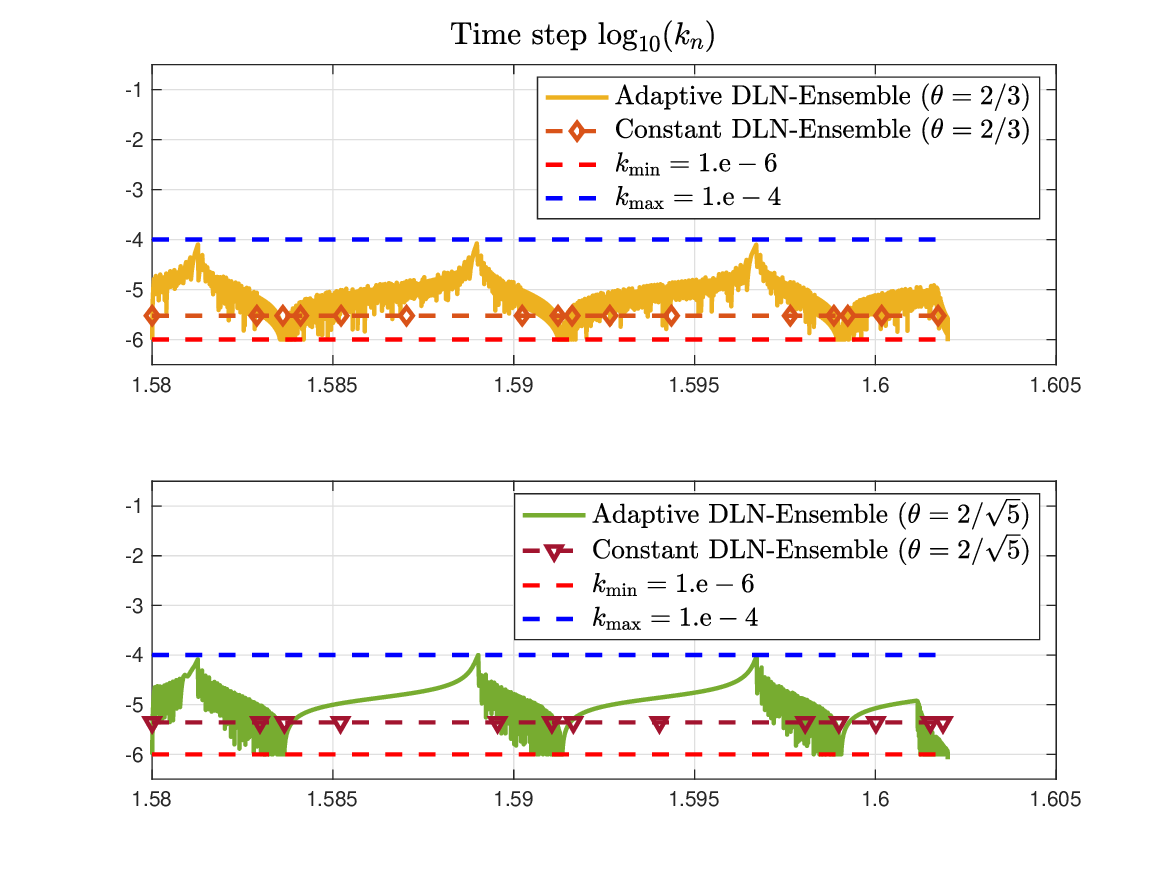}\\
				\vspace{0.02cm}
		\end{minipage}}  	
		\par
		\centering
		\vspace{-0.2cm}
		\caption{All the algorithms except the constant time-stepping algorithm with $\theta = 2/\sqrt{5}$ have similar patterns of numerical dissipation and kinetic energy dissipation rate. Adaptive algorithm with $\theta = 2/3$ is less efficient than adaptive algorithm with $\theta = 2/\sqrt{5}$ in that its $\widehat{T}_{n+1}$ reaches the required tolerance ${\tt{Tol}} = 1.\rm{e}-4$ more often and use the minimum time step size frequently.}
		\label{fig:ND-VD-LTE-Step-2nd}
	\end{figure}

	\section{Conclusions} The family of variable time-stepping DLN-Ensemble algorithms with $\theta \in (0,1)$ for multiple systems of NSE are stable and second-order convergent under the CFL-like conditions and other mild restrictions about time step ratios and diameter.  
	In practice, the algorithms can be easily implemented by refactorizing the BE-Ensemble algorithm.
	The adaptive DLN-Ensemble algorithms, utilizing the fully explicit AB2-like scheme to estimate LTE 
	with almost no additional costs, outperform the corresponding constant time-stepping algorithms in some highly stiff problems. 
	In the future, we would like to combine the family of DLN-Ensemble algorithms with other time-efficient algorithms, such as the scalar auxiliary variable (SAV) algorithm to have stable and convergent numerical solutions without the CFL-like conditions.

	\begin{appendices}
		\section{Proof of Lemma \ref{lemma:DLN-consistency}} 
		\label{appendixA} \ \\
		\begin{lemma}
			Let $\{ t_{n} \}_{n=1}^{N}$ be the time grids on the time interval $[0,T]$, $u(\cdot,t)$ the mapping from $[0,T]$ to $H^{\ell}(\Omega)$ and $u_{n}$ the function $u(\cdot,t_{n})$ in $H^{\ell}(\Omega)$. Assuming the mapping $u(\cdot,t)$ is smooth about $t$, then for any $\theta \in [0,1)$
			\begin{align}
			\| u_{n,\beta} - u(t_{n,\beta}) \|_{\ell}^{2} 
			\leq& C(\theta) (k_{n} + k_{n-1})^{3} \int_{t_{n-1}}^{t_{n+1}} \| u_{tt} \|_{\ell}^{2} dt, 
			\label{eq:Appendix-consist-2nd-eq1} \\
			\| u_{n,\ast} - u(t_{n,\beta}) \|_{\ell}^{2} 
			\leq& C(\theta) (k_{n} + k_{n-1})^{3} \int_{t_{n-1}}^{t_{n+1}} \| u_{tt} \|_{\ell}^{2} dt, 
			\label{eq:Appendix-consist-2nd-eq2} \\
			\Big\| \frac{u_{n,\alpha}}{\widehat{k}_{n}} - u_{t}(t_{n,\beta})\Big\|_{\ell}^{2}  
			\leq& C(\theta) (k_{n} + k_{n-1})^{3} \int_{t_{n-1}}^{t_{n+1}} \| u_{ttt} \|_{\ell}^{2} dt.
			\label{eq:Appendix-consist-2nd-eq3}
			\end{align}
			For $\theta = 1$, the corresponding conclusions for the midpoint rule are 
			\begin{align*}
			\big\| u_{n,\beta} - u(t_{n,\beta}) \big\|_{\ell}^{2} 
			=& \Big\| \frac{u_{n+1} + u_{n}}{2} - u \big( \frac{t_{n+1} + t_{n}}{2} \big)\Big\|_{\ell}
			\leq C k_{n}^{3} \int_{t_{n}}^{t_{n+1}} \| u_{tt} \|_{\ell}^{2} dt, \\
			\| u_{n,\ast} - u(t_{n,\beta}) \|_{\ell}^{2}
			=& \Big\| \Big( 1 + \frac{k_{n}}{2k_{n-1}} \Big) u_{n} - \frac{k_{n}}{2k_{n-1}} u_{n-1} 
			- u \big( \frac{t_{n+1} + t_{n}}{2} \big) \Big\|_{\ell} \\
			\leq& C (k_{n} + k_{n-1})^{3} \int_{t_{n-1}}^{t_{n+1}} \| u_{tt} \|_{\ell}^{2} dt, \\
			\Big\| \frac{u_{n,\alpha}}{\widehat{k}_{n}} - u_{t}(t_{n,\beta})\Big\|_{\ell}^{2}  
			=& \Big\| \frac{u_{n+1} - u_{n}}{k_{n}} - u_{t} \big( \frac{t_{n+1} + t_{n}}{2} \big) \Big\|_{\ell}
			\leq C k_{n}^{3} \displaystyle\int_{t_{n}}^{t_{n+1}} \| u_{ttt} \|_{\ell}^{2} dt. 
			\end{align*}
		\end{lemma}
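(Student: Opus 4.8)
The plan is to Taylor-expand $u$ about the node $t_{n,\beta}=\sum_{\ell=0}^{2}\beta_{\ell}^{(n)}t_{n-1+\ell}\in[t_{n-1},t_{n+1}]$ and to exploit the moment identities that are built into the DLN coefficients. From the explicit formulas one verifies directly
\[
\textstyle\sum_{\ell}\alpha_{\ell}=0,\quad \sum_{\ell}\beta_{\ell}^{(n)}=1,\quad \sum_{\ell}\alpha_{\ell}(t_{n-1+\ell}-t_{n,\beta})=\widehat{k}_{n},\quad \sum_{\ell}\alpha_{\ell}(t_{n-1+\ell}-t_{n,\beta})^{2}=0,
\]
the last being exactly the order-two condition that pins down the $\beta$'s; and, writing $u_{n,\ast}=c_{1}u_{n}+c_{0}u_{n-1}$ with $c_{1}=\beta_{2}^{(n)}(1+k_{n}/k_{n-1})+\beta_{1}^{(n)}$ and $c_{0}=\beta_{0}^{(n)}-\beta_{2}^{(n)}k_{n}/k_{n-1}$, one has $c_{1}+c_{0}=1$ and $c_{1}(t_{n}-t_{n,\beta})+c_{0}(t_{n-1}-t_{n,\beta})=0$. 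I will also record the two-sided bound $\tfrac{1-\theta}{2}(k_{n}+k_{n-1})\le\widehat{k}_{n}=\tfrac{1+\theta}{2}k_{n}+\tfrac{1-\theta}{2}k_{n-1}\le k_{n}+k_{n-1}$ together with the coefficient bounds \eqref{eq:bound-beta}, which furnish the $\theta$-dependent constants $C(\theta)$ (and explain their blow-up as $\theta\to1$).

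For \eqref{eq:Appendix-consist-2nd-eq1}: Taylor's theorem with integral remainder gives $u(t_{n-1+\ell})=u(t_{n,\beta})+u_{t}(t_{n,\beta})(t_{n-1+\ell}-t_{n,\beta})+\int_{t_{n,\beta}}^{t_{n-1+\ell}}(t_{n-1+\ell}-s)u_{tt}(s)\,ds$; multiplying by $\beta_{\ell}^{(n)}$ and summing, the constant and linear terms cancel by $\sum\beta_{\ell}^{(n)}=1$ and by the definition of $t_{n,\beta}$, leaving $u_{n,\beta}-u(t_{n,\beta})=\sum_{\ell}\beta_{\ell}^{(n)}\int_{t_{n,\beta}}^{t_{n-1+\ell}}(t_{n-1+\ell}-s)u_{tt}(s)\,ds$. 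Each remainder is bounded in $H^{\ell}(\Omega)$ by Cauchy--Schwarz in time, using $|t_{n-1+\ell}-s|\le C(k_{n}+k_{n-1})$ on an interval inside $[t_{n-1},t_{n+1}]$, which produces a factor $C(k_{n}+k_{n-1})^{3/2}\big(\int_{t_{n-1}}^{t_{n+1}}\|u_{tt}\|_{\ell}^{2}\,dt\big)^{1/2}$; combining with $|\beta_{\ell}^{(n)}|\le C(\theta)$ and squaring gives \eqref{eq:Appendix-consist-2nd-eq1}. Estimate \eqref{eq:Appendix-consist-2nd-eq3} is the same argument carried one order further: since $\sum\alpha_{\ell}=0$, $\sum\alpha_{\ell}(t_{n-1+\ell}-t_{n,\beta})=\widehat{k}_{n}$ and $\sum\alpha_{\ell}(t_{n-1+\ell}-t_{n,\beta})^{2}=0$, the zeroth-, first- and second-order terms of $\sum_{\ell}\alpha_{\ell}u(t_{n-1+\ell})$ collapse to $\widehat{k}_{n}u_{t}(t_{n,\beta})$, so $\widehat{k}_{n}^{-1}u_{n,\alpha}-u_{t}(t_{n,\beta})=\widehat{k}_{n}^{-1}\sum_{\ell}\alpha_{\ell}\int_{t_{n,\beta}}^{t_{n-1+\ell}}\tfrac12(t_{n-1+\ell}-s)^{2}u_{ttt}(s)\,ds$; Cauchy--Schwarz supplies a $C(k_{n}+k_{n-1})^{5/2}\big(\int_{t_{n-1}}^{t_{n+1}}\|u_{ttt}\|_{\ell}^{2}\big)^{1/2}$ factor, and dividing by $\widehat{k}_{n}\ge\tfrac{1-\theta}{2}(k_{n}+k_{n-1})$ and squaring yields \eqref{eq:Appendix-consist-2nd-eq3}.

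For \eqref{eq:Appendix-consist-2nd-eq2} I will use the identity $u_{n,\ast}=\beta_{2}^{(n)}L_{n}(t_{n+1})+\beta_{1}^{(n)}u_{n}+\beta_{0}^{(n)}u_{n-1}$, where $L_{n}$ is the linear interpolant of $u$ through $(t_{n-1},u_{n-1})$ and $(t_{n},u_{n})$, so that $u_{n,\ast}-u(t_{n,\beta})=\big(u_{n,\beta}-u(t_{n,\beta})\big)-\beta_{2}^{(n)}\big(u(t_{n+1})-L_{n}(t_{n+1})\big)$. The first term is controlled by \eqref{eq:Appendix-consist-2nd-eq1}; the second is the error of linear extrapolation, for which the Peano-kernel representation $u(t_{n+1})-L_{n}(t_{n+1})=\int_{t_{n-1}}^{t_{n+1}}K_{n}(s)u_{tt}(s)\,ds$ holds with $\|K_{n}\|_{L^{\infty}}\le C(k_{n}+k_{n-1})$ and $\operatorname{supp}K_{n}\subset[t_{n-1},t_{n+1}]$, so a final Cauchy--Schwarz gives the $C(k_{n}+k_{n-1})^{3}\int_{t_{n-1}}^{t_{n+1}}\|u_{tt}\|_{\ell}^{2}$ bound after using $|\beta_{2}^{(n)}|\le C(\theta)$. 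The $\theta=1$ statements are the identical computations with the midpoint-rule coefficients $(\alpha_{2},\alpha_{1},\alpha_{0})=(1,-1,0)$, $\widehat{k}_{n}=k_{n}$, $(\beta_{2},\beta_{1},\beta_{0})=(\tfrac12,\tfrac12,0)$, $t_{n,\beta}=(t_{n}+t_{n+1})/2$; here $\widehat{k}_{n}=k_{n}$ removes the $\theta$-dependence and shrinks the relevant integral to $[t_{n},t_{n+1}]$ in the first and third estimates, while the second is the same extrapolation estimate.

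\textbf{Main obstacle.} The delicate point is \eqref{eq:Appendix-consist-2nd-eq2}: the coefficient $c_{1}=\beta_{2}^{(n)}(1+k_{n}/k_{n-1})+\beta_{1}^{(n)}$ appearing in $u_{n,\ast}$ is unbounded as $k_{n}/k_{n-1}\to\infty$, so a naive term-by-term bound of the extrapolation error diverges with the step ratio; one must pass to the Peano-kernel (equivalently, Newton divided-difference) form in order to expose the cancellation that keeps $\|K_{n}\|_{L^{\infty}}$ of size $k_{n}+k_{n-1}$ \emph{uniformly} in the step ratio. The other point requiring care, though purely algebraic, is verifying the order-two identity $\sum_{\ell}\alpha_{\ell}(t_{n-1+\ell}-t_{n,\beta})^{2}=0$ directly from the explicit $\beta_{\ell}^{(n)}$-formulas; this is the crux of why the method, and hence \eqref{eq:Appendix-consist-2nd-eq3}, is genuinely second order.
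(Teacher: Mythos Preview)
Your proposal is correct and close in spirit to the paper's argument, but organized around a different expansion point and with a genuinely different treatment of \eqref{eq:Appendix-consist-2nd-eq2}. The paper expands everything about $t_{n}$ (not $t_{n,\beta}$); your second-order moment identity $\sum_{\ell}\alpha_{\ell}(t_{n-1+\ell}-t_{n,\beta})^{2}=0$ then appears in the paper in the equivalent form
\[
\frac{\alpha_{2}k_{n}^{2}+\alpha_{0}k_{n-1}^{2}}{2\widehat{k}_{n}}-\big(\beta_{2}^{(n)}k_{n}-\beta_{0}^{(n)}k_{n-1}\big)=0,
\]
so the ``crux'' you isolate is exactly the identity the paper checks. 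For \eqref{eq:Appendix-consist-2nd-eq2} the paper also expands about $t_{n}$ and obtains
\[
u_{n,\ast}-u(t_{n,\beta})=\Big(\beta_{0}^{(n)}-\beta_{2}^{(n)}\tfrac{k_{n}}{k_{n-1}}\Big)\!\int_{t_{n-1}}^{t_{n}}\!u_{tt}(t)(t-t_{n-1})\,dt-\int_{t_{n}}^{t_{n,\beta}}\!u_{tt}(t)(t_{n,\beta}-t)\,dt,
\]
and then bounds $\big|\beta_{0}^{(n)}-\beta_{2}^{(n)}k_{n}/k_{n-1}\big|\cdot k_{n-1}^{3/2}=\big|\beta_{0}^{(n)}k_{n-1}-\beta_{2}^{(n)}k_{n}\big|\cdot k_{n-1}^{1/2}\le C(\theta)(k_{n}+k_{n-1})^{3/2}$. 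So the ``unbounded $c_{1}$'' obstacle you flag is already absorbed by the short $k_{n-1}$-length integral; no Peano-kernel step is needed if one expands about $t_{n}$. Your decomposition $u_{n,\ast}-u(t_{n,\beta})=(u_{n,\beta}-u(t_{n,\beta}))-\beta_{2}^{(n)}(u(t_{n+1})-L_{n}(t_{n+1}))$ is a clean alternative that recycles \eqref{eq:Appendix-consist-2nd-eq1} and reduces the remaining piece to a standard linear-extrapolation error; this buys a more structural proof, while the paper's choice of expansion point buys a slightly shorter direct computation.
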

		\begin{proof}
			We first prove the case $\theta \in [0,1)$. It suffices to prove the case $\ell = 0$. 
			By Taylor's theorem with integral remainder
			\begin{align}
			u ( x, t_{n+1} ) &= u ( x, t_{n} ) + u_{t} ( x, t_{n} ) {k_{n}} 
			+ \int_{t_{n}}^{t_{n+1}} u_{tt} ( x, t ) ( t_{n+1} - t ) dt, 
			\label{eq:Int-remainder} \\
			u ( x, t_{n-1} ) &= u ( x, t_{n} ) - u_{t} ( x, t_{n} ) {k_{n-1}} 
			+ \int_{t_{n}}^{t_{n-1}} u_{tt} ( x, t ) ( t_{n-1} - t ) dt, \notag \\
			u ( x, t_{n,\beta} ) &= u ( x, t_{n} ) + u_{t} ( x, t_{n} ) ( t_{n,\beta} - t_{n} ) 
			+ \int_{t_{n}}^{t_{n,\beta}} u_{tt} ( x, t ) ( t_{n,\beta} - t ) dt \notag \\
			&= u ( x, t_{n} ) + u_{t} ( x, t_{n} ) \big( {\beta_{2}^{(n)}} k_{n} - {\beta_{0}^{(n)}} k_{n-1} \big) 
			+ \int_{t_{n}}^{t_{n,\beta}} u_{tt} ( x, t ) ( t_{n,\beta} - t ) dt. \notag 
			\end{align}
			We use \eqref{eq:Int-remainder}  and the fact that $\beta_{2}^{(n)} + \beta_{1}^{(n)} + \beta_{0}^{(n)} = 1$, 
			\begin{align}
			& u_{n,\beta}(x) - u(x,t_{n,\beta})   
			\label{eq:AppendA-eq1} \\
			=& {\beta_{2}^{(n)}} \!\!\! \int_{t_{n}}^{t_{n+1}} \!\!\! u_{tt} ( x, t ) ( t_{n\!+\!1} \!-\! t ) dt 
			\!+\! {\beta_{0}^{(n)}} \!\!\! \int_{t_{n}}^{t_{n-1}} \!\!\! u_{tt} ( x, t ) ( t_{n\!-\!1} \!-\! t ) dt
			\!- \!\!\! \int_{t_{n}}^{t_{n,\beta}} \!\! u_{tt} ( x, t ) ( t_{n,\beta} \!-\! t ) dt. \notag 
			\end{align}
			By \eqref{eq:AppendA-eq1}, \eqref{eq:bound-beta} and H\"older's inequality,
			\begin{align*}
			&\big\| u_{n,\beta}(\cdot) - u(\cdot,t_{n,\beta})  \big\|^{2} \notag \\
			\leq& C(\theta)\!\! \int_{\Omega} \Big[ \int_{t_{n}}^{t_{n+1}} | u_{tt} (x,t)| (t_{n\!+\!1} \!-\! t) dt 
			+ \int_{t_{n}}^{t_{n-1}} | u_{tt} (x,t)| (t_{n\!-\!1} \!-\! t) dt \\
			& \qquad + \Big| \int_{t_{n}}^{t_{n,\beta}} | u_{tt} (x,t) | |t_{n,\beta} \!-\! t| dt \Big| \Big]^{2} dx 
			\notag \\
			\leq& C(\theta)\!\! \int_{\Omega}\!\! \Big\{\! \Big[ \!\! \int_{t_{n}}^{t_{n\!+\!1}} \!\!\! |u_{tt} (x, t)|^{2} dt  \!\! \int_{t_{n}}^{t_{n\!+\!1}} \!\!\!(t_{n+1} \!-\! t)^{2} dt \Big]^{\!\frac{1}{2}}
			\!\!+\!\! \Big[ \!\! \int_{t_{n\!-\!1}}^{t_{n}}\!\!\! |u_{tt} (x,t) |^{2} dt 
			\!\! \int_{t_{n\!-\!1}}^{t_{n}}\!\!\! (t \!-\! t_{n\!-\!1})^{2} dt \Big]^{\!\frac{1}{2}}  \notag \\
			&\qquad \qquad + \Big[ \!\!\int_{t_{n}}^{t_{n,\beta}}\!\!\! | u_{tt} (x,t)|^{2} dt  \Big|\!\! \int_{t_{n}}^{t_{n,\beta}} \!\! ( t_{n,\beta} \!-\! t )^{2} dt \Big| \Big]^{\!\frac{1}{2}} \!\Big\}^{2} dx 
			\notag \\
			\leq& C(\theta) (k_{n} + k_{n-1})^{3} \int_{\Omega} \int_{t_{n-1}}^{t_{n+1}}  |u_{tt} (x,t) |^{2} dt dx,
			\notag 
			\end{align*}
			which implies \eqref{eq:Appendix-consist-2nd-eq1}. Still by \eqref{eq:Int-remainder} and 
			$\beta_{2}^{(n)} + \beta_{1}^{(n)} + \beta_{0}^{(n)} = 1$, 
			\begin{align}
				&u_{n,\ast}(x) - u(x,t_{n,\beta}) 
				\label{eq:AppendA-eq2} \\
				=& \Big( \beta _{0}^{(n)} - \beta _{2}^{(n)} \frac{k_{n}}{k_{n-1}} \Big) 
				\int_{t_{n-1}}^{t_{n}} u_{tt} ( x, t ) ( t - t_{n-1}) dt - \int_{t_{n}}^{t_{n,\beta}} u_{tt} ( x, t ) ( t_{n,\beta} - t ) dt. \notag
			\end{align}
			By \eqref{eq:AppendA-eq2}, \eqref{eq:bound-beta} and H\"older's inequality,
			\begin{align*}
				&\big\| u_{n,\ast}(\cdot) - u(\cdot,t_{n,\beta})  \big\|^{2} \notag \\
				\leq& \int_{\Omega} \! \Big[ \Big| \Big( \beta _{0}^{(n)} \!-\! \beta _{2}^{(n)} \frac{k_{n}}{k_{n-1}} \Big) \Big|
				\int_{t_{n-1}}^{t_{n}} \!\! |u_{tt} ( x, t )| ( t - t_{n-1}) dt 
				\!+\! \Big| \int_{t_{n}}^{t_{n,\beta}} \! \! | u_{tt} (x,t) | |t_{n,\beta} \!-\! t| dt \Big| \Big]^{2} dx \\
				\leq& \int_{\Omega} \Big\{ \Big| \Big( \beta _{0}^{(n)} \!-\! \beta _{2}^{(n)} \frac{k_{n}}{k_{n-1}} \Big) \Big| 
				\Big[ \!\! \int_{t_{n\!-\!1}}^{t_{n}}\!\!\! |u_{tt} (x,t) |^{2} dt 
				\!\! \int_{t_{n\!-\!1}}^{t_{n}}\!\!\! (t \!-\! t_{n\!-\!1})^{2} dt \Big]^{\!\frac{1}{2}}  \notag \\
				&\qquad \qquad + \Big[ \Big|\!\!\int_{t_{n}}^{t_{n,\beta}}\!\!\! | u_{tt} (x,t)|^{2} dt \Big| \Big| \!\! \int_{t_{n}}^{t_{n,\beta}} \!\! ( t_{n,\beta} \!-\! t )^{2} dt\Big| \Big]^{\!\frac{1}{2}} \!\Big\}^{2} dx 
				\notag \\
				\leq& C(\theta) (k_{n} + k_{n-1})^{3} \int_{\Omega} \int_{t_{n-1}}^{t_{n+1}}  |u_{tt} (x,t) |^{2} dt dx,
			\end{align*}
			which implies \eqref{eq:Appendix-consist-2nd-eq2}.
			\begin{confidential}
				\color{darkblue}
				\begin{align*}
					&u_{n,\ast}(x) - u(x,t_{n,\beta}) \\
					=& \Big( \beta _{2}^{(n)} \big( 1 + \frac{k_{n}}{k_{n-1}} \big) + \beta _{1}^{(n)} \Big) u_{n}(x)
					+ \Big( \beta _{0}^{(n)} - \beta _{2}^{(n)} \frac{k_{n}}{k_{n-1}} \Big) u_{n-1}(x)
					- u(x,t_{n,\beta}) \\ 
					=& \Big( \beta _{2}^{(n)} \big( 1 + \frac{k_{n}}{k_{n-1}} \big) + \beta _{1}^{(n)} \Big) u(x,t_{n})
					+ \Big( \beta _{0}^{(n)} - \beta _{2}^{(n)} \frac{k_{n}}{k_{n-1}} \Big) 
					\Big\{ u ( x, t_{n} ) - u_{t} ( x, t_{n} ) {k_{n-1}} + \int_{t_{n}}^{t_{n-1}} u_{tt} ( x, t ) ( t_{n-1} - t ) dt \Big\} \\
					&- \Big\{ u ( x, t_{n} ) + u_{t} ( x, t_{n} ) \big( {\beta_{2}^{(n)}} k_{n} - {\beta_{0}^{(n)}} k_{n-1} \big) 
					+ \int_{t_{n}}^{t_{n,\beta}} u_{tt} ( x, t ) ( t_{n,\beta} - t ) dt \Big\} \\
					=& \Big[  \beta _{2}^{(n)} + \beta _{2}^{(n)}\frac{k_{n}}{k_{n-1}} + \beta _{1}^{(n)} 
					+ \beta _{0}^{(n)} - \beta _{2}^{(n)} \frac{k_{n}}{k_{n-1}} - 1 \Big] u(x,t_{n})
					+ \Big[ - \big( \beta _{0}^{(n)} k_{n-1} - \beta _{2}^{(n)} k_{n} \Big)  
					- \big( {\beta_{2}^{(n)}} k_{n} - {\beta_{0}^{(n)}} k_{n-1} \big) \Big] u_{t} ( x, t_{n} ) \\
					&+ \Big( \beta _{0}^{(n)} - \beta _{2}^{(n)} \frac{k_{n}}{k_{n-1}} \Big) 
					\int_{t_{n-1}}^{t_{n}} u_{tt} ( x, t ) ( t - t_{n-1}) dt
					- \int_{t_{n}}^{t_{n,\beta}} u_{tt} ( x, t ) ( t_{n,\beta} - t ) dt.
				\end{align*}
				\begin{align*}
					&\int_{\Omega} \Big\{ \Big| \Big( \beta _{0}^{(n)} \!-\! \beta _{2}^{(n)} \frac{k_{n}}{k_{n-1}} \Big) \Big| 
					\Big[ \!\! \int_{t_{n\!-\!1}}^{t_{n}}\!\!\! |u_{tt} (x,t) |^{2} dt 
					\!\! \int_{t_{n\!-\!1}}^{t_{n}}\!\!\! (t \!-\! t_{n\!-\!1})^{2} dt \Big]^{\!\frac{1}{2}}  + \Big[ \Big|\!\! \int_{t_{n}}^{t_{n,\beta}}\!\!\! | u_{tt} (x,t)|^{2} dt \Big| \Big| \!\! \int_{t_{n}}^{t_{n,\beta}} \!\! ( t_{n,\beta} \!-\! t )^{2} dt \Big| \Big]^{\!\frac{1}{2}} \!\Big\}^{2} dx \\
					=& \int_{\Omega} \Big\{ \Big| \Big( \beta _{0}^{(n)} \!-\! \beta _{2}^{(n)} \frac{k_{n}}{k_{n-1}} \Big) \Big| 
					\Big[ \!\! \int_{t_{n\!-\!1}}^{t_{n}}\!\!\! |u_{tt} (x,t) |^{2} dt 
					\Big( \frac{1}{3} (t - t_{n-1} )^{3} \Big|_{t_{n-1}}^{t_{n}} \Big) \Big]^{\!\frac{1}{2}}  + \Big[ \Big| \!\!\int_{t_{n}}^{t_{n,\beta}}\!\!\! | u_{tt} (x,t)|^{2} dt \Big| \Big( \Big| \frac{1}{3} ( t - t_{n,\beta})^{3} \Big|_{t_{n}}^{t_{n,\beta}} \Big| \Big) \Big]^{\!\frac{1}{2}} \!\Big\}^{2} dx \\
					=& \int_{\Omega} \Big\{ \Big| \Big( \beta _{0}^{(n)} \!-\! \beta _{2}^{(n)} \frac{k_{n}}{k_{n-1}} \Big) \Big| 
					\Big[ \frac{k_{n-1}^{3}}{3} \int_{t_{n\!-\!1}}^{t_{n}}\!\!\! |u_{tt} (x,t) |^{2} dt  \Big]^{\!\frac{1}{2}}  
					+ \Big[  \frac{|t_{n,\beta} - t_{n}|^{3}}{3} \Big| \!\! \int_{t_{n}}^{t_{n,\beta}}\!\!\! | u_{tt} (x,t)|^{2} dt \Big| \Big]^{\!\frac{1}{2}} \!\Big\}^{2} dx \\
					=& \int_{\Omega} \Big\{ \sqrt{\frac{k_{n-1}}{3}} \Big|\beta _{2}^{(n)}k_{n} - \beta _{0}^{(n)} k_{n-1} \Big| 
					\Big[ \int_{t_{n\!-\!1}}^{t_{n}}\!\!\! |u_{tt} (x,t) |^{2} dt  \Big]^{\!\frac{1}{2}}  
					+ \frac{\Big|\beta _{2}^{(n)}k_{n} - \beta _{0}^{(n)} k_{n-1} \Big|^{3/2} }{\sqrt{3}}
					\Big[  \!\! \int_{t_{n}}^{t_{n,\beta}}\!\!\! | u_{tt} (x,t)|^{2} dt \Big| \Big]^{\!\frac{1}{2}} \!\Big\}^{2} dx \\
					\leq& C(\theta) \big( k_{n} + k_{n-1} \big)^{3}  \int_{\Omega} \Big\{ \Big[ \int_{t_{n\!-\!1}}^{t_{n}}\!\!\! |u_{tt} (x,t) |^{2} dt  \Big]^{\!\frac{1}{2}} + \Big[  \Big|\!\! \int_{t_{n}}^{t_{n,\beta}}\!\!\! | u_{tt} (x,t)|^{2} dt \Big| \Big]^{\!\frac{1}{2}} \Big\}^{2} dx \\
					\leq& C(\theta) \big( k_{n} + k_{n-1} \big)^{3} \int_{\Omega} \Big[ \int_{t_{n\!-\!1}}^{t_{n}}\!\!\! |u_{tt} (x,t) |^{2} dt +\Big| \int_{t_{n}}^{t_{n,\beta}}\!\!\! | u_{tt} (x,t)|^{2} dt \Big| \Big] dx 
					\leq C(\theta) \big( k_{n} + k_{n-1} \big)^{3}  
					\int_{\Omega} \int_{t_{n-1}}^{t_{n+1}} \!\!\! |u_{tt} (x,t) |^{2} dt dx.
				\end{align*}
				\normalcolor
			\end{confidential}
			For \eqref{eq:Appendix-consist-2nd-eq3}, we still use the Taylor's theorem with integral remainder and obtain
			\begin{align}
			u (x,t_{n+1}) \!&=\! u(x,t_{n}) \!+\! u_{t}(x, t_{n}) {k_{n}} 
			\!+\! u_{tt}(x,t_{n}) \frac{k_{n}^{2}}{2} \!+\! \int_{t_{n}}^{t_{n+1}} \!u_{ttt}(x,t)
			\frac{(t_{n\!+\!1}\!-\!t)^{2}}{2} dt, 
			\label{eq:Int-remainder-2} \\
			u(x,t_{n-1}) \!&=\! u(x,t_{n}) \!-\! u_{t}(x, t_{n}) {k_{n-1}} 
			\!+\! u_{tt}(x,t_{n}) \frac{k_{n-1}^{2}}{2} \!+\! \int_{t_{n}}^{t_{n-1}} \!u_{ttt} (x,t) 
			\frac{(t_{n-1} - t)^{2}}{2} dt, \notag \\
			u_{t}(x, t_{n,\beta}) &= u_{t} (x,t_{n}) \!+\! u_{tt} (x,t_{n}) ( {\beta_{2}^{(n)}} k_{n} - {\beta_{0}^{(n)}} k_{n-1} ) \!+\! \int_{t_{n}}^{t_{n,\beta}} \!u_{ttt} (x,t) (t_{n,\beta} - t) dt. \notag 
			\end{align}
			By \eqref{eq:Int-remainder-2} and the fact that $\alpha_{2} + \alpha_{1} + \alpha_{0} = 0$
			\begin{align}
			&\frac{u_{n,\alpha}(x)}{\widehat{k}_{n}} - u_{t}(t_{n,\beta}) 
			\label{eq:AppendA-eq3} \\
			=& \Big[ \frac{{\alpha_{2}} {k_{n}}^{2} + {\alpha_{0}} {k_{n-1}}^{2}}{2 \widehat{k}_{n}} - \big( {\beta_{2}^{(n)}} k_{n} - {\beta_{0}^{(n)}} k_{n-1} \big) \Big] u_{tt}(x,t_{n}) - \int_{t_{n}}^{t_{n,\beta}} \!u_{ttt} (x,t) (t_{n,\beta} - t) dt \notag \\
			&+ \frac{{\alpha_{2}}}{2 \widehat{k}_{n}} \!\!\int_{t_{n}}^{t_{n\!+\!1}} \!\!u_{ttt} (x,t) (t_{n\!+\!1} \!-\! t)^{2} dt 
			+ \frac{{\alpha_{0}}}{2 \widehat{k}_{n}} \!\!\int_{t_{n}}^{t_{n\!-\!1}} \!\!u_{ttt} (x,t) (t_{n\!-\!1} \!-\! t)^{2} dt. \notag 
			\end{align}
			It's easy to check
			\begin{gather}
			\frac{{\alpha_{2}} {k_{n}}^{2} + {\alpha_{0}} {k_{n-1}}^{2}}{2 \widehat{k}_{n}} - \big( {\beta_{2}^{(n)}} k_{n} - {\beta_{0}^{(n)}} k_{n-1} \big) = 0.
			\label{eq:AppendA-eq4}
			\end{gather}
			We combine \eqref{eq:AppendA-eq3} and \eqref{eq:AppendA-eq4}
			\begin{align}
			&\Big\| \frac{u_{n,\alpha}(\cdot)}{\widehat{k}_{n}} - u_{t} (x,t_{n,\beta}) \Big\|^{2} 
			\label{eq:AppendA-eq5} \\
			\leq& \frac{C(\theta)}{{\widehat{k}_{n}}^{2}} \int_{\Omega} \Big[ \int_{t_{n}}^{t_{n+1}} | u_{ttt} (x,t) | (t_{n+1} - t)^{2} dt + \int_{t_{n-1}}^{t_{n}} | u_{ttt} (x,t) | (t_{n-1} - t)^{2} dt \notag \\
			&\qquad \qquad + \widehat{k}_{n}  \Big| \int_{t_{n}}^{t_{n,\beta}} | u_{ttt} (x,t) | | t_{n,\beta} - t | dt \Big| \Big]^{2} dx \notag \\
			\leq& \!\frac{C(\theta)}{\widehat{k}_{n}^{2}} \!\! \int_{\Omega}\!\!\! \Big\{\! \Big[ \!\! \int_{t_{n}}^{t_{n\!+\!1}} \!\!\! |u_{ttt} (x,t)|^{2} dt  \!\!\! \int_{t_{n}}^{t_{n\!+\!1}} \!\!\!(t_{n+1} \!-\! t)^{4} dt \Big]^{\!\frac{1}{2}}
			\!\!+\!\! \Big[ \!\! \int_{t_{n\!-\!1}}^{t_{n}}\!\!\! |u_{ttt} (x,t)|^{2} dt 
			\!\!\! \int_{t_{n\!-\!1}}^{t_{n}}\!\!\! (t \!-\! t_{n\!-\!1})^{4} dt \Big]^{\!\frac{1}{2}}  \notag \\
			&\qquad \qquad + \widehat{k}_{n} \Big[ \!\!\int_{t_{n}}^{t_{n,\beta}}\!\!\! | u_{ttt} (x,t)|^{2} dt \!\!\! \int_{t_{n}}^{t_{n,\beta}} \!\! (t_{n,\beta} \!-\! t)^{2} dt \Big]^{\!\frac{1}{2}} \!\Big\}^{2} dx 
			\notag \\
			\leq& C(\theta) \Big(\frac{k_{n} + k_{n-1}}{\widehat{k}_{n}} \Big)^{2} (k_{n} + k_{n-1})^{3} \int_{\Omega} \int_{t_{n-1}}^{t_{n+1}} |u_{ttt} (x,t)|^{2} dt dx. \notag 
			\end{align}
			Since 
			\begin{gather*}
			\frac{1}{\widehat{k}_{n}} = \frac{1}{\frac{1+\theta}{2}k_{n} + \frac{1-\theta}{2}k_{n-1} }
			\leq \frac{2}{(1-\theta) (k_{n}+k_{n-1})},
			\end{gather*}
			we have \eqref{eq:Appendix-consist-2nd-eq2} from \eqref{eq:AppendA-eq5}. For $\theta=1$, the DLN method is reduced to midpoint rule and the corresponding conclusions are easy to verify.
			\begin{confidential}
				\color{darkblue}
			For $\theta = 1$, we denote $t_{n+\frac{1}{2}} = (t_{n+1} + t_{n})/2$ and use the Taylor's theorem with integral remainder
			\begin{align}
			u (x,t_{n+1}) &= u(x,t_{n+\frac{1}{2}}) \!+\! u_{t}(x,t_{n+\frac{1}{2}}) \frac{k_{n}}{2}
			\!+\! \int_{t_{n+\frac{1}{2}}}^{t_{n+1}} \!u_{tt}(x,t)(t_{n+1}-t) dt, 
			\label{eq:Int-remainder-3} \\
			u (x,t_{n+1}) &= u(x,t_{n+\frac{1}{2}}) \!+\! u_{t}(x,t_{n+\frac{1}{2}}) \frac{k_{n}}{2} 
			\!+\! u_{tt}(x,t_{n}) \frac{k_{n}^{2}}{8} \!+\! \int_{t_{n+\frac{1}{2}}}^{t_{n+1}} \!u_{ttt}(x,t)
			\frac{(t_{n+1}-t)^{2}}{2} dt, \notag \\
			u(x,t_{n}) &= u(x,t_{n+\frac{1}{2}}) \!-\! u_{t}(x,t_{n+\frac{1}{2}}) \frac{k_{n}}{2} 
			\!+\! \int_{t_{n+\frac{1}{2}}}^{t_{n}} \!u_{tt} (x,t) (t_{n} - t) dt, \notag \\
			u(x,t_{n}) &= u(x,t_{n+\frac{1}{2}}) \!-\! u_{t}(x,t_{n+\frac{1}{2}}) \frac{k_{n}}{2} 
			\!+\! u_{tt}(x,t_{n}) \frac{k_{n}^{2}}{8} \!+\! \int_{t_{n+\frac{1}{2}}}^{t_{n}} \!u_{ttt} (x,t) 
			\frac{(t_{n} - t)^{2}}{2} dt. \notag 
			\end{align}
			By \eqref{eq:Int-remainder-3}
			\begin{align}
			u_{n,\beta}(x) - u(x,t_{n,\beta})
			=& \frac{u_{n+1}(x) + u_{n}(x)}{2} - u \big(x, t_{n+1/2} \big) 
			\label{eq:AppendA-eq6} \\
			=& \frac{1}{2} \int_{t_{n+\frac{1}{2}}}^{t_{n+1}} \!u_{tt}(x,t)(t_{n+1}-t) dt
			+ \frac{1}{2} \int_{t_{n+\frac{1}{2}}}^{t_{n}} \!u_{tt} (x,t) (t_{n} - t) dt, 
			\notag \\
			\frac{u_{n,\alpha}(x)}{\widehat{k}_{n}} - u_{t}(x,t_{n,\beta}) 
			=& \frac{u_{n+1}(x) - u_{n}(x)}{k_{n}} - u_{t} (t_{n+\frac{1}{2}}) \notag \\
			=& \frac{1}{2k_{n}} \Big( \int_{t_{n+\frac{1}{2}}}^{t_{n+1}} \!u_{ttt}(x,t)(t_{n+1}-t)^{2} dt
			- \int_{t_{n+\frac{1}{2}}}^{t_{n}} \!u_{ttt} (x,t) (t_{n} - t)^{2} dt \Big).  \notag 
			\end{align}
			By \eqref{eq:AppendA-eq5}
			\begin{align*}
			&\big\| u_{n,\beta}(\cdot) - u(\cdot,t_{n,\beta}) \big\|^{2} \\
			\leq& \frac{1}{4} \int_{\Omega}\!\!\! \Big[ \int_{t_{n+\frac{1}{2}}}^{t_{n+1}} | u_{tt} (x,t) | |t_{n+1} - t| dt + \int_{t_{n}}^{t_{n+\frac{1}{2}}} | u_{tt} (x,t) | |t_{n} - t| dt \Big]^{2} dx \notag \\
			\leq& \!\frac{1}{4} \!\! \int_{\Omega}\!\!\! \Big\{\! \Big[ \!\! \int_{t_{n+\frac{1}{2}}}^{t_{n\!+\!1}} \!\!\! |u_{tt} (x,t)|^{2} dt  \!\!\! \int_{t_{n+\frac{1}{2}}}^{t_{n\!+\!1}} \!\!\!(t_{n+1} \!-\! t)^{2} dt \Big]^{\!\frac{1}{2}}
			\!\!+\!\! \Big[ \!\! \int_{t_{n}}^{t_{n+\frac{1}{2}}}\!\!\! |u_{tt} (x,t)|^{2} dt 
			\!\!\! \int_{t_{n}}^{t_{n+\frac{1}{2}}}\!\!\! (t \!-\! t_{n})^{2} dt \Big]^{\!\frac{1}{2}} \Big\}^{2} dx \\
			\leq& \!\frac{k_{n}^{3}}{48} \int_{\Omega} \int_{t_{n}}^{t_{n+1}} |u_{tt} (x,t)|^{2} dt dx,
			\\
			&\Big\| \frac{u_{n,\alpha}(\cdot)}{\widehat{k}_{n}} - u_{t} (x,t_{n,\beta}) \Big\|^{2} \\
			\leq& \frac{1}{4 k_{n}^{2}} \int_{\Omega} \Big[ \int_{t_{n+\frac{1}{2}}}^{t_{n+1}} | u_{ttt} (x,t) | (t_{n+1} - t)^{2} dt + \int_{t_{n}}^{t_{n+\frac{1}{2}}} | u_{ttt} (x,t) | (t_{n} - t)^{2} dt \Big]^{2} dx \notag \\
			\leq& \!\frac{1}{4 k_{n}^{2}} \!\! \int_{\Omega}\!\!\! \Big\{\! \Big[ \!\! \int_{t_{n+\frac{1}{2}}}^{t_{n\!+\!1}} \!\!\! |u_{ttt} (x,t)|^{2} dt  \!\!\! \int_{t_{n+\frac{1}{2}}}^{t_{n\!+\!1}} \!\!\!(t_{n+1} \!-\! t)^{4} dt \Big]^{\!\frac{1}{2}}
			\!\!+\!\! \Big[ \!\! \int_{t_{n}}^{t_{n+\frac{1}{2}}}\!\!\! |u_{ttt} (x,t)|^{2} dt 
			\!\!\! \int_{t_{n}}^{t_{n+\frac{1}{2}}}\!\!\! (t \!-\! t_{n})^{4} dt \Big]^{\!\frac{1}{2}} \Big\}^{2} dx \\
			=& \frac{k_{n}^{3}}{320} \int_{\Omega} \int_{t_{n}}^{t_{n+1}} |u_{ttt} (x,t)|^{2} dt dx, 
			\end{align*}
			which implies \eqref{eq:Appendix-consist-2nd-eq2} for the case $\theta = 1$.
			\normalcolor
			\end{confidential}
		\end{proof}

		\section{Error Analysis} 

		\subsection{Proof of Theorem \ref{thm:Error-L2}} 
		\label{appendixB-L2} \ \\
		\begin{theorem}
			If the velocity $u^{j}(x,t)$ and pressure $p^{j}(x,t)$ in the $j$-th NSE satisfy the following regularities:
			\begin{gather}
			u \in \ell^{\infty}(\{t_{n}\}_{n=0}^{N};H^{r}(\Omega))  \cap \ell^{\infty,\beta}(\{t_{n}\}_{n=0}^{N};H^{r+1}(\Omega)), \ p \in \ell^{2,\beta}\big( \{t_{n}\}_{n=0}^{N};H^{s+1}(\Omega) \big) \notag \\
			u_{t} \in L^{2}\big( 0,T;H^{r+1}(\Omega) \big),  \ \ 
			u_{tt} \in L^{2}\big(0,T;H^{r+1}(\Omega) \big), \ \ 
			u_{ttt} \in L^{2}\big( 0,T;X^{-1} \big),
			\label{eq:appendixB-L2-regularity}
			\end{gather}
			for all $j$, then under CFL-like conditions in \eqref{eq:CFL-like-cond} and time ratio bounds in \eqref{eq:time-ratio-cond}, the numerical solutions DLN-Ensemble algorithms in \eqref{eq:DLN-Ensemble-Alg} for all $\theta \in (0,1)$ satisfy
			\begin{gather}
			\max_{0 \leq n \leq N} \| u_{n}^{j} - u_{n}^{j,h} \| + \Big(  \nu  \sum_{n=1}^{N-1} \widehat{k}_{n} 
			\| \nabla \big( u_{n,\beta}^{j} - u_{n,\beta}^{j,h} \big) \|^{2} \Big)^{1/2} 
			\label{eq:appendixB-error-L2-conclusion} \\
			\leq  \mathcal{O} \big( h^{r} ,h^{s+1} , k_{\rm{max}}^{2}, h^{1/2} k_{\rm{max}}^{3/2} \big).
			\notag
			\end{gather}
		\end{theorem}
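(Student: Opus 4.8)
The plan is to follow the standard energy-argument-plus-discrete-Gronwall template, with the DLN consistency Lemma~\ref{lemma:DLN-consistency} and the $G$-stability identity \eqref{eq:Gstab-Id} doing the temporal work and Lemma~\ref{lemma:b-bound} together with the inverse inequality \eqref{eq:inv-inequal} doing the nonlinear work. First I would split the error $e_n^j := u_n^j - u_n^{j,h}$ through the Stokes projection $\widetilde{u}_n^j := I_{\rm{St}}^{h} u_n^j$ as $e_n^j = \eta_n^j - \phi_n^{j,h}$, where $\eta_n^j := u_n^j - \widetilde{u}_n^j$ is controlled directly by \eqref{eq:approx-thm}, \eqref{eq:Stoke-Approx} and the regularity in \eqref{eq:appendixB-L2-regularity}, and $\phi_n^{j,h} := u_n^{j,h} - \widetilde{u}_n^j \in V^{h}$ is the quantity to be estimated; the point of using the Stokes projection is that $\nu(\nabla\eta_{n,\beta}^j,\nabla v^h)$ cancels against the pressure contribution and vanishes on $V^{h}$. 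Then I would subtract the first equation of \eqref{eq:DLN-Ensemble-Alg}, tested against $v^h \in V^{h}$ (so the discrete pressure disappears), from the weak form of the $j$-th NSE in \eqref{eq:jth-NSE} evaluated at $t_{n,\beta}$, producing an error equation for $\phi^{j,h}$ whose right-hand side collects the temporal consistency terms $\widehat{k}_n^{-1}u_{n,\alpha}^j - u_t^j(t_{n,\beta})$, $u_{n,\beta}^j - u^j(t_{n,\beta})$ and $u_{n,\ast}^j - u^j(t_{n,\beta})$ (all $\mathcal{O}((k_n+k_{n-1})^{3/2})$ in the relevant norm by Lemma~\ref{lemma:DLN-consistency}), the $\eta$-dependent spatial terms, and the nonlinear remainder.

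Next I would test the error equation with $v^h = \phi_{n,\beta}^{j,h} \in V^{h}$ and apply \eqref{eq:Gstab-Id} to turn $(\widehat{k}_n^{-1}\phi_{n,\alpha}^{j,h},\phi_{n,\beta}^{j,h})$ into a telescoping difference of $G(\theta)$-norms of $(\phi_{n+1}^{j,h},\phi_n^{j,h})$ plus the numerical dissipation $\|\sum_\ell \gamma_\ell^{(n)}\phi_{n-1+\ell}^{j,h}\|^2 = \tfrac{\theta(1-\theta^2)}{2(1+\varepsilon_n\theta)^2}\|\Phi_n^{j,h}\|^2$, where $\Phi_n^{j,h}$ is the analogue of $U_n^{j,h}$ built from $\phi$; the viscous term supplies $\nu\widehat{k}_n\|\nabla\phi_{n,\beta}^{j,h}\|^2$. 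These two nonnegative quantities are the sinks into which every remainder term must be absorbed.

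The heart of the argument — and the step I expect to be the main obstacle — is the nonlinear term. Exactly as in the proof of Theorem~\ref{thm:Stab-L2}, the discrete convective term reorganizes as $b(u_{n,\ast}^{j,h},u_{n,\beta}^{j,h},v^h) - \tfrac{2\beta_2^{(n)}}{1-\varepsilon_n}\,b(u_{n,\ast}^{j,h}-\langle u^h\rangle_{n,\ast},U_n^{j,h},v^h)$; subtracting the exact $b(u^j(t_{n,\beta}),u^j(t_{n,\beta}),v^h)$ and expanding each factor through $u_{n,\ast}^{j,h} = u^j(t_{n,\beta}) - (u^j(t_{n,\beta})-u_{n,\ast}^j) - \eta_{n,\ast}^j + \phi_{n,\ast}^{j,h}$ (and similarly for $u_{n,\beta}^{j,h}$) produces a list of trilinear pieces. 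I would bound each piece with Lemma~\ref{lemma:b-bound}, Poincar\'e's inequality and the inverse inequality \eqref{eq:inv-inequal}. The genuinely explicit piece carrying the ensemble fluctuation $\|\nabla(u_{n,\ast}^{j,h}-\langle u^h\rangle_{n,\ast})\|$ picks up an $h^{-1/2}$ from the inverse inequality and has to be split by Young's inequality into a part absorbed by $\nu\widehat{k}_n\|\nabla\phi_{n,\beta}^{j,h}\|^2$ under the CFL-like condition \eqref{eq:CFL-like-cond} and a part absorbed by $\tfrac{\theta(1-\theta^2)}{4(1+\varepsilon_n\theta)^2}\|\Phi_n^{j,h}\|^2$, mirroring \eqref{eq:b-Stab-2}; the $\|\nabla u_{n,\beta}^{j,h}\|$-weighted factors that survive are handled by the $\ell^{2,\beta}(H^{r+1})$-type regularity and the a priori bound \eqref{eq:L2-Stab-conclusion}. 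The cross term between the fluctuation — which the CFL-like condition controls by a multiple of $\sqrt{h\nu/\widehat{k}_n}$ — and the $\mathcal{O}((k_n+k_{n-1})^{3/2})$ temporal consistency errors is what will generate the $\mathcal{O}(h^{1/2}k_{\rm{max}}^{3/2})$ term in \eqref{eq:appendixB-error-L2-conclusion}.

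Finally I would sum over $n = 1,\dots,M-1$, use $\widehat{k}_n \le k_n + k_{n-1}$ and the discrete-norm definitions \eqref{eq:def-norm-discrete}, invoke the time-ratio bounds \eqref{eq:time-ratio-cond} to keep consecutive $\widehat{k}_n$ comparable, and apply the discrete Gronwall lemma to the terms proportional to $\|\phi_n^{j,h}\|^2$ (these come from the non-absorbed convective pieces and are summable by the assumed regularity). Collecting the spatial contributions from $\eta$ (orders $h^r$ and $h^{s+1}$), the temporal contributions from Lemma~\ref{lemma:DLN-consistency} (order $k_{\rm{max}}^2$ after the weighted sum and a square root, using $u_{tt},u_{ttt}\in L^2$), and the mixed contribution (order $h^{1/2}k_{\rm{max}}^{3/2}$), and undoing the splitting $e_n^j = \eta_n^j - \phi_n^{j,h}$ by the triangle inequality, yields \eqref{eq:appendixB-error-L2-conclusion}. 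The bookkeeping of the many trilinear remainders, and checking that the fluctuation piece absorbs under the CFL-like condition \eqref{eq:CFL-like-cond} rather than under a genuine step restriction, is the most delicate part of the proof; the rest is the routine energy-plus-Gronwall machinery.
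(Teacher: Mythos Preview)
Your proposal is correct and follows essentially the same route as the paper: Stokes-projection splitting $e_n^j = \eta_n^j + \phi_n^{j,h}$, testing the error equation with $\phi_{n,\beta}^{j,h}$, applying the $G$-stability identity \eqref{eq:Gstab-Id}, absorbing the ensemble-fluctuation piece into $\tfrac{\theta(1-\theta^2)}{4(1+\varepsilon_n\theta)^2}\|\Phi_n^{j,h}\|^2$ and $\nu\widehat{k}_n\|\nabla\phi_{n,\beta}^{j,h}\|^2$ via the CFL-like condition \eqref{eq:CFL-like-cond}, and closing with discrete Gronwall. Two small remarks: the paper takes the Stokes projection of $(u_n^j,0)$ (not $(u_n^j,p_n^j)$) in this $L^2$ argument, so $\nu(\nabla\eta_{n,\beta}^j,\nabla\phi_{n,\beta}^{j,h})$ vanishes directly on $V^h$ and the exact pressure is handled separately by subtracting its $L^2$-projection onto $Q^h$; and the a~priori stability bound \eqref{eq:L2-Stab-conclusion} is not actually invoked---the $\phi$-dependent nonlinear remainders are controlled purely through Gronwall with coefficient $\|u_{n,\beta}^j\|_2^2$.
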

		\begin{proof}
			We devide the proof into four parts:
		\begin{enumerate}
				\item[1.] We combine $j$-th NSE in \eqref{eq:jth-NSE} at time $t_{n,\beta}$ and the DLN-Ensemble algorithms for $j$-th NSE in \eqref{eq:DLN-Ensemble-Alg} to derive the equation of pointwise error $e_{n+1}:= u_{n+1}^{j} - u_{n+1}^{j,h}$
				\item[2.] We set $I_{\rm{St}} u_{n}^{j}$ to be the velocity component of 
				Stokes projection of $(u_{n}^{j},0)$ onto $V^{h} \times Q^{h}$ and decompose the error to be $e_{n}^{j} = (u_{n}^{j} - I_{\rm{St}} u_{n}^{j} ) + (I_{\rm{St}} u_{n}^{j} - u_{n}^{j,h}) 
				:= \eta_{n}^{j} + \phi_{n}^{j,h}$.
			Then we transfer the error equation to be the new equation 
			in terms of $\{\eta_{n-1+\ell}^{j}\}_{\ell=0}^{2}$ and $\{ \phi_{n-1+\ell}^{j} \}_{\ell=0}^{2}$.
			\item[3.] We obtain the bound for $\phi_{n+1}^{j,h}$ by addressing the terms from
			\begin{itemize}
				\item the semi-implicit DLN algorithms \cite{Pei24_NM} for $j$-th NSE in \eqref{eq:jth-NSE},
				\item the DLN-Ensemble algorithms in \eqref{eq:DLN-Ensemble-Alg}.
			\end{itemize}
			\item[4.] We use the discrete Gr\"onwall inequality without time restrictions \cite[p.369]{HR90_SIAM_NA}, approximations for Stokes projection in \eqref{eq:Stoke-Approx} and approximations for $(X^{h},Q^{h})$ in \eqref{eq:approx-thm} to achieve convergence of numerical solutions 
			in $L^{2}$-norm.
		\end{enumerate}
		\ \\
		\noindent \textit{Part 1.} \\
			The exact solutions of $j$-th NSE at time $t_{n,\beta}$ satisfies 
			\begin{gather}
				\big( u_{t}^{j} ( t_{n,\beta}) , v^{h} \big) + b \big( u^{j} ( t_{n,\beta}) , u^{j} ( t_{n,\beta} ) , v^{h} \big) + \nu \big( \nabla u^{j} (t_{n,\beta}) , \nabla v^{h} \big)
				- \big( p^{j} (t_{n,\beta}) , \nabla \cdot v^{h} \big) \notag \\
				= \big( f^{j} ( t_{n,\beta}), v^{h} \big), \qquad \forall v^{h} \in V^{h}.
				\label{eq:NSE-jth-exact}
			\end{gather}
			We restrict $v^{h} \in V^{h}$ in \eqref{eq:DLN-Ensemble-Alg} and subtract first equation of \eqref{eq:DLN-Ensemble-Alg} from \eqref{eq:NSE-jth-exact}
			\begin{gather}
				\frac{1}{\widehat{k}_{n}} \big( u_{n,\alpha}^{j} - u_{n,\alpha}^{j,h} , v^{h} \big) + b\big( u^{j} (t_{n,\beta}) , u^{j} (t_{n,\beta}) , v^{h} \big)  
				- b \big( \langle u^{h} \rangle_{n,\ast}, u_{n,\beta}^{j,h} , v^{h} \big) \notag \\
				- b \big( u_{n,\ast}^{j,h} \!-\! \langle u^{h} \rangle_{n,\ast} , u_{n,\ast}^{j,h}, v^{h} \big) - \big( p^{j} (t_{n,\beta}) , \nabla \cdot v^{h} \big) 
				+ \nu \big( \nabla \big( u^{j} (t_{n,\beta}) -  u_{n,\beta}^{j,h} \big) , \nabla v^{h} \big) 
				\notag \\
				= \frac{1}{\widehat{k}_{n}} \big( u_{n,\alpha}^{j}, v^{h} \big)
				- \big( u_{t}^{j}( t_{n,\beta}) , v^{h} \big). 
				\label{eq:DLN-Error-Eq1}
			\end{gather}
			We denote the error of velocity of $j$-th NSE at time $t_{n}$ to be $e_{n}^{j} = u_{n}^{j} - u_{n}^{j,h}$. \eqref{eq:DLN-Error-Eq1} becomes 
			\begin{gather}
				\frac{1}{\widehat{k}_{n}} \big( e_{n,\alpha}^{j}, v^{h} \big) 
				\!+\! b\big( u^{j} (t_{n,\beta}) , u^{j} (t_{n,\beta}) , v^{h} \big) 
				\!-\! b \big( \langle u^{h} \rangle_{n,\ast}, u_{n,\beta}^{j,h} , v^{h} \big)
				\!-\! b \big( u_{n,\ast}^{j,h} \!-\! \langle u^{h} \rangle_{n,\ast} , u_{n,\ast}^{j,h}, v^{h} \big)
				\notag \\
				- \big( p^{j} (t_{n,\beta}) , \nabla \cdot v^{h} \big)  
				+ \nu \big( \nabla \big( u^{j} ( t_{n,\beta}) - u_{n,\beta}^{j} \big), \nabla v^{h} \big)
				+ \nu \big( \nabla e_{n,\beta}^{j}, \nabla v^{h} \big) \notag \\
				= \frac{1}{\widehat{k}_{n}} \big( u_{n,\alpha}^{j}, v^{h} \big)
				- \big( u_{t}^{j}( t_{n,\beta}) , v^{h} \big). 
				\label{eq:DLN-Error-Eq2} 
			\end{gather}
			\ \\
			\noindent \textit{Part 2.} \\
			Let $I_{\rm{St}} u_{n}^{j} \in V^{h}$ be the first component of Stokes projection of $(u_{n}^{j},0)$ onto $V^{h} \times Q^{h}$. We denote $\eta_{n}^{j} = u_{n}^{j} - I_{\rm{St}} u_{n}^{j}$
			and  $\phi_{n}^{j,h} = I_{\rm{St}} u_{n}^{j} - u_{n}^{j,h}$.
			Thus $e_{n}^{j} = \eta_{n}^{j} + \phi_{n}^{j,h}$ and \eqref{eq:DLN-Error-Eq2} becomes
			\begin{confidential}
				\color{darkblue}
				\begin{gather*}
					\frac{1}{\widehat{k}_{n}} \big( \eta_{n,\alpha}^{j} , v^{h} \big) 
					+ \frac{1}{\widehat{k}_{n}} \big( \phi_{n,\alpha}^{j,h} , v^{h} \big) + b \big( u^{j} ( t_{n,\beta}) , u^{j} (t_{n,\beta}) , v^{h} \big) \notag \\
					- b \big( u_{n,\ast}^{j,h} - \big( u_{n,\ast}^{j,h} \!-\! \langle u^{h} \rangle_{n,\ast} \big), u_{n,\beta}^{j,h}, v^{h} \big) 
					- b \big( u_{n,\ast}^{j,h} \!-\! \langle u^{h} \rangle_{n,\ast}, u_{n,\ast}^{j,h}, v^{h} \big) - \big( p^{j} (t_{n,\beta}), \nabla \cdot v^{h} \big) \notag \\
					+ \nu \big( \nabla \big( u^{j} ( t_{n,\beta}) - u_{n,\beta}^{j} \big) + \nabla \big(u_{n,\beta}^{j} - u_{n,\beta}^{j,h} \big), \nabla v^{h} \big) 
					= \frac{1}{\widehat{k}_{n}} \big( u_{n,\alpha}^{j}, v^{h} \big)
					- \big( u_{t}^{j} (t_{n,\beta}), v^{h} \big).   
				\end{gather*}
				\begin{gather*}
					\frac{1}{\widehat{k}_{n}}
					\big( \phi_{n,\alpha}^{j,h} , v^{h} \big)
					+ b \big( u^{j}( t_{n,\beta}), u^{j} (t_{n,\beta}) , v^{h} \big)  
					- b \big(  u_{n,\ast}^{j,h} - \big( u_{n,\ast}^{j,h} \!-\! \langle u^{h} \rangle_{n,\ast} \big), u_{n,\beta}^{j,h}, v^{h} \big) \notag \\
					- b \big( u_{n,\ast}^{j,h} \!-\! \langle u^{h} \rangle_{n,\ast}, u_{n,\ast}^{j,h}, v^{h} \big) 
					- \big( p^{j} (t_{n,\beta}), \nabla \cdot v^{h} \big) 
					+ \nu \big( \nabla \big(u^{j} (t_{n,\beta}) - u_{n,\beta}^{j} \big), \nabla v^{h} \big)
					\notag \\
					+ \nu \big( \nabla \big( \eta_{n,\beta}^{j} + \phi_{n,\beta}^{j,h} \big), \nabla v^{h} \big)
					= \Big( \frac{u_{n,\alpha}^{j}}{\widehat{k}_{n}} - u_{t}^{j} (t_{n,\beta}), v^{h} \Big)  
					- \frac{1}{\widehat{k}_{n}} \big( \eta_{n,\alpha}^{j}, v^{h} \big).   
				\end{gather*}
				\begin{align*}
					&\frac{1}{\widehat{k}_{n}} \big( \phi_{n,\alpha}^{j,h} , v^{h} \big)
					\!+\! b \big( u^{j}( t_{n,\beta}), u^{j} (t_{n,\beta}) , v^{h} \big)  
					\!+\! \nu \big( \nabla \phi_{n,\beta}^{j,h}, \nabla v^{h} \big)  
					\!-\! \big( p^{j} (t_{n,\beta}), \nabla \cdot v^{h} \big)
					\notag \\
					&- \!b \big( u_{n,\ast}^{j,h} - \big( u_{n,\ast}^{j,h} \!-\! \langle u^{h} \rangle_{n,\ast} \big), u_{n,\beta}^{j,h}, v^{h} \big)
					\!-\! b \big( u_{n,\ast}^{j,h} \!-\! \langle u^{h} \rangle_{n,\ast}, u_{n,\ast}^{j,h}, v^{h} \big) 
					\!+\! \nu \big( \nabla \big(u^{j} (t_{n,\beta}) \!-\! u_{n,\beta}^{j} \big), \nabla v^{h} \big)
					\notag \\
					&= \Big( \frac{u_{n,\alpha}^{j}}{\widehat{k}_{n}} - u_{t}^{j} (t_{n,\beta}), v^{h} \Big) 
					- \frac{1}{\widehat{k}_{n}} \big( \eta_{n,\alpha}^{j}, v^{h} \big)
					- \nu \big( \nabla \eta_{n,\beta}^{j} , \nabla v^{h} \big). 
				\end{align*}
	
				\normalcolor
			\end{confidential}
			\begin{align}
				&\frac{1}{\widehat{k}_{n}} \big( \phi_{n,\alpha}^{j,h} , v^{h} \big)
				\!+\! \nu \big( \nabla \phi_{n,\beta}^{j,h}, \nabla v^{h} \big)
				\label{eq:DLN-Error-Eq3} \\
				=& \Big( \frac{u_{n,\alpha}^{j}}{\widehat{k}_{n}} \!-\! u_{t}^{j} (t_{n,\beta}), v^{h} \Big) 
				\!-\! \frac{1}{\widehat{k}_{n}} \big( \eta_{n,\alpha}^{j}, v^{h} \big)
				\!+\! \nu \! \big( \nabla \big(u_{n,\beta}^{j} \!-\! u^{j} (t_{n,\beta}) \big), \nabla v^{h} \big)  
				+ \big( p^{j} (t_{n,\beta}), \nabla \cdot v^{h} \big) \notag \\
				&- \nu \big( \nabla \eta_{n,\beta}^{j} , \nabla v^{h} \big) 
				\!-\! b \big( u^{j}( t_{n,\beta}), u^{j} (t_{n,\beta}) , v^{h} \big) 
				\!+\!b \big( u_{n,\ast}^{j,h} - \big( u_{n,\ast}^{j,h} \!-\! \langle u^{h} \rangle_{n,\ast} \big), u_{n,\beta}^{j,h}, v^{h} \big) \notag \\
				&+\! b \big( u_{n,\ast}^{j,h} \!-\! \langle u^{h} \rangle_{n,\ast}, u_{n,\ast}^{j,h}, v^{h} \big). \notag 
			\end{align}
			We denote 
			\begin{gather*}
				\Phi_{n}^{j,h} = \frac{1 - \varepsilon_{n}}{2} \phi_{n+1}^{j,h} - \phi_{n}^{j,h} + \frac{1 + \varepsilon_{n}}{2} \phi_{n-1}^{j,h},
			\end{gather*}
			and set $v^{h} = \phi_{n,\beta}^{j,h}$ in \eqref{eq:DLN-Error-Eq3}. By the identity in \eqref{eq:Gstab-Id}, \eqref{eq:DLN-Error-Eq3} becomes
			\begin{confidential}
				\color{darkblue}
				\begin{align*}
					&\frac{1}{\widehat{k}_{n}} \Big( \begin{Vmatrix} {\phi_{n+1}^{j,h}} \\ {\phi_{n}^{j,h}} \end{Vmatrix}^{2}_{G(\theta)}-\begin{Vmatrix} {\phi_{n}^{j,h}} \\ {\phi_{n-1}^{j,h}} \end{Vmatrix}^{2}_{G(\theta)} + \frac{ \theta (1 - {\theta}^{2})}{2 ( 1 + \varepsilon_{n} \theta )^{2}} \| \Phi_{n}^{j,h} \|^{2} \Big) + \nu \| \nabla \phi_{n,\beta}^{j,h} \|^{2} \\
					= & \Big( \frac{u_{n,\alpha}^{j}}{\widehat{k}_{n}} \!-\! u_{t}^{j} (t_{n,\beta}), \phi_{n,\beta}^{j,h} \Big) 
					\!-\! \frac{1}{\widehat{k}_{n}} \big( \eta_{n,\alpha}^{j}, \phi_{n,\beta}^{j,h} \big) 
					\!+\! \nu \! \big( \nabla \big(u_{n,\beta}^{j} \!-\! u^{j} (t_{n,\beta}) \big), \nabla \phi_{n,\beta}^{j,h} \big) \notag \\
					+& \big( p^{j} (t_{n,\beta}), \nabla \cdot \phi_{n,\beta}^{j,h} \big) 
					\!- \!b \big( u^{j}( t_{n,\beta}), u^{j} (t_{n,\beta}) , \phi_{n,\beta}^{j,h} \big) 
					\!+ \!b \big( u_{n,\ast}^{j,h} - \big( u_{n,\ast}^{j,h} \!-\! \langle u^{h} \rangle_{n,\ast} \big), u_{n,\beta}^{j,h}, \phi_{n,\beta}^{j,h} \big) \notag \\
					+& b \big( u_{n,\ast}^{j,h} \!-\! \langle u^{h} \rangle_{n,\ast}, u_{n,\ast}^{j,h}, \phi_{n,\beta}^{j,h} \big). \notag
				\end{align*}
				\normalcolor
			\end{confidential}
			\begin{align}
				&\frac{1}{\widehat{k}_{n}} \Big( \begin{Vmatrix} {\phi_{n+1}^{j,h}} \\ {\phi_{n}^{j,h}} \end{Vmatrix}^{2}_{G(\theta)}-\begin{Vmatrix} {\phi_{n}^{j,h}} \\ {\phi_{n-1}^{j,h}} \end{Vmatrix}^{2}_{G(\theta)} + \frac{ \theta (1 - {\theta}^{2})}{2 ( 1 + \varepsilon_{n} \theta )^{2}} \| \Phi_{n}^{j,h} \|^{2} \Big) + \nu \| \nabla \phi_{n,\beta}^{j,h} \|^{2} 
				\label{eq:DLN-Error-Eq4} \\
				= & \Big( \frac{u_{n,\alpha}^{j}}{\widehat{k}_{n}} \!-\! u_{t}^{j} (t_{n,\beta}), \phi_{n,\beta}^{j,h} \Big) 
				\!-\! \frac{1}{\widehat{k}_{n}} \big( \eta_{n,\alpha}^{j}, \phi_{n,\beta}^{j,h} \big) 
				\!+\! \nu \! \big( \nabla \big(u_{n,\beta}^{j} \!-\! u^{j} (t_{n,\beta}) \big), \nabla \phi_{n,\beta}^{j,h} \big) \notag \\
				&+ \big( p^{j} (t_{n,\beta}), \nabla \cdot \phi_{n,\beta}^{j,h} \big) 
				\!+ b \big( u_{n,\ast}^{j,h}, u_{n,\beta}^{j,h}, \phi_{n,\beta}^{j,h} \big) 
				\!- b \big( u_{n,\ast}^{j}, u_{n,\beta}^{j}, \phi_{n,\beta}^{j,h} \big)
				+ b \big( u_{n,\ast}^{j}, u_{n,\beta}^{j}, \phi_{n,\beta}^{j,h} \big) \notag \\
				&- b \big( u^{j}( t_{n,\beta}), u^{j} (t_{n,\beta}) , \phi_{n,\beta}^{j,h} \big) 
				+ b \big( u_{n,\ast}^{j,h} - \langle u^{h} \rangle_{n,\ast}, u_{n,\ast}^{j,h} - u_{n,\beta}^{j,h}, \phi_{n,\beta}^{j,h} \big). \notag
			\end{align}
			where $\nu \big( \nabla \eta_{n,\beta}^{j} , \nabla \phi_{n,\beta}^{j,h} \big) = 0$ in \eqref{eq:DLN-Error-Eq3} is due to definition of Stokes projection in \eqref{eq:Stokes-def}. \\
			\ \\
			\noindent \textit{Part 3.}  Now we address all terms on the right hand side of \eqref{eq:DLN-Error-Eq4}.  \\
			\noindent $\bullet$ \textit{Terms from the semi-implicit DLN algorithms for $j$-th NSE in \eqref{eq:jth-NSE}}  \\
			By the definition of dual norm in \eqref{eq:dual-norm}, Young's inequality and \eqref{eq:consist-2nd-eq3} in Lemma \ref{lemma:DLN-consistency}
			\begin{align}
				\Big( \frac{u_{n,\alpha}^{j}}{\widehat{k}_{n}} \!-\! u_{t}^{j} (t_{n,\beta}), \phi_{n,\beta}^{j,h} \Big)
				\leq& \Big\| \frac{u_{n,\alpha}^{j}}{\widehat{k}_{n}} \!-\! u_{t}^{j} (t_{n,\beta}) \Big\|_{-1} \| \nabla \phi_{n,\beta}^{j,h} \|   
				\label{eq:error-rhs-term1} \\
				\leq& \frac{C(\theta)}{\nu} k_{\rm{max}}^{3} \int_{t_{n-1}}^{t_{n+1}} \| u_{ttt}^{j} \|_{-1}^{2} dt
				+ \frac{\nu}{16} \| \nabla \phi_{n,\beta}^{j,h} \|^{2}.  \notag 
			\end{align}
			By Poincar\'e inequality, Young's inequality, \eqref{eq:approx-thm}, \eqref{eq:Stoke-Approx} 
			and the fact 
			\begin{gather*}
				u_{n,\alpha}^{j} = \alpha_{2} \big( u_{n+1}^{j} - u_{n}^{j} \big) 
				- \alpha_{0} \big( u_{n}^{j} - u_{n-1}^{j} \big),
			\end{gather*}
			we have
			\begin{confidential}
				\color{darkblue}
				\begin{align*}
					\frac{1}{\widehat{k}_{n}} \big( \eta_{n,\alpha}^{j}, \phi_{n,\beta}^{j,h} \big)
					\leq \frac{1}{\widehat{k}_{n}} \| \eta_{n,\alpha}^{j} \| \| \phi_{n,\beta}^{j,h} \|
					\leq \frac{C(\Omega)}{\widehat{k}_{n}} \| \eta_{n,\alpha}^{j} \| \| \nabla \phi_{n,\beta}^{j,h} \|
				\end{align*}
				\normalcolor
			\end{confidential}
			\begin{align}
				\frac{1}{\widehat{k}_{n}}
				\big(\eta_{n,\alpha}^{j}, \phi_{n,\beta}^{j,h} \!\big)
				\leq& \frac{C(\Omega)}{\nu \widehat{k}^{2}_{n}} 
				\big\| \eta_{n,\alpha}^{j} \big\|^{2}
				+ \frac{\nu}{16} \| \nabla \phi_{n,\beta}^{j,h} \|^{2} 
				\label{eq:error-rhs-term2-eq1} \\
				\leq& \frac{C(\Omega) h^{2r+2}}{\nu \widehat{k}^{2}_{n}} \| u_{n,\alpha}^{j} \|_{r+1}^{2} + \frac{\nu}{16} \| \nabla \phi_{n,\beta}^{j,h} \|^{2} \notag \\
				\leq& \frac{C(\Omega,\theta) h^{2r\!+\!2}}{\nu \widehat{k}^{2}_{n}} \big( \|u_{n\!+\!1}^{j} \!-\! u_{n}^{j}\|_{r\!+\!1}^{2} \!+\! \|u_{n}^{j} \!-\! u_{n\!-\!1}^{j}\|_{r\!+\!1}^{2} \big)
				\!+\! \frac{\nu}{16} \| \!\nabla \phi_{n,\beta}^{j,h} \!\|^{2}. \notag 
			\end{align}
			By Holder's inequality
			\begin{confidential}
				\color{darkblue}
				\begin{align*}
					\big\| u_{n,\alpha}^{j} \big\|_{r+1}^{2}
					=& \big\| \alpha_{2} \big( u_{n+1}^{j} - u_{n}^{j} \big) 
					- \alpha_{0} \big( u_{n}^{j} - u_{n-1}^{j} \big) \big\|_{r+1}^{2} \\
					\leq& C(\theta) \big( \|u_{n+1}^{j} - u_{n}^{j}\|_{r+1}^{2} + \|u_{n+1}^{j} - u_{n-1}^{j}\|_{r+1}^{2} \big)
				\end{align*}
				\begin{align*}
					\|u_{n+1}^{j} - u_{n}^{j} \|_{r+1}^{2} 
					=& \Big\| \int_{t_{n}}^{t_{n+1}} \partial_{t} u^{j}(\cdot,t) dt \Big\|_{r+1}^{2} 
					= \int_{\Omega} \sum_{\ell \leq r+1} 
					\Big| D^{\ell} \Big( \int_{t_{n}}^{t_{n+1}} \partial_{t} u^{j}(x,t) dt \Big) \Big|^{2} dx  \\
					=& \int_{\Omega} \sum_{\ell \leq r+1}
					\Big| \int_{t_{n}}^{t_{n+1}} D^{\ell} \partial_{t} u^{j}(x,t) dt \Big|^{2} dx
					\leq \int_{\Omega} \sum_{\ell \leq r+1}
					\Big( \int_{t_{n}}^{t_{n+1}} \big| D^{\ell} \partial_{t} u^{j}(x,t) \big| dt \Big)^{2} dx \\
					\leq& \int_{\Omega} \sum_{\ell \leq r+1} 
					\Big[ \big( \int_{t_{n}}^{t_{n+1}} 1^2 dt \big) 
					\Big( \int_{t_{n}}^{t_{n+1}} \big| D^{\ell} \partial_{t} u^{j}(x,t) \big|^2 dt \Big) \Big] dx \\
					=& k_{n} \int_{\Omega} \int_{t_{n}}^{t_{n+1}} 
					\sum_{\ell \leq r+1} \big| D^{\ell} \partial_{t} u^{j}(x,t) \big|^2 dt dx \\
					=& k_{n} \int_{t_{n}}^{t_{n+1}} \int_{\Omega}
					\sum_{\ell \leq r+1} \big| D^{\ell} \partial_{t} u^{j}(x,t) \big|^2 dx dt 
					= k_{n} \int_{t_{n}}^{t_{n+1}} \| u_{t}^{j} \|_{r+1}^{2} dt.
				\end{align*}
				Similarly 
				\begin{gather*}
					\|u_{n}^{j} - u_{n-1}^{j} \|_{r+1}^{2}
					\leq k_{n-1} \int_{t_{n-1}}^{t_{n}} \| u_{t}^{j} \|_{r+1}^{2} dt
				\end{gather*}
				\begin{align*}
					\big\| u_{n,\alpha}^{j} \big\|_{r+1}^{2}
					\leq & C(\theta) \big( \|u_{n+1}^{j} - u_{n}^{j} \|_{r+1}^{2} + \|u_{n}^{j} - u_{n-1}^{j} \|_{r+1}^{2} \big) \\
					\leq & C(\theta) (k_{n}+k_{n-1}) \int_{t_{n-1}}^{t_{n+1}} \| u_{t}^{j} \|_{r+1}^{2} dt.
				\end{align*}
				\normalcolor
			\end{confidential}
			\begin{align}
				\|u_{n+1}^{j} - u_{n}^{j} \|_{r+1}^{2} 
				&= \Big\| \int_{t_{n}}^{t_{n+1}} u_{t}^{j}(\cdot,t) dt \Big\|_{r+1}^{2}
				\leq k_{n} \int_{t_{n}}^{t_{n+1}} \| u_{t}^{j} \|_{r+1}^{2} dt, 
				\label{eq:error-rhs-term2-eq2} \\
				\|u_{n}^{j} - u_{n-1}^{j} \|_{r+1}^{2}
				&= \Big\| \int_{t_{n-1}}^{t_{n}} u_{t}^{j}(\cdot,t) dt \Big\|_{r+1}^{2}
				\leq k_{n-1} \int_{t_{n-1}}^{t_{n}} \| u_{t}^{j} \|_{r+1}^{2} dt. \notag 
			\end{align}
			We combining \eqref{eq:error-rhs-term2-eq1}, \eqref{eq:error-rhs-term2-eq2} and use the fact
			\begin{gather*}
				\widehat{k}_{n} \geq \max \big\{ \frac{1+\theta}{2} k_{n}, \frac{1-\theta}{2} k_{n-1} \big\} > 0,
			\end{gather*}
			to obtain 
			\begin{align}
				\frac{1}{\widehat{k}_{n}} \big( \eta_{n,\alpha}^{j}, \phi_{n,\beta}^{j,h} \big) 
				\leq \frac{C(\theta) h^{2r+2}}{\nu \widehat{k}_{n}} 
				\int_{t_{n-1}}^{t_{n+1}} \| u_{t}^{j} \|_{r+1}^{2} dt + \frac{\nu}{16} \| \nabla \phi_{n,\beta}^{j,h} \|^{2}. 
				\label{eq:error-rhs-term2}
			\end{align}
			By Young's inequality and \eqref{eq:consist-2nd-eq1} in Lemma \ref{lemma:DLN-consistency}
			\begin{align}
				\nu \big( \nabla (u_{n,\beta}^{j} - u^{j}(t_{n,\beta})), \nabla \phi_{n,\beta}^{j,h} \big) 
				\leq& \nu \big\| \nabla (u_{n,\beta}^{j} - u^{j}(t_{n,\beta})) \big\| 
				\| \nabla \phi_{n,\beta}^{j,h} \|   \label{eq:error-rhs-term3}  \\
				\leq& C(\theta) \nu k_{\rm{max}}^{3} \int_{t_{n-1}}^{t_{n+1}} \| \nabla u_{tt}^{j} \|^{2} dt 
				+ \frac{\nu}{16} \| \nabla \phi_{n,\beta}^{j,h} \|^{2}. \notag 
			\end{align}
			We set $q_{n}^{h}$ to be $L^2$-projection of $p(t_{n,\beta})$ onto $Q^{h}$ and use \eqref{eq:approx-thm}
			\begin{align}
				\big(p(t_{n,\beta}), \nabla \cdot \phi_{n,\beta}^{h}\big) = 
				\big(p(t_{n,\beta}) - q_{n}^{h}, \nabla \cdot \phi_{n,\beta}^{h}\big)
				\leq&  \sqrt{d} \big\| p(t_{n,\beta}) - q_{n}^{h} \big\| \| \nabla \phi_{n,\beta}^{h} \| 
				\label{eq:error-rhs-term4}  \\
				\leq&  \frac{C(\Omega)h^{2s+2}}{\nu} \| p(t_{n,\beta}) \|_{s+1}^{2}
				+ \frac{\nu}{16} \| \nabla \phi_{n,\beta}^{h} \|^{2}.
			 	\notag 
			\end{align}
			By skew-symmetric property of $b$ 
			\begin{align*}
				&b \big( u_{n,\ast}^{j,h}, u_{n,\beta}^{j,h}, \phi_{n,\beta}^{j,h} \big) 
				\!- b \big( u_{n,\ast}^{j}, u_{n,\beta}^{j}, \phi_{n,\beta}^{j,h} \big) \\
				=& b \big( u_{n,\ast}^{j,h}, u_{n,\beta}^{j,h}, \phi_{n,\beta}^{j,h} \big)
				\!- b \big( u_{n,\ast}^{j,h}, u_{n,\beta}^{j}, \phi_{n,\beta}^{j,h} \big)
				\!+ b \big( u_{n,\ast}^{j,h}, u_{n,\beta}^{j}, \phi_{n,\beta}^{j,h} \big)
				\!- b \big( u_{n,\ast}^{j}, u_{n,\beta}^{j}, \phi_{n,\beta}^{j,h} \big) \\
				=& - b \big( u_{n,\ast}^{j,h}, \eta_{n,\beta}^{j}, \phi_{n,\beta}^{j,h} \big) 
				\!- b \big( \eta_{n,\ast}^{j}, u_{n,\beta}^{j}, \phi_{n,\beta}^{j,h} \big)
				\!- b \big( \phi_{n,\ast}^{j,h}, u_{n,\beta}^{j}, \phi_{n,\beta}^{j,h} \big) \\
				=& b \big( u_{n,\ast}^{j} - u_{n,\ast}^{j,h}, \eta_{n,\beta}^{j}, \phi_{n,\beta}^{j,h} \big) 
				\!- b \big( u_{n,\ast}^{j}, \eta_{n,\beta}^{j}, \phi_{n,\beta}^{j,h} \big)
				\!- b \big( \eta_{n,\ast}^{j}, u_{n,\beta}^{j}, \phi_{n,\beta}^{j,h} \big)
				\!- b \big( \phi_{n,\ast}^{j,h}, u_{n,\beta}^{j}, \phi_{n,\beta}^{j,h} \big) \\
				=& b \big( \eta_{n,\ast}^{j}, \eta_{n,\beta}^{j}, \phi_{n,\beta}^{j,h} \big) 
				\!+ b \big( \phi_{n,\ast}^{j,h}, \eta_{n,\beta}^{j}, \phi_{n,\beta}^{j,h} \big)
				\!- b \big( u_{n,\ast}^{j}, \eta_{n,\beta}^{j}, \phi_{n,\beta}^{j,h} \big)
				\!- b \big( \eta_{n,\ast}^{j}, u_{n,\beta}^{j}, \phi_{n,\beta}^{j,h} \big) \\
				&- b \big( \phi_{n,\ast}^{j,h}, u_{n,\beta}^{j}, \phi_{n,\beta}^{j,h} \big).
			\end{align*}
			By \eqref{eq:b-bound-1}, \eqref{eq:b-bound-2} in Lemma \ref{lemma:b-bound}, inverse inequality in \eqref{eq:inv-inequal}, 
			approximations in \eqref{eq:approx-thm}, approximation of 
			Stokes projection in \eqref{eq:Stoke-Approx}, Poincar\'e inequality, bounds 
			of $\{\beta_{\ell}^{(n)}\}_{\ell=0}^{2}$ in \eqref{eq:bound-beta}, time ratio bounds 
			in \eqref{eq:time-ratio-cond} and $h \leq 1$
			\begin{align}
				b \big( \eta_{n,\ast}^{j}, \eta_{n,\beta}^{j}, \phi_{n,\beta}^{j,h} \big) 
				\leq& C(\Omega,\theta) \big( \| \nabla \eta_{n}^{j} \| + \| \nabla \eta_{n-1}^{j} \| \big)
				\| \nabla \eta_{n,\beta}^{j} \| \| \nabla \phi_{n,\beta}^{j,h} \|                   
				\label{eq:non-linear-term1} \\ 
				\leq& C(\Omega,\theta) \big( \| \nabla u_{n}^{j} \| + \| \nabla u_{n-1}^{j} \| \big) 
				\| \nabla \eta_{n,\beta}^{j} \| \| \nabla \phi_{n,\beta}^{j,h} \|, \notag  \\
				b \big( \phi_{n,\ast}^{j,h}, \eta_{n,\beta}^{j}, \phi_{n,\beta}^{j,h} \big) 
				\leq& C(\Omega) \| \phi_{n,\ast}^{j,h} \|^{1/2} \| \nabla \phi_{n,\ast}^{j,h} \|^{1/2} 
				\| \nabla \eta_{n,\beta}^{j} \| \| \nabla \phi_{n,\beta}^{j,h} \|                  \notag \\
				\leq& C(\Omega) h \| \phi_{n,\ast}^{j,h} \|^{1/2} \| \nabla \phi_{n,\ast}^{j,h} \|^{1/2} 
				\| u_{n,\beta}^{j} \|_{2} \| \nabla \phi_{n,\beta}^{j,h} \|        \notag \\
				\leq& C(\Omega,\theta) h^{1/2} \big( \| \phi_{n}^{j,h} \| + \| \phi_{n-1}^{j,h} \| \big) 
				\| u_{n,\beta}^{j} \|_{2} \| \nabla \phi_{n,\beta}^{j,h} \|,                         \notag \\
				b \big( u_{n,\ast}^{j}, \eta_{n,\beta}^{j}, \phi_{n,\beta}^{j,h} \big) 
				\leq& C(\Omega,\theta) \big( \| \nabla u_{n}^{j} \| + \| \nabla u_{n-1}^{j} \| \big)
				\| \nabla \eta_{n,\beta}^{j} \| \| \nabla \phi_{n,\beta}^{j,h} \|,  \notag \\
				b \big( \eta_{n,\ast}^{j}, u_{n,\beta}^{j}, \phi_{n,\beta}^{j,h} \big) 
				\leq& C(\Omega) \| \nabla \eta_{n,\ast}^{j} \| \| \nabla u_{n,\beta}^{j} \| \| \nabla \phi_{n,\beta}^{j,h} \|, \notag \\
				b \big( \phi_{n,\ast}^{j,h}, u_{n,\beta}^{j}, \phi_{n,\beta}^{j,h} \big) 
				\leq& C(\Omega,\theta) \| u_{n,\beta}^{j} \|_{2} \big( \| \phi_{n}^{j,h} \| + \| \phi_{n-1}^{j,h} \| \big)
				\| \nabla \phi_{n,\beta}^{j,h} \|.  \notag 
			\end{align}
			We apply Young's inequality to all non-linear terms in \eqref{eq:non-linear-term1} 
			\begin{align}
				&b \big( u_{n,\ast}^{j,h}, u_{n,\beta}^{j,h}, \phi_{n,\beta}^{j,h} \big) 
				- b \big( u_{n,\ast}^{j}, u_{n,\beta}^{j}, \phi_{n,\beta}^{j,h} \big) 
				\label{eq:non-linear-eq1} \\
				\leq& \frac{C(\Omega,\theta)}{\nu} \!\Big[ \| u_{n,\beta}^{j} \|_{2}^{2} \!
				\big( \| \phi_{n}^{j,h} \|^{2} \!+\! \| \phi_{n-1}^{j,h} \|^{2} \big) 
				\!+\! \| | \nabla u^{j} | \|_{\infty,0}^{2} \| \nabla \eta_{n,\ast}^{j} \|^{2} 
				\!+\! \| | \nabla u^{j} | \|_{\infty,0}^{2} \| \nabla \eta_{n,\beta}^{j} \|^{2} \Big]
				\notag \\
				&+\! \frac{\nu}{8} \| \nabla \phi_{n,\beta}^{j,h} \|^{2}.  \notag   
			\end{align}
			\begin{confidential}
				\color{darkblue}
				\begin{align*}
					&b \big( u_{n,\ast}^{j,h}, u_{n,\beta}^{j,h}, \phi_{n,\beta}^{j,h} \big) 
					- b \big( u_{n,\ast}^{j}, u_{n,\beta}^{j}, \phi_{n,\beta}^{j,h} \big) \\
					\leq& C(\Omega,\theta) \Big( \| u_{n,\beta}^{j} \|_{2} \big( \| \phi_{n}^{j,h} \| + \| \phi_{n-1}^{j,h} \| \big) 
					+ \| \nabla \eta_{n,\ast}^{j} \| \| \nabla u_{n,\beta}^{j} \|
					+ \big( \| \nabla u_{n}^{j} \| + \| \nabla u_{n-1}^{j} \| \big) \| \nabla \eta_{n,\beta}^{j} \| \Big) 
					\| \nabla \phi_{n,\beta}^{j,h} \| \\
					\leq& \frac{C(\Omega,\theta)}{\nu} \Big( \| u_{n,\beta}^{j} \|_{2}^{2} 
					\big( \| \phi_{n}^{j,h} \|^{2} + \| \phi_{n-1}^{j,h} \|^{2} \big) 
					+ \| \nabla \eta_{n,\ast}^{j} \|^{2} \| \nabla u_{n,\beta}^{j} \|^{2}
					+ \big( \| \nabla u_{n}^{j} \|^{2} + \| \nabla u_{n-1}^{j} \|^{2} \big) \| \nabla \eta_{n,\beta}^{j} \|^{2} \Big)
					+ \frac{\nu}{16} \| \nabla \phi_{n,\beta}^{j,h} \|^{2} \\
					\leq& \frac{C(\Omega,\theta)}{\nu} \Big( \| u_{n,\beta}^{j} \|_{2}^{2} 
					\big( \| \phi_{n}^{j,h} \|^{2} + \| \phi_{n-1}^{j,h} \|^{2} \big) 
					+ \| | \nabla u^{j} | \|_{\infty,0}^{2} \| \nabla \eta_{n,\ast}^{j} \|^{2} 
					+ \| |\nabla u^{j} | \|_{\infty,0}^{2}  \| \nabla \eta_{n,\beta}^{j} \|^{2} \Big)
					+ \frac{\nu}{8} \| \nabla \phi_{n,\beta}^{j,h} \|^{2} \\
				\end{align*}
				\normalcolor
			\end{confidential}
			By \eqref{eq:approx-thm}, \eqref{eq:Stoke-Approx}, triangle inequality and \eqref{eq:consist-2nd-eq1}, \eqref{eq:consist-2nd-eq2} in Lemma \ref{lemma:DLN-consistency}
			\begin{align*}
				\| \nabla \eta_{n,\ast}^{j} \|^{2}
				\leq& C h^{2r} \| u_{n,\ast}^{j} \|_{r+1}^{2} 
				\leq C h^{2r} \big( \| u_{n,\ast}^{j} - u^{j}(t_{n,\beta}) \|_{r+1}^{2} 
				+ \| u^{j}(t_{n,\beta}) \|_{r+1}^{2} \big)  \\
				\leq& C(\theta) h^{2r} (k_{n}+k_{n-1})^{3} \int_{t_{n-1}}^{t_{n+1}} \| u_{tt}^{j} \|_{r+1}^{2} dt 
				+ C h^{2r} \| u^{j}(t_{n,\beta}) \|_{r+1}^{2}, \\
				\| \nabla \eta_{n,\beta}^{j} \|^{2}
				\leq& C h^{2r} \| {u}_{n,\beta}^{j} \|_{r+1}^{2} 
				\leq C h^{2r} \big( \| {u}_{n,\beta}^{j} - u^{j}(t_{n,\beta}) \|_{r+1}^{2} 
				+ \| u^{j}(t_{n,\beta}) \|_{r+1}^{2} \big)  \\
				\leq& C(\theta) h^{2r} (k_{n}+k_{n-1})^{3} \int_{t_{n-1}}^{t_{n+1}} \| u_{tt}^{j} \|_{r+1}^{2} dt 
				+ C h^{2r} \| u^{j}(t_{n,\beta}) \|_{r+1}^{2}. 
			\end{align*}
			\eqref{eq:non-linear-eq1} becomes 
			\begin{align}
				&b \big( u_{n,\ast}^{j,h}, u_{n,\beta}^{j,h}, \phi_{n,\beta}^{j,h} \big) 
				- b \big( u_{n,\ast}^{j}, u_{n,\beta}^{j}, \phi_{n,\beta}^{j,h} \big) 
				\label{eq:non-linear-eq2} \\
				\leq& \frac{C(\Omega,\theta) h^{2r} }{\nu}  \| | \nabla u^{j} | \|_{\infty,0}^{2}
				\Big( k_{\rm{max}}^{3} \int_{t_{n-1}}^{t_{n+1}} \!\! \| u_{tt}^{j} \|_{r+1}^{2}\! dt 
				+ \| u^{j}(t_{n,\beta}) \|_{r+1}^{2} \!\Big) 
				+ \frac{\nu}{8} \| \nabla \phi_{n,\beta}^{j,h} \|^{2}   \notag  \\
				&+ \frac{C(\Omega,\theta) \| u_{n,\beta}^{j} \|_{2}^{2} }{\nu} \! 
				\big( \| \phi_{n}^{j,h} \|^{2} + \| \phi_{n\!-\!1}^{j,h} \|^{2} \big). \notag
			\end{align}
			\begin{confidential}
				\color{darkblue}
				\begin{align*}
				&b \big( u_{n,\ast}^{j}, u_{n,\beta}^{j}, \phi_{n,\beta}^{j,h} \big)  
				- b \big( u^{j}( t_{n,\beta}), u^{j} (t_{n,\beta}) , \phi_{n,\beta}^{j,h} \big) \\
				=&  b \big( u_{n,\ast}^{j}, u_{n,\beta}^{j}, \phi_{n,\beta}^{j,h} \big)
				- b \big( u^{j}(t_{n,\beta}), u_{n,\beta}^{j}, \phi_{n,\beta}^{j,h} \big) 
				+ b \big( u^{j}(t_{n,\beta}), u_{n,\beta}^{j}, \phi_{n,\beta}^{j,h} \big)
				- b ( u^{j}(t_{n,\beta}), u^{j}(t_{n,\beta}), \phi_{n,\beta}^{j,h} )
				\end{align*}
				\normalcolor
			\end{confidential}
			By \eqref{eq:b-bound-1}, Poincar\'e inequality, \eqref{eq:bound-beta} and \eqref{eq:consist-2nd-eq1}, \eqref{eq:consist-2nd-eq2} in Lemma \ref{lemma:DLN-consistency}
			\begin{align}
				&b \big( u_{n,\ast}^{j}, u_{n,\beta}^{j}, \phi_{n,\beta}^{h} \big)
				- b ( u^{j}(t_{n,\beta}), u^{j}(t_{n,\beta}), \phi_{n,\beta}^{j,h} )  
				\label{eq:non-linear-term2}\\
				=& b \big( u_{n,\ast}^{j} - u^{j}(t_{n,\beta}), u_{n,\beta}^{j}, \phi_{n,\beta}^{h} \big)
				+ b \big( u^{j}(t_{n,\beta}), u_{n,\beta}^{j} - u^{j}(t_{n,\beta}), \phi_{n,\beta}^{j,h} \big) \notag \\
				\leq& C(\Omega) \| \nabla \big( u_{n,\ast}^{j} - u^{j}(t_{n,\beta}) \big) \|
				\| \nabla u_{n,\beta}^{j} \| \| \nabla \phi_{n,\beta}^{j,h} \| \notag \\
				&+ C(\Omega) \| \nabla u^{j}(t_{n,\beta}) \| \| \nabla \big( u_{n,\beta}^{j} - u^{j}(t_{n,\beta}) \big) \|
				\| \nabla \phi_{n,\beta}^{j,h} \|    \notag \\
				\leq& \frac{C(\Omega,\theta)}{\nu} k_{\rm{max}}^{3} \big( \| |u^{j}| \|_{\infty,1}^{2} 
				+ \| |u^{j}| \|_{\infty,1,\beta}^{2} \big)
				\int_{t_{n-1}}^{t_{n+1}} \| \nabla u_{tt}^{j} \|^{2} dt 
				+ \frac{\nu}{16} \| \nabla \phi_{n,\beta}^{j,h} \|^{2}. \notag 
			\end{align}
			\begin{confidential}
				\color{darkblue}
				\begin{align*}
					C(\Omega) \| \nabla \big( u_{n,\ast}^{j} - u^{j}(t_{n,\beta}) \big) \|
					\| \nabla u_{n,\beta}^{j} \| \| \nabla \phi_{n,\beta}^{j,h} \|      
					\leq& C(\Omega,\theta) \| \nabla \big( u_{n,\ast}^{j} - u^{j}(t_{n,\beta}) \big) \|
					\| |u^{j}| \|_{\infty,1} \| \nabla \phi_{n,\beta}^{j,h} \| \\
					\leq& \frac{\nu}{32} \| \nabla \phi_{n,\beta}^{j,h} \|^{2}     
					+ \frac{C(\Omega,\theta)}{\nu} \| \nabla \big( u_{n,\ast}^{j} - u^{j}(t_{n,\beta}) \big) \|^{2}
					\| |u^{j}| \|_{\infty,1}^{2} \\
					\leq& \frac{\nu}{32} \| \nabla \phi_{n,\beta}^{j,h} \|^{2}
					+ \frac{C(\Omega,\theta)}{\nu} (k_{n} + k_{n-1})^{3} \| |u^{j}| \|_{\infty,1}^{2}
					\int_{t_{n-1}}^{t_{n+1}} \| \nabla u_{tt}^{j} \|^{2} dt \\
					\leq& \frac{\nu}{32} \| \nabla \phi_{n,\beta}^{j,h} \|^{2}
					+ \frac{C(\Omega,\theta)}{\nu} (k_{n} + k_{n-1})^{3} \| |u^{j}| \|_{\infty,1}^{2}
					\int_{t_{n-1}}^{t_{n+1}} \| \nabla u_{tt}^{j} \|^{2} dt
				\end{align*}
				\begin{align*}
					C(\Omega) \| \nabla u^{j}(t_{n,\beta}) \| \| \nabla \big( u_{n,\beta}^{j} - u^{j}(t_{n,\beta}) \big) \|
					\| \nabla \phi_{n,\beta}^{j,h} \| 
					\leq& \frac{\nu}{32} \| \nabla \phi_{n,\beta}^{j,h} \|^{2} 
					+ \frac{C(\Omega)}{\nu} \| \nabla \big( u_{n,\beta}^{j} - u^{j}(t_{n,\beta}) \big) \|^{2}
					\| \nabla u^{j}(t_{n,\beta}) \|^{2}    \\
					\leq& \frac{\nu}{32} \| \nabla \phi_{n,\beta}^{j,h} \|^{2} 
					+ \frac{C(\Omega)}{\nu} \| |u^{j}| \|_{\infty,1,\beta}^{2}
					\| \nabla \big( u_{n,\beta}^{j} - u^{j}(t_{n,\beta}) \big) \|^{2} \\
					\leq& \frac{\nu}{32} \| \nabla \phi_{n,\beta}^{j,h} \|^{2} 
					+ \frac{C(\Omega,\theta)}{\nu} \| |u^{j}| \|_{\infty,1,\beta}^{2} (k_{n}+k_{n-1})^{3}
					\int_{t_{n-1}}^{t_{n+1}} \| \nabla u_{tt}^{j} \|^{2} dt 
				\end{align*}
				\normalcolor
			\end{confidential}

			\noindent $\bullet$ \textit{New terms arising from the DLN-Ensemble algorithms in \eqref{eq:DLN-Ensemble-Alg}}  \\
			By skew-symmetric property of $b$
			\begin{align}
				&b \big( u_{n,\ast}^{j,h} \!-\! \langle  u^{h} \rangle_{n,\ast}, u_{n,\ast}^{j,h} - u_{n,\beta}^{j,h}, \phi_{n,\beta}^{j,h} \big) 
				\label{eq:non-linear-eq3}\\
				=&\!  b \big( u_{n,\ast}^{j,h} \!-\! \langle  u^{h} \rangle_{n,\ast}, \big( u_{n,\ast}^{j,h} \!-\! u_{n,\beta}^{j,h} \big) \!-\! \big( u_{n,\ast}^{j} \!-\! u_{n,\beta}^{j} \big) , \phi_{n,\beta}^{j,h} \big) 
				\!+\! b \big( u_{n,\ast}^{j,h} \!-\! \langle  u^{h} \rangle_{n,\ast}, u_{n,\ast}^{j} \!-\! u_{n,\beta}^{j}, \phi_{n,\beta}^{j,h} \big)  \notag \\
				=&\! b \big( u_{n,\ast}^{j,h} \!-\! \langle  u^{h} \rangle_{n,\ast}, \eta_{n,\ast}^{j} - \eta_{n,\beta}^{j}, \phi_{n,\beta}^{j,h} \big) 
				+ b \big( u_{n,\ast}^{j,h} \!-\! \langle  u^{h} \rangle_{n,\ast}, \phi_{n,\ast}^{j,h} - \phi_{n,\beta}^{j,h}, \phi_{n,\beta}^{j,h} \big) \notag \\
				&+ b \big( u_{n,\ast}^{j,h} \!-\! \langle  u^{h} \rangle_{n,\ast}, u_{n,\ast}^{j} - u_{n,\beta}^{j}, \phi_{n,\beta}^{j,h} \big) \notag \\
				=& b \big( u_{n,\ast}^{j,h} \!-\! \langle  u^{h} \rangle_{n,\ast}, \eta_{n,\ast}^{j} - \eta_{n,\beta}^{j}, \phi_{n,\beta}^{j,h} \big) 
				- b \big( u_{n,\ast}^{j,h} \!-\! \langle  u^{h} \rangle_{n,\ast}, \phi_{n,\beta}^{j,h},
				\phi_{n,\ast}^{j,h} - \phi_{n,\beta}^{j,h} \big) \notag \\
				&+ b \big( u_{n,\ast}^{j,h} \!-\! \langle  u^{h} \rangle_{n,\ast}, u_{n,\ast}^{j} - u_{n,\beta}^{j}, \phi_{n,\beta}^{j,h} \big). \notag 
			\end{align}
			By \eqref{eq:b-bound-1} in Lemma \ref{lemma:b-bound}, Poincar$\acute{\rm{e}}$ inequality, \eqref{eq:approx-thm}, \eqref{eq:Stoke-Approx}, Young's inequality and CFL-like conditions in \eqref{eq:CFL-like-cond} 
			\begin{align}
				& b \big( u_{n,\ast}^{j,h} \!-\! \langle  u^{h} \rangle_{n,\ast}, \eta_{n,\ast}^{j} - \eta_{n,\beta}^{j}, \phi_{n,\beta}^{j,h} \big) 
				\label{eq:non-linear-eq3-term1} \\
				\leq& C(\Omega) \big\| \nabla \big( u_{n,\ast}^{j,h} \!-\! \langle  u^{h} \rangle_{n,\ast} \big) \big\|
				\big\| \nabla \big( \eta_{n,\ast}^{j} - \eta_{n,\beta}^{j} \big) \big\| \| \nabla \phi_{n,\beta}^{j,h} \|
				\notag \\
				\leq& \! C(\Omega) h^{r} \! \big\| \nabla \big( u_{n,\ast}^{j,h} \!-\! \langle  u^{h} \rangle_{n,\ast} \big) \big\|
				\big( \big\| u_{n,\ast}^{j} \!-\! u^{j}(t_{n,\beta}) \big\|_{r\!+\!1}
				\!+\! \big\| u_{n,\beta}^{j} \!-\! u^{j}(t_{n,\beta})  \big\|_{r\!+\!1} \big) \| \nabla \phi_{n,\beta}^{j,h} \| \notag \\
				\leq&\! \frac{C(\Omega)h^{2r}}{\nu} \big\| \nabla \big( u_{n,\ast}^{j,h} \!-\! \langle  u^{h} \rangle_{n,\ast} \big) \big\|^{2} \!\big( \big\| u_{n,\ast}^{j} \!-\! u^{j}(t_{n,\beta}) \big\|_{r\!+\!1}^{2}
				\!+\! \big\| u_{n,\beta}^{j} \!-\! u^{j}(t_{n,\beta})  \big\|_{r\!+\!1}^{2} \big) \notag \\
				&+\! \frac{\nu}{32} \| \nabla \phi_{n,\beta}^{j,h} \|^{2}
				\notag \\
				\leq& \! C(\Omega, \theta) \frac{h^{2r}}{\nu} \! \frac{h \nu }{\widehat{k}_{n}} \Big( \frac{1-\varepsilon_{n}}{1+\varepsilon_{n} \theta} \Big)^{2} k_{\rm{max}}^{3} 
				\int_{t_{n-1}}^{t_{n+1}} \| u_{tt}^{j} \|_{r+1}^{2} dt \!+\! \frac{\nu}{32} \| \nabla \phi_{n,\beta}^{j,h} \|^{2} \notag \\
				\leq& \! C(\Omega,\theta) \frac{h^{2r+1} k_{\rm{max}}^{3} }{\widehat{k}_{n} } 
				\int_{t_{n-1}}^{t_{n+1}} \| u_{tt}^{j} \|_{r+1}^{2} dt \!+\! \frac{\nu}{32} \| \nabla \phi_{n,\beta}^{j,h} \|^{2}. \notag 
			\end{align}
			Similarly,
			\begin{align}
				&b \big( u_{n,\ast}^{j,h} \!-\! \langle  u^{h} \rangle_{n,\ast}, u_{n,\ast}^{j} - u_{n,\beta}^{j}, \phi_{n,\beta}^{j,h} \big) 
				\label{eq:non-linear-eq3-term3} \\
				\leq& C(\Omega) \big\| \nabla \big( u_{n,\ast}^{j,h} \!-\! \langle  u^{h} \rangle_{n,\ast} \big) \big\|
				\big\| \nabla \big( u_{n,\ast}^{j}- u_{n,\beta}^{j} \big) \big\| \| \nabla \phi_{n,\beta}^{j,h} \|
				\notag \\
				\leq& \! \frac{C(\Omega)}{\nu} \! \big\| \nabla \big( u_{n,\ast}^{j,h} \!-\! \langle  u^{h} \rangle_{n,\ast} \big) \big\|^{2} \big( \big\| \nabla \big(u_{n,\ast}^{j} \!-\! u^{j}(t_{n,\beta}) \big) \big\|^{2}
				\!+\! \big\| \nabla \big(u_{n,\beta}^{j} \!-\! u^{j}(t_{n,\beta}) \big)  \big\|^{2} \big) \!+\! \frac{\nu}{32} \| \nabla \phi_{n,\beta}^{j,h} \|^{2}
				\notag \\
				\leq& \! C(\Omega,\theta) \frac{h k_{\rm{max}}^{3} }{\widehat{k}_{n} } 
				\int_{t_{n-1}}^{t_{n+1}} \| \nabla u_{tt}^{j} \|^{2} dt \!+\! \frac{\nu}{32} \| \nabla \phi_{n,\beta}^{j,h} \|^{2}. \notag
			\end{align}
			We use \eqref{eq:b-bound-2} in Lemma \ref{lemma:b-bound}, Poincar$\acute{\rm{e}}$ inequality, inverse inequality in \eqref{eq:inv-inequal}, Young's inequality and the fact 
			\begin{gather*}
			\phi_{n,\beta}^{j,h} - \phi_{n,\ast}^{j,h}
			= \frac{2 \beta_{2}^{(n)}}{1 - \varepsilon_{n}} \Phi_{n}^{j,h},
			\end{gather*}
			to obtain 
			\begin{align}
				&b \big( u_{n,\ast}^{j,h} \!-\! \langle  u^{h} \rangle_{n,\ast}, \phi_{n,\beta}^{j,h},
				\phi_{n,\ast}^{j,h} - \phi_{n,\beta}^{j,h} \big) 
				\label{eq:non-linear-eq3-term2} \\
				\leq& C(\Omega) \big\| \nabla \big( u_{n,\ast}^{j,h} \!-\! \langle  u^{h} \rangle_{n,\ast} \big) \big\|
				\| \nabla \phi_{n,\beta}^{j,h} \| \big\| \phi_{n,\ast}^{j,h}  - \phi_{n,\beta}^{j,h} \big\|^{1/2} 
				\big\| \nabla \big( \phi_{n,\ast}^{j,h}  - \phi_{n,\beta}^{j,h} \big) \big\|^{1/2}
				\notag \\
				\leq& \frac{C(\Omega,\theta) }{\sqrt{h} (1 - \varepsilon_{n})}  \big\| \nabla \big( u_{n,\ast}^{j,h} \!-\! \langle  u^{h} \rangle_{n,\ast} \big) \big\|
				\| \nabla \phi_{n,\beta}^{j,h} \| \big\| \Phi_{n}^{j,h} \big\| \notag \\
				\leq& \frac{C(\Omega,\theta) \widehat{k}_{n}}{h} \Big( \frac{1 + \varepsilon_{n} \theta }{1 - \varepsilon_{n}} \Big)^{2} 
				\big\| \nabla \big( u_{n,\ast}^{j,h} \!-\! \langle  u^{h} \rangle_{n,\ast} \big) \big\|^{2} \| \nabla \phi_{n,\beta}^{j,h} \|^{2} + \frac{\theta (1 - \theta^2)}{4 \widehat{k}_{n} (1 + \varepsilon_{n} \theta )^{2} } 
				\big\| \Phi_{n}^{j,h} \big\|^{2}. \notag 
			\end{align}
			\begin{confidential}
				\color{darkblue}
				\begin{align*}
					&b \big( u_{n,\ast}^{j,h} \!-\! \langle  u^{h} \rangle_{n,\ast}, \phi_{n,\beta}^{j,h},
					\phi_{n,\ast}^{j,h} - \phi_{n,\beta}^{j,h} \big) \\
					\leq& C(\Omega) \big\| \nabla \big( u_{n,\ast}^{j,h} \!-\! \langle  u^{h} \rangle_{n,\ast} \big) \big\|
					\| \nabla \phi_{n,\beta}^{j,h} \| \big\| \phi_{n,\ast}^{j,h} - \phi_{n,\beta}^{j,h} \big\|^{1/2} 
					\big\| \nabla \big( \phi_{n,\ast}^{j,h} - \phi_{n,\beta}^{j,h} \big) \big\|^{1/2}
					\notag \\
					\leq& C(\Omega) h^{-1/2} \big\| \nabla \big( u_{n,\ast}^{j,h} \!-\! \langle  u^{h} \rangle_{n,\ast} \big) \big\|
					\| \nabla \phi_{n,\beta}^{j,h} \| \big\| \phi_{n,\ast}^{j,h} - \phi_{n,\beta}^{j,h} \big\| \notag \\
					=& \frac{C(\Omega)}{\sqrt{h}}  \big\| \nabla \big( u_{n,\ast}^{j,h} \!-\! \langle  u^{h} \rangle_{n,\ast} \big) \big\|
					\| \nabla \phi_{n,\beta}^{j,h} \| \frac{2 \beta_{2}^{(n)}}{1 - \varepsilon_{n}}  \| \Phi_{n}^{j,h} \| \\
					=& C(\Omega,\theta) \Big[ \sqrt{\frac{\widehat{k}_{n}}{h}} \Big( \frac{1 + \varepsilon_{n} \theta }{1 - \varepsilon_{n}} \Big) 
					\big\| \nabla \big( u_{n,\ast}^{j,h} \!-\! \langle  u^{h} \rangle_{n,\ast} \big) \big\| \| \nabla \phi_{n,\beta}^{j,h} \| \Big] \Big( \frac{1}{\sqrt{\widehat{k}_{n}}(1 + \varepsilon_{n} \theta ) } \| \Phi_{n}^{j,h} \| \Big) \\
					\leq& \frac{C(\Omega,\theta) \widehat{k}_{n}}{h} \Big( \frac{1 + \varepsilon_{n} \theta }{1 - \varepsilon_{n}} \Big)^{2} 
					\big\| \nabla \big( u_{n,\ast}^{j,h} \!-\! \langle  u^{h} \rangle_{n,\ast} \big) \big\|^{2} \| \nabla \phi_{n,\beta}^{j,h} \|^{2} + \frac{\theta (1 - \theta^2)}{4 \widehat{k}_{n} (1 + \varepsilon_{n} \theta )^{2} } 
					\big\| \Phi_{n}^{j,h} \big\|^{2}.  
				\end{align*}
				\normalcolor
			\end{confidential}
			We combine \eqref{eq:non-linear-eq3}, \eqref{eq:non-linear-eq3-term1}, \eqref{eq:non-linear-eq3-term3}, \eqref{eq:non-linear-eq3-term2} to have
			\begin{align}
				&b \big( u_{n,\ast}^{j,h} \!-\! \langle  u^{h} \rangle_{n,\ast}, u_{n,\ast}^{j,h} - u_{n,\beta}^{j,h}, \phi_{n,\beta}^{j,h} \big) 
				\label{eq:non-linear-eq4} \\
				\leq& \frac{C(\Omega,\theta) h^{2r+1}  k_{\rm{max}}^{3} }{\widehat{k}_{n} } 
				\!\! \int_{t_{n-1}}^{t_{n+1}} \!\| u_{tt}^{j} \|_{r+1}^{2} dt 
				+ \frac{C(\Omega,\theta) h k_{\rm{max}}^{3} }{\widehat{k}_{n} } \!\! \int_{t_{n-1}}^{t_{n+1}} \!\| \nabla u_{tt}^{j} \|^{2} dt \!+\! \frac{\nu}{16} \| \nabla \phi_{n,\beta}^{j,h} \|^{2} \notag \\
				&+ \frac{\theta (1 - \theta^2)}{4 \widehat{k}_{n} (1 + \varepsilon_{n} \theta )^{2} } 
				\big\| \Phi_{n}^{j,h} \big\|^{2} 
				+ \frac{C(\theta) \widehat{k}_{n}}{h} \Big( \frac{1 + \varepsilon_{n} \theta }{1 - \varepsilon_{n}} \Big)^{2} 
				\big\| \nabla \big( u_{n,\ast}^{j,h} \!-\! \langle  u^{h} \rangle_{n,\ast} \big) \big\|^{2} \| \nabla \phi_{n,\beta}^{j,h} \|^{2}. \notag 
			\end{align}
			\ \\
			\noindent \textit{Part 4.} \ \\
			By \eqref{eq:error-rhs-term1}, \eqref{eq:error-rhs-term2}, \eqref{eq:error-rhs-term3}, \eqref{eq:error-rhs-term4}, \eqref{eq:non-linear-eq2}, \eqref{eq:non-linear-term2} and \eqref{eq:non-linear-eq4}, \eqref{eq:DLN-Error-Eq4} becomes
			\begin{align}
				& \begin{Vmatrix} {\phi_{n+1}^{j,h}} \\ {\phi_{n}^{j,h}} \end{Vmatrix}^{2}_{G(\theta)}-\begin{Vmatrix} {\phi_{n}^{j,h}} \\ {\phi_{n-1}^{j,h}} \end{Vmatrix}^{2}_{G(\theta)} + \frac{ \theta (1 - {\theta}^{2})}{4 ( 1 + \varepsilon_{n} \theta )^{2}} \| \Phi_{n}^{j,h} \|^{2} + \frac{\nu \widehat{k}_{n}}{4} \| \nabla \phi_{n,\beta}^{j,h} \|^{2} 
				\label{eq:DLN-Error-Eq5} \\
				&+ \frac{\nu \widehat{k}_{n}}{4} \Big[ 1 - C (\Omega,\theta) \Big( \frac{1 + \varepsilon_{n} \theta}{1 - \varepsilon_{n}} \Big)^{2} \frac{\widehat{k}_{n}}{ \nu h} \big\| \nabla \big( u_{n,\ast}^{j,h} \!-\! \langle  u^{h} \rangle_{n,\ast}  \big) \big\|^{2} \Big]
				\| \nabla \phi_{n,\beta}^{j,h} \|^{2} \notag \\ 
				\leq & \frac{C(\Omega,\theta) \widehat{k}_{n} \| u_{n,\beta}^{j} \|_{2}^{2} }{\nu} \! 
				\big( \| \phi_{n}^{j,h} \|^{2} + \| \phi_{n\!-\!1}^{j,h} \|^{2} \big) 
				+ \frac{C(\theta) k_{\rm{max}}^{4}}{\nu}  \int_{t_{n-1}}^{t_{n+1}} \| u_{ttt}^{j} \|_{-1}^{2} dt \notag \\
				&+ \frac{C(\Omega,\theta) h^{2r+2}}{\nu} \int_{t_{n-1}}^{t_{n+1}} \| u_{t}^{j} \|_{r+1}^{2} dt 
				+ C(\theta) \nu k_{\rm{max}}^{4} \int_{t_{n-1}}^{t_{n+1}} \| \nabla u_{tt}^{j} \|^{2} dt  \notag \\
				&+ \frac{C(\Omega)h^{2s+2}}{\nu} (k_{n} \!+\! k_{n-1}) \| p(t_{n,\beta}) \|_{s\!+\!1}^{2}  \notag  \\
				&+ \frac{C(\Omega,\theta) \!h^{2r} }{\nu} \! \| | \!\nabla u^{j} | \|_{\infty\!,\!0}^{2}
				\Big(\! k_{\rm{max}}^{4}\! \int_{t_{n\!-\!1}}^{t_{n\!+\!1}} \| u_{tt}^{j} \|_{r\!+\!1}^{2}\! dt 
				\!+\! (k_{n} \!+\! k_{n-1}) \| u^{j}(t_{n,\beta}) \|_{r\!+\!1}^{2} \!\Big) \notag \\
				&+\! \frac{C(\Omega,\theta)k_{\rm{max}}^{4}}{\nu} \big( \! \| |u^{j}| \|_{\infty,1}^{2} 
				\!+\! \| |u^{j}| \|_{\infty,1,\beta}^{2} \! \big) \!
				\int_{t_{n\!-\!1}}^{t_{n\!+\!1}} \! \| \nabla u_{tt}^{j} \|^{2} \! dt \notag \\
				&+ C(\Omega,\theta) h^{2r+1}  k_{\rm{max}}^{3} 
				\!\! \int_{t_{n-1}}^{t_{n+1}} \!\| u_{tt}^{j} \|_{r+1}^{2} dt 
				+ C(\Omega,\theta) h k_{\rm{max}}^{3} \!
				\int_{t_{n\!-\!1}}^{t_{n\!+\!1}} \! \| \nabla u_{tt}^{j} \|^{2} \! dt. \notag 
			\end{align}
			\begin{confidential}
				\color{darkblue}
				\begin{align*}
				&\frac{1}{\widehat{k}_{n}} \Big( \begin{Vmatrix} {\phi_{n+1}^{j,h}} \\ {\phi_{n}^{j,h}} \end{Vmatrix}^{2}_{G(\theta)}-\begin{Vmatrix} {\phi_{n}^{j,h}} \\ {\phi_{n-1}^{j,h}} \end{Vmatrix}^{2}_{G(\theta)} + \frac{ \theta (1 - {\theta}^{2})}{2 ( 1 + \varepsilon_{n} \theta )^{2}} \| \Phi_{n}^{j,h} \|^{2} \Big) + \nu \| \nabla \phi_{n,\beta}^{j,h} \|^{2}  \\
				\leq & \frac{C(\theta)}{\nu} k_{\rm{max}}^{3} \int_{t_{n-1}}^{t_{n+1}} \| u_{ttt}^{j} \|_{-1}^{2} dt 
				+ \frac{C(\Omega,\theta) h^{2r+2}}{\nu \widehat{k}_{n}} \int_{t_{n-1}}^{t_{n+1}} \| u_{t}^{j} \|_{r+1}^{2} dt \\
				&+ C(\theta) \nu k_{\rm{max}}^{3} \int_{t_{n-1}}^{t_{n+1}} \| \nabla u_{tt}^{j} \|^{2} dt 
				+ \frac{C(\Omega) h^{2s+2}}{\nu} \| p(t_{n,\beta}) \|_{s\!+\!1}^{2}  \\
				&+ \frac{C(\Omega,\theta) \!h^{2r} }{\nu} \! \| | \!\nabla u^{j} | \|_{\infty\!,\!0}^{2}
				\Big(\! k_{\rm{max}}^{3}\! \int_{t_{n\!-\!1}}^{t_{n\!+\!1}} \| u_{tt}^{j} \|_{r\!+\!1}^{2}\! dt 
				\!+\! \| u^{j}(t_{n\!,\!\beta}) \|_{r\!+\!1}^{2} \!\Big) 
				+ \frac{C(\Omega,\theta) \!\| u_{n\!,\!\beta}^{j} \|_{2}^{2} }{\nu} \! 
				\big( \| \phi_{n}^{j,h} \|^{2} + \| \phi_{n\!-\!1}^{j,h} \|^{2} \big) \\
				&+ \frac{C(\Omega,\theta)}{\nu} k_{\rm{max}}^{3} \big( \| |u^{j}| \|_{\infty,1}^{2} 
				+ \| |u^{j}| \|_{\infty,1,\beta}^{2} \big)
				\int_{t_{n-1}}^{t_{n+1}} \| \nabla u_{tt}^{j} \|^{2} dt 
				+ \frac{ C(\Omega,\theta) h^{2r+1}  k_{\rm{max}}^{3} }{\widehat{k}_{n} } 
				\int_{t_{n-1}}^{t_{n+1}} \| u_{tt}^{j} \|_{r+1}^{2} dt \\
				&+ \frac{C(\Omega,\theta) h k_{\rm{max}}^{3} }{\widehat{k}_{n} } 
				\int_{t_{n-1}}^{t_{n+1}} \| \nabla u_{tt}^{j} \|^{2} dt \\
				&+ \frac{\theta (1 - \theta^2)}{4 \widehat{k}_{n} (1 + \varepsilon_{n} \theta )^{2} } 
				\big\| \Phi_{n}^{j,h} \big\|^{2} 
				+ \frac{C(\theta) \widehat{k}_{n}}{h} \Big( \frac{1 + \varepsilon_{n} \theta }{1 - \varepsilon_{n}} \Big)^{2} 
				\big\| \nabla \big( u_{n,\ast}^{j,h} \!-\! \langle u^{h} \rangle_{n,\ast} \big) \big\|^{2} \| \nabla \phi_{n,\beta}^{j,h} \|^{2} + \big( \frac{1}{16} \cdot 8 \big) \nu \| \nabla \phi_{n,\beta}^{j,h} \|^{2}.
				\end{align*}
				\begin{align*}
				& \begin{Vmatrix} {\phi_{n+1}^{j,h}} \\ {\phi_{n}^{j,h}} \end{Vmatrix}^{2}_{G(\theta)}-\begin{Vmatrix} {\phi_{n}^{j,h}} \\ {\phi_{n-1}^{j,h}} \end{Vmatrix}^{2}_{G(\theta)} + \frac{ \theta (1 - {\theta}^{2})}{2 ( 1 + \varepsilon_{n} \theta )^{2}} \| \Phi_{n}^{j,h} \|^{2}  + \nu \widehat{k}_{n} \| \nabla \phi_{n,\beta}^{j,h} \|^{2}  \\
				\leq & \frac{C(\theta) k_{\rm{max}}^{4}}{\nu}  \int_{t_{n-1}}^{t_{n+1}} \| u_{ttt}^{j} \|_{-1}^{2} dt 
				+ \frac{C(\Omega,\theta) h^{2r+2}}{\nu} \int_{t_{n-1}}^{t_{n+1}} \| u_{t}^{j} \|_{r+1}^{2} dt
				+ C(\theta) \nu k_{\rm{max}}^{4} \int_{t_{n-1}}^{t_{n+1}} \| \nabla u_{tt}^{j} \|^{2} dt \\
				&+ \frac{C(\Omega)h^{2s\!+\!2}}{\nu} (k_{n} + k_{n-1}) \| p(t_{n,\beta}) \|_{s\!+\!1}^{2}  
				+ \frac{C(\Omega,\theta) \!h^{2r} }{\nu} \! \| | \!\nabla u^{j} | \|_{\infty\!,\!0}^{2}
				\Big(\! k_{\rm{max}}^{4}\! \int_{t_{n\!-\!1}}^{t_{n\!+\!1}} \| u_{tt}^{j} \|_{r\!+\!1}^{2}\! dt 
				\!+\! (k_{n} + k_{n-1}) \| u^{j}(t_{n\!,\!\beta}) \|_{r\!+\!1}^{2} \!\Big) \\
				&+ \frac{C(\Omega,\theta) \widehat{k}_{n} \| u_{n\!,\!\beta}^{j} \|_{2}^{2} }{\nu} \! 
				\big( \| \phi_{n}^{j,h} \|^{2} + \| \phi_{n\!-\!1}^{j,h} \|^{2} \big) 
				+ \frac{C(\Omega,\theta)}{\nu} k_{\rm{max}}^{4} \big( \| |u^{j}| \|_{\infty,1}^{2} 
				+ \| |u^{j}| \|_{\infty,1,\beta}^{2} \big)
				\int_{t_{n-1}}^{t_{n+1}} \| \nabla u_{tt}^{j} \|^{2} dt  \\
				&+ C(\Omega,\theta) h^{2r+1} k_{\rm{max}}^{3} \int_{t_{n-1}}^{t_{n+1}} \| u_{tt}^{j} \|_{r+1}^{2} dt 
				+ C(\Omega,\theta) h k_{\rm{max}}^{3} \int_{t_{n-1}}^{t_{n+1}} \| \nabla u_{tt}^{j} \|^{2} dt \\
				&+ \frac{\theta (1 - \theta^2)}{4 (1 + \varepsilon_{n} \theta )^{2} } 
				\big\| \Phi_{n}^{j,h} \big\|^{2} 
				+ \frac{C(\theta) \widehat{k}_{n}^{2}}{h} \Big( \frac{1 + \varepsilon_{n} \theta }{1 - \varepsilon_{n}} \Big)^{2} 
				\big\| \nabla \big( u_{n,\ast}^{j,h} \!-\! \langle u^{h} \rangle_{n,\ast} \big) \big\|^{2} \| \nabla \phi_{n,\beta}^{j,h} \|^{2} + \frac{\nu \widehat{k}_{n}}{2} \| \nabla \phi_{n,\beta}^{j,h} \|^{2}.
				\end{align*}
				\normalcolor
			\end{confidential}
			We apply CFL-like conditions in \eqref{eq:CFL-like-cond} for \eqref{eq:DLN-Error-Eq5} and sum \eqref{eq:DLN-Error-Eq5} over $n$ from $1$ to $M-1$ ($M=2,3, \cdots, N$)
			\begin{confidential}
				\color{darkblue}
				\begin{align*}
				& \begin{Vmatrix} {\phi_{M}^{j,h}} \\ {\phi_{M-1}^{j,h}} \end{Vmatrix}^{2}_{G(\theta)} 
				+ \sum_{n=1}^{M-1} \frac{ \theta (1 - {\theta}^{2})}{4 ( 1 + \varepsilon_{n} \theta )^{2}} \| \Phi_{n}^{j,h} \|^{2} 
				+ \sum_{n=1}^{M-1} \frac{\nu \widehat{k}_{n}}{4} \| \nabla \phi_{n,\beta}^{j,h} \|^{2} \\
				\leq & \frac{C(\Omega,\theta)}{\nu} \! \sum_{n\!=\!1}^{M-1}
				\widehat{k}_{n} \| u_{n,\beta}^{j} \|_{2}^{2} \big( \| \phi_{n}^{j,h} \|^{2} + \| \phi_{n\!-\!1}^{j,h} \|^{2} \big) 
				+\! \frac{C(\theta) k_{\rm{max}}^{4}}{\nu} \! \| u_{ttt}^{j} \|_{2,-1}^{2} \\
				&+ \frac{C(\Omega,\theta) h^{2r\!+\!2}}{\nu} \! \| u_{t}^{j} \|_{2,r\!+\!1}^{2}
				+ C(\theta) \nu k_{\rm{max}}^{4} \! \| \nabla u_{tt}^{j} \|_{2,0}^{2} \notag \\
				&+ \frac{C(\Omega) h^{2s\!+\!2}}{\nu} \| |p^{j}| \|_{2,s\!+\!1,\beta}^{2} 
				+ \frac{C(\Omega,\theta) \!h^{2r} }{\nu} \! \| | \!\nabla u^{j} | \|_{\infty\!,\!0}^{2}
				\Big(\! k_{\rm{max}}^{4}\! \| u_{tt}^{j} \|_{2,r\!+\!1}^{2} 
				\!+\! \| |u^{j}| \|_{2,r\!+\!1,\beta}^{2} \!\Big) \notag \\
				&+\! \frac{C(\Omega,\theta)k_{\rm{max}}^{4}}{\nu} \big( \! \| |u^{j}| \|_{\infty,1}^{2} 
				\!+\! \| |u^{j}| \|_{\infty,1,\beta}^{2} \! \big) \! \| \nabla u_{tt}^{j} \|_{2,0}^{2}
				\!+\! C(\Omega,\theta) \! h^{2r+1} \! k_{\rm{max}}^{3} \! \| u_{tt}^{j} \|_{2,r\!+\!1}^{2} \\
				&+ C(\Omega,\theta) \! h k_{\rm{max}}^{3} \! \| \nabla u_{tt}^{j} \|_{2,0}^{2}
				+ \begin{Vmatrix} {\phi_{1}^{j,h}} \\ {\phi_{0}^{j,h}} \end{Vmatrix}^{2}_{G(\theta)}.
				\end{align*}
				\normalcolor
			\end{confidential}
			\begin{align}
				&\| \phi_{M}^{j,h} \|^{2} + \frac{4}{1 + \theta} \sum_{n=1}^{M-1} \frac{\nu \widehat{k}_{n}}{4} \| \nabla \phi_{n,\beta}^{j,h} \|^{2} 
				\label{eq:DLN-Error-Eq6} \\
				\leq& \! \frac{C(\Omega,\theta)}{\nu}\! \Big[ \widehat{k}_{M-1} \| u_{M-1,\beta}^{j} \|_{2}^{2}
				\| \phi_{M-1}^{j,h} \|^{2} + \sum_{n=1}^{M-2} 
				\big( \widehat{k}_{n+1} \| u_{n+1,\beta}^{j} \|_{2}^{2} 
				+ \widehat{k}_{n} \| u_{n,\beta}^{j} \|_{2}^{2}  \big) \| \phi_{n}^{j,h} \|^{2} \notag \\
				&\qquad \qquad + \widehat{k}_{1} \| u_{1,\beta}^{j} \|_{2}^{2} \| \phi_{0}^{j,h} \|^{2} \Big]  \notag \\
				&+ \frac{C(\theta) k_{\rm{max}}^{4}}{\nu}  \| u_{ttt}^{j} \|_{2,-1}^{2} 
				+ \frac{C(\Omega,\theta) h^{2r+2}}{\nu} \| u_{t}^{j} \|_{2,r+1}^{2} 
				+ C(\theta) \nu k_{\rm{max}}^{4}  \| \nabla u_{tt}^{j} \|_{2,0}^{2} \notag \\
				&+ \frac{C(\Omega)h^{2s+2}}{\nu} \| |p^{j}| \|_{2,s+1,\beta}^{2} 
				+ C(\Omega,\theta) h k_{\rm{max}}^{3} \| u_{tt}^{j} \|_{2,r+1}^{2} \notag  \\
				&+ \frac{C(\Omega,\theta) h^{2r} }{\nu} \| | \nabla u^{j} | \|_{\infty,0}^{2}
				\Big( k_{\rm{max}}^{4} \| u_{tt}^{j} \|_{2,r+1}^{2} + \| |u^{j}| \|_{2,r+1,\beta}^{2} \Big) \notag \\
				&+ \frac{C(\Omega,\theta)k_{\rm{max}}^{4}}{\nu} \big( \| |u^{j}| \|_{\infty,1}^{2} 
				+ \| |u^{j}| \|_{\infty,1,\beta}^{2} \big) \| \nabla u_{tt}^{j} \|_{2,0}^{2} 
				+ C(\Omega,\theta) h^{2r+1} k_{\rm{max}}^{3} \| u_{tt}^{j} \|_{2,r+1}^{2} \notag \\
				&+ C(\Omega,\theta) h k_{\rm{max}}^{3} \| \nabla u_{tt}^{j} \|_{2,0}^{2}
				+ C(\theta) \big( \| \phi_{1}^{j} \|^{2} + \| \phi_{0}^{j} \|^{2} \big). \notag 
			\end{align}
			We denote
			\begin{align}
				F_{1} 
				= & \frac{C(\theta) k_{\rm{max}}^{4}}{\nu}  \| u_{ttt}^{j} \|_{2,-1}^{2} 
				+ \frac{C(\Omega,\theta) h^{2r+2}}{\nu} \| u_{t}^{j} \|_{2,r+1}^{2} 
				\label{eq:def-F1} \\
				+& C(\theta) \nu k_{\rm{max}}^{4}  \| \nabla u_{tt}^{j} \|_{2,0}^{2} 
				+ \frac{C(\Omega)h^{2s+2}}{\nu} \| |p^{j}| \|_{2,s+1,\beta}^{2} 
				+ C(\Omega,\theta) h k_{\rm{max}}^{3} \| u_{tt}^{j} \|_{2,r+1}^{2} \notag  \\
				+& \frac{C(\Omega,\theta) h^{2r} }{\nu} \| | \nabla u^{j} | \|_{\infty,0}^{2}
				\Big( k_{\rm{max}}^{4} \| u_{tt}^{j} \|_{2,r+1}^{2} + \| |u^{j}| \|_{2,r+1,\beta}^{2} \Big) \notag \\
				+& \frac{C(\Omega,\theta)k_{\rm{max}}^{4}}{\nu} \big( \| |u^{j}| \|_{\infty,1}^{2} 
				+ \| |u^{j}| \|_{\infty,1,\beta}^{2} \big) \| \nabla u_{tt}^{j} \|_{2,0}^{2} 
				+ C(\Omega,\theta) h^{2r+1} k_{\rm{max}}^{3} \| u_{tt}^{j} \|_{2,r+1}^{2} \notag \\
				+& C(\Omega,\theta) h k_{\rm{max}}^{3} \| \nabla u_{tt}^{j} \|_{2,0}^{2}
				+ C(\theta) \big( \| \phi_{1}^{j} \|^{2} + \| \phi_{0}^{j} \|^{2} \big). \notag
			\end{align}
			By discrete Gr$\rm{\ddot{o}}$nwall inequality without restrictions (\cite[p.369]{HR90_SIAM_NA}) and \eqref{eq:consist-2nd-eq1} in Lemma \ref{lemma:DLN-consistency}, \eqref{eq:DLN-Error-Eq6} becomes
			\begin{align}
				\| \phi_{M}^{h} \|^2
				+ \frac{\nu}{1+\theta} \sum_{n=1}^{M-1} \widehat{k}_{n} \| \nabla \phi_{n,\beta}^{h} \|^2   
				\leq& \exp \Big( \frac{C(\Omega,\theta)}{\nu} \sum_{n=1}^{N-1}  \widehat{k}_{n} \| u_{n,\beta} \|_{2}^{2} \Big) F_{1} \label{eq:DLN-Error-Eq7} \\
				\leq& \exp \Big( \frac{C(\Omega,\theta)}{\nu} \big(  k_{\rm{max}}^{4} \| u_{tt}^{j} \|_{2,2}^{2} 
			   	+ \| |u^{j}| \|_{2,2,\beta}^{2} \big)  \Big) F_{1}.
				\notag 
			\end{align}
			\begin{confidential}
				\color{darkblue}
				\begin{align*}
					\sum_{n=1}^{N-1} \widehat{k}_{n} \| u_{n,\beta}^{j} \|_{2}^{2}
					\leq& 2 \sum_{n=1}^{N-1} \widehat{k}_{n} \Big( \| u_{n,\beta}^{j} - u^{j}(t_{n,\beta}) \|_{2}^{2}
					+ \| u^{j}(t_{n,\beta}) \|_{2}^{2} \Big) \\
					\leq& C(\theta) \sum_{n\!=\!1}^{M-1} \Big( k_{\rm{max}}^{4} \int_{t_{n-1}}^{t_{n+1}} \| u_{tt}^{j} \|_{2}^{2} dt
					+ (k_{n} \!+\! k_{n-1}) \| u^{j}(t_{n,\beta}) \|_{2}^{2} \Big) 
				\end{align*}
				\normalcolor
			\end{confidential}
			By triangle inequality, \eqref{eq:approx-thm}, \eqref{eq:Stoke-Approx}, \eqref{eq:consist-2nd-eq1} 
			in Lemma \ref{lemma:DLN-consistency} and \eqref{eq:DLN-Error-Eq6}
			\begin{confidential}
				\color{darkblue}
				\begin{align*}
					&\Big( \nu \sum_{n=1}^{N-1} \widehat{k}_{n} \| \nabla e_{n,\beta}^{j} \|^{2} \Big)^{1/2}  \\
					\leq& \Big( 2 \nu \sum_{n=1}^{N-1} \widehat{k}_{n} \| \nabla \phi_{n,\beta}^{j,h} \|^{2} \Big)^{1/2}
					+ \Big( 2 \nu \sum_{n=1}^{N-1} \widehat{k}_{n} \| \nabla \eta_{n,\beta}^{j} \|^{2} \Big)^{1/2} \\
					\leq& \exp \Big[ \frac{C(\Omega,\theta)}{\nu} \big( k_{\rm{max}}^{4} \| u_{tt}^{j} \|_{2,2}^{2} 
					+ \| |u^{j}| \|_{2,2,\beta}^{2} \big) \Big] \sqrt{F_{1}} 
					+ C(\Omega)h^{r} \Big( \nu \sum_{n=1}^{N-1} \widehat{k}_{n} \| u_{n,\beta}^{j} \|_{r+1}^2 \Big)^{1/2} \\
					\leq& \exp \Big[ \frac{C(\Omega,\theta)}{\nu} \big( k_{\rm{max}}^{4} \| u_{tt} \|_{2,2}^{2} 
					+ \| |u| \|_{2,2,\beta}^{2} \big) \Big] \sqrt{F_{1}} \\
					&+ C(\Omega) \sqrt{\nu} h^{r} \Big( \sum_{n=1}^{N-1} \widehat{k}_{n} \| u_{n,\beta}^{j} - u^{j}(t_{n,\beta}) \|_{r+1}^2 + \sum_{n=1}^{N-1} \widehat{k}_{n} \| u^{j}(t_{n,\beta}) \|_{r+1}^2 \Big)^{1/2} \\
					\leq& \exp \Big[ \frac{C(\Omega,\theta)}{\nu} \big( k_{\rm{max}}^{4} \| u_{tt}^{j} \|_{2,2}^{2} 
					+ \| |u^{j}| \|_{2,2,\beta}^{2} \big) \Big] \sqrt{F_{1}} \\
					&+ C(\Omega)\sqrt{\nu} h^{r} \Big( C(\theta) \sum_{n=1}^{N-1} (k_{n}+k_{n-1})^{4} 
					\int_{t_{n-1}}^{t_{n+1}}\| u_{tt}^{j} \|_{r+1}^2 dt 
					+ \sum_{n=1}^{N-1} (k_{n}+k_{n-1}) \| u^{j}(t_{n,\beta}) \|_{r+1}^2 \Big)^{1/2} \\
					\leq& \exp \Big[ \frac{C(\Omega,\theta)}{\nu} \big( k_{\rm{max}}^{4} \| u_{tt}^{j} \|_{2,2}^{2} 
					+ \| |u^{j}| \|_{2,2,\beta}^{2} \big) \Big] \sqrt{F_{1}} 
					+ C(\Omega,\theta) \sqrt{\nu} h^{r} \Big( k_{\rm{max}}^{4} \| u_{tt}^{j} \|_{2,r+1}^{2}
					+ \| |u^{j}| \|_{2,r+1,\beta}^2 \Big)^{1/2} \\
					\leq& \exp \Big[ \frac{C(\Omega,\theta)}{\nu} \big( k_{\rm{max}}^{4} \| u_{tt}^{j} \|_{2,2}^{2} 
					+ \| |u^{j}| \|_{2,2,\beta}^{2} \big) \Big] \sqrt{F_{1}}
					+ C(\Omega,\theta) \sqrt{\nu} h^{r} \big( k_{\rm{max}}^{2} \| u_{tt}^{j} \|_{2,r+1}
					+ \| |u^{j}| \|_{2,r+1,\beta} \big),
				\end{align*}
				\normalcolor
			\end{confidential}
			\begin{align}
				\max_{0 \leq n \leq N}\| e_{n}^{j} \|  
				\leq& \max_{0 \leq n \leq N}\| \phi_{n}^{j,h} \| + \max_{0 \leq n \leq N}\| \eta_{n}^{j} \| 
				\label{eq:error-L2-inf-final-L2} \\
				\leq& \exp \Big[ \frac{C(\Omega,\theta)}{\nu} \big( k_{\rm{max}}^{4} \| u_{tt}^{j} \|_{2,2}^{2} 
				+ \| |u^{j}| \|_{2,2,\beta}^{2} \big) \Big] \sqrt{F_{1}} 
				+ C(\Omega)h^{r} \| |u^{j}| \|_{\infty,r}.  \notag 
			\end{align}
			\begin{align}
				\Big(\! \nu \sum_{n=1}^{N\!-\!1} \widehat{k}_{n} \| \nabla e_{n,\beta}^{j} \|^{2} \!\Big)^{1/2}
				\leq& \Big(\! 2 \nu \sum_{n=1}^{N\!-\!1} \widehat{k}_{n} \| \nabla \phi_{n,\beta}^{j,h} \|^{2} \!\Big)^{1/2}
				\!+\! \Big(\! 2 \nu \sum_{n=1}^{N\!-\!1} \widehat{k}_{n} \| \nabla \eta_{n,\beta}^{j} \|^{2} \!\Big)^{1/2} 
				\label{eq:error-L2-inf-final-H1} \\
				\leq& \exp \Big[ \frac{C(\Omega,\theta)}{\nu} \big( k_{\rm{max}}^{4} \| u_{tt}^{j} \|_{2,2}^{2} 
				+ \| |u^{j}| \|_{2,2,\beta}^{2} \big) \Big] \sqrt{F_{1}} \notag \\
				& \qquad \qquad \quad + C(\Omega,\theta) \sqrt{\nu} h^{r} \big( k_{\rm{max}}^{2} \| u_{tt}^{j} \|_{2,r+1}
				+ \| |u^{j}| \|_{2,r+1,\beta} \big). \notag 
			\end{align}
			We combine \eqref{eq:error-L2-inf-final-L2} and \eqref{eq:error-L2-inf-final-H1} and obtain 
			\eqref{eq:appendixB-error-L2-conclusion} in Theorem \ref{thm:Error-L2}.

		\end{proof}

		\subsection{Proof of Theorem \ref{thm:Error-H1}}
		\label{appendixB-H1} \ \\
		\begin{theorem}
			We assume that for the $j$-th NSE in \eqref{eq:jth-NSE}, the velocity $u^{j}(x,t)$ satisfies
			\begin{gather*}
			u^{j} \!\in\! \ell^{\infty}(\{ t_{n} \}_{n=0}^{N};H^{r}(\Omega) \cap H^{2}(\Omega) ) 
			\cap \ell^{\infty,\beta}(\{ t_{n} \}_{n=0}^{N};H^{2}(\Omega))
			\cap \ell^{2,\beta}(\{ t_{n} \}_{n=0}^{N};H^{r+1}(\Omega)), \\
			u_{t}^{j} \in L^{2}(0,T;H^{r+1}(\Omega)),  \ \ 
			u_{tt}^{j} \in L^{2}(0,T;H^{r+1}(\Omega)), \ \ 
			u_{ttt}^{j} \in L^{2}(0,T;X^{-1} \cap L^{2}(\Omega)), 
			\end{gather*} 
			and the pressure $p^{j}(x,t)$ satisfies
			\begin{gather*}
			p^{j} \in \ell^{\infty}(\{ t_{n} \}_{n=0}^{N};H^{s+1}(\Omega)) \cap \ell^{2,\beta}(\{ t_{n} \}_{n=0}^{N};H^{s+1}(\Omega)), \\
			p_{t}^{j} \in L^{2}(0,T;H^{s+1}(\Omega)), \ p_{tt}^{j} \in L^{2}(0,T;H^{s+1}(\Omega)),
			\end{gather*}
			for all $j$.  
			Under the CFL-like conditions in \eqref{eq:CFL-like-cond}, time ratio bounds in \eqref{eq:time-ratio-cond} and the time-diameter condition in \eqref{eq:time-diameter-relation},
			the numerical solutions of the DLN-Ensemble algorithms in \eqref{eq:DLN-Ensemble-Alg} for all $\theta \in (0,1)$ satisfy 
			\begin{align}
			&\max_{0 \leq n \leq N}\| u_{n}^{j} - u_{n}^{j,h} \|_{1}  
			\!+\! \sum_{n=1}^{N-1} \frac{\widehat{k}_{n}}{\nu} \Big\| \frac{ u_{n,\alpha}^{j} - u_{n,\alpha}^{j,h} }{ \widehat{k}_{n} } \Big\|^{2}
			\!\leq\! \mathcal{O} \big( h^{r},h^{s+1}, k_{\rm{max}}^{2}, h^{1/2}k_{\rm{max}}^{3/2} \big). \label{eq:error-H1-conclusion-appendix} 
			\end{align}
		\end{theorem}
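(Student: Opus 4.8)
The plan is to repeat the argument of Theorem~\ref{thm:Error-L2} in Appendix~\ref{appendixB-L2}, but to test the error equation with the discrete time derivative $\widehat{k}_{n}^{-1}\phi_{n,\alpha}^{j,h}$ rather than with $\phi_{n,\beta}^{j,h}$, exactly in the way the $H^{1}$-stability bound \eqref{eq:DLN-Stab-H1-conclusion} was derived from the $L^{2}$-stability bound \eqref{eq:L2-Stab-conclusion}. First I would carry over verbatim \textit{Part~1} and \textit{Part~2} of the proof of Theorem~\ref{thm:Error-L2}: write $e_{n}^{j}=\eta_{n}^{j}+\phi_{n}^{j,h}$ with $\eta_{n}^{j}=u_{n}^{j}-I_{\rm{St}}u_{n}^{j}$ and $\phi_{n}^{j,h}=I_{\rm{St}}u_{n}^{j}-u_{n}^{j,h}\in V^{h}$, use the Stokes projection \eqref{eq:Stokes-def} to remove $\nu(\nabla\eta_{n,\beta}^{j},\nabla v^{h})$, and arrive at the error identity \eqref{eq:DLN-Error-Eq3}. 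Taking there $v^{h}=\widehat{k}_{n}^{-1}\phi_{n,\alpha}^{j,h}\in V^{h}$ annihilates $(p_{n,\beta}^{j,h},\nabla\cdot v^{h})$, turns $(p^{j}(t_{n,\beta}),\nabla\cdot v^{h})$ into $(p^{j}(t_{n,\beta})-q_{n}^{h},\nabla\cdot v^{h})$ with $q_{n}^{h}$ the $L^{2}(\Omega)$-projection of $p^{j}(t_{n,\beta})$ onto $Q^{h}$ (bounded by \eqref{eq:approx-thm}), and, via the $G$-stability identity \eqref{eq:Gstab-Id} applied to $\{\nabla\phi_{n}^{j,h}\}$, converts $\tfrac{\nu}{\widehat{k}_{n}}(\nabla\phi_{n,\beta}^{j,h},\nabla\phi_{n,\alpha}^{j,h})$ into a telescoping $G(\theta)$-norm difference plus the numerical dissipation $\tfrac{\nu\theta(1-\theta^{2})}{2\widehat{k}_{n}(1+\varepsilon_{n}\theta)^{2}}\|\nabla\Phi_{n}^{j,h}\|^{2}$, where $\Phi_{n}^{j,h}=\tfrac{1-\varepsilon_{n}}{2}\phi_{n+1}^{j,h}-\phi_{n}^{j,h}+\tfrac{1+\varepsilon_{n}}{2}\phi_{n-1}^{j,h}$. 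The coercive left side is thus $\|\widehat{k}_{n}^{-1}\phi_{n,\alpha}^{j,h}\|^{2}$ together with those two quantities; note that, unlike in the $L^{2}$ proof, no coercive $\nu\widehat{k}_{n}\|\nabla\phi_{n,\beta}^{j,h}\|^{2}$ survives, which is why the already-proved $L^{2}$-error estimate must be reused below.

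Next I would bound each term on the right of \eqref{eq:DLN-Error-Eq3} against $\widehat{k}_{n}^{-1}\phi_{n,\alpha}^{j,h}$, combining the splitting of \textit{Part~3} of the proof of Theorem~\ref{thm:Error-L2} with the right-hand-side estimates of the proof of \eqref{eq:DLN-Stab-H1-conclusion}. The consistency terms $\big(\widehat{k}_{n}^{-1}u_{n,\alpha}^{j}-u_{t}^{j}(t_{n,\beta}),v^{h}\big)$ and $\widehat{k}_{n}^{-1}\big(\eta_{n,\alpha}^{j},v^{h}\big)$ are estimated in the $L^{2}(\Omega)$-pairing (using $u_{ttt}^{j}\in L^{2}(0,T;X^{-1}\cap L^{2})$, \eqref{eq:consist-2nd-eq3} and \eqref{eq:approx-thm}), while the viscous consistency term $\nu\big(\nabla(u_{n,\beta}^{j}-u^{j}(t_{n,\beta})),\nabla v^{h}\big)$ is first integrated by parts (legitimate since $u^{j}\in H^{2}(\Omega)$ and $v^{h}\in X$) to $-\nu\big(\Delta(u_{n,\beta}^{j}-u^{j}(t_{n,\beta})),v^{h}\big)$ and bounded by \eqref{eq:consist-2nd-eq1} with $\ell=2$; all three contribute only data (of orders $k_{\rm{max}}^{4}$, $h^{2r+2}$, $k_{\rm{max}}^{4}$ after summation) and a small multiple of $\|\widehat{k}_{n}^{-1}\phi_{n,\alpha}^{j,h}\|^{2}$. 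For the nonlinearity I would use the skew-symmetric decomposition of the chain preceding \eqref{eq:non-linear-eq3}: the pieces $b(\phi_{n,\ast}^{j,h},u_{n,\beta}^{j},\cdot)$, $b(\eta_{n,\ast}^{j},u_{n,\beta}^{j},\cdot)$ and $b(u_{n,\ast}^{j},\eta_{n,\beta}^{j},\cdot)$ are controlled with the $v\in H^{2}$ estimates in \eqref{eq:b-bound-1}--\eqref{eq:b-bound-3} (no inverse inequality), producing Gr\"onwall-type terms $\widehat{k}_{n}\big(\|u_{n,\beta}^{j}\|_{2}^{2}+\| |\nabla u^{j}| \|_{\infty,0}^{2}\big)\big(\|\nabla\phi_{n}^{j,h}\|^{2}+\|\nabla\phi_{n-1}^{j,h}\|^{2}\big)$ plus $\mathcal{O}(h^{2r})$ data; the fluctuation piece $b(u_{n,\ast}^{j,h}-\langle u^{h}\rangle_{n,\ast},\phi_{n,\beta}^{j,h},\phi_{n,\ast}^{j,h}-\phi_{n,\beta}^{j,h})$ is handled via \eqref{eq:b-bound-2}, the inverse inequality \eqref{eq:inv-inequal}, the identity $\phi_{n,\beta}^{j,h}-\phi_{n,\ast}^{j,h}=\tfrac{2\beta_{2}^{(n)}}{1-\varepsilon_{n}}\Phi_{n}^{j,h}$ and \eqref{eq:bound-beta}, exactly as in \eqref{eq:non-linear-eq3-term2}, so that half of $\|\nabla\Phi_{n}^{j,h}\|^{2}$ absorbs it while the residual CFL-type factor $\tfrac{\widehat{k}_{n}}{h}\big(\tfrac{1+\varepsilon_{n}\theta}{1-\varepsilon_{n}}\big)^{2}\|\nabla(u_{n,\ast}^{j,h}-\langle u^{h}\rangle_{n,\ast})\|^{2}$ is dropped by \eqref{eq:CFL-like-cond}; and the remaining $\eta$-pieces together with $b(u_{n,\ast}^{j,h}-\langle u^{h}\rangle_{n,\ast},\eta_{n,\ast}^{j}-\eta_{n,\beta}^{j},\cdot)$ and $b(u_{n,\ast}^{j,h}-\langle u^{h}\rangle_{n,\ast},u_{n,\ast}^{j}-u_{n,\beta}^{j},\cdot)$ are estimated with \eqref{eq:b-bound-1}, \eqref{eq:inv-inequal}, \eqref{eq:approx-thm}, \eqref{eq:Stoke-Approx}, \eqref{eq:consist-2nd-eq1}--\eqref{eq:consist-2nd-eq2}, \eqref{eq:bound-beta}, \eqref{eq:time-ratio-cond} and \eqref{eq:CFL-like-cond}; here the $h^{-1}$ coming from the inverse inequality is offset because it multiplies either a consistency factor $k_{\rm{max}}^{3}$ (so the net order is $h^{2r-1}k_{\rm{max}}^{3}\le hk_{\rm{max}}^{3}$ for $r\ge1$) or, inside the Gr\"onwall coefficient, an $\mathcal{O}(h^{2r-2})$ bounded quantity.

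I would then multiply the resulting inequality by $\widehat{k}_{n}$, sum over $n=1,\dots,M-1$ for $M=2,\dots,N$, telescope the $G(\theta)$-norms, discard the nonnegative CFL-type contribution by \eqref{eq:CFL-like-cond}, and apply the discrete Gr\"onwall inequality without restrictions \cite[p.369]{HR90_SIAM_NA}. As in the proof of \eqref{eq:DLN-Stab-H1-conclusion}, the Gr\"onwall multiplier has the form $\exp\!\big[\tfrac{C}{\nu}\sum_{n}\widehat{k}_{n}\big(\|u_{n,\beta}^{j}\|_{2}^{2}+\| |\nabla u^{j}| \|_{\infty,0}^{2}+\tfrac{1}{h\nu}\nu\widehat{k}_{n}\|e_{n,\beta}^{j}\|_{1}^{2}+\cdots\big)\big]$; into it I would insert the already-proved $L^{2}$-error estimate \eqref{eq:error-L2-conclusion} in the telescoped form \eqref{eq:error-DLN-L2H1}, and the $H^{1}$-stability bound \eqref{eq:DLN-Stab-H1-conclusion} for $\max_{n}\|\nabla u_{n}^{j,h}\|$, so that $h^{-1}$ times the $\mathcal{O}(h^{2r})$ error norm and $h^{-1}$ times the $\mathcal{O}(k_{\rm{max}}^{4})$ consistency quantities stay bounded under the time--diameter restriction \eqref{eq:time-diameter-relation} and the time-ratio bounds \eqref{eq:time-ratio-cond}. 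This yields $\|\nabla\phi_{M}^{j,h}\|^{2}+\tfrac{1}{\nu}\sum_{n=1}^{M-1}\widehat{k}_{n}\|\widehat{k}_{n}^{-1}\phi_{n,\alpha}^{j,h}\|^{2}=\mathcal{O}(h^{2r},h^{2s+2},k_{\rm{max}}^{4},hk_{\rm{max}}^{3})$. Finally, the triangle inequality with $\|\nabla\eta_{n}^{j}\|\le Ch^{r}\|u_{n}^{j}\|_{r+1}$ from \eqref{eq:approx-thm}--\eqref{eq:Stoke-Approx} and the straightforward bound on $\widehat{k}_{n}^{-1}\eta_{n,\alpha}^{j}$ converts the $\phi$-estimate into the claimed estimate \eqref{eq:error-H1-conclusion-appendix} for $e_{n}^{j}=u_{n}^{j}-u_{n}^{j,h}$.

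The main obstacle is precisely the bookkeeping of the negative powers of $h$. Since testing against $\widehat{k}_{n}^{-1}\phi_{n,\alpha}^{j,h}$ destroys the coercive $\nu\widehat{k}_{n}\|\nabla\phi_{n,\beta}^{j,h}\|^{2}$ available in the $L^{2}$ proof, several viscous, pressure and nonlinear terms can only be estimated after an inverse inequality, each creating an $h^{-1}$ (or $h^{-2}$) factor. One must ensure, by systematically integrating by parts and exploiting the $v\in H^{2}$ estimates of Lemma~\ref{lemma:b-bound} together with the regularity $u^{j}\in\ell^{\infty,\beta}(\{t_{n}\}_{n=0}^{N};H^{2}(\Omega))$, that no $h^{-2}k_{\rm{max}}^{4}$ ever reaches the Gr\"onwall exponent --- only $h^{-1}k_{\rm{max}}^{4}$, which is bounded exactly by $k_{\rm{max}}\le h^{1/4}$ --- and that the fluctuation nonlinearity is absorbed exactly by the numerical dissipation $\|\nabla\Phi_{n}^{j,h}\|^{2}$, as in the $L^{2}$ proof. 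Keeping all powers of $h$ and $k_{\rm{max}}$ balanced so that the final order $\mathcal{O}(h^{r},h^{s+1},k_{\rm{max}}^{2},h^{1/2}k_{\rm{max}}^{3/2})$ emerges is the delicate part.
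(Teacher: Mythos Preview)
Your overall strategy---test \eqref{eq:DLN-Error-Eq2} with $v^{h}=\widehat{k}_{n}^{-1}\phi_{n,\alpha}^{j,h}$, use the $G$-identity on $\nabla\phi$, absorb the fluctuation piece into $\|\nabla\Phi_{n}^{j,h}\|^{2}$ via the CFL condition, then Gr\"onwall with the $L^{2}$-error fed back in---matches the paper. But one step would fail as written.

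You propose to carry over Parts~1--2 of the $L^{2}$ proof \emph{verbatim}, i.e.\ to keep the Stokes projection of $(u_{n}^{j},0)$. The paper switches to the Stokes projection of $(u_{n}^{j},p_{n}^{j})$ in the $H^{1}$ proof, and this is not cosmetic. With $v^{h}=\widehat{k}_{n}^{-1}\phi_{n,\alpha}^{j,h}$ the only coercive quantity is $\|\widehat{k}_{n}^{-1}\phi_{n,\alpha}^{j,h}\|^{2}$; there is no $\|\nabla v^{h}\|^{2}$ to absorb into. Under your choice the residual pressure term is $(p^{j}(t_{n,\beta})-q_{n}^{h},\nabla\cdot v^{h})$, which pairs $Ch^{s+1}\|p\|_{s+1}$ with $\|\nabla v^{h}\|$; the inverse inequality then costs $h^{-1}$ and you land at order $h^{s}$, not $h^{s+1}$ (integration by parts against $\nabla(p-q_{n}^{h})$ gives the same $h^{s}$). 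Under the paper's choice, the first equation in \eqref{eq:Stokes-def} with $(u,p)$ turns $\nu(\nabla\eta_{n,\beta}^{j},\nabla v^{h})-(p^{j}(t_{n,\beta}),\nabla\cdot v^{h})$ into the pure time-consistency term $(p^{j}(t_{n,\beta})-p_{n,\beta}^{j},\nabla\cdot v^{h})$; integrating by parts to $-\big(\nabla(p^{j}(t_{n,\beta})-p_{n,\beta}^{j}),v^{h}\big)$ and applying \eqref{eq:consist-2nd-eq1} gives a $k_{\max}^{2}$ contribution against $\|v^{h}\|$, with no loss of $h$. The spatial pressure error $h^{s+1}$ is instead hidden inside $\|\eta\|$ and $\|\nabla\eta\|$ via \eqref{eq:Stoke-Approx}, where it always meets only $\|v^{h}\|$ (e.g.\ in $\widehat{k}_{n}^{-1}(\eta_{n,\alpha}^{j},v^{h})$, see \eqref{eq:DLN-Error-H1-Eq1-term2}, and in the nonlinear $\eta$-terms).

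A smaller slip: you write the fluctuation piece as $b(u_{n,\ast}^{j,h}-\langle u^{h}\rangle_{n,\ast},\phi_{n,\beta}^{j,h},\phi_{n,\ast}^{j,h}-\phi_{n,\beta}^{j,h})$. That form arose in the $L^{2}$ proof by skew-symmetry \emph{because} the test function there was $\phi_{n,\beta}^{j,h}$. Here the test function is $\widehat{k}_{n}^{-1}\phi_{n,\alpha}^{j,h}$, so no such swap is available; the term stays as $b(u_{n,\ast}^{j,h}-\langle u^{h}\rangle_{n,\ast},\phi_{n,\ast}^{j,h}-\phi_{n,\beta}^{j,h},\widehat{k}_{n}^{-1}\phi_{n,\alpha}^{j,h})$. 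It is still handled with \eqref{eq:b-bound-3}, the inverse inequality and $\phi_{n,\beta}^{j,h}-\phi_{n,\ast}^{j,h}=\tfrac{2\beta_{2}^{(n)}}{1-\varepsilon_{n}}\Phi_{n}^{j,h}$, but the CFL factor now multiplies the coercive term $\|\widehat{k}_{n}^{-1}\phi_{n,\alpha}^{j,h}\|^{2}$ (not $\|\nabla\phi_{n,\beta}^{j,h}\|^{2}$), and it is this product that is dropped via \eqref{eq:CFL-like-cond}; half of $\tfrac{\nu\theta(1-\theta^{2})}{\widehat{k}_{n}(1+\varepsilon_{n}\theta)^{2}}\|\nabla\Phi_{n}^{j,h}\|^{2}$ absorbs the rest. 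This is exactly \eqref{eq:DLN-Error-H1-Eq1-term9-term2} in the paper.
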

		\begin{proof}
			We devide the proof into the following three parts:
		\begin{enumerate}
				\item[1.] We set $I_{\rm{St}} u_{n}^{j}$ to be the velocity component of 
				Stokes projection of $(u_{n}^{j},p_{n}^{j})$ onto $V^{h} \times Q^{h}$ and decompose the error to be $e_{n}^{j} = (u_{n}^{j} - I_{\rm{St}} u_{n}^{j} ) + (I_{\rm{St}} u_{n}^{j} - u_{n}^{j,h}) 
				:= \eta_{n}^{j} + \phi_{n}^{j,h}$.
				Then we transfer the error equation in \eqref{eq:DLN-Error-Eq2} to be the new equation 
				in terms of $\{\eta_{n-1+\ell}^{j}\}_{\ell=0}^{2}$ and $\{ \phi_{n-1+\ell}^{j} \}_{\ell=0}^{2}$.
			\item[2.] We obtain the bound for $\nabla \phi_{n+1}^{j,h}$ by addressing the terms from
			\begin{itemize}
				\item the semi-implicit DLN algorithms \cite{Pei24_NM} for $j$-th NSE in \eqref{eq:jth-NSE},
				\item the DLN-Ensemble algorithms in \eqref{eq:DLN-Ensemble-Alg}.
			\end{itemize}
			\item[3.] We use the discrete Gr\"onwall inequality without time restrictions \cite[p.369]{HR90_SIAM_NA}, approximations for Stokes projection in \eqref{eq:Stoke-Approx} and approximations for $(X^{h},Q^{h})$ in \eqref{eq:approx-thm} to achieve convergence of numerical solutions 
			in $H^{1}$-norm.
		\end{enumerate}
		\ \\
		\ \\
		\noindent \textit{Part 1.} \\
		We denote $I_{\rm{St}} u_{n}^{j}$ to be the velocity component of the Stokes projection of $(u_{n}^{j}, p_{n}^{j})$ and decompose the error of velocity at $t_{n}$ as
		\begin{gather*}
			e_{n}^{j} = u_{n}^{j} - u_{n}^{j,h} = \big(u_{n}^{j} - I_{\rm{St}} u_{n}^{j} \big)
			+ \big( I_{\rm{St}} u_{n}^{j} - u_{n}^{j,h} \big) = \eta_{n}^{j} + \phi_{n}^{j,h},  \\
			\eta_{n}^{j} = u_{n}^{j} - I_{\rm{St}} u_{n}^{j}, \qquad 
			\phi_{n}^{j,h} = I_{\rm{St}} u_{n}^{j} - u_{n}^{j,h},
		\end{gather*}
		and we still have \eqref{eq:DLN-Error-Eq2}.
		We take $v^{h} = \widehat{k}_{n}^{-1} \phi_{n,\alpha}^{j,h} \in V^{h}$ in \eqref{eq:DLN-Error-Eq2}. By the $G$-stability identity in \eqref{eq:Gstab-Id} and first equation in the definition of Stokes projection in \eqref{eq:Stokes-def}, \eqref{eq:DLN-Error-Eq2} becomes
		\begin{confidential}
			\color{darkblue}
			\begin{align*}
				&\Big\| \widehat{k}_{n}^{-1} \phi_{n,\alpha}^{j,h} \Big\|^{2} + 
				\frac{\nu}{\widehat{k}_{n}} \Big(\begin{Vmatrix}
				\nabla {\phi_{n+1}^{j,h}} \\
				\nabla {\phi_{n}^{j,h}}
				\end{Vmatrix}
				_{G(\theta)}^{2} -
				\begin{Vmatrix}
				\nabla {\phi_{n}^{j,h}} \\
				\nabla {\phi_{n-1}^{j,h}}
				\end{Vmatrix}%
				_{G(\theta)}^{2} 
				+ \Big\| \nabla \big(\sum_{\ell=0}^{2} \gamma_{\ell}^{(n)} \phi_{n\!-\!1\!+\!\ell}^{j,h} \big) \Big\| ^{2} \Big) \\
				=& \Big( \frac{u_{n,\alpha}^{j}}{\widehat{k}_{n}} - u_{t}^{j} (t_{n,\beta}), \widehat{k}_{n}^{-1} \phi_{n,\alpha}^{j,h} \Big) 
				\!-\! \frac{1}{\widehat{k}_{n}} \Big( \eta_{n,\alpha}^{j}, \widehat{k}_{n}^{-1} \phi_{n,\alpha}^{j,h} \Big) 
				+\! \nu \! \big( \nabla \Big(u_{n,\beta}^{j} \!-\! u^{j} (t_{n,\beta}) \big), \nabla \big(\widehat{k}_{n}^{-1} \phi_{n,\alpha}^{j,h} \big) \Big) \notag \\
				&+ \Big( p^{j} (t_{n,\beta}), \nabla \cdot \big(\widehat{k}_{n}^{-1} \phi_{n,\alpha}^{j,h} \big) \Big) 
				- \nu \Big( \nabla \eta_{n,\beta}^{j} , \nabla \big( \widehat{k}_{n}^{-1} \phi_{n,\alpha}^{j,h} \big) \Big) 
				- b \big( u^{j}( t_{n,\beta}), u^{j} (t_{n,\beta}) , \widehat{k}_{n}^{-1} \phi_{n,\alpha}^{j,h} \big) \\
				&+ b \big( u_{n,\ast}^{j,h} - \big( u_{n,\ast}^{j,h} \!-\! \langle u^{h} \rangle_{n,\ast} \big), u_{n,\beta}^{j,h}, \widehat{k}_{n}^{-1} \phi_{n,\alpha}^{j,h} \big) + b \big( u_{n,\ast}^{j,h} \!-\! \langle u^{h} \rangle_{n,\ast}, u_{n,\ast}^{j,h}, \widehat{k}_{n}^{-1} \phi_{n,\alpha}^{j,h} \big). \notag
			\end{align*}
			\begin{align*}
				&\Big\| \widehat{k}_{n}^{-1} \phi_{n,\alpha}^{j,h} \Big\|^{2} + 
				\frac{\nu}{\widehat{k}_{n}} \Big(\begin{Vmatrix}
				\nabla {\phi_{n+1}^{j,h}} \\
				\nabla {\phi_{n}^{j,h}}
				\end{Vmatrix}
				_{G(\theta)}^{2} -
				\begin{Vmatrix}
				\nabla {\phi_{n}^{j,h}} \\
				\nabla {\phi_{n-1}^{j,h}}
				\end{Vmatrix}%
				_{G(\theta)}^{2} 
				+ \Big\| \nabla \big(\sum_{\ell=0}^{2} \gamma_{\ell}^{(n)} \phi_{n\!-\!1\!+\!\ell}^{j,h} \big) \Big\| ^{2} \Big) \\
				=& \Big( \frac{u_{n,\alpha}^{j}}{\widehat{k}_{n}} - u_{t}^{j} (t_{n,\beta}), \widehat{k}_{n}^{-1} \phi_{n,\alpha}^{j,h} \Big) 
				\!-\! \frac{1}{\widehat{k}_{n}} \Big( \eta_{n,\alpha}^{j}, \widehat{k}_{n}^{-1} \phi_{n,\alpha}^{j,h} \Big) 
				+\! \nu \! \big( \nabla \Big(u_{n,\beta}^{j} \!-\! u^{j} (t_{n,\beta}) \big), \nabla \big(\widehat{k}_{n}^{-1} \phi_{n,\alpha}^{j,h} \big) \Big) \notag \\
				&+ \Big( p^{j} (t_{n,\beta}), \nabla \cdot \big(\widehat{k}_{n}^{-1} \phi_{n,\alpha}^{j,h} \big) \Big) 
				- \Big( p_{n,\beta}^{j}, \nabla \cdot \big(\widehat{k}_{n}^{-1} \phi_{n,\alpha}^{j,h} \big) \Big) \\  
				&+ \!b \big( u_{n,\ast}^{j,h}, u_{n,\beta}^{j,h}, \widehat{k}_{n}^{-1} \phi_{n,\alpha}^{j,h} \big) 
				\!-\! b \big( u_{n,\ast}^{j}, u_{n,\beta}^{j}, \widehat{k}_{n}^{-1} \phi_{n,\alpha}^{j,h}\big) 
				\!+\! b \big( u_{n,\ast}^{j}, u_{n,\beta}^{j}, \widehat{k}_{n}^{-1} \phi_{n,\alpha}^{j,h} \big)  \notag \\
				&- \!b \big( u^{j}( t_{n,\beta}), u^{j} (t_{n,\beta}) , \widehat{k}_{n}^{-1} \phi_{n,\alpha}^{j,h} \big) 
				\!+\! b \big( u_{n,\ast}^{j,h} \!-\! \langle u^{h} \rangle_{n,\ast}, u_{n,\ast}^{j,h} - u_{n,\beta}^{j,h}, \widehat{k}_{n}^{-1} \phi_{n,\alpha}^{j,h} \big). \notag 
			\end{align*}
			\normalcolor
		\end{confidential}
		\begin{align}
			&\Big\| \widehat{k}_{n}^{-1} \phi_{n,\alpha}^{j,h} \Big\|^{2} + 
			\frac{\nu}{\widehat{k}_{n}} \Big(\begin{Vmatrix}
			\nabla {\phi_{n+1}^{j,h}} \\
			\nabla {\phi_{n}^{j,h}}
			\end{Vmatrix}
			_{G(\theta)}^{2} \!\!\!-
			\begin{Vmatrix}
			\nabla {\phi_{n}^{j,h}} \\
			\nabla {\phi_{n-1}^{j,h}}
			\end{Vmatrix}%
			_{G(\theta)}^{2} 
			\!\!\!+ \Big\| \nabla \big(\sum_{\ell=0}^{2} \gamma_{\ell}^{(n)} \phi_{n\!-\!1\!+\!\ell}^{j,h} \big) \Big\| ^{2} \Big) 
			\label{eq:DLN-Error-H1-Eq1} \\
			=& \Big(\! \frac{u_{n,\alpha}^{j}}{\widehat{k}_{n}} \!-\! u_{t}^{j} (t_{n,\beta}), \widehat{k}_{n}^{-1} \phi_{n,\alpha}^{j,h} \!\Big) 
			\!-\! \frac{1}{\widehat{k}_{n}}\! \Big(\! \eta_{n,\alpha}^{j}, \widehat{k}_{n}^{-1} \phi_{n,\alpha}^{j,h} \! \Big) 
			\!+\! \nu \! \Big(\! \nabla \big(u_{n,\beta}^{j} \!-\! u^{j} (t_{n,\beta}) \big), \nabla \!\big(\widehat{k}_{n}^{-1} \phi_{n,\alpha}^{j,h} \big)\!\! \Big) \notag \\
			&+ \Big( p^{j} (t_{n,\beta}) - p_{n,\beta}^{j}, \nabla \cdot \big(\widehat{k}_{n}^{-1} \phi_{n,\alpha}^{j,h} \big) \Big) 
			+ b \big( u_{n,\ast}^{j,h}, u_{n,\beta}^{j,h}, \widehat{k}_{n}^{-1} \phi_{n,\alpha}^{j,h} \big) 
			- b \big( u_{n,\ast}^{j}, u_{n,\beta}^{j}, \widehat{k}_{n}^{-1} \phi_{n,\alpha}^{j,h}\big) 
			\notag \\
			&+ b \big( u_{n,\ast}^{j}, u_{n,\beta}^{j}, \widehat{k}_{n}^{-1} \phi_{n,\alpha}^{j,h} \big)  
			- b \big( u^{j}( t_{n,\beta}), u^{j} (t_{n,\beta}) , \widehat{k}_{n}^{-1} \phi_{n,\alpha}^{j,h} \big) \notag \\
			&+ b \big( u_{n,\ast}^{j,h} \!-\! \langle u^{h} \rangle_{n,\ast}, u_{n,\ast}^{j,h} - u_{n,\beta}^{j,h}, \widehat{k}_{n}^{-1} \phi_{n,\alpha}^{j,h} \big). \notag 
		\end{align}
			\ \\
			\noindent \textit{Part 2.}  Now we address all terms on the right hand side of \eqref{eq:DLN-Error-Eq4}.  \\
			\noindent $\bullet$ \textit{Terms from the semi-implicit DLN algorithms for $j$-th NSE in \eqref{eq:jth-NSE}}  \\
			By Cauchy-Schwarz inequality, Young's inequality and \eqref{eq:consist-2nd-eq3} in 
			Lemma \ref{lemma:DLN-consistency}
			\begin{align}
				\Big( \frac{u_{n,\alpha}^{j}}{\widehat{k}_{n}}  
				\!-\! u^{j}(t_{n,\beta}), \widehat{k}_{n}^{-1} \phi_{n,\alpha}^{j,h} \! \Big) 
				\leq& \Big\| \frac{u_{n,\alpha}^{j}}{\widehat{k}_{n}} - u^{j}(t_{n,\beta}) \Big\| 
				\| \widehat{k}_{n}^{-1} \phi_{n,\alpha}^{j,h} \|   
				\label{eq:DLN-Error-H1-Eq1-term1} \\
				\leq& C(\theta) k_{\rm{max}}^{3} \int_{t_{n-1}}^{t_{n+1}} \| u_{ttt}^{j} \|^{2} dt
				+ \frac{1}{16} \| \widehat{k}_{n}^{-1} \phi_{n,\alpha}^{j,h} \|^{2}.  \notag 
			\end{align}
			By Cauchy-Schwarz inequality, Young's inequality, \eqref{eq:approx-thm}, \eqref{eq:Stoke-Approx} and H$\ddot{\rm{o}}$lder's inequality
			\begin{confidential}
				\color{darkblue}
				\begin{align*}
				&\big( \widehat{k}_{n}^{-1} \eta_{n,\alpha}^{j}, \widehat{k}_{n}^{-1} \phi_{n,\alpha}^{j,h} \big)
				\leq \| \widehat{k}_{n}^{-1} \eta_{n,\alpha}^{j} \|\| \widehat{k}_{n}^{-1} \phi_{n,\alpha}^{j,h} \| 
				\leq \frac{C}{\widehat{k}_{n}^{2}} \| \eta_{n,\alpha}^{j} \|^{2}
				+ \frac{1}{16}\| \widehat{k}_{n}^{-1} \phi_{n,\alpha}^{j,h} \|^{2}  \\
				\leq& \frac{C}{\widehat{k}_{n}^{2}} \Big( h^{2r+2} \| u_{n,\alpha}^{j} \|_{r+1}^{2} 
				+ \frac{h^{2s+4}}{\nu^{2}} \| p_{n,\alpha}^{j} \|_{s+1}^{2} \Big) 
				+ \frac{1}{16}\| \widehat{k}_{n}^{-1} \phi_{n,\alpha}^{j,h} \|^{2}  \\
				\leq& \frac{C}{\widehat{k}_{n}^{2}} \Big( h^{2r+2} 
				C(\theta) (k_{n}+k_{n-1}) \int_{t_{n-1}}^{t_{n+1}} \| u_{t}^{j} \|_{r+1}^{2} dt
				+ \frac{h^{2s+4}}{\nu^{2}} C(\theta) (k_{n}+k_{n-1}) \int_{t_{n-1}}^{t_{n+1}} \| p_{t}^{j} \|_{s+1}^{2} dt \Big) 
				+ \frac{1}{16}\| \widehat{k}_{n}^{-1} \phi_{n,\alpha}^{j,h} \|^{2}
				\end{align*}
			\normalcolor
			\end{confidential}
			\begin{align}
				&\big( \widehat{k}_{n}^{-1} \eta_{n,\alpha}^{j}, \widehat{k}_{n}^{-1} \phi_{n,\alpha}^{j,h} \big)  
				\label{eq:DLN-Error-H1-Eq1-term2} \\
				\leq& \frac{C}{\widehat{k}_{n}^{2}} \Big( h^{2r+2} \| u_{n,\alpha}^{j} \|_{r+1}^{2} 
				+ \frac{h^{2s+4}}{\nu^{2}} \| p_{n,\alpha}^{j} \|_{s+1}^{2} \Big) 
				+ \frac{1}{16}\| \widehat{k}_{n}^{-1} \phi_{n,\alpha}^{j,h} \|^{2} \notag \\
				\leq& \frac{C(\theta)}{\widehat{k}_{n}} \Big( h^{2r+2} 
				\int_{t_{n-1}}^{t_{n+1}} \| u_{t}^{j} \|_{r+1}^{2} dt 
				+ \frac{h^{2s+4}}{\nu^{2}} \int_{t_{n-1}}^{t_{n+1}} \| p_{t}^{j} \|_{s+1}^{2} dt \Big) 
				+ \frac{1}{16}\| \widehat{k}_{n}^{-1} \phi_{n,\alpha}^{j,h} \|^{2}.   \notag 
			\end{align}
			By Gauss diverence theorem, Cauchy-Schwarz inequality, Young's inequality and \eqref{eq:consist-2nd-eq1} in 
			Lemma \ref{lemma:DLN-consistency}
			\begin{align}
				\nu \big( \nabla (u_{n,\beta} - u(t_{n,\beta})), \nabla \widehat{k}_{n}^{-1} \phi_{n,\alpha}^{h} \big) 
				=& \nu \big( \Delta (u_{n,\beta} - u(t_{n,\beta})), \widehat{k}_{n}^{-1} \phi_{n,\alpha}^{h} \big) 
				\label{eq:DLN-Error-H1-Eq1-term3} \\
				\leq& C(\theta) \nu^{2} k_{\rm{max}}^{3} \int_{t_{n-1}}^{t_{n+1}} \| u_{tt} \|_{2}^{2} dt 
				+ \frac{1}{16} \| \widehat{k}_{n}^{-1} \phi_{n,\alpha}^{h} \|^{2}. \notag  \\
				\big(p_{n,\beta}^{j} - p^{j}(t_{n,\beta}), \nabla \cdot \widehat{k}_{n}^{-1} \phi_{n,\alpha}^{j,h} \big)
				\leq& C \big\| \nabla (p_{n,\beta}^{j} - p^{j}(t_{n,\beta})) \big\|^{2}
				+ \frac{1}{16} \| \widehat{k}_{n}^{-1} \phi_{n,\alpha}^{j,h} \|^{2} 
				\label{eq:DLN-Error-H1-Eq1-term4} \\
				\leq& C(\theta) k_{\rm{max}}^3 \int_{t_{n-1}}^{t_{n+1}} \| \nabla p_{tt}^{j} \|^{2} dt 
				+ \frac{1}{16} \| \widehat{k}_{n}^{-1} \phi_{n,\alpha}^{j,h} \|^{2}.   \notag
			\end{align}
			\begin{confidential}
				\color{darkblue}
				\begin{align*}
				\big(p_{n,\beta}^{j} - p^{j}(t_{n,\beta}), \nabla \cdot \widehat{k}_{n}^{-1} \phi_{n,\alpha}^{j,h} \big) 
				=& - \big(\nabla (p_{n,\beta}^{j} - p^{j}(t_{n,\beta})), \widehat{k}_{n}^{-1} \phi_{n,\alpha}^{j,h} \big) \\
				\leq & C \big\| \nabla (p_{n,\beta}^{j} - p^{j}(t_{n,\beta})) \big\|^{2}
				+ \frac{1}{16} \| \widehat{k}_{n}^{-1} \phi_{n,\alpha}^{j,h} \|^{2} \\
				\leq & C(\theta) (k_{n} + k_{n-1})^3 \int_{t_{n-1}}^{t_{n+1}} \| \nabla p_{tt}^{j} \|^{2} dt 
				+ \frac{1}{16} \| \widehat{k}_{n}^{-1} \phi_{n,\alpha}^{j,h} \|^{2}.
				\end{align*}
				\normalcolor
			\end{confidential}
			For non-linear terms, it's easy to check 
			\begin{confidential}
				\color{darkblue}
				\begin{align*}
				&  b \big( u_{n,\ast}^{j,h}, u_{n,\beta}^{j,h}, \widehat{k}_{n}^{-1} \phi_{n,\alpha}^{j,h} \big)
				- b \big( u_{n,\ast}^{j}, u_{n,\beta}^{j}, \widehat{k}_{n}^{-1} \phi_{n,\alpha}^{j,h} \big)\\
				=& b \big( u_{n,\ast}^{j,h}, u_{n,\beta}^{j,h}, \widehat{k}_{n}^{-1} \phi_{n,\alpha}^{j,h} \big) 
				- b \big( u_{n,\ast}^{j}, u_{n,\beta}^{j,h}, \widehat{k}_{n}^{-1} \phi_{n,\alpha}^{j,h} \big) 
				+ b \big(  u_{n,\ast}^{j}, u_{n,\beta}^{j,h}, \widehat{k}_{n}^{-1} \phi_{n,\alpha}^{j,h} \big) 
				- b \big( u_{n,\ast}^{j}, u_{n,\beta}^{j}, \widehat{k}_{n}^{-1} \phi_{n,\alpha}^{j,h} \big) \\
				=& - b \big( e_{n,\ast}^{j}, u_{n,\beta}^{j,h}, \widehat{k}_{n}^{-1} \phi_{n,\alpha}^{j,h} \big)
				- b \big( u_{n,\ast}^{j}, e_{n,\beta}^{j}, \widehat{k}_{n}^{-1} \phi_{n,\alpha}^{j,h} \big) \\
				=& - b \big( e_{n,\ast}^{j}, u_{n,\beta}^{j,h}, \widehat{k}_{n}^{-1} \phi_{n,\alpha}^{j,h} \big) 
				+ b \big( e_{n,\ast}^{j}, u_{n,\beta}^{j}, \widehat{k}_{n}^{-1} \phi_{n,\alpha}^{j,h} \big)
				- b \big( e_{n,\ast}^{j}, u_{n,\beta}^{j}, \widehat{k}_{n}^{-1} \phi_{n,\alpha}^{j,h} \big)
				- b \big( u_{n,\ast}^{j}, e_{n,\beta}^{j}, \widehat{k}_{n}^{-1} \phi_{n,\alpha}^{j,h} \big) \\
				=& b \big( e_{n,\ast}^{j}, u_{n,\beta}^{j}, \widehat{k}_{n}^{-1} \phi_{n,\alpha}^{j,h} \big)
				- b \big( u_{n,\ast}^{j}, e_{n,\beta}^{j}, \widehat{k}_{n}^{-1} \phi_{n,\alpha}^{j,h} \big)
				+ b \big( e_{n,\ast}^{j}, e_{n,\beta}^{j}, \widehat{k}_{n}^{-1} \phi_{n,\alpha}^{j,h} \big) 
				\end{align*}
				\normalcolor
			\end{confidential}
			\begin{align*}
				& b \big( u_{n,\ast}^{j,h}, u_{n,\beta}^{j,h}, \widehat{k}_{n}^{-1} \phi_{n,\alpha}^{j,h} \big)
				- b \big( u_{n,\ast}^{j}, u_{n,\beta}^{j}, \widehat{k}_{n}^{-1} \phi_{n,\alpha}^{j,h} \big)
				\\
				= & b \big( e_{n,\ast}^{j}, u_{n,\beta}^{j}, \widehat{k}_{n}^{-1} \phi_{n,\alpha}^{j,h} \big)
				- b \big( u_{n,\ast}^{j}, e_{n,\beta}^{j}, \widehat{k}_{n}^{-1} \phi_{n,\alpha}^{j,h} \big)
				+ b \big( e_{n,\ast}^{j}, e_{n,\beta}^{j}, \widehat{k}_{n}^{-1} \phi_{n,\alpha}^{j,h} \big).
			\end{align*}
			By \eqref{eq:b-bound-1}, \eqref{eq:b-bound-3}, Poincar\'e inequality and inverse inequality in \eqref{eq:inv-inequal}
			\begin{align*}
				b \big( e_{n,\ast}^{j}, u_{n,\beta}^{j}, \widehat{k}_{n}^{-1} \phi_{n,\alpha}^{j,h} \big) 
				=& b \big( \phi_{n,\ast}^{j,h}, u_{n,\beta}^{j}, \widehat{k}_{n}^{-1} \phi_{n,\alpha}^{j,h} \big)
				+ b \big( \eta_{n,\ast}^{j}, u_{n,\beta}^{j}, \widehat{k}_{n}^{-1} \phi_{n,\alpha}^{j,h} \big) 
				\notag \\
				\leq& C(\Omega) \big( \| \phi_{n,\ast}^{j,h} \|_{1} + \| \eta_{n,\ast}^{j} \|_{1} \big) 
				\| u_{n,\beta}^{j} \|_{2} \| \widehat{k}_{n}^{-1} \phi_{n,\alpha}^{j,h} \| \\
				b \big( u_{n,\ast}^{j}, e_{n,\beta}^{j}, \widehat{k}_{n}^{-1} \phi_{n,\alpha}^{j,h} \big)
				\leq& C(\Omega) \| u_{n,\ast}^{j} \|_{2} \| e_{n,\beta}^{j} \|_{1} 
				\| \widehat{k}_{n}^{-1} \phi_{n,\alpha}^{j,h} \|,  \notag \\
				b \big( e_{n,\ast}^{j}, e_{n,\beta}^{j}, \widehat{k}_{n}^{-1} \phi_{n,\alpha}^{j,h} \big)
				=& b \big( \phi_{n,\ast}^{j,h}, e_{n,\beta}^{j}, \widehat{k}_{n}^{-1} \phi_{n,\alpha}^{j,h} \big)
				+ b \big( \eta_{n,\ast}^{j}, e_{n,\beta}^{j}, \widehat{k}_{n}^{-1} \phi_{n,\alpha}^{j,h} \big)
				\notag \\
				\leq& C(\Omega) h^{-1/2} \big( \| \phi_{n,\ast}^{j,h} \|_{1} + \| \eta_{n,\ast}^{j}\|_{1} \big) \| e_{n,\beta}^{j} \|_{1} \| \widehat{k}_{n}^{-1} \phi_{n,\alpha}^{j,h} \|.
			\end{align*}
			Thus 
			\begin{align}
				&b \big( u_{n,\ast}^{j,h}, u_{n,\beta}^{j,h}, \widehat{k}_{n}^{-1} \phi_{n,\alpha}^{j,h} \big)
				- b \big( u_{n,\ast}^{j}, u_{n,\beta}^{j}, \widehat{k}_{n}^{-1} \phi_{n,\alpha}^{j,h} \big) 
				\label{eq:DLN-Error-H1-Eq1-term5-6} \\
				\leq& C(\Omega) \big( \| u_{n,\beta}^{j} \|_{2} + h^{-1/2} \| e_{n,\beta}^{j} \|_{1} \big) 
				\| \phi_{n,\ast}^{j,h} \|_{1} \| \widehat{k}_{n}^{-1} \phi_{n,\alpha}^{j,h} \| 
				+ C(\Omega) \| \eta_{n,\ast}^{j} \|_{1} \| u_{n,\beta}^{j} \|_{2} 
				\| \widehat{k}_{n}^{-1} \phi_{n,\alpha}^{j,h} \|  \notag \\
				&+ C(\Omega) \| u_{n,\ast}^{j} \|_{2} \| e_{n,\beta}^{j} \|_{1} 
				\| \widehat{k}_{n}^{-1} \phi_{n,\alpha}^{j,h} \| 
				+ C(\Omega) h^{-1/2} \| \eta_{n,\ast}^{j} \|_{1} \| e_{n,\beta}^{j} \|_{1} 
				\| \widehat{k}_{n}^{-1} \phi_{n,\alpha}^{j,h} \|.     \notag 
			\end{align}
			By Cauchy-Schwarz inequality, Young's inequality, Poincar\'e inequality, \eqref{eq:approx-thm}, \eqref{eq:Stoke-Approx} and \eqref{eq:consist-2nd-eq2}
			\begin{align}
				& C(\Omega) \big( \| u_{n,\beta}^{j} \|_{2} + h^{-1/2} \| e_{n,\beta}^{j} \|_{1} \big) 
				\| \phi_{n,\ast}^{j,h} \|_{1} \| \widehat{k}_{n}^{-1} \phi_{n,\alpha}^{j,h} \| 
				\label{eq:DLN-Error-H1-Eq1-term5-6-sub-term1} \\
				\leq & C(\Omega,\theta) \big( \| u_{n,\beta}^{j} \|_{2}+ h^{-1/2} \| e_{n,\beta}^{j} \|_{1} \big)
				\big( \| \nabla \phi_{n}^{j,h} \| + \| \nabla \phi_{n-1}^{j,h} \| \big)
				+ \frac{1}{128} \| \widehat{k}_{n}^{-1} \phi_{n,\alpha}^{j,h} \|^{2} \notag \\
				\leq & C(\Omega,\theta) \big( \| u_{n,\beta}^{j} \|_{2}^{2} + h^{-1}\| e_{n,\beta}^{j} \|_{1}^{2} \big)
				\big( \| \nabla \phi_{n}^{j,h} \|^{2} + \| \nabla \phi_{n-1}^{j,h} \|^{2} \big)
				+ \frac{1}{128} \| \widehat{k}_{n}^{-1} \phi_{n,\alpha}^{j,h} \|^{2}, \notag
			\end{align}
			\begin{align}
				&C(\Omega) \| \eta_{n,\ast}^{j} \|_{1} \| u_{n,\beta}^{j} \|_{2}  
				\| \widehat{k}_{n}^{-1} \phi_{n,\alpha}^{j,h} \|  
				\label{eq:DLN-Error-H1-Eq1-term5-6-sub-term2} \\
				\leq& C(\Omega,\theta) \| |u^{j}| \|_{\infty,2}^{2} \Big( \frac{h^{2s+2}}{\nu^{2}} 
				\| p_{n,\ast}^{j} \|_{s+1}^{2} + h^{2r} \| u_{n,\ast}^{j} \|_{r+1}^{2} \Big) 
				+ \frac{1}{128} \| \widehat{k}_{n}^{-1} \phi_{n,\alpha}^{j,h} \|^{2} \notag \\
				\leq& C(\Omega,\theta) \| |u^{j}| \|_{\infty,2}^{2} \Big[ \frac{h^{2s+2}}{\nu^{2}} 
				\big( \| p_{n,\ast}^{j} - p^{j}(t_{n,\beta}) \|_{s+1}^{2} + \| p^{j}(t_{n,\beta})\|_{s+1}^{2} \big) \notag \\
				&\qquad \qquad \qquad + h^{2r} \big( \| u_{n,\ast}^{j} - u^{j}(t_{n,\beta}) \|_{r+1}^{2} + \| u^{j}(t_{n,\beta}) \|_{r+1}^{2} \big) \Big]  
				+ \frac{1}{128} \| \widehat{k}_{n}^{-1} \phi_{n,\alpha}^{j,h} \|^{2}  \notag \\ 
				\leq& C(\Omega,\theta) \| |u^{j}| \|_{\infty,2}^{2} \Big[ \frac{h^{2s\!+\!2}}{\nu^{2}} 
				\Big( C(\theta) k_{\rm{max}}^{3} \int_{t_{n-1}}^{t_{n+1}} \| p_{tt}^{j} \|_{s\!+\!1}^{2} dt 
				+ \| p^{j}(t_{n,\beta})\|_{s\!+\!1}^{2}  \Big) \notag \\
				&\qquad \qquad \qquad + h^{2r} \Big( C(\theta) k_{\rm{max}}^{3} \int_{t_{n\!-\!1}}^{t_{n\!+\!1}} \| u_{tt}^{j} \|_{r\!+\!1}^{2} dt + \| u^{j}(t_{n,\beta})\|_{r\!+\!1}^{2} \Big) \Big]  
				+ \frac{1}{128} \| \widehat{k}_{n}^{-1} \phi_{n,\alpha}^{j,h} \|^{2}, \notag 
			\end{align}
			\begin{confidential}
				\color{darkblue}
				\begin{align*}
				&C(\Omega) \| \eta_{n,\ast}^{j} \|_{1} \| u_{n,\beta}^{j} \|_{2} 
				\| \widehat{k}_{n}^{-1} \phi_{n,\alpha}^{j,h} \|  
				\leq C(\Omega) \| \eta_{n,\ast}^{j} \|_{1}^{2} \| u_{n,\beta}^{j} \|_{2}^{2}
				+ \frac{1}{128} \| \widehat{k}_{n}^{-1} \phi_{n,\alpha}^{j,h} \|^{2} \\
				\leq& C(\Omega,\theta) \| |u^{j}| \|_{\infty,2}^{2} \Big( \frac{h^{2s+2}}{\nu^{2}} 
				\| p_{n,\ast}^{j} \|_{s+1}^{2} + h^{2r} \| u_{n,\ast}^{j} \|_{r+1}^{2} \Big) 
				+ \frac{1}{128} \| \widehat{k}_{n}^{-1} \phi_{n,\alpha}^{j,h} \|^{2} \\
				\leq& C(\Omega,\theta) \| |u^{j}| \|_{\infty,2}^{2} \Big[ \frac{h^{2s+2}}{\nu^{2}} 
				\big( \| p_{n,\ast}^{j} - p^{j}(t_{n,\beta}) \|_{s+1}^{2} + \| p^{j}(t_{n,\beta})\|_{s+1}^{2} \big) \\
				&+ h^{2r} \big( \| u_{n,\ast}^{j} - u^{j}(t_{n,\beta}) \|_{r+1}^{2} + \| u^{j}(t_{n,\beta}) \|_{r+1}^{2} \big) \Big]  
				+ \frac{1}{128} \| \widehat{k}_{n}^{-1} \phi_{n,\alpha}^{j,h} \|^{2} \\
				\leq& C(\Omega,\theta) \| |u^{j}| \|_{\infty,2}^{2} \Big[ \frac{h^{2s+2}}{\nu^{2}} 
				\Big( C(\Omega,\theta) k_{\rm{max}}^{3} \int_{t_{n-1}}^{t_{n+1}} \| p_{tt}^{j} \|_{s+1}^{2} dt 
				+ \| p^{j}(t_{n,\beta})\|_{s+1}^{2}  \Big) \\
				&+ h^{2r} \big( C(\theta) k_{\rm{max}}^{3} \int_{t_{n-1}}^{t_{n+1}} \| u_{tt}^{j} \|_{r+1}^{2} dt + \| u^{j}(t_{n,\beta})\|_{r+1}^{2} \big) \Big]  
				+ \frac{1}{128} \| \widehat{k}_{n}^{-1} \phi_{n,\alpha}^{j,h} \|^{2}
				\end{align*}
				\normalcolor
			\end{confidential}
			\begin{align}
				C(\Omega) \| u_{n,\ast}^{j} \|_{2} \| e_{n,\beta}^{j} \|_{1} 
				\| \widehat{k}_{n}^{-1} \phi_{n,\alpha}^{j,h} \|
				\leq& C(\Omega) \| u_{n,\ast}^{j} \|_{2}^{2} \| e_{n,\beta}^{j} \|_{1}^{2}
				+ \frac{1}{128} \| \widehat{k}_{n}^{-1} \phi_{n,\alpha}^{j,h} \|^{2} 
				\label{eq:DLN-Error-H1-Eq1-term5-6-sub-term3}
				\\
				\leq& C(\Omega,\theta) \| |u^{j}| \|_{\infty,2}^{2} \| e_{n,\beta}^{j} \|_{1}^{2}
				+ \frac{1}{128} \| \widehat{k}_{n}^{-1} \phi_{n,\alpha}^{j,h} \|^{2}, \notag 
				\end{align}
				\begin{align}
				&C(\Omega) h^{-1/2} \| \eta_{n,\ast}^{j} \|_{1} \| e_{n,\beta}^{j} \|_{1} 
				\| \widehat{k}_{n}^{-1} \phi_{n,\alpha}^{j,h} \|  
				\label{eq:DLN-Error-H1-Eq1-term5-6-sub-term4} \\
				\leq& C(\Omega) h^{-1} \| \eta_{n,\ast}^{j} \|_{1}^{2} \| e_{n,\beta}^{j} \|_{1}^{2}
				+ \frac{1}{128} \| \widehat{k}_{n}^{-1} \phi_{n,\alpha}^{j,h} \|^{2}  \notag \\
				\leq& C(\Omega,\theta) \big( \frac{h^{2s+1}}{\nu^{2}} \| |p^{j}| \|_{\infty,s+1}^{2} 
				+ h^{2r-1} \| |u^{j}| \|_{\infty,r+1}^{2} \big) \| e_{n,\beta}^{j} \|_{1}^{2}
				+ \frac{1}{128} \| \widehat{k}_{n}^{-1} \phi_{n,\alpha}^{j,h} \|^{2}. \notag 
			\end{align}
			\begin{confidential}
				\color{darkblue}
				\begin{align*}
					C(\Omega) h^{-1/2} \| \eta_{n,\ast}^{j} \|_{1} \| e_{n,\beta}^{j} \|_{1} 
					\| \widehat{k}_{n}^{-1} \phi_{n,\alpha}^{j,h} \| 
					\leq& C(\Omega) h^{-1} \| \eta_{n,\ast}^{j} \|_{1}^{2} \| e_{n,\beta}^{j} \|_{1}^{2}
					+ \frac{1}{128} \| \widehat{k}_{n}^{-1} \phi_{n,\alpha}^{j,h} \|^{2}  \\
					\leq& C(\Omega) h^{-1} \Big( \frac{h^{2s+2}}{\nu^{2}} \| p_{n,\ast}^{j} \|_{s+1}^{2} 
					+ h^{2r} \| u_{n,\ast}^{j} \|_{r+1}^{2} \Big) \| e_{n,\beta}^{j} \|_{1}^{2}
					+ \frac{1}{128} \| \widehat{k}_{n}^{-1} \phi_{n,\alpha}^{j,h} \|^{2} \\
					\leq& C(\Omega,\theta) \Big( \frac{h^{2s+1}}{\nu^{2}} \| |p^{j}| \|_{\infty,s+1}^{2} 
					+ h^{2r-1} \| |u^{j}| \|_{\infty,r+1}^{2} \Big) \| e_{n,\beta}^{j} \|_{1}^{2}
					+ \frac{1}{128} \| \widehat{k}_{n}^{-1} \phi_{n,\alpha}^{j,h} \|^{2}.
				\end{align*}
				\normalcolor
			\end{confidential}
			By \eqref{eq:DLN-Error-H1-Eq1-term5-6-sub-term1}, \eqref{eq:DLN-Error-H1-Eq1-term5-6-sub-term2}, 
			\eqref{eq:DLN-Error-H1-Eq1-term5-6-sub-term3} and \eqref{eq:DLN-Error-H1-Eq1-term5-6-sub-term4}, \eqref{eq:DLN-Error-H1-Eq1-term5-6} becomes 
			\begin{align}
				&b \big( u_{n,\ast}^{j,h}, u_{n,\beta}^{j,h}, \widehat{k}_{n}^{-1} \phi_{n,\alpha}^{j,h} \big)
				- b \big( u_{n,\ast}^{j}, u_{n,\beta}^{j}, \widehat{k}_{n}^{-1} \phi_{n,\alpha}^{j,h} \big)  
				\label{eq:DLN-Error-H1-term5-6-eq2} \\
				\leq& C(\Omega,\theta) \big( \| u_{n,\beta}^{j} \|_{2}^{2} + h^{-1}\| e_{n,\beta}^{j} \|_{1}^{2} \big)
				\big( \| \nabla \phi_{n}^{j,h} \|^{2} + \| \nabla \phi_{n-1}^{j,h} \|^{2} \big) 
				+ C(\Omega,\theta) \| |u^{j}| \|_{\infty,2}^{2} \| e_{n,\beta}^{j} \|_{1}^{2} \notag \\
				&+ C(\Omega,\theta) \Big( \frac{h^{2s+1}}{\nu^{2}} \| |p^{j}| \|_{\infty,s+1}^{2} 
				+ h^{2r-1} \| |u^{j}| \|_{\infty,r+1}^{2} \Big) \| e_{n,\beta}^{j} \|_{1}^{2}
				+ \frac{1}{32} \| \widehat{k}_{n}^{-1} \phi_{n,\alpha}^{j,h} \|^{2} \notag \\
				&+ C(\Omega,\theta) \| |u^{j}| \|_{\infty,2}^{2} \Big[ \frac{h^{2s\!+\!2}}{\nu^{2}} 
				\Big( k_{\rm{max}}^{3} \int_{t_{n-1}}^{t_{n+1}} \| p_{tt}^{j} \|_{s\!+\!1}^{2} dt 
				+ \| p^{j}(t_{n,\beta})\|_{s\!+\!1}^{2}  \Big) \notag \\
				& \qquad \qquad \qquad \qquad \qquad \qquad 
				+ h^{2r} \Big( k_{\rm{max}}^{3} \int_{t_{n\!-\!1}}^{t_{n\!+\!1}} \| u_{tt}^{j} \|_{r\!+\!1}^{2} dt 
				+ \| u^{j}(t_{n,\beta})\|_{r\!+\!1}^{2} \Big) \Big]. 
				\notag 
			\end{align}
			By \eqref{eq:b-bound-1}, \eqref{eq:b-bound-3} and \eqref{eq:consist-2nd-eq1},\eqref{eq:consist-2nd-eq2} in Lemma \ref{lemma:DLN-consistency}
			\begin{align}
				&b \big( u_{n,\ast}^{j}, u_{n,\beta}, \widehat{k}_{n}^{-1} \phi_{n,\alpha}^{j,h} \big)
				- b ( u^{j}(t_{n,\beta}), u^{j}(t_{n,\beta}), \widehat{k}_{n}^{-1} \phi_{n,\alpha}^{j,h} )  
				\label{eq:DLN-Error-H1-term7-8} \\
				=& b \big( u_{n,\ast}^{j} - u(t_{n,\beta}), u_{n,\beta}^{j}, \widehat{k}_{n}^{-1} \phi_{n,\alpha}^{j,h} \big)
				+ b \big( u^{j}(t_{n,\beta}), u_{n,\beta}^{j} - u^{j}(t_{n,\beta}), \widehat{k}_{n}^{-1} \phi_{n,\alpha}^{j,h} \big) \notag \\
				\leq& C(\Omega) \| u_{n,\ast}^{j} - u^{j}(t_{n,\beta}) \|_{1}
				\| u_{n,\beta}^{j} \|_{2} \| \widehat{k}_{n}^{-1} \phi_{n,\alpha}^{j,h} \| 
				+ C(\Omega) \| u^{j}(t_{n,\beta}) \|_{2} \| u_{n,\beta}^{j} - u^{j}(t_{n,\beta}) \|_{1}
				\| \widehat{k}_{n}^{-1} \phi_{n,\alpha}^{j,h} \|    \notag \\
				\leq& C(\Omega,\theta) k_{\rm{max}}^{3} \big( \| |u^{j}| \|_{\infty,2}^{2} 
				+ \| |u^{j}| \|_{\infty,2,\beta}^{2}\big) \int_{t_{n-1}}^{t_{n+1}} \| u_{tt}^{j} \|_{1}^{2} dt 
				+ \frac{1}{32} \| \widehat{k}_{n}^{-1} \phi_{n,\alpha}^{j,h} \|^{2}. \notag 
			\end{align}
			\begin{confidential}
				\color{darkblue}
				\begin{align*}
					&b \big( u_{n,\ast}^{j}, u_{n,\beta}^{j}, \widehat{k}_{n}^{-1} \phi_{n,\alpha}^{j,h} \big)
					- b ( u^{j}(t_{n,\beta}), u^{j}(t_{n,\beta}), \widehat{k}_{n}^{-1} \phi_{n,\alpha}^{j,h} ) \\
					=& b \big( u_{n,\ast}^{j}, u_{n,\beta}^{j}, \widehat{k}_{n}^{-1} \phi_{n,\alpha}^{j,h} \big)
					- b \big( u^{j}(t_{n,\beta}), u_{n,\beta}^{j}, \widehat{k}_{n}^{-1} \phi_{n,\alpha}^{j,h} \big) 
					+ b \big( u^{j}(t_{n,\beta}), u_{n,\beta}^{j}, \widehat{k}_{n}^{-1} \phi_{n,\alpha}^{j,h} \big)
					- b ( u^{j}(t_{n,\beta}), u^{j}(t_{n,\beta}), \widehat{k}_{n}^{-1} \phi_{n,\alpha}^{j,h} )
				\end{align*}
				\normalcolor
			\end{confidential}

			\noindent $\bullet$ \textit{New terms arising from the DLN-Ensemble algorithms in \eqref{eq:DLN-Ensemble-Alg}}  \\
			Similar to \eqref{eq:non-linear-eq3}
			\begin{align}
				&b \big( u_{n,\ast}^{j,h} \!-\! \langle u^{h} \rangle_{n,\ast}, u_{n,\ast}^{j,h} - u_{n,\beta}^{j,h}, \widehat{k}_{n}^{-1} \phi_{n,\alpha}^{j,h}  \big) 
				\label{eq:DLN-Error-H1-Eq1-term9}   \\
				=& b \big( u_{n,\ast}^{j,h} \!-\! \langle u^{h} \rangle_{n,\ast}, \eta_{n,\ast}^{j} - \eta_{n,\beta}^{j}, \widehat{k}_{n}^{-1} \phi_{n,\alpha}^{j,h}  \big) 
				+ b \big( u_{n,\ast}^{j,h} \!-\! \langle u^{h} \rangle_{n,\ast}, \phi_{n,\ast}^{j,h} - \phi_{n,\beta}^{j,h}, \widehat{k}_{n}^{-1} \phi_{n,\alpha}^{j,h} \big) \notag \\
				&+ b \big( u_{n,\ast}^{j,h} \!-\! \langle u^{h} \rangle_{n,\ast}, u_{n,\ast}^{j} - u_{n,\beta}^{j}, \widehat{k}_{n}^{-1} \phi_{n,\alpha}^{j,h}  \big). \notag 
			\end{align}
			\begin{confidential}
				\color{darkblue}
				\begin{align*}
				&b \big( u_{n,\ast}^{j,h} \!-\! \langle u^{h} \rangle_{n,\ast}, u_{n,\ast}^{j,h} - u_{n,\beta}^{j,h}, \widehat{k}_{n}^{-1} \phi_{n,\alpha}^{j,h} \big) \\
				=&  b \big( u_{n,\ast}^{j,h} \!-\! \langle u^{h} \rangle_{n,\ast}, \big( u_{n,\ast}^{j,h} - u_{n,\beta}^{j,h} \big) - \big( u_{n,\ast}^{j} - u_{n,\beta}^{j} \big) , \widehat{k}_{n}^{-1} \phi_{n,\alpha}^{j,h} \big) 
				+ b \big( u_{n,\ast}^{j,h} \!-\! \langle u^{h} \rangle_{n,\ast}, u_{n,\ast}^{j} - u_{n,\beta}^{j}, \widehat{k}_{n}^{-1} \phi_{n,\alpha}^{j,h} \big)  \\
				=& b \big( u_{n,\ast}^{j,h} \!-\! \langle u^{h} \rangle_{n,\ast}, \big( \eta_{n,\ast}^{j} - \eta_{n,\beta}^{j} \big) + \big( \phi_{n,\ast}^{j,h} - \phi_{n,\beta}^{j,h} \big) , \widehat{k}_{n}^{-1} \phi_{n,\alpha}^{j,h} \big) 
				+ b \big( u_{n,\ast}^{j,h} \!-\! \langle u^{h} \rangle_{n,\ast}, u_{n,\ast}^{j} - u_{n,\beta}^{j}, \widehat{k}_{n}^{-1} \phi_{n,\alpha}^{j,h} \big) 
				\end{align*}
				\normalcolor
			\end{confidential}
			By \eqref{eq:b-bound-3} in Lemma \ref{lemma:b-bound}, Poincar\'e inequality, \eqref{eq:approx-thm}, 
			\eqref{eq:Stoke-Approx}, Young's inequality, CFL-like conditions in \eqref{eq:CFL-like-cond} 
			and \eqref{eq:consist-2nd-eq1}, \eqref{eq:consist-2nd-eq2} in Lemma \ref{lemma:DLN-consistency}
			\begin{align}
				& b \big( u_{n,\ast}^{j,h} \!-\! \langle u^{h} \rangle_{n,\ast}, \eta_{n,\ast}^{j} - \eta_{n,\beta}^{j}, \widehat{k}_{n}^{-1} \phi_{n,\alpha}^{j,h} \big) 
				\label{eq:DLN-Error-H1-Eq1-term9-term1} \\
				\leq& C(\Omega) h^{-1/2} \big\| \nabla \big( u_{n,\ast}^{j,h} \!-\! \langle u^{h} \rangle_{n,\ast} \big) \big\|
				\big\| \eta_{n,\ast}^{j} - \eta_{n,\beta}^{j} \big\|_{1} \| \widehat{k}_{n}^{-1} \phi_{n,\alpha}^{j,h} \|
				\notag \\
				\leq& C(\Omega) h^{-1/2} \big\| \nabla \big( u_{n,\ast}^{j,h} \!-\! \langle u^{h} \rangle_{n,\ast} \big) \big\|
				\Big( \frac{h^{s+1}}{\nu} \| p_{n,\ast}^{j} \!-\! p_{n,\beta}^{j} \|_{s\!+\!1} 
				\!+\! h^{r} \| u_{n,\ast}^{j} \!-\! u_{n,\beta}^{j} \|_{r\!+\!1} \Big) \| \widehat{k}_{n}^{-1} \phi_{n,\alpha}^{j,h} \|
				\notag \\
				\leq& C(\Omega) h^{-1} \big\| \nabla \big( u_{n,\ast}^{j,h} \!-\! \langle u^{h} \rangle_{n,\ast} \big) \big\|^{2}
				\Big( \frac{h^{2s+2}}{\nu^{2}} \| p_{n,\ast}^{j} \!-\! p_{n,\beta}^{j} \|_{s+1}^{2} 
				\!+\! h^{2r} \| u_{n,\ast}^{j} \!-\! u_{n,\beta}^{j} \|_{r+1}^{2} \Big)  \notag \\
				&+ \frac{1}{32} \| \widehat{k}_{n}^{-1} \phi_{n,\alpha}^{j,h} \|^{2} \notag \\
				\leq& C(\Omega,\theta) \frac{ \nu }{\widehat{k}_{n}} \Big( \frac{1-\varepsilon_{n}}{1+\varepsilon_{n} \theta} \Big)^{2} 
				\Big( \frac{h^{2s+2}}{\nu^{2}} \| p_{n,\ast}^{j} \!-\! p_{n,\beta}^{j} \|_{s\!+\!1}^{2} 
				\!+\! h^{2r} \| u_{n,\ast}^{j} \!-\! u_{n,\beta}^{j} \|_{r\!+\!1}^{2} \Big) 
				\!+\! \frac{1}{32} \| \widehat{k}_{n}^{-1} \phi_{n,\alpha}^{j,h} \|^{2} \notag \\
				\leq&\frac{C(\Omega,\theta) k_{\rm{max}}^{3}  \nu }{\widehat{k}_{n}} \!\Big( \frac{1\!-\!\varepsilon_{n}}{1\!+\!\varepsilon_{n} \theta} \Big)^{2} \!\!
				\Big( \frac{h^{2s\!+\!2}}{\nu^{2}} \!\!\! \int_{t_{n\!-\!1}}^{t_{n\!+\!1}} \!\| p_{tt}^{j} \|_{s\!+\!1}^{2} dt 
				\!+\! h^{2r} \!\!\!  \int_{t_{n\!-\!1}}^{t_{n\!+\!1}} \!\| u_{tt}^{j} \|_{r\!+\!1}^{2} dt  \Big) \!+\! \frac{1}{32} \| \widehat{k}_{n}^{-1} \phi_{n,\alpha}^{j,h} \|^{2}. \notag 
			\end{align}
			\begin{confidential}
				\color{darkblue}
				\begin{align*}
					& b \big( u_{n,\ast}^{j,h} \!-\! \langle u^{h} \rangle_{n,\ast}, \eta_{n,\ast}^{j} - \eta_{n,\beta}^{j}, \widehat{k}_{n}^{-1} \phi_{n,\alpha}^{j,h} \big) \\
					\leq& C(\Omega) \big\| \nabla \big( u_{n,\ast}^{j,h} \!-\! \langle u^{h} \rangle_{n,\ast} \big) \big\|
					\big\| \eta_{n,\ast}^{j} - \eta_{n,\beta}^{j} \big\|_{1} \| \widehat{k}_{n}^{-1} \phi_{n,\alpha}^{j,h} \|^{1/2}
					\| \nabla \widehat{k}_{n}^{-1} \phi_{n,\alpha}^{j,h} \|^{1/2} 
					\notag \\
					\leq& C(\Omega) h^{-1/2} \big\| \nabla \big( u_{n,\ast}^{j,h} \!-\! \langle u^{h} \rangle_{n,\ast} \big) \big\|
					\big\| \eta_{n,\ast}^{j} - \eta_{n,\beta}^{j} \big\|_{1} \| \widehat{k}_{n}^{-1} \phi_{n,\alpha}^{j,h} \| \\
					\leq& C(\Omega) h^{-1/2} \big\| \nabla \big( u_{n,\ast}^{j,h} \!-\! \langle u^{h} \rangle_{n,\ast} \big) \big\|
					\Big( \frac{h^{s+1}}{\nu} \| p_{n,\ast}^{j} - p_{n,\beta}^{j} \|_{s+1} 
					+ h^{r} \| u_{n,\ast}^{j} - u_{n,\beta}^{j} \|_{r+1} \Big) \| \widehat{k}_{n}^{-1} \phi_{n,\alpha}^{j,h} \| \\
					\leq& C(\Omega)h^{-1} \big\| \nabla \big( u_{n,\ast}^{j,h} \!-\! \langle u^{h} \rangle_{n,\ast} \big) \big\|^{2}
					\Big( \frac{h^{2s+2}}{\nu^{2}} \| \widetilde{p}_{n}^{j} - p_{n,\beta}^{j} \|_{s+1}^{2} 
					+ h^{2r} \| u_{n,\ast}^{j} - u_{n,\beta}^{j} \|_{r+1}^{2} \Big) 
					+ \frac{1}{32} \| \widehat{k}_{n}^{-1} \phi_{n,\alpha}^{j,h} \|^{2} \\
					\leq& C(\Omega) h^{-1} \Big[ C(\Omega,\theta) \frac{h \nu }{\widehat{k}_{n}} \Big( \frac{1-\varepsilon_{n}}{1+\varepsilon_{n} \theta} \Big)^{2} \Big] \Big( \frac{h^{2s+2}}{\nu^{2}} C(\theta) k_{\rm{max}}^{3} \int_{t_{n-1}}^{t_{n+1}} \| p_{tt}^{j} \|_{s+1}^{2} dt 
					+ h^{2r} C(\theta) k_{\rm{max}}^{3} \int_{t_{n-1}}^{t_{n+1}} \| u_{tt}^{j} \|_{r+1}^{2} dt \Big) \\
					&+ \frac{1}{32} \| \widehat{k}_{n}^{-1} \phi_{n,\alpha}^{j,h} \|^{2} \\
					\leq& C(\Omega,\theta) \frac{h \nu }{\widehat{k}_{n}} \Big( \frac{1-\varepsilon_{n}}{1+\varepsilon_{n} \theta} \Big)^{2} 
					\Big( \frac{h^{2s+1}}{\nu^{2}} k_{\rm{max}}^{3} \int_{t_{n-1}}^{t_{n+1}} \| p_{tt}^{j} \|_{s+1}^{2} dt 
					+ h^{2r-1} k_{\rm{max}}^{3} \int_{t_{n-1}}^{t_{n+1}} \| u_{tt}^{j} \|_{r+1}^{2} dt  \Big) \\
					&+ \frac{1}{32} \| \widehat{k}_{n}^{-1} \phi_{n,\alpha}^{j,h} \|^{2}.
				\end{align*}
				\normalcolor
			\end{confidential}
			Similarly,
			\begin{align}
				&b \big( u_{n,\ast}^{j,h} \!-\! \langle u^{h} \rangle_{n,\ast}, \phi_{n,\ast}^{j,h} - \phi_{n,\beta}^{j,h},
				\widehat{k}_{n}^{-1} \phi_{n,\alpha}^{j,h} \big) 
				\label{eq:DLN-Error-H1-Eq1-term9-term2} \\
				\leq& C(\Omega) h^{-1/2} \big\| \nabla \big( u_{n,\ast}^{j,h} \!-\! \langle u^{h} \rangle_{n,\ast} \big) \big\|
				\big\| \nabla \big( \phi_{n,\ast}^{j,h} - \phi_{n,\beta}^{j,h} \big) \big\| 
				\| \widehat{k}_{n}^{-1} \phi_{n,\alpha}^{j,h} \| \notag \\
				\leq& C(\Omega) h^{-1/2} \frac{2 \beta_{2}^{(n)}}{1 - \varepsilon_{n}}
				\big\| \nabla \big( u_{n,\ast}^{j,h} \!-\! \langle u^{h} \rangle_{n,\ast} \big) \big\|
				\| \nabla \Phi_{n}^{j,h} \|  \| \widehat{k}_{n}^{-1} \phi_{n,\alpha}^{j,h} \| \notag \\
				\leq& \frac{C(\Omega,\theta) \widehat{k}_{n}}{\nu h} \Big( \frac{1 + \varepsilon_{n} \theta }{1 - \varepsilon_{n}} \Big)^{2} 
				\big\| \nabla \big( u_{n,\ast}^{j,h} \!-\! \langle u^{h} \rangle_{n,\ast} \big) \big\|^{2} 
				\| \widehat{k}_{n}^{-1} \phi_{n,\alpha}^{j,h} \|^{2} 
				+ \frac{\nu \theta (1 - \theta^2)}{4 \widehat{k}_{n} (1 + \varepsilon_{n} \theta )^{2} } 
				\big\| \nabla \Phi_{n}^{j,h} \big\|^{2}. \notag
			\end{align}
			\begin{align}
				&b \big( u_{n,\ast}^{j,h} \!-\! \langle u^{h} \rangle_{n,\ast}, u_{n,\ast}^{j} - u_{n,\beta}^{j}, \widehat{k}_{n}^{-1} \phi_{n,\alpha}^{j,h} \big) 
				\label{eq:DLN-Error-H1-Eq1-term9-term3} \\
				\leq& C(\Omega) \big\| \nabla \big( u_{n,\ast}^{j,h} \!-\! \langle u^{h} \rangle_{n,\ast} \big) \big\|
				\| u_{n,\ast}^{j} - u_{n,\beta}^{j} \|_{2} \| \widehat{k}_{n}^{-1} \phi_{n,\alpha}^{j,h} \|
				\notag \\
				\leq&  C(\Omega) \big\| \nabla \big( u_{n,\ast}^{j,h} \!-\! \langle u^{h} \rangle_{n,\ast} \big) \big\|^{2}
				\| u_{n,\ast}^{j} - u_{n,\beta}^{j} \|_{2}^{2}  + \frac{1}{32} \| \widehat{k}_{n}^{-1} \phi_{n,\alpha}^{j,h} \|^{2} \notag \\
				\leq& C(\Omega) \Big[ C(\Omega,\theta) \frac{h \nu }{\widehat{k}_{n}} \Big( \frac{1-\varepsilon_{n}}{1+\varepsilon_{n} \theta} \Big)^{2} \Big]
				\Big( C(\theta) k_{\rm{max}}^{3} \int_{t_{n-1}}^{t_{n+1}} \| u_{tt}^{j} \|_{2}^{2} dt \Big) 
				+ \frac{1}{32} \| \widehat{k}_{n}^{-1} \phi_{n,\alpha}^{j,h} \|^{2} \notag \\
				\leq& C(\Omega,\theta) \frac{h \nu }{\widehat{k}_{n}}  k_{\rm{max}}^{3} \int_{t_{n-1}}^{t_{n+1}} \| u_{tt}^{j} \|_{2}^{2} dt 
				+ \frac{1}{32} \| \widehat{k}_{n}^{-1} \phi_{n,\alpha}^{j,h} \|^{2}.  \notag
			\end{align} 
			\begin{confidential}
				\color{darkblue}
				\begin{align*}
					&b \big( u_{n,\ast}^{j,h} \!-\! \langle u^{h} \rangle_{n,\ast}, u_{n,\ast}^{j} - u_{n,\beta}^{j}, \widehat{k}_{n}^{-1} \phi_{n,\alpha}^{j,h} \big) \\
					\leq& C(\Omega) \| u_{n,\ast}^{j,h} \!-\! \langle u^{h} \rangle_{n,\ast} \|_{1} 
					\| u_{n,\ast}^{j} - u_{n,\beta}^{j} \|_{2} \| \widehat{k}_{n}^{-1} \phi_{n,\alpha}^{j,h} \| \\
					\leq& C(\Omega) \big\| \nabla \big( u_{n,\ast}^{j,h} \!-\! \langle u^{h} \rangle_{n,\ast} \big) \big\|
					\| u_{n,\ast}^{j} - u_{n,\beta}^{j} \|_{2} \| \widehat{k}_{n}^{-1} \phi_{n,\alpha}^{j,h} \| \\
					\leq& C(\Omega) \big\| \nabla \big( u_{n,\ast}^{j,h} \!-\! \langle u^{h} \rangle_{n,\ast} \big) \big\|^{2}
					\| u_{n,\ast}^{j} - u_{n,\beta}^{j} \|_{2}^{2}  + \frac{1}{32} \| \widehat{k}_{n}^{-1} \phi_{n,\alpha}^{j,h} \|^{2} \\
					\leq& C(\Omega) \Big[ C(\Omega,\theta) \frac{h \nu }{\widehat{k}_{n}} \Big( \frac{1-\varepsilon_{n}}{1+\varepsilon_{n} \theta} \Big)^{2} \Big]
					\Big( C(\theta) k_{\rm{max}}^{3} \int_{t_{n-1}}^{t_{n+1}} \| u_{tt}^{j} \|_{2}^{2} dt \Big) 
					+ \frac{1}{32} \| \widehat{k}_{n}^{-1} \phi_{n,\alpha}^{j,h} \|^{2} \\
					\leq& C(\Omega,\theta) \frac{h \nu }{\widehat{k}_{n}}  k_{\rm{max}}^{3} \int_{t_{n-1}}^{t_{n+1}} \| u_{tt}^{j} \|_{2}^{2} dt 
					+ \frac{1}{32} \| \widehat{k}_{n}^{-1} \phi_{n,\alpha}^{j,h} \|^{2}  
				\end{align*}
				\begin{align*}
					&b \big( u_{n,\ast}^{j,h} \!-\! \langle u^{h} \rangle_{n,\ast}, \phi_{n,\ast}^{j,h} - \phi_{n,\beta}^{j,h},
					\widehat{k}_{n}^{-1} \phi_{n,\alpha}^{j,h} \big) \\
					\leq& C(\Omega) \| u_{n,\ast}^{j,h} \!-\! \langle u^{h} \rangle_{n,\ast} \|_{1}  \| \phi_{n,\ast}^{j,h} - \phi_{n,\beta}^{j,h} \|_{1} \| \widehat{k}_{n}^{-1} \phi_{n,\alpha}^{j,h} \|^{1/2}
					\| \nabla \widehat{k}_{n}^{-1} \phi_{n,\alpha}^{j,h} \|^{1/2} \\
					\leq& C(\Omega) h^{-1/2} \big\| \nabla \big( u_{n,\ast}^{j,h} \!-\! \langle u^{h} \rangle_{n,\ast} \big) \big\|
					\big\| \nabla \big( \phi_{n,\ast}^{j,h}  - \phi_{n,\beta}^{j,h} \big) \big\| 
					\| \widehat{k}_{n}^{-1} \phi_{n,\alpha}^{j,h} \| \\
					\leq& C(\Omega) h^{-1/2} \big\| \nabla \big( u_{n,\ast}^{j,h} \!-\! \langle u^{h} \rangle_{n,\ast} \big) \big\|
					\Big[ \frac{2 \beta_{2}^{(n)}}{1 - \varepsilon_{n}} \| \nabla \Phi_{n}^{j,h} \| \Big] \| \widehat{k}_{n}^{-1} \phi_{n,\alpha}^{j,h} \| \\
					\leq& \frac{C(\Omega,\theta) }{\sqrt{h} (1 - \varepsilon_{n})} \big\| \nabla \big( u_{n,\ast}^{j,h} \!-\! \langle u^{h} \rangle_{n,\ast} \big) \big\| \| \nabla \Phi_{n}^{j,h} \| \| \widehat{k}_{n}^{-1} \phi_{n,\alpha}^{j,h} \| \\
					\leq& \frac{C(\Omega,\theta) \widehat{k}_{n}}{\nu h} \Big( \frac{1 + \varepsilon_{n} \theta }{1 - \varepsilon_{n}} \Big)^{2} 
					\big\| \nabla \big( u_{n,\ast}^{j,h} \!-\! \langle u^{h} \rangle_{n,\ast} \big) \big\|^{2} 
					\| \widehat{k}_{n}^{-1} \phi_{n,\alpha}^{j,h} \|^{2} 
					+ \frac{\nu \theta (1 - \theta^2)}{4 \widehat{k}_{n} (1 + \varepsilon_{n} \theta )^{2} } 
					\big\| \nabla \Phi_{n}^{j,h} \big\|^{2}
				\end{align*}
				\normalcolor
			\end{confidential}
			\ \\
			\noindent \textit{Part 3.} \ \\
			We combine \eqref{eq:DLN-Error-H1-Eq1-term1}, \eqref{eq:DLN-Error-H1-Eq1-term2}, 
			\eqref{eq:DLN-Error-H1-Eq1-term3}, \eqref{eq:DLN-Error-H1-Eq1-term4}, 
			\eqref{eq:DLN-Error-H1-term5-6-eq2}, \eqref{eq:DLN-Error-H1-term7-8}, 
			\eqref{eq:DLN-Error-H1-Eq1-term9-term1}, \eqref{eq:DLN-Error-H1-Eq1-term9-term2} and \eqref{eq:DLN-Error-H1-Eq1-term9-term3}
			\begin{confidential}
				\color{darkblue}
				\begin{align*}
				&\Big\| \widehat{k}_{n}^{-1} \phi_{n,\alpha}^{j,h} \Big\|^{2} + 
				\frac{\nu}{\widehat{k}_{n}} \Big(\begin{Vmatrix}
				\nabla {\phi_{n+1}^{j,h}} \\
				\nabla {\phi_{n}^{j,h}}
				\end{Vmatrix}
				_{G(\theta)}^{2} -
				\begin{Vmatrix}
				\nabla {\phi_{n}^{j,h}} \\
				\nabla {\phi_{n-1}^{j,h}}
				\end{Vmatrix}
				_{G(\theta)}^{2} 
				+ \frac{ \theta (1 - {\theta}^{2})}{2 ( 1 + \varepsilon_{n} \theta )^{2}} \big\| \nabla \Phi_{n}^{j,h} \big\|^{2}  \Big) \\
				= & C(\theta) k_{\rm{max}}^{3} \int_{t_{n-1}}^{t_{n+1}} \| u_{ttt}^{j} \|^{2} dt
				+ \frac{1}{32} \| \widehat{k}_{n}^{-1} \phi_{n,\alpha}^{j,h} \|^{2} \\
				&+ \frac{C(\theta)}{\widehat{k}_{n}} \Big( h^{2r+2} \int_{t_{n-1}}^{t_{n+1}} \| u_{t}^{j} \|_{r+1}^{2} dt 
				+ \frac{h^{2s+4}}{\nu^{2}} \int_{t_{n-1}}^{t_{n+1}} \| p_{t}^{j} \|_{s+1}^{2} dt \Big) 
				+ \frac{1}{32}\| \widehat{k}_{n}^{-1} \phi_{n,\alpha}^{j,h} \|^{2} \\
				&+ C(\theta) \nu^{2} k_{\rm{max}}^{3} \int_{t_{n-1}}^{t_{n+1}} \| u_{tt} \|_{2}^{2} dt 
				+ \frac{1}{32} \| \widehat{k}_{n}^{-1} \phi_{n,\alpha}^{h} \|^{2} 
				+ C(\theta) k_{\rm{max}}^3 \int_{t_{n-1}}^{t_{n+1}} \| \nabla p_{tt}^{j} \|^{2} dt 
				+ \frac{1}{32} \| \widehat{k}_{n}^{-1} \phi_{n,\alpha}^{j,h} \|^{2} \\
				&+C(\Omega,\theta) \big( \| u_{n,\beta}^{j} \|_{2}^{2} + h^{-1}\| e_{n,\beta}^{j} \|_{1}^{2} \big)
				\big( \| \nabla \phi_{n}^{j,h} \|^{2} + \| \nabla \phi_{n-1}^{j,h} \|^{2} \big) 
				+ C(\Omega,\theta) \| |u^{j}| \|_{\infty,2}^{2} \| e_{n,\beta}^{j} \|_{1}^{2} \notag \\
				&+ C(\Omega,\theta) \big( \frac{h^{2s+1}}{\nu^{2}} \| |p^{j}| \|_{\infty,s+1}^{2} 
				+ h^{2r-1} \| |u^{j}| \|_{\infty,r+1}^{2} \big) \| e_{n,\beta}^{j} \|_{1}^{2}
				+ \frac{1}{32} \| \widehat{k}_{n}^{-1} \phi_{n,\alpha}^{j,h} \|^{2} \notag \\
				&+ C(\Omega,\theta) \| |u^{j}| \|_{\infty,2}^{2} \Big[ \frac{h^{2s\!+\!2}}{\nu^{2}} 
				\Big( k_{\rm{max}}^{3} \int_{t_{n-1}}^{t_{n+1}} \| p_{tt}^{j} \|_{s\!+\!1}^{2} dt 
				+ \| p^{j}(t_{n,\beta})\|_{s\!+\!1}^{2}  \Big) 
				+ h^{2r} \Big( k_{\rm{max}}^{3} \int_{t_{n\!-\!1}}^{t_{n\!+\!1}} \| u_{tt}^{j} \|_{r\!+\!1}^{2} dt 
				+ \| u^{j}(t_{n,\beta})\|_{r\!+\!1}^{2} \Big) \Big] \\
				&+C(\Omega,\theta) k_{\rm{max}}^{3} \big( \| |u^{j}| \|_{\infty,2}^{2} 
				+ \| |u^{j}| \|_{\infty,2,\beta}^{2}\big) \int_{t_{n-1}}^{t_{n+1}} \| u_{tt}^{j} \|_{1}^{2} dt 
				+ \frac{1}{32} \| \widehat{k}_{n}^{-1} \phi_{n,\alpha}^{j,h} \|^{2} \\
				&+ \frac{C(\Omega,\theta) k_{\rm{max}}^{3}  \nu }{\widehat{k}_{n}} \!\Big( \frac{1\!-\!\varepsilon_{n}}{1\!+\!\varepsilon_{n} \theta} \Big)^{2} \!\!
				\Big( \frac{h^{2s\!+\!2}}{\nu^{2}} \!\!\! \int_{t_{n\!-\!1}}^{t_{n\!+\!1}} \!\| p_{tt}^{j} \|_{s\!+\!1}^{2} dt 
				\!+\! h^{2r} \!\!\!  \int_{t_{n\!-\!1}}^{t_{n\!+\!1}} \!\| u_{tt}^{j} \|_{r\!+\!1}^{2} dt  \Big) \!+\! \frac{1}{32} \| \widehat{k}_{n}^{-1} \phi_{n,\alpha}^{j,h} \|^{2} \\
				&+ \frac{C(\Omega,\theta) \widehat{k}_{n}}{\nu h} \Big( \frac{1 + \varepsilon_{n} \theta }{1 - \varepsilon_{n}} \Big)^{2} 
				\big\| \nabla \big( u_{n,\ast}^{j,h} \!-\! \langle u^{h} \rangle_{n,\ast} \big) \big\|^{2} 
				\| \widehat{k}_{n}^{-1} \phi_{n,\alpha}^{j,h} \|^{2} 
				+ \frac{\nu \theta (1 - \theta^2)}{4 \widehat{k}_{n} (1 + \varepsilon_{n} \theta )^{2} } 
				\big\| \nabla \Phi_{n}^{j,h} \big\|^{2} \\
				&+ C(\Omega,\theta) \frac{h \nu }{\widehat{k}_{n}}  k_{\rm{max}}^{3} \int_{t_{n-1}}^{t_{n+1}} \| u_{tt}^{j} \|_{2}^{2} dt 
				+ \frac{1}{32} \| \widehat{k}_{n}^{-1} \phi_{n,\alpha}^{j,h} \|^{2}
				\end{align*}
				\begin{align*}
				&\begin{Vmatrix}
				\nabla {\phi_{n+1}^{j,h}} \\
				\nabla {\phi_{n}^{j,h}}
				\end{Vmatrix}
				_{G(\theta)}^{2} -
				\begin{Vmatrix}
				\nabla {\phi_{n}^{j,h}} \\
				\nabla {\phi_{n-1}^{j,h}}
				\end{Vmatrix}
				_{G(\theta)}^{2} 
				+ \frac{ \theta (1 - {\theta}^{2})}{4 ( 1 + \varepsilon_{n} \theta )^{2}} \big\| \nabla \Phi_{n}^{j,h} \big\|^{2}
				+ \frac{3\widehat{k}_{n}}{4 \nu} \Big\| \widehat{k}_{n}^{-1} \phi_{n,\alpha}^{j,h} \Big\|^{2} \\
				&- \frac{C(\Omega,\theta) \widehat{k}_{n}^{2}}{\nu^{2} h} \Big( \frac{1 + \varepsilon_{n} \theta }{1 - \varepsilon_{n}} \Big)^{2} 
				\big\| \nabla \big( u_{n,\ast}^{j,h} \!-\! \langle u^{h} \rangle_{n,\ast} \big) \big\|^{2} 
				\Big\| \widehat{k}_{n}^{-1} \phi_{n,\alpha}^{j,h} \Big\|^{2} \notag \\
				\leq& \frac{C(\theta)k_{\rm{max}}^{4}}{\nu} \! \int_{t_{n-1}}^{t_{n+1}} \| u_{ttt}^{j} \|^{2} dt
				\!+\! \frac{C(\theta)}{\nu} \Big( h^{2r+2} \! \int_{t_{n-1}}^{t_{n+1}} \| u_{t}^{j} \|_{r+1}^{2} dt 
				\!+\! \frac{h^{2s+4}}{\nu^{2}} \! \int_{t_{n-1}}^{t_{n+1}} \| p_{t}^{j} \|_{s+1}^{2} dt \Big) \notag \\
				&+\! C(\theta) \nu k_{\rm{max}}^{4} \!\int_{t_{n-1}}^{t_{n+1}} \| u_{tt} \|_{2}^{2} dt 
				\!+\!\frac{C(\theta)k_{\rm{max}}^{4}}{\nu} \! \int_{t_{n-1}}^{t_{n+1}} \| \nabla p_{tt}^{j} \|^{2} dt \notag \\
				&+\frac{C(\Omega,\theta) \widehat{k}_{n}}{\nu} \big( \| u_{n,\beta}^{j} \|_{2}^{2} + h^{-1}\| e_{n,\beta}^{j} \|_{1}^{2} \big)
				\big( \| \nabla \phi_{n}^{j,h} \|^{2} + \| \nabla \phi_{n-1}^{j,h} \|^{2} \big) 
				+ \frac{C(\Omega,\theta)}{\nu} \| |u^{j}| \|_{\infty,2}^{2} \widehat{k}_{n} \| e_{n,\beta}^{j} \|_{1}^{2} \notag \\
				&+ C(\Omega,\theta) \big( \frac{h^{2s+1}}{\nu^{3}} \| |p^{j}| \|_{\infty,s+1}^{2} 
				+ \frac{h^{2r-1}}{\nu} \| |u^{j}| \|_{\infty,r+1}^{2} \big) \widehat{k}_{n} \| e_{n,\beta}^{j} \|_{1}^{2} \notag \\
				&+ C(\Omega,\theta) \| |u^{j}| \|_{\infty,2}^{2} \frac{h^{2s\!+\!2}}{\nu^{3}} 
				\Big( k_{\rm{max}}^{4} \int_{t_{n-1}}^{t_{n+1}} \| p_{tt}^{j} \|_{s\!+\!1}^{2} dt 
				+ \widehat{k}_{n} \| p^{j}(t_{n,\beta})\|_{s\!+\!1}^{2}  \Big) \notag \\
				&+C(\Omega,\theta) \| |u^{j}| \|_{\infty,2}^{2} \frac{h^{2r}}{\nu} 
				\Big( k_{\rm{max}}^{4} \int_{t_{n\!-\!1}}^{t_{n\!+\!1}} \| u_{tt}^{j} \|_{r\!+\!1}^{2} dt 
				+ \widehat{k}_{n}  \| u^{j}(t_{n,\beta})\|_{r\!+\!1}^{2} \Big) \notag \\
				&+ \frac{C(\theta) k_{\rm{max}}^{4}}{\nu} \big( \| |u^{j}| \|_{\infty,2}^{2} 
				+ \| |u^{j}| \|_{\infty,2,\beta}^{2}\big) \! \int_{t_{n-1}}^{t_{n+1}} \| u_{tt}^{j} \|_{1}^{2} dt  \notag \\
				&+\! C(\Omega,\theta) k_{\rm{max}}^{3} \!
				\Big(\! \frac{h^{2s\!+\!2}}{\nu^{2}} \!\!\! \int_{t_{n\!-\!1}}^{t_{n\!+\!1}} \!\!\| p_{tt}^{j} \|_{s\!+\!1}^{2} dt 
				\!+\! h^{2r} \!\!\! \int_{t_{n\!-\!1}}^{t_{n\!+\!1}} \! \!\| u_{tt}^{j} \|_{r\!+\!1}^{2} dt \! \Big) 
				\!+\! C(\theta) h  k_{\rm{max}}^{3} \!\! \int_{t_{n\!-\!1}}^{t_{n\!+\!1}} \! \!\| u_{tt}^{j} \|_{2}^{2} dt. \notag 
				\end{align*}
				\normalcolor
			\end{confidential}
			\begin{align}
			&\begin{Vmatrix}
			\nabla {\phi_{n+1}^{j,h}} \\
			\nabla {\phi_{n}^{j,h}}
			\end{Vmatrix}
			_{G(\theta)}^{2} -
			\begin{Vmatrix}
			\nabla {\phi_{n}^{j,h}} \\
			\nabla {\phi_{n-1}^{j,h}}
			\end{Vmatrix}
			_{G(\theta)}^{2} 
			+ \frac{ \theta (1 - {\theta}^{2})}{4 ( 1 + \varepsilon_{n} \theta )^{2}} \big\| \nabla \Phi_{n}^{j,h} \big\|^{2}
			+ \frac{\widehat{k}_{n}}{4 \nu} \Big\| \widehat{k}_{n}^{-1} \phi_{n,\alpha}^{j,h} \Big\|^{2} \notag \\
			& + \frac{\widehat{k}_{n}}{2\nu} \Big[ 1 - C(\Omega,\theta)  \Big( \frac{1 + \varepsilon_{n} \theta }{1 - \varepsilon_{n}} \Big)^{2} \frac{\widehat{k}_{n}}{\nu h}  \big\| \nabla \big( u_{n,\ast}^{j,h} \!-\! \langle u^{h} \rangle_{n,\ast} \big) \big\|^{2} \Big] \Big\| \widehat{k}_{n}^{-1} \phi_{n,\alpha}^{j,h} \Big\|^{2} 
			\label{eq:DLN-Error-H1-eq2} \\
			\leq& \frac{C(\theta)k_{\rm{max}}^{4}}{\nu} \! \int_{t_{n-1}}^{t_{n+1}} \| u_{ttt}^{j} \|^{2} dt
			\!+\! \frac{C(\theta)}{\nu} \Big( h^{2r+2} \! \int_{t_{n-1}}^{t_{n+1}} \| u_{t}^{j} \|_{r+1}^{2} dt 
			\!+\! \frac{h^{2s+4}}{\nu^{2}} \! \int_{t_{n-1}}^{t_{n+1}} \| p_{t}^{j} \|_{s+1}^{2} dt \Big) \notag \\
			+&\! C(\theta) \nu k_{\rm{max}}^{4} \!\int_{t_{n-1}}^{t_{n+1}} \| u_{tt}^{j} \|_{2}^{2} dt 
			\!+\!\frac{C(\theta)k_{\rm{max}}^{4}}{\nu} \! \int_{t_{n-1}}^{t_{n+1}} \| \nabla p_{tt}^{j} \|^{2} dt 
			\!+\! \frac{C(\Omega,\theta)}{\nu} \| |u^{j}| \|_{\infty,2}^{2} \widehat{k}_{n} \| e_{n,\beta}^{j} \|_{1}^{2}
			\notag \\
			+&\frac{C(\Omega,\theta) \widehat{k}_{n}}{\nu} \big( \| u_{n,\beta}^{j} \|_{2}^{2} + h^{-1}\| e_{n,\beta}^{j} \|_{1}^{2} \big)
			\big( \| \nabla \phi_{n}^{j,h} \|^{2} + \| \nabla \phi_{n-1}^{j,h} \|^{2} \big)  \notag \\
			+& C(\Omega,\theta) \big( \frac{h^{2s+1}}{\nu^{3}} \| |p^{j}| \|_{\infty,s+1}^{2} 
			+ \frac{h^{2r-1}}{\nu} \| |u^{j}| \|_{\infty,r+1}^{2} \big) \widehat{k}_{n} \| e_{n,\beta}^{j} \|_{1}^{2} \notag \\
			+& C(\Omega,\theta) \| |u^{j}| \|_{\infty,2}^{2} \frac{h^{2s\!+\!2}}{\nu^{3}} 
			\Big( k_{\rm{max}}^{4} \int_{t_{n-1}}^{t_{n+1}} \| p_{tt}^{j} \|_{s\!+\!1}^{2} dt 
			+ \widehat{k}_{n} \| p^{j}(t_{n,\beta})\|_{s\!+\!1}^{2}  \Big) \notag \\
			+&C(\Omega,\theta) \| |u^{j}| \|_{\infty,2}^{2} \frac{h^{2r}}{\nu} 
			\Big( k_{\rm{max}}^{4} \int_{t_{n\!-\!1}}^{t_{n\!+\!1}} \| u_{tt}^{j} \|_{r\!+\!1}^{2} dt 
			+ \widehat{k}_{n}  \| u^{j}(t_{n,\beta})\|_{r\!+\!1}^{2} \Big) \notag \\
			+& \frac{C(\Omega,\theta) k_{\rm{max}}^{4}}{\nu} \big( \| |u^{j}| \|_{\infty,2}^{2} 
			+ \| |u^{j}| \|_{\infty,2,\beta}^{2}\big) \! \int_{t_{n-1}}^{t_{n+1}} \| u_{tt}^{j} \|_{1}^{2} dt  \notag \\
			+&\! C(\Omega,\theta) k_{\rm{max}}^{3} \!
			\Big(\! \frac{h^{2s\!+\!2}}{\nu^{2}} \!\!\! \int_{t_{n\!-\!1}}^{t_{n\!+\!1}} \!\!\| p_{tt}^{j} \|_{s\!+\!1}^{2} dt 
			\!+\! h^{2r} \!\!\! \int_{t_{n\!-\!1}}^{t_{n\!+\!1}} \! \!\| u_{tt}^{j} \|_{r\!+\!1}^{2} dt \! \Big) 
			\!+\! C(\theta) h  k_{\rm{max}}^{3} \!\! \int_{t_{n\!-\!1}}^{t_{n\!+\!1}} \! \!\| u_{tt}^{j} \|_{2}^{2} dt. \notag 
			\end{align}
			We sum \eqref{eq:DLN-Error-H1-eq2} over $n$ from $1$ to $M$ ($M=2, \cdots, N$) and use CFL-like conditions in \eqref{eq:CFL-like-cond} to obtain
			\begin{align}
			&\| \nabla \phi_{M}^{j,h} \|^{2} 
			+ \sum_{n=1}^{N-1} \frac{ \theta (1 - {\theta}^{2})}{4 ( 1 + \varepsilon_{n} \theta )^{2}} \big\| \nabla \Phi_{n}^{j,h} \big\|^{2}
			+ \frac{1}{\nu (1+\theta) } \sum_{n=1}^{M-1} \widehat{k}_{n} \Big\| \widehat{k}_{n}^{-1} \phi_{n,\alpha}^{j,h} \Big\|^{2} 
			\label{eq:DLN-Error-H1-eq3}   \\
			\leq& \frac{C(\Omega,\theta)}{\nu} \sum_{n=1}^{N-1}\big( \widehat{k}_{n} \| u_{n,\beta}^{j} \|_{2}^{2} 
			+ \frac{1}{h \nu} \nu \widehat{k}_{n} \| e_{n,\beta}^{j} \|_{1}^{2} \big)
			\big( \| \nabla \phi_{n}^{j,h} \|^{2} + \| \nabla \phi_{n-1}^{j,h} \|^{2} \big) \notag \\
			+& \frac{C(\Omega,\theta)}{\nu^{2}} \| |u^{j}| \|_{\infty,2}^{2} \sum_{n=1}^{N-1} 
			\nu \widehat{k}_{n} \| e_{n,\beta}^{j} \|_{1}^{2}    \notag \\
			+&\! C(\Omega,\theta) \big( \frac{h^{2s\!+\!1}}{\nu^{4}} \| |p^{j}| \|_{\infty,s\!+\!1}^{2} 
			\!+\! \frac{h^{2r\!-\!1}}{\nu^{2}} \| |u^{j}| \|_{\infty,r\!+\!1}^{2} \big) \sum_{n\!=\!1}^{N\!-\!1} 
			\nu \widehat{k}_{n} \| e_{n,\beta}^{j} \|_{1}^{2}  
			\!+\! \frac{C(\theta)k_{\rm{max}}^{4}}{\nu} \| u_{ttt}^{j} \|_{2,0}^{2} \notag \\
			+&\frac{C(\theta)}{\nu} \Big( h^{2r+2} \| u_{t}^{j} \|_{2,r+1}^{2} \!+\! \frac{h^{2s+4}}{\nu^{2}} \| p_{t}^{j} \|_{2,s+1}^{2}  \Big)
			+\! C(\theta) \nu k_{\rm{max}}^{4} \| u_{tt}^{j} \|_{2,2}^{2}  \notag \\
			+&\!\frac{C(\theta)k_{\rm{max}}^{4}}{\nu} \| \nabla p_{tt}^{j} \|_{2,0}^{2} 
			+ C(\Omega,\theta) \| |u^{j}| \|_{\infty,2}^{2} \frac{h^{2s\!+\!2}}{\nu^{3}} 
			\Big( k_{\rm{max}}^{4} \| p_{tt}^{j} \|_{2,s\!+\!1}^{2} + \| |p^{j}| \|_{2,s\!+\!1,\beta}^{2}  \Big) \notag \\
			+& C(\Omega,\theta) \| |u^{j}| \|_{\infty,2}^{2} \frac{h^{2r}}{\nu} 
			\Big( k_{\rm{max}}^{4} \| u_{tt}^{j} \|_{2,r\!+\!1}^{2}  
			+ \| |u^{j}| \|_{2,r\!+\!1,\beta}^{2} \Big) \notag \\
			+& \frac{C(\Omega,\theta) k_{\rm{max}}^{4}}{\nu} \big( \| |u^{j}| \|_{\infty,2}^{2} 
			+ \| |u^{j}| \|_{\infty,2,\beta}^{2}\big)  \| u_{tt}^{j} \|_{2,1}^{2} 
			+ C(\Omega,\theta) h  k_{\rm{max}}^{3} \| u_{tt}^{j} \|_{2,2}^{2} \notag \\
			+& C(\Omega,\theta) k_{\rm{max}}^{3} \!
			\Big(\! \frac{h^{2s\!+\!2}}{\nu^{2}} \| p_{tt}^{j} \|_{2,s\!+\!1}^{2} 
			\!+\! h^{2r} \| u_{tt}^{j} \|_{2,r\!+\!1}^{2} \Big)
			\!+\! C(\theta) \big( \| \nabla \phi_{1}^{j,h} \|^{2} + \| \nabla \phi_{0}^{j,h} \|^{2} \big).  \notag 
			\end{align}
			Similar to \eqref{eq:DLN-Stab-H1-eq4}, we apply \eqref{eq:error-DLN-L2H1} and discrete Gronwall inequality to \eqref{eq:DLN-Error-H1-eq3} 
			\begin{confidential}
				\color{darkblue}
				\begin{align*}
				&\| \nabla \phi_{N}^{j,h} \|^{2}   
				+ \sum_{n=1}^{N-1} \frac{ \theta (1 - {\theta}^{2})}{4 ( 1 + \varepsilon_{n} \theta )^{2}} \big\| \nabla \Phi_{n}^{j,h} \big\|^{2}
				+ \frac{1}{\nu(1+\theta)} \sum_{n=1}^{N-1} \widehat{k}_{n} 
				\Big\| \widehat{k}_{n}^{-1} \phi_{n,\alpha}^{j,h} \Big\|^{2} \\
				\leq & \frac{C(\Omega,\theta)}{\nu} 
				\Big[ \big( \widehat{k}_{N-1} \| u_{N-1,\beta}^{j} \|_{2}^{2} 
				+ \frac{1}{h \nu} \nu \widehat{k}_{N-1} \| e_{N-1,\beta}^{j} \|_{1}^{2}  \big) 
				\| \nabla \phi_{N-1}^{j,h} \|^{2} \\
				+& \sum_{n=2}^{N-2}  \big(\widehat{k}_{n+1} \| u_{n+1,\beta}^{j} \|_{2}^{2} 
				+ \widehat{k}_{n} \| u_{n,\beta}^{j} \|_{2}^{2} 
				+ \frac{1}{h \nu} \nu \widehat{k}_{n+1} \| e_{n+1,\beta}^{j} \|_{1}^{2} 
				+ \frac{1}{h \nu} \nu \widehat{k}_{n} \| e_{n,\beta}^{j} \|_{1}^{2} \big) 
				\| \nabla \phi_{n}^{j,h} \|^{2} \\
				+& \big( \widehat{k}_{1} \| u_{1,\beta}^{j} \|_{2}^{2} 
				+ \frac{1}{h \nu} \nu \widehat{k}_{1} \| e_{1,\beta}^{j} \|_{1}^{2}  \big) 
				\| \nabla \phi_{0}^{j,h} \|^{2} \Big] 
				+ \frac{C(\Omega,\theta)(1 + \nu T)}{\nu^{2}} \| |u^{j}| \|_{\infty,2}^{2} F_{2}^{2} 	\\
				+& C(\Omega,\theta)(1 + \nu T) \big( \frac{h^{2s+1}}{\nu^{4}} \| |p^{j}| \|_{\infty,s+1}^{2} 
				+ \frac{h^{2r-1}}{\nu^{2}} \| |u^{j}| \|_{\infty,r+1}^{2} \big) 
				F_{2}^{2} 
				\!+\! \frac{C(\theta)k_{\rm{max}}^{4}}{\nu} \| u_{ttt}^{j} \|_{2,0}^{2} \\
				+&\frac{C(\theta)}{\nu} \Big( h^{2r+2} \| u_{t}^{j} \|_{2,r+1}^{2} \!+\! \frac{h^{2s+4}}{\nu^{2}} \| p_{t}^{j} \|_{2,s+1}^{2}  \Big)
				+\! C(\theta) \nu k_{\rm{max}}^{4} \| u_{tt}^{j} \|_{2,2}^{2}   \notag \\
				+&\!\frac{C(\theta)k_{\rm{max}}^{4}}{\nu} \| \nabla p_{tt}^{j} \|_{2,0}^{2} 
				+ C(\Omega,\theta) \| |u^{j}| \|_{\infty,2}^{2} \frac{h^{2s\!+\!2}}{\nu^{3}} 
				\Big( k_{\rm{max}}^{4} \| p_{tt}^{j} \|_{2,s\!+\!1}^{2} + \| |p^{j}| \|_{2,s\!+\!1,\beta}^{2}  \Big) \notag \\
				+& C(\Omega,\theta) \| |u^{j}| \|_{\infty,2}^{2} \frac{h^{2r}}{\nu} 
				\Big( k_{\rm{max}}^{4} \| u_{tt}^{j} \|_{2,r\!+\!1}^{2}  
				+ \| |u^{j}| \|_{2,r\!+\!1,\beta}^{2} \Big) \notag \\
				+& \frac{C(\Omega,\theta) k_{\rm{max}}^{4}}{\nu} \big( \| |u^{j}| \|_{\infty,2}^{2} 
				+ \| |u^{j}| \|_{\infty,2,\beta}^{2}\big)  \| u_{tt}^{j} \|_{2,1}^{2} 
				+ C(\Omega,\theta) h  k_{\rm{max}}^{3} \| u_{tt}^{j} \|_{2,2}^{2} \notag \\
				+& C(\Omega,\theta) k_{\rm{max}}^{3} \!
				\Big(\! \frac{h^{2s\!+\!2}}{\nu^{2}} \| p_{tt}^{j} \|_{2,s\!+\!1}^{2} 
				\!+\! h^{2r} \| u_{tt}^{j} \|_{2,r\!+\!1}^{2} \Big)
				\!+\! C(\theta) \big( \| \nabla \phi_{1}^{j,h} \|^{2} + \| \nabla \phi_{0}^{j,h} \|^{2} \big).
				\end{align*}
				\normalcolor
			\end{confidential}
			\begin{align}
			&\| \nabla \phi_{M}^{j,h} \|^{2}  
			+ \sum_{n=1}^{M-1} \frac{ \theta (1 - {\theta}^{2})}{4 ( 1 + \varepsilon_{n} \theta )^{2}} \big\| \nabla \Phi_{n}^{j,h} \big\|^{2}
			+ \frac{1}{\nu(1+\theta)} \sum_{n=1}^{M-1} \widehat{k}_{n} 
			\Big\| \widehat{k}_{n}^{-1} \phi_{n,\alpha}^{j,h} \Big\|^{2}        
			\label{eq:DLN-Error-H1-eq4} \\
			&\leq  \exp \Big[ \frac{C(\Omega,\theta)}{\nu}  
			\Big( k_{\rm{max}}^{4} \| u_{tt}^{j} \|_{2,2}^{2} 
			+ \| |u^{j}|\|_{2,2,\beta}^{2} + \frac{ (C\nu T + 1) F_{2}^{2} }{h \nu} \Big) \Big] F_{3},               \notag 
			\end{align}
			where
			\begin{align*}
			F_{3} 
			&= C(\Omega,\theta)(1 + \nu T) \big( \frac{h^{2s+1}}{\nu^{4}} \| |p^{j}| \|_{\infty,s+1}^{2} 
			+ \frac{h^{2r-1}}{\nu^{2}} \| |u^{j}| \|_{\infty,r+1}^{2} \big) F_{2}^{2} 
			\!+\! \frac{C(\theta)k_{\rm{max}}^{4}}{\nu} \| u_{ttt}^{j} \|_{2,0}^{2} \\
			&+\frac{C(\Omega,\theta)}{\nu} \Big( h^{2r+2} \| u_{t}^{j} \|_{2,r+1}^{2} \!+\! \frac{h^{2s+4}}{\nu^{2}} \| p_{t}^{j} \|_{2,s+1}^{2} \Big)
			+\! C(\theta) \nu k_{\rm{max}}^{4} \| u_{tt}^{j} \|_{2,2}^{2}  \notag \\
			&+\!\frac{C(\theta)k_{\rm{max}}^{4}}{\nu} \| \nabla p_{tt}^{j} \|_{2,0}^{2} 
			+ C(\Omega,\theta) \| |u^{j}| \|_{\infty,2}^{2} \frac{h^{2s\!+\!2}}{\nu^{3}} 
			\Big( k_{\rm{max}}^{4} \| p_{tt}^{j} \|_{2,s\!+\!1}^{2} + \| |p^{j}| \|_{2,s\!+\!1,\beta}^{2}  \Big) \notag \\
			&+ C(\Omega,\theta) \| |u^{j}| \|_{\infty,2}^{2} \frac{h^{2r}}{\nu} 
			\Big( k_{\rm{max}}^{4} \| u_{tt}^{j} \|_{2,r\!+\!1}^{2}  
			\!+\! \| |u^{j}| \|_{2,r\!+\!1,\beta}^{2} \Big) 
			\!+\! \frac{C(\Omega,\theta)(1 + \nu T)}{\nu^{2}} \| |u^{j}| \|_{\infty,2}^{2} F_{2}^{2} \notag \\
			&+ \frac{C(\Omega,\theta) k_{\rm{max}}^{4}}{\nu} \big( \| |u^{j}| \|_{\infty,2}^{2} 
			+ \| |u^{j}| \|_{\infty,2,\beta}^{2}\big)  \| u_{tt}^{j} \|_{2,1}^{2} 
			+ C(\Omega,\theta) h  k_{\rm{max}}^{3} \| u_{tt}^{j} \|_{2,2}^{2} \notag \\
			&+ C(\Omega,\theta) k_{\rm{max}}^{3} \!
			\Big(\! \frac{h^{2s\!+\!2}}{\nu^{2}} \| p_{tt}^{j} \|_{2,s\!+\!1}^{2} 
			\!+\! h^{2r} \| u_{tt}^{j} \|_{2,r\!+\!1}^{2} \Big)
			\!+\! C(\theta) \big( \| \nabla \phi_{1}^{j,h} \|^{2} + \| \nabla \phi_{0}^{j,h} \|^{2} \big).
			\end{align*}
			By the time-diameter restriction in \eqref{eq:time-diameter-relation}, $h^{-1} F_{2}^{2}$ is bounded. Thus 
			\begin{align}
			\max_{0 \leq n \leq N} \| \nabla e_{n}^{j} \|  
			\leq& \max_{0 \leq n \leq N} \| \nabla \eta_{n}^{j} \| 
			+ \max_{0 \leq n \leq N} \| \nabla \phi_{n}^{j,h} \|   
			\label{eq:DLN-Error-H1-eq5} \\
			\leq& C(\Omega)h^{r} \| |u^{j}| \|_{\infty,r+1} \!+\! \frac{C(\Omega)h^{s+1}}{\nu} \| |p^{j}| \|_{\infty,s+1}  
			\notag \\
			&+ \exp \Big[ \frac{C(\Omega,\theta)}{\nu}  \big( k_{\rm{max}}^{4} \| u_{tt}^{j} \|_{2,2}^{2} 
			+ \| |u^{j}|\|_{2,2,\beta}^{2} + \frac{ (C \nu T + 1) F_{2}^{2}  }{h \nu} \big)   \Big] \sqrt{F_{3}}.   \notag 
			\end{align}
			By \eqref{eq:approx-thm}, \eqref{eq:Stoke-Approx} and H$\ddot{\rm{o}}$lder's inequality 
			\begin{confidential}
				\color{darkblue}
				\begin{align*}
				&\sum_{n=1}^{N-1} \frac{\widehat{k}_{n}}{\nu} 
				\Big\| \widehat{k}_{n}^{-1} \eta_{n,\alpha}^{j} \Big\|^{2}
				= \sum_{n=1}^{N-1} \frac{1}{\nu \widehat{k}_{n}} \| \eta_{n,\alpha}^{j} \|^{2} 
				\leq \sum_{n=1}^{N-1} \frac{C(\Omega)}{\nu \widehat{k}_{n}} 
				\Big( \frac{h^{2s+2}}{\nu^{2}} \| p_{n,\alpha}^{j} \|_{s+1}^{2} 
				+ h^{2r} \| u_{n,\alpha}^{j} \|_{r}^{2}  \Big) \\
				\leq& \sum_{n=1}^{N-1} \frac{C(\Omega)}{\nu \widehat{k}_{n}}
				\Big[ \frac{h^{2s+2}}{\nu^{2}} C(\theta)(k_{n} + k_{n-1}) \int_{t_{n-1}}^{t_{n+1}} \| p_{t}^{j} \|_{s+1}^{2} dt 
				+ h^{2r} C(\theta)(k_{n} + k_{n-1}) \int_{t_{n-1}}^{t_{n+1}} \| u_{t}^{j} \|_{r}^{2} dt  \Big] \\
				\leq& \sum_{n=1}^{N-1} \frac{C(\Omega,\theta)}{\nu} \Big[ \frac{h^{2s+2}}{\nu^{2}} 
				\int_{t_{n-1}}^{t_{n+1}} \| p_{t}^{j} \|_{s+1}^{2} dt 
				+ h^{2r} \int_{t_{n-1}}^{t_{n+1}} \| u_{t}^{j} \|_{r}^{2} dt  \Big]
				= \frac{C(\theta)}{\nu} 
				\Big( \frac{h^{2s+2}}{\nu^{2}} \| p_{t}^{j} \|_{2,s+1}^{2} 
				+ h^{2r} \| u_{t}^{j} \|_{2,r}^{2}   \Big)
				\end{align*}
				\begin{align*}
				\sum_{n=1}^{N-1} \frac{\widehat{k}_{n}}{\nu} 
				\Big\| \widehat{k}_{n}^{-1} e_{n,\alpha}^{j} \Big\|^{2} 
				\leq& 2 \sum_{n=1}^{N-1} \frac{\widehat{k}_{n}}{\nu} 
				\Big\| \widehat{k}_{n}^{-1} \eta_{n,\alpha}^{j} \Big\|^{2}
				+ 2 \sum_{n=1}^{N-1} \frac{\widehat{k}_{n}}{\nu} 
				\Big\| \widehat{k}_{n}^{-1} \phi_{n,\alpha}^{j,h} \Big\|^{2} \\
				\leq& \frac{C(\Omega,\theta)}{\nu} \Big( \frac{h^{2s+2}}{\nu^{2}} \| p_{t}^{j} \|_{2,s+1}^{2} 
				+ h^{2r} \| u_{t}^{j} \|_{2,r}^{2}  \Big) \\
				+& \exp \Big[ \frac{C(\Omega,\theta)}{\nu} \! 
				\Big( k_{\rm{max}}^{4} \| u_{tt}^{j} \|_{2,2}^{2} 
				\!+\! \| |u^{j}|\|_{2,2,\beta}^{2} \!+\! \frac{ (C \nu T \!+\! 1)F_{2}^{2}  }{h \nu} \Big)  \Big] \cdot F_{3}
				\end{align*}
				\normalcolor
			\end{confidential}
			\begin{align}
			\sum_{n=1}^{N-1} \frac{\widehat{k}_{n}}{\nu} 
			\Big\| \widehat{k}_{n}^{-1} \eta_{n,\alpha}^{j} \Big\|^{2}
			\leq& \sum_{n=1}^{N-1} \frac{C(\Omega)}{\nu \widehat{k}_{n}} 
			\Big( \frac{h^{2s+2}}{\nu^{2}} \| p_{n,\alpha}^{j} \|_{s+1}^{2} + h^{2r} \| u_{n,\alpha}^{j} \|_{r}^{2} \Big) 
			\label{eq:diff-eta-L2} \\
			\leq& \sum_{n=1}^{N-1} \frac{C(\Omega,\theta)}{\nu} \Big[ \frac{h^{2s+2}}{\nu^{2}} 
			\int_{t_{n-1}}^{t_{n+1}} \| p_{t}^{j} \|_{s+1}^{2} dt 
			+ h^{2r} \int_{t_{n-1}}^{t_{n+1}} \| u_{t}^{j} \|_{r}^{2} dt  \Big] \notag \\
			=& \frac{C(\Omega,\theta)}{\nu} \Big( \frac{h^{2s+2}}{\nu^{2}} \| p_{t}^{j} \|_{2,s+1}^{2} 
			+ h^{2r} \| u_{t}^{j} \|_{2,r}^{2} \Big).      \notag 
			\end{align}
			By \eqref{eq:DLN-Error-H1-eq4}, \eqref{eq:diff-eta-L2} and triangle inequality
			\begin{align}
			\sum_{n=1}^{N-1} \frac{\widehat{k}_{n}}{\nu} 
			\Big\| \widehat{k}_{n}^{-1} e_{n,\alpha}^{j} \Big\|^{2} 
			\leq&\! \frac{C(\Omega,\theta)}{\nu} \!\Big(\! \frac{h^{2s\!+\!2}}{\nu^{2}} 
			\| p_{t}^{j} \|_{2,s\!+\!1}^{2} 
			\!+\! h^{2r} \| u_{t}^{j} \|_{2,r}^{2}  \!\Big)  
			\label{eq:error-diff-time-L2} \\
			&+ \exp \Big[ \frac{C(\Omega,\theta)}{\nu} \! 
			\Big( k_{\rm{max}}^{4} \| u_{tt}^{j} \|_{2,2}^{2} 
			\!+\! \| |u^{j}|\|_{2,2,\beta}^{2} \!+\! \frac{ (C \nu T \!+\! 1) F_{2}^{2}  }{h \nu} \Big) \Big]  F_{3},      \notag
			\end{align}
			By \eqref{eq:error-L2-conclusion} in Theorem \ref{thm:Error-L2}, \eqref{eq:DLN-Error-H1-eq5} 
			and \eqref{eq:error-diff-time-L2}, we have \eqref{eq:error-H1-conclusion-appendix}. 
		\end{proof}
		\ \\

		\subsection{Proof of Theorem \ref{thm:Error-Pressure}}
		\label{appendixB-Pressure} \ 
		\begin{theorem}
			We assume that for the $j$-th NSE in \eqref{eq:jth-NSE}, the velocity $u^{j}(x,t)$ satisfies
			\begin{gather*}
			u \in \ell^{\infty}(\{ t_{n} \}_{n=0}^{N};H^{r+1}(\Omega)) \cap \ell^{\infty,\beta}(\{ t_{n} \}_{n=0}^{N};H^{1}(\Omega))
			\cap \ell^{2,\beta}(\{ t_{n} \}_{n=0}^{N};H^{r+1}(\Omega)), \\
			u_{t} \in L^{2}(0,T;H^{r+1}(\Omega)),  \ \ 
			u_{tt} \in L^{2}(0,T;H^{r+1}(\Omega)), \ \ 
			u_{ttt} \in L^{2}(0,T;X^{-1} \cap L^{2}(\Omega)), 
			\end{gather*} 
			and the pressure $p^{j}(x,t)$ satisfies
			\begin{gather*}
			p \in \ell^{\infty}(\{ t_{n} \}_{n=0}^{N};H^{s+1}) \cap \ell^{2,\beta}(\{ t_{n} \}_{n=0}^{N};H^{s+1}), \\
			p_{t} \in L^{2}(0,T;H^{s+1}(\Omega)), \ \ p_{tt} \in L^{2}(0,T;H^{s+1}(\Omega)),
			\end{gather*}
			for all $j$.  
			Under the CFL-like conditions in \eqref{eq:CFL-like-cond}, time ratio bounds in \eqref{eq:time-ratio-cond} and the time-diameter condition in \eqref{eq:time-diameter-relation},
			the numerical solutions of the DLN-Ensemble algorithms in \eqref{eq:DLN-Ensemble-Alg} for all $\theta \in (0,1)$ satisfy
			\begin{align}
			\Big( \sum_{n=1}^{N-1} \widehat{k}_{n} \| p_{n,\beta}^{j} - p_{n,\beta}^{j,h} \|^{2} \Big)^{1/2}
			\leq \mathcal{O} \big( h^{r},h^{s+1}, k_{\rm{max}}^{2}, h^{1/2}k_{\rm{max}}^{3/2}  \big). 
			\label{eq:Error-Pressure-Conclusion-Appendix} 
			\end{align}
		\end{theorem}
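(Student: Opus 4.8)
The plan is to exploit the discrete inf-sup condition \eqref{eq:inf-sup-cond} together with the velocity error bounds already proved in Theorem \ref{thm:Error-L2} and Theorem \ref{thm:Error-H1}. First I would split the pressure error at each level $t_{n,\beta}$ as $p^j_{n,\beta}-p^{j,h}_{n,\beta}=(p^j_{n,\beta}-\pi^h_n)+(\pi^h_n-p^{j,h}_{n,\beta})$, where $\pi^h_n\in Q^h$ is the $L^2$-projection of $p^j(t_{n,\beta})$; the first piece is controlled by $Ch^{s+1}\|p^j(t_{n,\beta})\|_{s+1}$ via \eqref{eq:approx-thm}, and since $\pi^h_n-p^{j,h}_{n,\beta}\in Q^h$, the inf-sup condition gives $C_{\tt{is}}\|\pi^h_n-p^{j,h}_{n,\beta}\|\le\sup_{v^h\in X^h}(\pi^h_n-p^{j,h}_{n,\beta},\nabla\cdot v^h)/\|\nabla v^h\|$. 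I would then write $(\pi^h_n-p^{j,h}_{n,\beta},\nabla\cdot v^h)=(\pi^h_n-p^j(t_{n,\beta}),\nabla\cdot v^h)+(p^j(t_{n,\beta})-p^{j,h}_{n,\beta},\nabla\cdot v^h)$ and substitute the momentum error equation — the analogue of \eqref{eq:DLN-Error-Eq2} retaining the pressure term and with $v^h$ ranging over all of $X^h$ — to express $(p^j(t_{n,\beta})-p^{j,h}_{n,\beta},\nabla\cdot v^h)$ as a sum of: the discrete time-derivative error $\widehat{k}_n^{-1}(e^j_{n,\alpha},v^h)$; the DLN consistency defects $(\widehat{k}_n^{-1}u^j_{n,\alpha}-u^j_t(t_{n,\beta}),v^h)$ and $\nu(\nabla(u^j_{n,\beta}-u^j(t_{n,\beta})),\nabla v^h)$; the viscous velocity-error term $\nu(\nabla e^j_{n,\beta},\nabla v^h)$; and the nonlinear residual $b(u^j(t_{n,\beta}),u^j(t_{n,\beta}),v^h)-b(\langle u^h\rangle_{n,\ast},u^{j,h}_{n,\beta},v^h)-b(u^{j,h}_{n,\ast}-\langle u^h\rangle_{n,\ast},u^{j,h}_{n,\ast},v^h)$.

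Dividing by $\|\nabla v^h\|$, taking the supremum, squaring, multiplying by $\widehat{k}_n$ and summing in $n$, each group is then estimated: the time-derivative error via the bound on $\sum_n\widehat{k}_n\|\widehat{k}_n^{-1}e^j_{n,\alpha}\|^2$ from Theorem \ref{thm:Error-H1} (cf. \eqref{eq:error-H1-conclusion}); the consistency defects via \eqref{eq:consist-2nd-eq1} and \eqref{eq:consist-2nd-eq3} in Lemma \ref{lemma:DLN-consistency}, which yield an $O(k_{\rm{max}}^2)$ contribution; and the viscous velocity-error term via the $\nu\sum_n\widehat{k}_n\|\nabla e^j_{n,\beta}\|^2$ bound from Theorem \ref{thm:Error-L2} (the second term of \eqref{eq:error-L2-conclusion}). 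Each of these contributes at the order stated in \eqref{eq:Error-Pressure-Conclusion-Appendix}.

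For the nonlinear residual I would follow the decompositions already used in the proof of Theorem \ref{thm:Error-L2}: write $u^{j,h}=u^j-e^j$, $e^j=\eta^j+\phi^{j,h}$, insert the intermediate bilinear form $b(u^j_{n,\ast},u^j_{n,\beta},v^h)$, and use the skew-symmetry of $b$ to trade a derivative onto $v^h$ wherever a factor carries only $L^2$ regularity. The resulting pieces are bounded by \eqref{eq:b-bound-1} through \eqref{eq:b-bound-3}, Poincar\'e's inequality, the approximation properties \eqref{eq:approx-thm} and \eqref{eq:Stoke-Approx}, and the consistency Lemma; for the explicitly-lagged fluctuation $u^{j,h}_{n,\ast}-\langle u^h\rangle_{n,\ast}$ I would use the inverse inequality \eqref{eq:inv-inequal} together with the CFL-like condition \eqref{eq:CFL-like-cond}, which exactly absorbs the arising factor $\widehat{k}_n h^{-1}\|\nabla(u^{j,h}_{n,\ast}-\langle u^h\rangle_{n,\ast})\|^2$. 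At this stage I would invoke the velocity stability bounds \eqref{eq:L2-Stab-conclusion} and \eqref{eq:DLN-Stab-H1-conclusion} to handle factors such as $\|\nabla u^{j,h}_n\|$ and $\|\nabla u^{j,h}_{n,\beta}\|$, and the velocity error bounds of Theorems \ref{thm:Error-L2} and \ref{thm:Error-H1} to handle $\|e^j_{n,\beta}\|$ and $\|\nabla e^j_{n,\beta}\|$; the mixed term pairing the fluctuation with a consistency defect is what produces the $h^{1/2}k_{\rm{max}}^{3/2}$ contribution. Collecting all estimates, multiplying by $\widehat{k}_n$, summing over $n$ and taking the square root yields \eqref{eq:Error-Pressure-Conclusion-Appendix}.

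The main obstacle I anticipate is the bookkeeping of the nonlinear residual: one must route each of the half-dozen subterms through the right bilinear estimate so that the inverse-inequality factor $h^{-1/2}$ is always matched against the CFL-like quantity in \eqref{eq:CFL-like-cond} and never against a velocity-error factor (which would otherwise force a suboptimal power of $h$), and simultaneously track the interplay of $h$ and $k_{\rm{max}}$ so that the fluctuation--consistency cross term lands at precisely $h^{1/2}k_{\rm{max}}^{3/2}$. The time-diameter condition \eqref{eq:time-diameter-relation} and the time-ratio bounds \eqref{eq:time-ratio-cond} are used exactly to keep these mixed powers and the $h^{-1}$-weighted error quantities (which appear through the inverse inequality) bounded.
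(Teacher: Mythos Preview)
Your proposal is correct and follows essentially the same route as the paper: split via an $L^2$-projection of the pressure, invoke the inf-sup condition \eqref{eq:inf-sup-cond}, insert the momentum error identity, and bound each piece by the consistency lemma and the previously established velocity error estimates. One detail you leave implicit is worth flagging: when you decompose the ensemble fluctuation term $b(u^{j,h}_{n,\ast}-\langle u^h\rangle_{n,\ast},\,u^{j,h}_{n,\beta}-u^{j,h}_{n,\ast},\,v^h)$, the middle slot splits as $(\eta^j_{n,\ast}-\eta^j_{n,\beta})+(\phi^{j,h}_{n,\ast}-\phi^{j,h}_{n,\beta})+(u^j_{n,\ast}-u^j_{n,\beta})$, and the $\phi$-piece equals $-\tfrac{2\beta_2^{(n)}}{1-\varepsilon_n}\Phi^{j,h}_n$. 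After the CFL condition absorbs $\|\nabla(u^{j,h}_{n,\ast}-\langle u^h\rangle_{n,\ast})\|^2$, what remains is $\sum_n\tfrac{1}{(1+\varepsilon_n\theta)^2}\|\nabla\Phi^{j,h}_n\|^2$, which is not controlled by the \emph{stated} conclusions \eqref{eq:error-L2-conclusion} or \eqref{eq:error-H1-conclusion} but by the intermediate numerical-dissipation bound inside the proof of Theorem~\ref{thm:Error-H1} (the $\|\nabla\Phi^{j,h}_n\|^2$ sum in \eqref{eq:DLN-Error-H1-eq4}). The paper draws on exactly that intermediate inequality here; your plan works once you cite it explicitly rather than the theorem statements alone. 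The stability bounds \eqref{eq:L2-Stab-conclusion} and \eqref{eq:DLN-Stab-H1-conclusion} are in fact not needed in this argument.
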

		\begin{proof}
			As in the proof of Theorem \ref{thm:Error-H1}, 
			we still denote $I_{\rm{St}} u_{n}^{j}$ to be the velocity component of Stokes projection of $(u_{n}^{j}, p_{n}^{j})$  and decompose the error of velocity at $t_{n}$ as
			\begin{gather*}
				e_{n}^{j} = u_{n}^{j} - u_{n}^{j,h} = \big(u_{n}^{j} - I_{\rm{St}} u_{n}^{j} \big)
				+ \big( I_{\rm{St}} u_{n}^{j} - u_{n}^{j,h} \big) = \eta_{n}^{j} + \phi_{n}^{j,h},  \\
				\eta_{n}^{j} = u_{n}^{j} - I_{\rm{St}} u_{n}^{j}, \qquad 
				\phi_{n}^{j,h} = I_{\rm{St}} u_{n}^{j} - u_{n}^{j,h}.
			\end{gather*}
			We set $v^{h} \in X^{h}$ in \eqref{eq:NSE-jth-exact} and subtract first equation of \eqref{eq:DLN-Ensemble-Alg} from \eqref{eq:NSE-jth-exact}
			\begin{confidential}
				\color{darkblue}
				\begin{gather*}
				\frac{1}{\widehat{k}_{n}} \big( u_{n,\alpha}^{j} - u_{n,\alpha}^{j,h}, v^{h} \big) + b\big( u^{j} (t_{n,\beta}) , u^{j} (t_{n,\beta}) , v^{h} \big)  
				- b \big( \langle u^{h} \rangle_{n,\ast}, u_{n,\beta}^{j,h} , v^{h} \big) \notag \\
				- b \big( u_{n,\ast}^{j,h} - \langle u^{h} \rangle_{n,\ast}, u_{n,\ast}^{j,h}, v^{h} \big) 
				- \big( p^{j} (t_{n,\beta}) , \nabla \cdot v^{h} \big) + \big( p_{n,\beta}^{j,h}, \nabla \cdot v^{h} \big)
				+ \nu \big( \nabla \big( u^{j} (t_{n,\beta}) -  u_{n,\beta}^{j,h} \big) , \nabla v^{h} \big) 
				\notag \\
				= \frac{1}{\widehat{k}_{n}} \big( u_{n,\alpha}^{j}, v^{h} \big)
				- \big( u_{t}^{j}( t_{n,\beta}) , v^{h} \big). 
				\end{gather*}
				\begin{gather*}
				\frac{1}{\widehat{k}_{n}} \big( e_{n,\alpha}^{j}, v^{h} \big) 
				+ \nu \big( \nabla \big( u^{j} (t_{n,\beta}) - u_{n,\beta}^{j} \big) , \nabla v^{h} \big) 
				+ \nu \big( \nabla \big( u_{n,\beta}^{j} - u_{n,\beta}^{j,h} \big) , \nabla v^{h} \big)
				\\
				- \big( p^{j} (t_{n,\beta}) - p_{n,\beta}^{j} + p_{n,\beta}^{j} - q^{j,h} + q^{j,h} - p_{n,\beta}^{j,h}, \nabla \cdot v^{h} \big)
				\\
				+ b\big( u^{j} (t_{n,\beta}) , u^{j} (t_{n,\beta}) , v^{h} \big)  
				- b \big( \langle u^{h} \rangle_{n,\ast}, u_{n,\beta}^{j,h}, v^{h} \big) 
				- b \big( u_{n,\ast}^{j,h} - \langle u^{h} \rangle_{n,\ast}, u_{n,\ast}^{j,h}, v^{h} \big)  \\
				= \frac{1}{\widehat{k}_{n}} \big( u_{n,\alpha}^{j}, v^{h} \big)
				- \big( u_{t}^{j}( t_{n,\beta}) , v^{h} \big). 
				\end{gather*}
				\begin{align*}
				&b\big( u^{j} (t_{n,\beta}) , u^{j} (t_{n,\beta}) , v^{h} \big)  
				- b \big( \langle u^{h} \rangle_{n,\ast}, u_{n,\beta}^{j,h}, v^{h} \big) 
				- b \big( u_{n,\ast}^{j,h} - \langle u^{h} \rangle_{n,\ast}, u_{n,\ast}^{j,h}, v^{h} \big) \\
				=& b\big( u^{j} (t_{n,\beta}) , u^{j} (t_{n,\beta}) , v^{h} \big)  
				- b \big( u_{n,\ast}^{j,h} - \big( u_{n,\ast}^{j,h} - \langle u^{h} \rangle_{n,\ast} \big), u_{n,\beta}^{j,h}, v^{h} \big) 
				- b \big( u_{n,\ast}^{j,h} - \langle u^{h} \rangle_{n,\ast}, u_{n,\ast}^{j,h}, v^{h} \big) \\
				=& b\big( u^{j} (t_{n,\beta}) , u^{j} (t_{n,\beta}) , v^{h} \big)  
				- b \big( u_{n,\ast}^{j,h}, u_{n,\beta}^{j,h}, v^{h} \big) 
				+ b \big( u_{n,\ast}^{j,h} - \langle u^{h} \rangle_{n,\ast}, u_{n,\beta}^{j,h}, v^{h} \big) 
				- b \big( u_{n,\ast}^{j,h} - \langle u^{h} \rangle_{n,\ast}, u_{n,\ast}^{j,h}, v^{h} \big) \\
				=& b\big( u^{j} (t_{n,\beta}) , u^{j} (t_{n,\beta}) , v^{h} \big)  
				- b \big( u_{n,\ast}^{j,h}, u_{n,\beta}^{j,h}, v^{h} \big) 
				+ b \big( u_{n,\ast}^{j,h} - \langle u^{h} \rangle_{n,\ast}, u_{n,\beta}^{j,h} - u_{n,\ast}^{j,h}, v^{h} \big) \\
				=& b\big( u^{j} (t_{n,\beta}) , u^{j} (t_{n,\beta}) , v^{h} \big) 
				- b \big( u_{n,\ast}^{j}, u_{n,\beta}^{j}, v^{h} \big) 
				+ b \big( u_{n,\ast}^{j}, u_{n,\beta}^{j}, v^{h} \big) 
				- b \big( u_{n,\ast}^{j,h}, u_{n,\beta}^{j,h}, v^{h} \big) 
				+ b \big( u_{n,\ast}^{j,h} - \langle u^{h} \rangle_{n,\ast}, u_{n,\beta}^{j,h} - u_{n,\ast}^{j,h}, v^{h} \big) \\ 
				\end{align*}
				\begin{gather*}
				\frac{1}{\widehat{k}_{n}} \Big( e_{n,\alpha}^{j} , v^{h} \Big) 
				+ \nu \big( \nabla \big( u^{j} (t_{n,\beta}) - u_{n,\beta}^{j} \big) , \nabla v^{h} \big) 
				+ \nu \big( \nabla e_{n,\beta}^{j} , \nabla v^{h} \big) \\
				- \big( p^{j} (t_{n,\beta}) - p_{n,\beta}^{j}, \nabla \cdot v^{h} \big)
				- \big( p_{n,\beta}^{j} - q^{j,h}, \nabla \cdot v^{h} \big)
				+ \big( p_{n,\beta}^{j,h} - q^{j,h}, \nabla \cdot v^{h} \big) \\
				+ b\big( u^{j} (t_{n,\beta}) , u^{j} (t_{n,\beta}) , v^{h} \big) 
				- b \big( u_{n,\ast}^{j}, u_{n,\beta}^{j}, v^{h} \big) 
				+ b \big( u_{n,\ast}^{j}, u_{n,\beta}^{j}, v^{h} \big) 
				- b \big( u_{n,\ast}^{j,h}, u_{n,\beta}^{j,h}, v^{h} \big) 
				+ b \big( u_{n,\ast}^{j,h} - \langle u^{h} \rangle_{n,\ast}, u_{n,\beta}^{j,h} - u_{n,\ast}^{j,h}, v^{h} \big) \\ 
				= \frac{1}{\widehat{k}_{n}} \big( u_{n,\alpha}^{j}, v^{h} \big)
				- \big( u_{t}^{j}( t_{n,\beta}) , v^{h} \big). 
				\end{gather*}
				\normalcolor
			\end{confidential}
			\begin{align}
				( p_{n,\beta}^{j,h} - q_{n}^{j,h}, \nabla \cdot v^{h} )
				=& - \big( \widehat{k}_{n}^{-1} e_{n,\alpha}^{j} , v^{h} \big) - \nu \big( \nabla e_{n,\beta}^{j}, \nabla v^{h} \big) 
				\label{eq:DLN-Error-P-L2L2-eq1} \\
				&+ \nu \big( \nabla \big( u_{n,\beta}^{j} \!-\! u^{j} (t_{n,\beta}) \big) , \nabla v^{h} \big)  
				+ \Big( \frac{u_{n,\alpha}^{j}}{\widehat{k}_{n}} - u_{t}^{j}( t_{n,\beta}), v^{h} \Big) \notag \\
				&+ \big( p^{j} (t_{n,\beta}) - p_{n,\beta}^{j}, \nabla \cdot v^{h} \big)
				+ \big( p_{n,\beta}^{j} - q_{n}^{j,h}, \nabla \cdot v^{h} \big) \notag \\
				&+ b \big( u_{n,\ast}^{j}, u_{n,\beta}^{j}, v^{h} \big)  
				- b\big( u^{j} (t_{n,\beta}) , u^{j} (t_{n,\beta}) , v^{h} \big) 
				+ b \big( u_{n,\ast}^{j,h}, u_{n,\beta}^{j,h}, v^{h} \big)  \notag \\
				&- b \big( u_{n,\ast}^{j}, u_{n,\beta}^{j}, v^{h} \big) 
				- b \big( u_{n,\ast}^{j,h} - \langle u^{h} \rangle_{n,\ast}, u_{n,\beta}^{j,h} - u_{n,\ast}^{j,h}, v^{h} \big). \notag 
			\end{align}
			where $q_{n}^{j,h}$ is the $L^{2}$-projection of $p_{n,\beta}^{j}$ onto $Q^{h}$.
			By Cauchy-Schwarz inequality, and Poincar$\acute{\rm{e}}$ inequality,
			\begin{gather}
				- \big( \widehat{k}_{n}^{-1} e_{n,\alpha}^{j} , v^{h} \big) \!-\! \nu \big( \nabla e_{n,\beta}^{j}, \nabla v^{h} \big) 
				\leq C(\Omega) \big( \big\| \widehat{k}_{n}^{-1} e_{n,\alpha}^{j} \big\| + \nu  \| \nabla e_{n,\beta}^{j} \| \big) \| \nabla v^{h} \|.
				\label{eq:DLN-Error-P-L2L2-eq1-term1-2}
			\end{gather}
			By Cauchy-Schwarz inequality, definition of dual norm in \eqref{eq:dual-norm}, \eqref{eq:consist-2nd-eq1} and \eqref{eq:consist-2nd-eq3} in Lemma \ref{lemma:DLN-consistency}
			\begin{align}
				\nu \big( \nabla (u_{n,\beta}^{j} - u^{j}(t_{n,\beta})), \nabla v^{h} \big) 
				\leq&  \nu \big\| \nabla (u_{n,\beta}^{j} - u^{j}(t_{n,\beta})) \big\| \| \nabla v^{h} \|  
				\label{eq:DLN-Error-P-L2L2-eq1-term3} \\
				\leq&  C(\theta)\nu \Big( k_{\rm{max}}^{3} 
				\int_{t_{n-1}}^{t_{n+1}} \| \nabla u_{tt}^{j} \| dt \Big)^{1/2}
				\| \nabla v^{h} \|, \notag \\
				\Big( \frac{ u_{n,\alpha}^{j}}{\widehat{k}_{n}}  
				- u^{j}(t_{n,\beta}), v^{h} \Big) 
				\leq& \Big\| \frac{u_{n,\alpha}^{j}}{\widehat{k}_{n}} 
				- u^{j}(t_{n,\beta}) \Big\|_{-1} \| \nabla v^{h} \|  
				\label{eq:DLN-Error-P-L2L2-eq1-term4} \\
				\leq& C(\theta) \Big(  k_{\rm{max}}^{3} \int_{t_{n-1}}^{t_{n+1}} \| u_{ttt}^{j} \|_{-1}^{2} dt \Big)^{1/2} \| \nabla v^{h} \|.  \notag 
			\end{align}
			By Cauchy-Schwarz inequality,  \eqref{eq:approx-thm} and \eqref{eq:consist-2nd-eq1} 
			in Lemma \ref{lemma:DLN-consistency}
			\begin{gather}
				(p^{j}(t_{n,\beta}) \!-\! p_{n,\beta}^{j}, \nabla \cdot v^{h})
				\!\leq\! \sqrt{d} \| p^{j}(t_{n,\beta}) \!-\! p_{n,\beta}^{j} \| \| \nabla v^{h} \|
				\!\leq\! C(\theta) \Big( \!k_{\rm{max}}^{3} \!\! \!
				\int_{t_{n\!-\!1}}^{t_{n\!+\!1}} \!\| p_{tt}^{j} \| dt \!\Big)^{\frac{1}{2}} \| \nabla v^{h} \|, \notag \\
				( p_{n,\beta}^{j} - q_{n}^{j,h}, \nabla \cdot v^{h} )
				\leq \sqrt{d} \| p_{n,\beta}^{j} - q^{j,h} \| \| \nabla v^{h} \|
				\leq \frac{C(\Omega)h^{s+1}}{\nu} \| p_{n,\beta}^{j} \|_{s+1} \| \nabla v^{h} \|.
				\label{eq:DLN-Error-P-L2L2-eq1-term5-6}
			\end{gather}
			By \eqref{eq:b-bound-1} in Lemma \ref{lemma:b-bound}, \eqref{eq:consist-2nd-eq1} 
			in Lemma \ref{lemma:DLN-consistency} and Poincar\'e inequality
			\begin{align}
				&b \big( u_{n,\ast}^{j}, u_{n,\beta}^{j}, v^{h} \big)
				- b ( u^{j}(t_{n,\beta}), u^{j}(t_{n,\beta}), v^{h} )  
				\label{eq:DLN-Error-P-L2L2-eq1-term7-8} \\
				=& b \big( u_{n,\ast}^{j} - u^{j}(t_{n,\beta}), u_{n,\beta}^{j}, v^{h} \big)
				+ b \big( u^{j}(t_{n,\beta}), u_{n,\beta}^{j} - u^{j}(t_{n,\beta}), v^{h} \big) \notag \\
				\leq& C(\Omega) \big( \| \nabla \big( u_{n,\ast}^{j} - u^{j}(t_{n,\beta}) \big) \|
				\| \nabla u_{n,\beta}^{j} \| 
				+ \| \nabla u^{j}(t_{n,\beta}) \| \| \nabla \big( u_{n,\beta}^{j} - u^{j}(t_{n,\beta}) \big) \| \big) \| \nabla v^{h} \|   \notag \\
				\leq& C(\Omega,\theta) \big( \| |u^{j}| \|_{\infty,1} 
				+ \| |u^{j}| \|_{\infty,1,\beta} \big) 
				\Big( k_{\rm{max}}^{3} 
				\int_{t_{n-1}}^{t_{n+1}} \| \nabla u_{tt}^{j} \| dt \Big)^{1/2} \| \nabla v^{h} \|. \notag 
			\end{align}
			By \eqref{eq:b-bound-1} in Lemma \ref{lemma:b-bound} and Poincar$\acute{\rm{e}}$ inequality
			\begin{align}
				& b \big( u_{n,\ast}^{j,h}, u_{n,\beta}^{j,h}, v^{h} \big)
				- b \big( u_{n,\ast}^{j}, u_{n,\beta}^{j}, v^{h} \big)
				\label{eq:DLN-Error-P-L2L2-eq1-term9-10} \\
				= & b \big( e_{n,\ast}^{j}, u_{n,\beta}^{j}, v^{h} \big)
				- b \big( u_{n,\ast}^{j}, e_{n,\beta}^{j}, v^{h} \big)
				+ b \big( e_{n,\ast}^{j}, e_{n,\beta}^{j}, v^{h} \big) \notag \\
				\leq& C(\Omega) \big( \| e_{n,\ast}^{j} \|_{1} \| \nabla u_{n,\beta}^{j} \|
				+ \| u_{n,\ast}^{j} \|_{1} \| \nabla e_{n,\beta}^{j} \| 
				+ \| e_{n,\ast}^{j} \|_{1} \| \nabla e_{n,\beta}^{j} \| \big)
				\| \nabla v^{h} \|.   \notag 
			\end{align}
			By \eqref{eq:b-bound-1} in Lemma \ref{lemma:b-bound} and Poincar$\acute{\rm{e}}$ inequality
			\begin{align}
				&b \big( u_{n,\ast}^{j,h} - \langle u^{h} \rangle_{n,\ast}, u_{n,\beta}^{j,h} - u_{n,\ast}^{j,h}, v^{h} \big) 
				\label{eq:DLN-Error-P-L2L2-eq1-term11} \\
				=& b \big( u_{n,\ast}^{j,h} - \langle u^{h} \rangle_{n,\ast}, \eta_{n,\ast}^{j} - \eta_{n,\beta}^{j}, v^{h} \big) 
				+ b \big( u_{n,\ast}^{j,h} - \langle u^{h} \rangle_{n,\ast}, \frac{2 \beta_{2}^{(n)}}{1 - \varepsilon_{n}}  \Phi_{n}^{j,h}, v^{h} \big) \notag \\
				&+ b \big( u_{n,\ast}^{j,h} - \langle u^{h} \rangle_{n,\ast}, u_{n,\ast}^{j} - u_{n,\beta}^{j}, v^{h} \big) \notag \\
				\leq& C(\theta) \big\| \nabla \big( u_{n,\ast}^{j,h} - \langle u^{h} \rangle_{n,\ast} \big) \big\| 
				\Big( \big\| \eta_{n,\ast}^{j} \!-\! \eta_{n,\beta}^{j} \big\|_{1} +  \frac{1}{1 \!-\! \varepsilon_{n}} \| \nabla \Phi_{n}^{j,h} \|
				+ \big\| \nabla \big( u_{n,\ast}^{j} - u_{n,\beta}^{j} \big) \big\| \Big) \| \nabla v^{h} \|. \notag 
			\end{align}
			We combine \eqref{eq:DLN-Error-P-L2L2-eq1}, \eqref{eq:DLN-Error-P-L2L2-eq1-term1-2}, 
			\eqref{eq:DLN-Error-P-L2L2-eq1-term3}, \eqref{eq:DLN-Error-P-L2L2-eq1-term4}, 
			\eqref{eq:DLN-Error-P-L2L2-eq1-term5-6}, \eqref{eq:DLN-Error-P-L2L2-eq1-term7-8}, 
			\eqref{eq:DLN-Error-P-L2L2-eq1-term9-10}, \eqref{eq:DLN-Error-P-L2L2-eq1-term11}, and use the discrete inf-dup condition in \eqref{eq:inf-sup-cond}
			\begin{confidential}
				\color{darkblue}
				\begin{align*}
					|( p_{n,\beta}^{j,h} - q_{n}^{j,h}, \nabla \cdot v^{h} )|
					\leq& C(\Omega) \big( \big\| \widehat{k}_{n}^{-1} e_{n,\alpha}^{j} \big\| + \nu  \| \nabla e_{n,\beta}^{j} \| \big) \| \nabla v^{h} \|
					+ C(\theta)\nu \Big( k_{\rm{max}}^{3} \int_{t_{n\!-\!1}}^{t_{n\!+\!1}} \| \nabla u_{tt}^{j} \| dt \Big)^{1/2} \| \nabla v^{h} \| 
					\\
					&+ C(\theta) \Big(  k_{\rm{max}}^{3} \int_{t_{n-1}}^{t_{n+1}} \| u_{ttt}^{j} \|_{-1}^{2} dt \Big)^{1/2} \| \nabla v^{h} \|
					\\
					&+ C(\theta) \Big( \!k_{\rm{max}}^{3} \!\! \!\int_{t_{n\!-\!1}}^{t_{n\!+\!1}} \!\| p_{tt}^{j} \| dt \!\Big)^{\frac{1}{2}} \| \nabla v^{h} \| + \frac{C(\Omega)h^{s+1}}{\nu} \| p_{n,\beta}^{j} \|_{s+1} \| \nabla v^{h} \| \\
					&+ C(\Omega,\theta) \big( \| |u^{j}| \|_{\infty,1} + \| |u^{j}| \|_{\infty,1,\beta} \big) 
					\Big( k_{\rm{max}}^{3} \int_{t_{n-1}}^{t_{n+1}} \| \nabla u_{tt}^{j} \| dt \Big)^{1/2} \| \nabla v^{h} \| \\
					&+ C(\Omega) \big( \| e_{n,\ast}^{j} \|_{1} \| \nabla u_{n,\beta}^{j} \|
					+ \| u_{n,\ast}^{j} \|_{1} \| \nabla e_{n,\beta}^{j} \| 
					+ \| e_{n,\ast}^{j} \|_{1} \| \nabla e_{n,\beta}^{j} \| \big) \| \nabla v^{h} \| \\
					&+ C(\Omega,\theta) \big\| \nabla \big( u_{n,\ast}^{j,h} - \langle u^{h} \rangle_{n,\ast} \big) \big\| 
					\Big( \big\| \eta_{n,\ast}^{j} \!-\! \eta_{n,\beta}^{j} \big\|_{1} +  \frac{1}{1 \!-\! \varepsilon_{n}} \| \nabla \Phi_{n}^{j,h} \|
					+ \big\| \nabla \big( u_{n,\ast}^{j} - u_{n,\beta}^{j} \big) \big\| \Big) \| \nabla v^{h} \|. 
				\end{align*}
				\normalcolor
			\end{confidential}
			\begin{align}
				&C_{\tt{is}} \| p_{n,\beta}^{j,h} \!-\! q_{n}^{j,h} \| 
				\label{eq:DLN-Error-P-L2L2-eq2} \\
				\leq & C(\Omega) \big(\big\| \widehat{k}_{n}^{-1} e_{n,\alpha}^{j} \big\| 
				+\! \nu  \| \nabla e_{n,\beta}^{j} \| \big)
				+\! C(\theta)\nu \Big( k_{\rm{max}}^{3} \int_{t_{n\!-\!1}}^{t_{n\!+\!1}} \| \nabla u_{tt}^{j} \| dt \Big)^{\frac{1}{2}} \notag \\
				+& C(\theta) \Big(  k_{\rm{max}}^{3} \int_{t_{n-1}}^{t_{n+1}} \| u_{ttt}^{j} \|_{-1}^{2} dt \Big)^{\frac{1}{2}} 
				+ C(\theta) \Big( \!k_{\rm{max}}^{3} \!\! \!\int_{t_{n\!-\!1}}^{t_{n\!+\!1}} \!\| p_{tt}^{j} \| dt \!\Big)^{\frac{1}{2}} 
				+ \frac{C(\Omega) h^{s+1}}{\nu} \| p_{n,\beta}^{j} \|_{s+1} \notag \\
				+& C(\Omega,\theta) \big( \| |u^{j}| \|_{\infty,1} + \| |u^{j}| \|_{\infty,1,\beta} \big) 
				\Big( k_{\rm{max}}^{3} \int_{t_{n-1}}^{t_{n+1}} \| \nabla u_{tt}^{j} \| dt \Big)^{\frac{1}{2}} \notag \\
				+& C(\Omega) \big( \| e_{n,\ast}^{j} \|_{1} \| \nabla u_{n,\beta}^{j} \|
				+ \| u_{n,\ast}^{j} \|_{1} \| \nabla e_{n,\beta}^{j} \| 
				+ \| e_{n,\ast}^{j} \|_{1} \| \nabla e_{n,\beta}^{j} \| \big) \| \nabla v^{h} \| \notag \\
				+& C(\Omega,\theta) \big\| \nabla \big( u_{n,\ast}^{j,h} \!-\! \langle u^{h} \rangle_{n,\ast} \big) \big\| 
				\Big( \big\| \eta_{n,\ast}^{j} \!-\! \eta_{n,\beta}^{j} \big\|_{1} +  \frac{1}{1 \!-\! \varepsilon_{n}} \| \nabla \Phi_{n}^{j,h} \|
				\!+\! \big\| \nabla \big( u_{n,\ast}^{j} \!-\! u_{n,\beta}^{j} \big) \big\| \Big) \| \nabla v^{h} \|. \notag 
			\end{align}
			By triangle inequality \eqref{eq:approx-thm} and \eqref{eq:consist-2nd-eq1} in Lemma \ref{lemma:DLN-consistency}, \eqref{eq:DLN-Error-P-L2L2-eq2} becomes
			\begin{confidential}
				\color{darkblue}
				\begin{align*}
					&\sum_{n=1}^{N-1} \widehat{k}_{n} \| p_{n,\beta}^{j} - p_{n,\beta}^{j,h} \|^{2} \\
					\leq& 2 \sum_{n=1}^{N-1} \widehat{k}_{n} \| p_{n,\beta}^{j} - q_{n}^{j,h} \|^{2}
					+ 2 \sum_{n=1}^{N-1} \widehat{k}_{n} \| p_{n,\beta}^{j,h} - q_{n}^{j,h} \|^{2} \\
					\leq& C(\Omega) h^{2s+2}  \sum_{n=1}^{N-1} \widehat{k}_{n} \| p_{n,\beta}^{j} \|_{s+1}^{2}
					+ C(\Omega) \sum_{n=1}^{N-1} \widehat{k}_{n} \| \widehat{k}_{n}^{-1} e_{n,\alpha}^{j} \|^{2}
					+ C(\Omega) \sum_{n=1}^{N-1} \nu \widehat{k}_{n} \| \nabla e_{n,\beta}^{j} \|^{2}   \notag \\
					+& C(\theta) \nu k_{\rm{max}}^{4} \sum_{n=1}^{N-1} \int_{t_{n\!-\!1}}^{t_{n\!+\!1}} \| \nabla u_{tt}^{j} \| dt
					+ C(\theta) k_{\rm{max}}^{4} \sum_{n=1}^{N-1} \int_{t_{n-1}}^{t_{n+1}} \| u_{ttt}^{j} \|_{-1}^{2} dt \\
					+& C(\theta) k_{\rm{max}}^{4} \sum_{n=1}^{N-1} \int_{t_{n-1}}^{t_{n+1}} \| p_{tt}^{j} \| dt 
					+ \frac{C(\Omega) h^{2s+2}}{\nu^{2}}  \sum_{n=1}^{N-1} \widehat{k}_{n} \| p_{n,\beta}^{j} \|_{s+1} \\
					+& C(\Omega,\theta) \big( \| |u^{j}| \|_{\infty,1} \!+\! \| |u^{j}| \|_{\infty,1,\beta} \big) k_{\rm{max}}^{4} 
					\sum_{n=1}^{N-1} \int_{t_{n-1}}^{t_{n+1}} \| \nabla u_{tt}^{j} \| dt \\
					+& C(\Omega,\theta) \| | u| \|_{\infty,1}^{2} \| |e_{n}^{j}| \|_{\infty,1}^{2} \big( \sum_{n=1}^{N-1} \widehat{k}_{n} \big)
					+ C(\Omega,\theta) \| |e_{n}^{j}| \|_{\infty,1}^{2} \big( \sum_{n=1}^{N-1} \widehat{k}_{n} \big) \\
					+& C(\Omega,\theta) \sum_{n=1}^{N-1} \widehat{k}_{n} 
					\big\| \nabla \big( u_{n,\ast}^{j,h} \!-\! \langle u^{h} \rangle_{n,\ast} \big) \big\|^{2}  
					\Big( \big\| \eta_{n,\ast}^{j} \!-\! \eta_{n,\beta}^{j} \big\|_{1}^{2} 
					+ \frac{\| \nabla \Phi_{n}^{j,h} \|^{2}}{(1 \!-\! \varepsilon_{n})^{2}} 
					+ \big\| \nabla \big( u_{n,\ast}^{j} \!-\! u_{n,\beta}^{j} \big)  \big\|^{2} \Big).
				\end{align*}
				\normalcolor
			\end{confidential}
			\begin{align}
				&\sum_{n=1}^{N-1} \widehat{k}_{n} \| p_{n,\beta}^{j} - p_{n,\beta}^{j,h} \|^{2} 
				\label{eq:DLN-Error-P-L2L2-eq3} \\
				\leq& 2 \sum_{n=1}^{N-1} \widehat{k}_{n} \| p_{n,\beta}^{j} - q_{n}^{j,h} \|^{2}
				+ 2 \sum_{n=1}^{N-1} \widehat{k}_{n} \| p_{n,\beta}^{j,h} - q_{n}^{j,h} \|^{2}  \notag \\
				\leq& C(\theta) h^{2s+2} \big( k_{\rm{max}}^{4} \| p_{tt}^{j} \|_{2,s\!+\!1}^{2} \!+\! \| |p^{j}| \|_{2,s\!+\!1,\beta}^{2} \big)
				\!+\! C(\Omega) \!\sum_{n=1}^{N-1} \widehat{k}_{n} \| \widehat{k}_{n}^{-1} e_{n,\alpha}^{j} \|^{2}
				\notag \\
				+& C(\Omega) \!\sum_{n=1}^{N-1} \nu \widehat{k}_{n} \| \nabla e_{n,\beta}^{j} \|^{2}   
				+ C(\theta) \nu k_{\rm{max}}^{4} \| \nabla u_{tt}^{j} \|_{2,0}^{2} + C(\theta) k_{\rm{max}}^{4} \big( \| u_{ttt}^{j} \|_{2,-1}^{2}
				+ \| p_{tt}^{j} \|_{2,0}^{2} \big) \notag \\
				+& \frac{C(\Omega,\theta) h^{2s+2}}{\nu^{2}} \!\big( k_{\rm{max}}^{4} \| p_{tt}^{j} \|_{2,s\!+\!1}^{2} \!+\! \| |p^{j}| \|_{2,s\!+\!1,\beta}^{2} \big) 
				+ C(\Omega,\theta) T \| | u| \|_{\infty,1}^{2} \| |e_{n}^{j}| \|_{\infty,1}^{2} 
				\notag \\
				+& C(\Omega,\theta) \big( \| |u^{j}| \|_{\infty,1} \!+\! \| |u^{j}| \|_{\infty,1,\beta} \big) k_{\rm{max}}^{4} \| \nabla u_{tt}^{j} \|_{2,0}^{2}
				\!+\! C(\Omega,\theta) T \| |e_{n}^{j}| \|_{\infty,1}^{2} \notag \\
				+& C(\Omega,\theta) \sum_{n=1}^{N-1} \widehat{k}_{n} 
				\big\| \nabla \big( u_{n,\ast}^{j,h} \!-\! \langle u^{h} \rangle_{n,\ast} \big) \big\|^{2}  
				\Big( \big\| \eta_{n,\ast}^{j} \!-\! \eta_{n,\beta}^{j} \big\|_{1}^{2} 
				+ \frac{\| \nabla \Phi_{n}^{j,h} \|^{2}}{(1 \!-\! \varepsilon_{n})^{2}} 
				+ \big\| \nabla \big( u_{n,\ast}^{j} \!-\! u_{n,\beta}^{j} \big)  \big\|^{2} \Big).  \notag 
			\end{align}
			By \eqref{eq:error-L2-inf-final-H1} in the proof of Theorem \ref{thm:Error-L2}  
			and \eqref{eq:error-diff-time-L2} in the proof of Theorem \ref{thm:Error-H1}
			\begin{align}
				&\sum_{n=1}^{M-1} \widehat{k}_{n} \| \widehat{k}_{n}^{-1} e_{n,\alpha}^{j} \|^{2}
				+\sum_{n=1}^{M-1} \nu \widehat{k}_{n} \| \nabla e_{n,\beta}^{j} \|^{2}    
				\label{eq:DLN-Error-P-L2L2-eq3-error-terms}     \\
				\leq& C(\Omega,\theta) \!\Big(\! \frac{h^{2s\!+\!2}}{\nu^{2}}  \| p_{t}^{j} \|_{2,s\!+\!1}^{2} \!+\! h^{2r} \| u_{t}^{j} \|_{2,r}^{2}  \!\Big) 
				\notag \\
				+& \nu \exp \Big[\! \frac{C(\Omega,\theta)}{\nu} \! \Big(\! k_{\rm{max}}^{4} \| u_{tt}^{j} \|_{2,2}^{2} 
				\!+\! \| |u^{j}|\|_{2,2,\beta}^{2} \!+\! \frac{ (C \nu T \!+\! 1) F_{2}^{2} }{h \nu} \Big)  \! \Big]  F_{3}  \notag \\
				+&\! \exp \!\Big[ \frac{C(\Omega,\theta)}{\nu} \big( k_{\rm{max}}^{4} \| u_{tt}^{j} \|_{2,2}^{2} 
				+ \| |u^{j}| \|_{2,2,\beta}^{2} \big) \Big] \! F_{1}
				\!+\! C(\Omega,\theta)  \nu h^{2r} \!\big( \!k_{\rm{max}}^{4} \!\| u_{tt}^{j} \|_{2,r\!+\!1}^{2}
				\!+\! \| |u^{j}| \|_{2,r\!+\!1,\beta}^{2} \!\big). \notag 
			\end{align}
			Similar to \eqref{eq:DLN-Error-H1-Eq1-term9-term1}, we have 
			\begin{align}
				&\sum_{n=1}^{N-1} \widehat{k}_{n} 
				\big\| \nabla \big( u_{n,\ast}^{j,h} \!-\! \langle u^{h} \rangle_{n,\ast} \big) \big\|^{2} 
				\big\| \eta_{n,\ast}^{j} - \eta_{n,\beta}^{j} \big\|_{1}^{2} 
				\label{eq:DLN-Error-P-L2L2-eq3-avg-term1}   \\
				\leq& C(\Omega) \sum_{n=1}^{N-1} \widehat{k}_{n} 
				\big\| \nabla \big( u_{n,\ast}^{j,h} \!-\! \langle u^{h} \rangle_{n,\ast} \big) \big\|^{2} 
				\Big( \frac{h^{2s\!+\!2}}{\nu^{2}} \| p_{n,\ast}^{j} \!-\! p_{n,\beta}^{j} \|_{s\!+\!1}^{2} 
				+ h^{2r} \| u_{n,\ast}^{j} \!-\! u_{n,\beta}^{j} \|_{r\!+\!1}^{2} \Big)  \notag \\
				\leq& C(\theta) \sum_{n=1}^{N-1} \widehat{k}_{n} \Big[ C(\Omega,\theta) \frac{ \nu h }{\widehat{k}_{n}} \Big( \frac{1-\varepsilon_{n}}{1+\varepsilon_{n} \theta} \Big)^{2} \Big]
				\Big( \frac{h^{2s\!+\!2}}{\nu^{2}} \!\!\! \int_{t_{n\!-\!1}}^{t_{n\!+\!1}} \!\| p_{tt}^{j} \|_{s\!+\!1}^{2} dt 
				\!+\! h^{2r} \!\!\!  \int_{t_{n\!-\!1}}^{t_{n\!+\!1}} \!\| u_{tt}^{j} \|_{r\!+\!1}^{2} dt  \Big) \notag \\
				\leq& C(\Omega,\theta) \nu\big( \frac{h^{2s\!+\!3}}{\nu^{2}} \| p_{tt}^{j} \|_{2,s\!+\!1}^{2} 
				+ h^{2r\!+\!1} \| u_{tt}^{j} \|_{2,r\!+\!1}^{2} \big). \notag 
			\end{align}
			By the CFL-like conditions in \eqref{eq:CFL-like-cond} and \eqref{eq:DLN-Error-H1-eq4} in the proof of Theorem \ref{thm:Error-H1}
			\begin{align}
				&\sum_{n=1}^{N-1} \widehat{k}_{n} 
				\big\| \nabla \big( u_{n,\ast}^{j,h} \!-\! \langle u^{h} \rangle_{n,\ast} \big) \big\|^{2} 
				\frac{\| \nabla \Phi_{n}^{j,h} \|^{2}}{(1 \!-\! \varepsilon_{n})^{2}}  
				\label{eq:DLN-Error-P-L2L2-eq3-avg-term2} \\
				\leq& \sum_{n=1}^{N-1} \widehat{k}_{n}  
				\Big[ C(\Omega,\theta) \frac{ \nu h }{\widehat{k}_{n}} \Big( \frac{1-\varepsilon_{n}}{1+\varepsilon_{n} \theta} \Big)^{2} \Big] 
				\frac{\| \nabla \Phi_{n}^{j,h} \|^{2}}{(1 \!-\! \varepsilon_{n})^{2}}  \notag \\
				\leq& C(\Omega,\theta) \nu h k_{\rm{max}}  
				\sum_{n=1}^{N-1} \frac{ \theta (1 - {\theta}^{2})}{4 ( 1 + \varepsilon_{n} \theta )^{2}} \| \nabla \Phi_{n}^{j,h} \|^{2} \notag \\
				\leq& C(\Omega,\theta) \nu h k_{\rm{max}}  
				\exp \Big[ \frac{C(\Omega,\theta)}{\nu}  \big( k_{\rm{max}}^{4} \| u_{tt}^{j} \|_{2,2}^{2} 
				+ \| |u^{j}|\|_{2,2,\beta}^{2} + \frac{ (C \nu T + 1) F_{2}^{2} }{h \nu} \big)  \Big] F_{3}. \notag 
			\end{align}
			By the CFL-like conditions in \eqref{eq:CFL-like-cond} and \eqref{eq:consist-2nd-eq1},
			\eqref{eq:consist-2nd-eq2} 
			in Lemma \ref{lemma:DLN-consistency},
			\begin{align}
				&\sum_{n=1}^{M-1} \widehat{k}_{n} 
				\big\| \nabla \big( u_{n,\ast}^{j,h} \!-\! \langle u^{h} \rangle_{n,\ast} \big) \big\|^{2}  
				\big\| \nabla \big( u_{n,\ast}^{j} \!-\! u_{n,\beta}^{j} \big)  \big\|^{2} 
				\label{eq:DLN-Error-P-L2L2-eq3-avg-term3} \\
				\leq& \sum_{n=1}^{M-1} \widehat{k}_{n} \Big[ C(\Omega,\theta) \frac{ \nu h }{\widehat{k}_{n}} \Big( \frac{1-\varepsilon_{n}}{1+\varepsilon_{n} \theta} \Big)^{2} \Big] 
				\Big( C(\theta) k_{\rm{max}}^{3} \int_{t_{n\!-\!1}}^{t_{n\!+\!1}} \!\| \nabla u_{tt}^{j} \|^{2} dt  \Big) \notag \\
				\leq& C(\Omega,\theta) \nu h k_{\rm{max}}^{3} \| \nabla u_{tt}^{j} \|_{2,0}^{2}. \notag 
			\end{align}
			We combine \eqref{eq:DLN-Error-P-L2L2-eq3}, \eqref{eq:DLN-Error-P-L2L2-eq3-error-terms}, \eqref{eq:DLN-Error-P-L2L2-eq3-avg-term1}, \eqref{eq:DLN-Error-P-L2L2-eq3-avg-term2}, and \eqref{eq:DLN-Error-P-L2L2-eq3-avg-term3},
			\begin{align}
				&\sum_{n=1}^{N-1} \widehat{k}_{n} \| p_{n,\beta}^{j} - p_{n,\beta}^{j,h} \|^{2} 
				\label{eq:DLN-Error-P-L2L2-eq4}  \\
				\leq& C(\Omega,\theta) h^{2s+2} \big( k_{\rm{max}}^{4} \| p_{tt}^{j} \|_{2,s\!+\!1}^{2} \!+\! \| |p^{j}| \|_{2,s\!+\!1,\beta}^{2} \big)
				+C(\Omega,\theta) \!\Big(\! \frac{h^{2s\!+\!2}}{\nu^{2}}  \| p_{t}^{j} \|_{2,s\!+\!1}^{2} \!+\! h^{2r} \| u_{t}^{j} \|_{2,r}^{2}  \!\Big) 
				\notag \\
				+& \nu \exp \Big[\! \frac{C(\Omega,\theta)}{\nu} \! \big(\! k_{\rm{max}}^{4} \| u_{tt}^{j} \|_{2,2}^{2} 
				\!+\! \| |u^{j}|\|_{2,2,\beta}^{2} \!+\! \frac{ (C\nu T \!+\! 1) F_{2}^{2} }{h \nu} \big) \! \Big]  F_{3}  \notag \\
				+&\! \exp \!\Big[ \frac{C(\Omega,\theta)}{\nu} \big( k_{\rm{max}}^{4} \| u_{tt}^{j} \|_{2,2}^{2} 
				+ \| |u^{j}| \|_{2,2,\beta}^{2} \big) \Big] \! F_{1}
				\!+\! C(\Omega,\theta) \nu h^{2r} \!\big( \!k_{\rm{max}}^{4} \!\| u_{tt}^{j} \|_{2,r\!+\!1}^{2}
				\!+\! \| |u^{j}| \|_{2,r\!+\!1,\beta}^{2} \!\big) \notag \\
				+& \!C(\theta) \nu k_{\rm{max}}^{4} \| \nabla u_{tt}^{j} \|_{2,0}^{2} + C(\theta) k_{\rm{max}}^{4} \big( \| u_{ttt}^{j} \|_{2,-1}^{2}
				+ \| p_{tt}^{j} \|_{2,0}^{2} \big) \notag \\
				+& \frac{C(\Omega,\theta) h^{2s\!+\!2}}{\nu^{2}} \!\big( k_{\rm{max}}^{4} \| p_{tt}^{j} \|_{2,s\!+\!1}^{2} \!+\! \| |p^{j}| \|_{2,s\!+\!1,\beta}^{2} \big)  \notag \\
				+& C(\Omega,\theta) \big( \| |u^{j}| \|_{\infty,1} \!+\! \| |u^{j}| \|_{\infty,1,\beta} \big) k_{\rm{max}}^{4} \| \nabla u_{tt}^{j} \|_{2,0}^{2} \notag \\
				+& C(\theta) T  (\| | u^{j}| \|_{\infty,1}^{2} \!+\! 1 ) \Big\{ C(\Omega) h^{2r} \| |u^{j}| \|_{\infty,r+1}^{2} 
				+ \frac{C(\Omega)h^{2s+2}}{\nu^{2}} \| |p^{j}| \|_{\infty,s+1}^{2}  \notag \\
				& \qquad \qquad \qquad \qquad \quad + \exp \Big[ \frac{C(\Omega,\theta)}{\nu}  \Big( k_{\rm{max}}^{4} \| u_{tt}^{j} \|_{2,2}^{2} 
				+ \| |u^{j}|\|_{2,2,\beta}^{2} + \frac{ (C \nu T + 1)F_{2}^{2} }{h \nu} \Big) \Big] F_{3} \Big\} \notag \\
				+&C(\Omega,\theta) \nu\big( \frac{h^{2s\!+\!3}}{\nu^{2}} \| p_{tt}^{j} \|_{2,s\!+\!1}^{2} 
				+ h^{2r\!+\!1} \| u_{tt}^{j} \|_{2,r\!+\!1}^{2} \big) + C(\theta) \nu h k_{\rm{max}}^{3} \| \nabla u_{tt}^{j} \|_{2,0}^{2} \notag \\
				+&C(\Omega,\theta) \nu h k_{\rm{max}}  
				\exp \Big[ \frac{C(\Omega,\theta)}{\nu}  \big( k_{\rm{max}}^{4} \| u_{tt}^{j} \|_{2,2}^{2} 
				+ \| |u^{j}|\|_{2,2,\beta}^{2} + \frac{ (C\nu T + 1) F_{2}^{2} }{h \nu} \big) \Big] F_{3}, \notag 
			\end{align}
			which implies \eqref{eq:Error-Pressure-Conclusion-Appendix}.

		\end{proof}

	\end{appendices}

\bibliographystyle{abbrv}
\bibliography{ReferenceEnsemble}

\end{document}